\newtheorem{theorem}{Theorem}
\newtheorem{lemma}[theorem]{Lemma}
\newtheorem{corollary}[theorem]{Corollary}
\newtheorem{proposition}[theorem]{Proposition}
\newtheorem{preremark}[theorem]{Remark}  \newenvironment{remark}
{\begin{preremark}\rm}{\end{preremark}}
\numberwithin{theorem}{section}
\numberwithin{equation}{section}
\font\gotviii=eufm10 at 8pt
\font\gotxi=eufm10 at 11pt
\font\posebni=msam10
\newcommand{\siC}{\textsf{\textit{C}}}
\newcommand{\C}{\mathbb{C}}
\newcommand{\N}{\mathbb{N}}
\newcommand{\R}{\mathbb{R}}
\newcommand\bA{\mathbf A}
\newcommand\bS{\mathbf{S}}
\newcommand{\cA}{\mathcal A}
\newcommand{\cB}{\mathcal B}
\newcommand{\cC}{\mathcal C}
\newcommand{\cD}{\mathcal D}
\newcommand{\cE}{\mathcal E}
\newcommand{\cF}{\mathcal F}
\newcommand{\cG}{\mathcal G}
\newcommand{\cH}{\mathcal H}
\newcommand{\cI}{\mathcal I}
\newcommand{\cM}{\mathcal M}
\newcommand{\cP}{\mathcal P}
\newcommand{\cQ}{\mathcal Q}
\newcommand{\cR}{\mathcal R}
\newcommand{\cS}{\mathcal S}
\newcommand{\cV}{\mathcal V}
\newcommand{\cW}{\mathcal W}
\newcommand{\su}{\mathsf u}
\newcommand{\sv}{\mathsf v}
\newcommand{\sw}{\mathsf w}
\newcommand{\sA}{\mathsf A}
\newcommand{\sB}{\mathsf B}
\newcommand{\sC}{\mathsf C}
\newcommand{\tf}{\mathtt f}
\newcommand{\tg}{\mathtt g}
\renewcommand{\th}{\mathtt h}
\newcommand{\oU}{{\mathscr U}}
\newcommand{\oV}{{\mathscr V}}
\newcommand{\oW}{{\mathscr W}}
\renewcommand{\a}{\text{\gotxi a}}
\newcommand{\gota}{\text{\gotxi a}}
\newcommand{\gX}{\text{\gotxi X}}
\newcommand{\gmX}{\text{\gotviii X}}
\newcommand{\gZ}{\text{\gotxi Z}}
\newcommand{\gH}{\text{\gotxi H}}
\newcommand{\gm}{\text{\gotxi m}}
\def\Dom{\cD}
\def\Ran{\operatorname{R}}
\newcommand\bena{\mathbf 1}
\newcommand{\e}{\varepsilon}
\newcommand{\f}{\varphi}
\newcommand{\leqsim}{\,\text{\posebni \char46}\,}
\newcommand{\geqsim}{\,\text{\posebni \char38}\,}
\newcommand{\sign}{\mathop{\rm sign}\nolimits}
\newcommand{\wt}{\widetilde}
\newcommand\wrt{\,\textup{d}}
\newcommand{\nor}[1]{\left\| #1 \right\|}
\newcommand\norm[2]{{\left\Vert{#1}\right\Vert_{#2}}}
\newcommand{\sk}[2]{\left\langle #1 , #2\right\rangle}
\newcommand{\mn}[2]{\left\{ #1\, ;\, #2 \right\}}
\renewcommand{\div}{{\rm div}}
\renewcommand{\leq}{\leqslant}
\renewcommand{\geq}{\geqslant}
\renewcommand{\Re}{{\rm Re}\,}
\renewcommand{\Im}{{\rm Im}\,}
\renewcommand\mod[1]{\left\vert{#1}\right\vert}
\begin{document}

\title[Trilinear embedding for divergence-form operators]{Trilinear embedding for divergence-form operators with complex coefficients}

\author[A. Carbonaro]{Andrea Carbonaro}
\address{Andrea Carbonaro, Universit\`{a} degli Studi di Genova, Dipartimento di Matematica, Via Dodecaneso 35, 16146 Genova, Italy}
\email{carbonaro@dima.unige.it}

\author[O. Dragičević]{Oliver Dragičević}
\address{Oliver Dragičević, Department of Mathematics, Faculty of Mathematics and Physics, University of Ljubljana,  Jadranska 19, SI-1000 Ljubljana, Slovenia, and Institute of Mathematics, Physics and Mechanics, Jadranska 19, SI-1000 Ljubljana, Slovenia}
\email{oliver.dragicevic@fmf.uni-lj.si}

\author[V. Kovač]{Vjekoslav Kovač}
\address{Vjekoslav Kova\v{c}, Department of Mathematics, Faculty of Science, University of Zagreb, Bijeni\v{c}ka cesta 30, 10000 Zagreb, Croatia}
\email{vjekovac@math.hr}

\author[K. A. Škreb]{Kristina Ana Škreb}
\address{Kristina Ana \v{S}kreb, Faculty of Civil Engineering, University of Zagreb, Fra Andrije Ka\v{c}i\'{c}a Mio\v{s}i\'{c}a 26, 10000 Zagreb, Croatia}
\email{kskreb@grad.hr}



\subjclass[2010]{Primary 35J15; 
Secondary 42B20, 
47D06
} 


\begin{abstract}
We prove a dimension-free $L^p(\Omega)\times L^q(\Omega)\times L^r(\Omega)\rightarrow L^1(\Omega\times (0,\infty))$
embedding for triples of elliptic operators in divergence form with complex coefficients and subject to mixed boundary conditions on $\Omega$, and for triples of exponents $p,q,r\in(1,\infty)$ mutually related by the identity $1/p+1/q+1/r=1$. Here $\Omega$ is
allowed to be an arbitrary open subset of $\R^d$.
Our assumptions involving the exponents and coefficient matrices are  
expressed in terms of a condition known as $p$-ellipticity. 
 The proof utilizes the method of Bellman functions and heat flows.
As a corollary, we give applications to (i) paraproducts and (ii) square functions associated with the corresponding operator semigroups, moreover, we prove (iii) inequalities of Kato--Ponce type for elliptic operators with complex coefficients. 
All the above results are the first of their kind for elliptic divergence-form operators with complex coefficients on arbitrary open sets.
Furthermore, the approach to (ii),(iii) through trilinear embeddings seems to be new.
\end{abstract}

\maketitle


\section{Introduction and statement of the main results}
Let $\Omega\subseteq\R^d$ be an arbitrary open set.
Denote by $\cA(\Omega)$ the family of all complex {\it uniformly strictly accretive} (also called {\it elliptic}) $d\times d$ matrix functions on $\Omega$ with $L^{\infty}$ coefficients.
That is, $\cA(\Omega)$ is the set of all measurable $A:\Omega\rightarrow\C^{d\times d}$ for which
there exist $\lambda,\Lambda>0$ such that for almost all $x\in\Omega$ we have
\begin{eqnarray}
\label{eq: ellipticity}
\Re\sk{A(x)\xi}{\xi}
&\hskip -19pt\geq \lambda|\xi|^2\,,
&\hskip 20pt\forall\xi\in\C^{d};
\\
\label{eq: boundedness}
\mod{\sk{A(x)\xi}{\eta}}
&\hskip-6pt\leq \Lambda \mod{\xi}\mod{\eta}\,,
&\hskip 20pt\forall\xi,\eta\in\C^{d}.
\end{eqnarray}
Elements of $\cA(\Omega)$ will also more simply be referred to as {\it accretive}
or {\it elliptic matrices}.
For any $A\in\cA(\Omega)$ denote by
$\lambda(A)$
the largest admissible $\lambda$ in \eqref{eq: ellipticity} and by
$\Lambda(A)$
the smallest $\Lambda$ in \eqref{eq: boundedness}.

\subsection{The $p$-ellipticity condition}
\label{s: Tendinitis Achilaris dex}
The concept of $p$-ellipticity was introduced by the first two authors of the present paper in \cite{CD-DivForm} as follows.

Given $A\in\cA(\Omega)$ and $p\in (1,\infty)$, we say that $A$ is {\it $p$-elliptic} if
$\Delta_{p}(A)>0$, where
\begin{equation}
\label{eq: kabuto}
\Delta_{p}(A):=
\underset{x\in\Omega}{{\rm ess}\inf}
\min_{\substack{\xi\in\C^{d}\\ |\xi|=1}}
\Re\sk{A(x)\xi}{\xi+|1-2/p|\bar\xi}_{\C^{d}}.
\end{equation}
Equivalently, $A$ is $p$-elliptic if
there exists $c=c(A,p)>0$ such that for a.e. $x\in\Omega$,
\begin{equation}
\label{eq: Sparky 21}
\Re\sk{A(x)\xi}{\xi+|1-2/p|\bar\xi}_{\C^{d}}
\geq c |\xi|^2\,,
\hskip30pt
 \forall\xi\in\C^{d}.
\end{equation}
It follows straight from \eqref{eq: kabuto}
that $\Delta_{p}$ is invariant under conjugation of $p$, meaning that $\Delta_{p}(A)=\Delta_{q}(A)$ when $1/p+1/q=1$.
Furthermore, note that $\Delta_2(A)=\lambda(A)$, so $p$-ellipticity generalizes the notion of classical ellipticity.
We will refer to $\Delta_p(A)$ and $\Lambda(A)$ collectively as the {\it $p$-ellipticity constants} of $A$.
In order to unify some computations, we extend the definition \eqref{eq: kabuto} to all $p\in(0,\infty]$.

Denote by $\cA_{p}(\Omega)$ the class of all $p$-elliptic matrix functions on $\Omega$. It is known, see \cite{CD-DivForm}, that
$\mn{\cA_{p}(\Omega)}{p\in[2,\infty)}$
is a decreasing chain of matrix classes such that
\begin{align}
\{\text{elliptic matrices on }\Omega\} & =\ \  \cA_2(\Omega)\,, \nonumber\\
\{\text{real elliptic matrices on }\Omega\} & = 
{\displaystyle\bigcap_{p\in[2,\infty)}\cA_{p}(\Omega)}\,.
\label{eq: real elliptic}
\end{align}

In \cite{CD-DivForm} it was argued that $p$-ellipticity could be of interest for the $L^p$-theory of elliptic PDEs with complex coefficients. So far, several examples have been found that support this thought.
The current paper aims to 
continue further in this direction,
by establishing new applications of $p$-ellipticity: trilinear embeddings, paraproducts, square function estimates and Kato--Ponce inequalities.

 A condition similar to \eqref{eq: Sparky 21}, yet slightly weaker,
 was formulated in a different manner by Cialdea and Maz'ya in \cite[(2.25)]{CM}; see \cite[Remark 5.14]{CD-DivForm}. It was a result of their study of a condition on sesquilinear forms known as $L^{p}$-dissipativity. In \cite{CD-DivForm} the first two authors of the present paper arrived at 
 \eqref{eq: Sparky 21}
 from a different direction, that is, via bilinear embeddings and generalized convexity of power functions; see Section \ref{s: Radu Simion Geamparalele} for more background.

While the first two authors of the present paper were preparing \cite{CD-DivForm}, M. Dindo\v s and J. Pipher were working on their own article \cite{DiPi}. They
discovered remarkable connections between \eqref{eq: Sparky 21} and the regularity theory of elliptic PDEs.
More precisely, they
found the following sharp condition which implies reverse H\"older inequalities for weak solutions of elliptic operators in divergence form with complex coefficients:
for some $\e=\e(A,p)>0$ and almost all $x\in\Omega$,
$$
\sk{\Re A(x)\lambda}{\lambda}_{\R^d}+\sk{\Re A(x)\eta}{\eta}_{\R^d}+\sk{\big(\sqrt{p'/p}\,\Im A(x)-\sqrt{p/p'}\,\Im A(x)^T\big)\lambda}{\eta}_{\R^d}
\geqslant\e\big(|\lambda|^2+|\eta|^2\big)
$$ 
for all $\lambda,\eta\in\R^d$. Here $p'=p/(p-1)$ is the conjugate exponent of $p$. 
It turned out that the above condition of theirs, devised
independently of \cite{CD-DivForm}, namely, as a strengthening of \cite[(2.25)]{CM}, was precisely a reformulation of \eqref{eq: Sparky 21}.
The same authors have since then been successfully continuing their line of exploration of $p$-ellipticity in PDEs; see their recent paper \cite{DiPi20} and a preprint with Li \cite{DiLiPi20}.

\subsection{Genesis of $p$-ellipticity}
\label{s: Radu Simion Geamparalele}
The idea of attaching a {\it number} to the pair $(A,p)$,
highlighting positivity of that number as a key condition,
writing it as in \eqref{eq: kabuto},
studying its dynamics with respect to $A$ and $p$,
etc.,
came as a synthesis of the first two authors' long-term study of {\it Bellman functions, heat flows} and {\it generalized convexity}.

Among the very first works where the Bellman-heat approach started getting developed were Petermichl--Volberg \cite{PV} and Nazarov--Volberg \cite{NV}.
Afterwards, in a series of papers written by A. Volberg and the second author of the present paper \cite{DV-Kato, DV-Sch, DV} or by the first two authors of the present paper \cite{CD-mult, CD-OU, CD-Riesz}, the heat flow method associated with a particular Nazarov--Treil function $Q$ (an example of a Bellman function), found in \cite{NV}, was applied and developed further.

For example, in \cite{DV-Sch} a scrutinous analysis of $Q$ revealed fine convexity properties that were, on one hand, indispensable for the main goal of \cite{DV-Sch} (a so-called {\it bilinear embedding}), and unexpected on the other. That suggested that $Q$ may possess further important convexity properties, which was subsequently confirmed in above-cited works of the first two authors.
Another milestone on that path was \cite{CD-mult}, where the flow associated with $Q$ was for the first time studied for {\it complex times}. This led to a universal spectral multiplier theorem for generators of symmetric contractive semigroups with the optimal angle, thus introducing the Bellman-heat method into spectral multipliers and answering a question posed several years earlier
by Stein.

As indicated above,
the gist of each of those works was establishing the convexity of $Q$.
In \cite{CD-DivForm} the concept of convexity of a function with respect to a matrix was introduced (see Section \ref{s: cajkovskij grand sonata} for the definition).
When the matrix in question is the identity matrix, this is the usual convexity.
In the same work the focus was fully shifted from the convexity of $Q$ to the convexity of its principal building blocks -- power functions. The $p$-ellipticity condition \eqref{eq: Sparky 21} was conceived there first as the uniform convexity of power functions $|\zeta|^p$ with respect to $A$, and then reshaped into \eqref{eq: Sparky 21}; see \cite[Remark 5.9]{CD-DivForm}.
Therefore the $p$-ellipticity emerged in \cite{CD-DivForm} after several years of gradually distilling the heat flow method through \cite{DV-Kato, DV-Sch, DV, CD-mult, CD-OU, CD-Riesz}.

Other works using the Bellman-heat approach include
Domelevo--Petermichl \cite{DoPe},
Mauceri--Spinelli \cite{MaSp},
Wr\'obel \cite{W18a},
Betancor--Dalmasso--Fari\~na--Scotto \cite{BDFS},
Kucharski \cite{K21},
the article \cite{KS18} by the latter two authors of the present paper, 
and \cite{CD-Mixed} by the former two.

\bigskip
Since we will be dealing with pairs and triples of matrices, it is useful to introduce further notation, as in \cite{CD-DivForm, CD-Mixed}:
\begin{equation*}
\aligned
\Delta_{p}(A_1,\hdots,A_{N})&=\min\{\Delta_{p}(A_1),\hdots,\Delta_{p}(A_{N})\}, \\
\lambda(A_1,\hdots,A_{N})&=\min\{\lambda(A_1),\hdots, \lambda(A_{N})\}, \\
\Lambda(A_1,\hdots,A_{N})&=\max\{\Lambda(A_1),\hdots, \Lambda(A_{N})\}.
\endaligned
\end{equation*}

\subsection{Elliptic partial differential operators in divergence form}
\label{s: Heian}

Suppose that either:
\begin{enumerate}[(a)]
\item
$\oU=H_0^1(\Omega)$,
\item
$\oU=H^1(\Omega)$, or
\item
$\oU$ is the closure in $H^1(\Omega)$ of the set of restrictions $\mn{u|_\Omega}{u\in C_c^\infty(\R^d\backslash\Gamma)}$, where $\Gamma$ is a (possibly empty) closed subset of $\partial\Omega$.
\end{enumerate}
Here $H_0^1(\Omega)$ stands for the closure of $C_c^\infty(\Omega)$ in the Sobolev space $H^1(\Omega)=W^{1,2}(\Omega)$.
Recall that $H_0^1(\R^{d})=H^1(\R^{d})$; see \cite[Corollary~3.19]{Adams} for a reference.
\label{koronavirus - skoro na rivu}

We would like to define the {\it divergence form} operator $L_{A}u=-\div(A\nabla u)$.
A standard way of achieving this is to use sesquilinear forms. Before proceeding we state that all the integrals in this paper will be taken with respect to the Lebesgue measure. As the ambient space we will typically take the complex Hilbert space $\cH=L^2(\Omega)$.

Let the sesquilinear form $\a=\a_{A,\oU}$ be given by $\cD(\a)$ and
\begin{equation*}
\a(u,v):=\int_\Omega\sk{A\nabla u}{\nabla v}_{\C^d}\,
\hskip 40pt
\text{ for }u,v\in\oU.
\end{equation*}
We define $L=L_A=L_{A,\oU}$ to be the operator associated with $\a_{A,\oU}$. See \cite[Section 1.2.3]{O} for information about this construction.
The bottomline is that
\begin{equation}
\label{eq: int by parts}
\sk{L_{A} u}{v}_{\cH}=
\int_\Omega\sk{A\nabla u}{\nabla v}_{\C^{d}},
\hskip 20pt
\forall u\in\cD(L_A),
v\in\oU,
\end{equation}
where the domain $\mathcal{D}(L_A)$ is the set of all $u\in \oU$ for which the right-hand side, regarded as an antilinear functional on $\oU$ with input $v$, extends boundedly to the whole $L^2(\Omega)$.
Depending on the choice (a)--(c) of $\oU$, we say that $L$ is subject to (a) {\it Dirichlet}, (b) {\it Neumann} or (c) {\it mixed boundary conditions}, see \cite[Section 4.1]{O}.

Consider the operator semigroup  on $L^2(\Omega)$ generated by $-L_{A,\oU}$:
\[
T^{A,\oU}_t := \exp(-t L_{A,\oU})\quad \text{for } t\in(0,\infty).
\]
The semigroup $(T^{A,\oU}_{t})_{t>0}$ is known to be contractive and analytic on $L^{2}(\Omega)$; see \cite[Chapter VI]{Kat}, \cite{AuTc} and \cite[Chapters 1 and 4]{O}.

\subsection{Trilinear embedding}
The main result of this paper is the following dimension-free \emph{trilinear embedding theorem}.
Recall that by the $p$-ellipticity constants of $A$ we mean the numbers $\Delta_p(A)$, $\lambda(A)$, $\Lambda(A)$.

\begin{theorem}
\label{t: trilinemb}
Let $\Omega\subseteq\R^d$ be an open set and let the spaces $\oU,\oV,\oW$ be as in Section \ref{s: Heian}.
Take $p,q,r\in(1,\infty)$ such that
\begin{equation}\label{eq:Hoeldexppqr}
\frac{1}{p} + \frac{1}{q} + \frac{1}{r} = 1.
\end{equation}
Suppose that accretive matrices
$A,B,C:\Omega\rightarrow\C^{d\times d}$
are $\max\{p,q,r\}$-elliptic.
Then for
$f\in\big(L^p\cap L^2\big)(\Omega)$,
$g\in\big(L^q\cap L^2\big)(\Omega)$
and
$h\in\big(L^r\cap L^2\big)(\Omega)$
we have
\begin{equation}
\label{eq: trilinemb}
\int_0^\infty\int_{\Omega}
\left| \nabla T_t^{A,\oU} f \right|
\left| \nabla T_t^{B,\oV} g \right|
\left| T_t^{C,\oW} h \right|
\wrt x\wrt t
\ \leqsim\
\nor{f}_{p}\nor{g}_{q}\nor{h}_{r}.
\end{equation}
When $\Omega=\R^d$, the same conclusion holds
under milder assumptions, namely, when
\begin{itemize}
\item
$A$ is $p$-elliptic and $(1+p/q)$-elliptic,
\item
$B$ is $q$-elliptic and $(1+q/p)$-elliptic,
\hfill
{\rm({\Large $\star$})}\label{star}
\item
$C$ is $r$-elliptic.
\end{itemize}

The implied embedding constants only depend on $p,q,r$ and $*$-ellipticity constants of $A,B,C$ alluded to in the theorem's assumptions. 
\end{theorem}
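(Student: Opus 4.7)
The plan is to follow the Bellman-function-and-heat-flow paradigm refined through \cite{NV, PV, CD-DivForm, CD-Mixed}, now extended to the trilinear setting. First I would set
$$u(x,t):=(T_t^{A,\oU}f)(x),\qquad v(x,t):=(T_t^{B,\oV}g)(x),\qquad w(x,t):=(T_t^{C,\oW}h)(x),$$
so that $u,v,w$ solve the corresponding abstract heat equations with initial data $f,g,h$. The goal is to design a nonnegative \emph{Bellman function} $Q:\C^3\to[0,\infty)$, invariant under independent rotations in each argument, whose evolution along the joint flow controls the trilinear density $|\nabla u||\nabla v||w|$ pointwise in time.

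Concretely, $Q$ must satisfy a \emph{size estimate} $Q(\zeta_1,\zeta_2,\zeta_3)\leqsim|\zeta_1|^p+|\zeta_2|^q+|\zeta_3|^r$ and a \emph{generalized convexity inequality} of the form
$$\sum_{j,k=1}^{3}\Re\sk{M_j\omega_j}{\partial_{\zeta_j}\partial_{\bar\zeta_k}Q(\zeta)\,\omega_k}_{\C^d}\ \geq\ \kappa\,|\omega_1|\,|\omega_2|\,|\zeta_3|$$
for every $\zeta\in\C^3$ and $\omega_1,\omega_2,\omega_3\in\C^d$, where $(M_1,M_2,M_3)=(A,B,C)$ and $\kappa>0$ depends only on the announced $*$-ellipticity constants. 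This extends the matrix-convexity notion of \cite{CD-DivForm} to three variables.

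Given such $Q$, I would define the energy $E(t):=\int_\Omega Q(u,v,w)\wrt x$ and compute $-E'(t)$ by differentiating under the integral and applying the form identity \eqref{eq: int by parts} to each of the three semigroups. The boundary conditions encoded in $\oU,\oV,\oW$ are engineered precisely so that the integration by parts closes without any boundary contamination; what remains is exactly the integral over $\Omega$ of the Hessian quadratic form above, evaluated at $(\omega_1,\omega_2,\omega_3)=(\nabla u,\nabla v,\nabla w)$. Generalized convexity then yields $\int_\Omega|\nabla u||\nabla v||w|\wrt x\leq\kappa^{-1}(-E'(t))$; integrating in $t$, using $E(\infty)\geq 0$, and invoking the size bound on $E(0)$ produces the additive estimate
$$\int_0^\infty\!\!\int_\Omega|\nabla u||\nabla v||w|\wrt x\wrt t\ \leqsim\ \|f\|_p^p+\|g\|_q^q+\|h\|_r^r,$$
which, by the standard rescaling $(f,g,h)\mapsto(\lambda f,\mu g,\nu h)$ and optimization in $\lambda,\mu,\nu$, converts to the sought multiplicative bound $\|f\|_p\|g\|_q\|h\|_r$.

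The main obstacle is the construction of $Q$ and the verification of generalized convexity under the sharp hypotheses. Guided by the bilinear picture of \cite{CD-DivForm, CD-Mixed}, a plausible ansatz is $Q(\zeta_1,\zeta_2,\zeta_3)=|\zeta_3|^r\,\Phi\big(|\zeta_1||\zeta_3|^{-r/p},\,|\zeta_2||\zeta_3|^{-r/q}\big)$ with $\Phi$ assembled from Nazarov--Treil-type power atoms adapted to the exponent triple in \eqref{eq:Hoeldexppqr}. On expansion, the Hessian form should split, modulo manifestly nonnegative terms, into two bilinear blocks pairing $(A,C)$ and $(B,C)$: the auxiliary exponents governing these are exactly $1+p/q$ and its conjugate $1+q/p$, which explains the milder hypothesis ({\Large$\star$}) on $\R^d$, while $r$-ellipticity of $C$ enters through the weight $|\zeta_3|^r$. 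On a general open $\Omega$ the partial Dirichlet/Neumann setting obstructs this clean pairwise splitting, forcing each matrix to handle all three bilinear interactions simultaneously, which is precisely why the stronger uniform $\max\{p,q,r\}$-ellipticity must be assumed there.
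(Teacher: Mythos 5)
Your proposal correctly identifies the paper's strategy --- a three-variable Bellman function whose generalized Hessian with respect to $(A,B,C)$ is bounded below by $|w|\,|\zeta|\,|\eta|$, driven along the joint heat flow, followed by the additive-to-multiplicative rescaling --- but it defers exactly the two components that constitute the proof. The existence of such a function is the heart of the matter and cannot be left as an ansatz. The paper does not produce it from a profile $\Phi$ of two reduced variables; it takes the explicit piecewise function of \cite{KS18}, given by six different linear combinations of tensor products of power functions on the six regions determined by the ordering of $|u|^p,|v|^q,|w|^r$, and verifies the Hessian lower bound region by region via the quantitative estimates for generalized Hessians of one-, two- and three-fold tensor products of powers (Propositions \ref{p: Vinjerac}, \ref{p: Metajna}, \ref{p: Zhenodraga}), tuning the free parameters $D,E$ to absorb the cross terms. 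This is where {\rm({\Large $\star$})} actually enters: for instance the building block $[v]^{1+q/p}[w]$ contributes $\Delta_{1+q/p}(B)$ through the two-factor estimate. Your statement that the Hessian ``should split'' into nonnegative blocks is a hope rather than an argument; recall that already for power functions of several complex variables the naive convexity fails even when $\Delta_p(\bA)>0$ (see the discussion preceding Lemma \ref{l: GCDFEsDCD}), so positivity of the full $6d\times 6d$ form genuinely has to be checked.

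The heat-flow step is also not as clean as described. The Bellman function is only $C^2$ off a singular set, so one must mollify and track how the convexity constant degrades (Corollary \ref{c: Naklofen}). More importantly, your claim that on a general open $\Omega$ ``the boundary conditions are engineered so that the integration by parts closes'' is precisely what is \emph{not} automatic: to use \eqref{eq: int by parts} one needs $\partial_u\gX(\tf,\tg,\th)\in\oU$, which requires the chain of approximations of Section \ref{s: general case} --- truncation by $\psi_n$ plus the $\bA$-convex perturbation $\nu^{m-2}\cP_{M,n,\e}$ that restores convexity where the cutoff destroys it. It is this perturbation, a power function of the full vector $(u,v,w)$ with exponent $M=\max\{p,r\}$, that forces the stronger $\max\{p,q,r\}$-ellipticity hypothesis for general $\Omega$; your explanation in terms of each matrix ``handling all three bilinear interactions'' is not the actual mechanism, since the convexity of $\gX$ itself holds under {\rm({\Large $\star$})} for arbitrary $\Omega$ (Theorem \ref{t: Voltaren}). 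Finally, even on $\R^d$ one still needs the passage from smooth to rough coefficients and from $C_c^\infty$ data to $L^p\cap L^2$ data, neither of which your outline addresses.
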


The proof of Theorem~\ref{t: trilinemb} will be given in Section \ref{s: Extracorporeal Shock Wave Therapy} (case of $\Omega=\R^d$) and Section \ref{s: general case} (for general $\Omega$), after all the needed preparatory results are established in Sections~\ref{s: prelims}--\ref{s: RICE}.

\begin{remark}
Introduce, for $p\in(1,\infty)$, the notation
\begin{equation*}
\label{eq: p*}
p^*:=\max\{p,p'\},
\end{equation*}
where $p'$ is the conjugate exponent of $p$, that is, $1/p+1/p'=1$. Owing to the symmetry and monotonicity properties of $\Delta_{p}(A)$ specified in Section \ref{s: Tendinitis Achilaris dex},
we have that
$$
A\text{ is }s\text{-elliptic and }t\text{-elliptic }
\hskip 15pt
\Longleftrightarrow
\hskip 15pt
A\text{ is }\max\{s^*,t^*\}\text{-elliptic}.
$$
In particular, in the special case $p=q$ the assumptions ({\Large $\star$})
read that $C$ is $r$-elliptic, while $A,B$ are $p$-elliptic for $2/p+1/r=1$.

Observe also that $1+p/q$ and $1+q/p$ are conjugate exponents, therefore $(1+p/q)$-ellipticity is the same as $(1+q/p)$-ellipticity.
Furthermore, note that $1+p/q=p/r'$ and $1+q/p=q/r'$.
\end{remark}

\subsection{Motivation. Bilinear versus trilinear}
A {\it bilinear embedding} is, roughly, an estimate of the following type:
$$
\int_0^\infty\int_X|\nabla T_tf||\nabla T_tg|\wrt \mu(x)\wrt t\,\leqsim\,\nor{f}_p\nor{g}_q,
$$
where $(T_t)_{t>0}$ is an operator semigroup acting on complex functions $f,g$ which map from some measure space $(X,\mu)$, and $1/p+1/q=1$. Variants of bilinear embeddings have been instrumental in proving an array of sharp results, e.g. Riesz transform estimates, general spectral multiplier theorems, maximal regularity etc.; see \cite{CD-DivForm, CD-Mixed, CD-mult} for the relevant literature. The first two authors of the present paper proved bilinear embeddings for elliptic operators in divergence form with complex coefficients \cite{CD-DivForm, CD-Mixed}; see Theorem \ref{t: N bil} for the result's statement. 
Our colleagues C. Thiele and J. Bennett have asked us
whether bilinear estimates in the context of elliptic matrices admit a reasonable {\it trilinear} counterpart. Our answer to these inquiries is Theorem \ref{t: trilinemb}.
Apart from pushing the Bellman-heat technique further, this result and its applications described in Section \ref{s: Applications} extend the scope of $p$-ellipticity. We believe that Theorem~\ref{t: trilinemb} bears potential for other applications in the future.

The special case $\Omega=\R^d$ and $A=B=C=I$ of Theorem \ref{t: trilinemb} follows by applying classical tools from harmonic analysis. The latter two authors of the present paper gave an alternative proof for $d=1$ in \cite[Section 3.3]{KS18}, using a Bellman-heat argument and utilizing an original function constructed in the same paper. Hence that function will be a natural starting point for the proof of our Theorem \ref{t: trilinemb}.

On $\R^d$, embedding-type theorems for general complex elliptic matrices were first studied by the first two authors of the present paper \cite{CD-DivForm}.
As mentioned above, their embedding differs from
the one in this paper 
by being bilinear.
It was however also based on a Bellman-heat argument, 
yet 
associated with another (simpler) function.
Recently, the same authors proved bilinear embedding for complex-coefficient operators under mixed boundary conditions on  arbitrary open sets $\Omega\subseteq \R^d$, see \cite{CD-Mixed, CD-Potentials}. Passing from $\R^d$ to general open sets $\Omega$ was technically demanding and called for a major modification of the approach from \cite{CD-DivForm}. This is also the reason why the two cases of Theorem \ref{t: trilinemb} (namely, $\Omega=\R^d$ and $\Omega\subsetneq\R^d$) will be treated in separate sections.

On top of grossly extending \cite[Section 3.3]{KS18}, the trilinear embedding of Theorem \ref{t: trilinemb} also directly implies the above-said bilinear embeddings from \cite{CD-DivForm,CD-Mixed}. See Section \ref{s: One Penny} for a more precise formulation of this statement and its proof.

The main challenge of the {\it trilinear} context when compared to the bilinear one is that symmetry is lost, i.e., knowing $p$ does not yet determine any of the other two exponents, unlike in the bilinear case with a pair of mutually conjugate exponents. On the other hand, the generalized convexity coefficient $\Delta_p(A)$ is naturally fit for conjugation, as it is actually invariant under conjugation of $p$.
This is what makes the trilinear case significantly different and more difficult.

Thus the proof of Theorem~\ref{t: trilinemb} is
based on combining the elements from \cite{KS18} on one hand, and the technique from \cite{CD-DivForm} (for $\Omega=\R^d$) and \cite{CD-Mixed} (for general $\Omega$) on the other.

\subsection{Trilinear implies bilinear}
\label{s: One Penny}

Here we present a simple argument demonstrating that our {\it trilinear} embedding (Theorem \ref{t: trilinemb}) implies the following {\it bilinear} estimate. 
The latter
appeared in \cite{CD-DivForm} in the special case of $\Omega=\R^d$ and in \cite{CD-Mixed} with general open $\Omega\subseteq\R^d$. We formulate it here for the sake of convenience.

\begin{theorem}
\cite{CD-DivForm,CD-Mixed}
\label{t: N bil}
Let $\oU, \oV\subseteq H^1(\Omega)$ be as in Section~\ref{s: Heian}. If $p,q>1$ are conjugate exponents and $A,B\in\cA_p(\Omega)$, then for all $f,g\in (L^{p}\cap L^{q})(\Omega)$ we have
\begin{equation}
\label{eq: N bil}
\int^{\infty}_{0}\!\int_{\Omega}\mod{\nabla T^{A,\oU}_{t}f(x)}\mod{\nabla T^{B,\oV}_{t}g(x)}\wrt x\wrt t\,\leqsim\,\norm{f}{p}\norm{g}{q},
\end{equation}
with the implied constants depending only on $p$, $\Delta_{p}(A,B)$, $\lambda(A,B)$ and $\Lambda(A,B)$.
\end{theorem}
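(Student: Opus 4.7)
The plan is to derive the bilinear estimate \eqref{eq: N bil} as a limiting case of Theorem \ref{t: trilinemb}, by driving the third exponent to infinity and choosing the third matrix and third function so that the extra factor $|T_t^{C,\oW} h|$ in the trilinear integrand is forced to the constant $\mathbf{1}$.

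First I would take $C=I$ and $\oW=H^1(\Omega)$, so that $T_t^{I,\oW}$ is the Neumann heat semigroup, which is sub-Markovian and satisfies $T_t^{I,\oW}\mathbf{1}=\mathbf{1}$; by \eqref{eq: real elliptic} the matrix $I$ is $s$-elliptic for every $s\in(1,\infty)$, freeing the third slot of any ellipticity constraint. Next I would pick a sequence of exponent triples $(p_n,q_n,r_n)$ with $1/p_n+1/q_n+1/r_n=1$, $p_n\to p$, $q_n\to q$, and $r_n\to\infty$ rapidly enough that $|\Omega\cap B(0,n)|^{1/r_n}\to 1$, together with test functions $h_n=\mathbf{1}_{\Omega\cap B(0,n)}$. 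Since the condition $\Delta_s(A)>0$ in \eqref{eq: kabuto} is continuous in $s$ for any fixed matrix $A$, and since $1+p_n/q_n\to p$, the $p$-ellipticity of $A$ and $B$ delivers the full list of hypotheses ({\Large $\star$}) of Theorem \ref{t: trilinemb} at $(p_n,q_n,r_n)$ for $n$ large. Applying Theorem \ref{t: trilinemb} then yields
\begin{equation*}
\int_0^\infty\!\!\int_\Omega |\nabla T_t^{A,\oU} f|\,|\nabla T_t^{B,\oV} g|\,T_t^{I,\oW} h_n \wrt x \wrt t \leqsim \|f\|_{p_n}\,\|g\|_{q_n}\,|\Omega\cap B(0,n)|^{1/r_n}.
\end{equation*}
Passing to $n\to\infty$, one has $T_t^{I,\oW} h_n(x)\uparrow 1$ pointwise on $\Omega\times(0,\infty)$ (by monotone convergence against the non-negative Neumann heat kernel), and $\|f\|_{p_n}\to\|f\|_p$, $\|g\|_{q_n}\to\|g\|_q$ after reducing by density to bounded, compactly supported $f,g$. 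Fatou's lemma applied to the left-hand side then produces \eqref{eq: N bil}.

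The main technical obstacle is twofold. First, one must verify that the implicit constant in Theorem \ref{t: trilinemb} depends continuously on $(p_n,q_n,r_n)$ and on the ellipticity data of $A,B,C$, so that it stays uniformly bounded along the sequence. Second, in the genuinely general-$\Omega$ setting Theorem \ref{t: trilinemb} demands $A,B,C$ all to be $\max\{p_n,q_n,r_n\}$-elliptic, a condition that breaks as $r_n\to\infty$; the argument sketched above therefore applies as written in the case $\Omega=\R^d$, where only the milder hypotheses ({\Large $\star$}) are required, while for general $\Omega$ one needs either a more delicate choice of $r_n$ kept within the admissible range of $s$-ellipticity for $A$ and $B$, or to combine with the separate proofs in \cite{CD-DivForm, CD-Mixed}.
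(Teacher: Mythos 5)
Your strategy is the same as the paper's (Section~\ref{s: One Penny}): derive the bilinear bound by driving the third exponent to infinity in the trilinear embedding and arranging for the factor $|T_t^{C,\oW}h|$ to become~$1$. You also correctly flag the two obstructions — uniform control of the embedding constant as $r\to\infty$ (which the paper settles via Corollary~\ref{c: cc} and Proposition~\ref{p: Pat Riley}) and the breakdown of the $\max\{p,q,r\}$-ellipticity hypothesis on general $\Omega$.

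The genuine gap is in the limiting mechanism. You take $C=I$ fixed, $h_n=\mathbf 1_{\Omega\cap B(0,n)}$, and invoke $T_t^{I,\oW}h_n\uparrow T_t^{I,\oW}\mathbf 1=\mathbf 1$; that last equality is \emph{conservativeness} (stochastic completeness) of the Neumann heat semigroup on $\Omega$. Sub-Markovianity only gives $T_t^{I,\oW}\mathbf 1\leq\mathbf 1$; the converse inequality is automatic for $\Omega=\R^d$ but is a nontrivial assertion for a general open set and is nowhere justified in your argument. The paper sidesteps this entirely by taking $C=\delta I$, so that $T_t^{\delta I,\oW}h=T_{\delta t}^{I,\oW}h$, and sending $\delta\to0$: this converts the third factor to $h$ itself by \emph{strong continuity} of the $L^2$-semigroup at $t=0$ (always valid), rather than by conservativeness at $t>0$. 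This is also why the paper needs the independence of the embedding constant from~$\delta$ (Corollary~\ref{c: cc}), a detail that has no analogue in your version but whose price is the unproven conservativeness claim. Finally, one small structural point: the paper further inserts a truncation $\min\{|\nabla T_t^{A,\oU}f|,k\}$ so that the pointwise product $X_kYZ_n$ becomes integrable for each fixed $t$, which is what makes the $L^2$-convergence $Z_n\to h$ usable under the $\wrt x$-integral before Fatou is applied in $t$; your argument leans instead on monotone convergence of $T_t^{I,\oW}h_n$, which again requires the conservativeness you have not established. For $\Omega=\R^d$ your route is fine; for general $\Omega$ (with real $A,B$, which is the other allowed regime) you should either prove conservativeness of the Neumann form semigroup on arbitrary open sets or switch to the $\delta\to0$ device.
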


Let $p,q,A,B,\Omega,\oU,\oV,f,g$ satisfy the assumptions of Theorem \ref{eq: N bil}. We further require that $p\ne q$, hence, by symmetry of \eqref{t: N bil} with respect to the interchange $p\leftrightarrow q$, we may assume that $p>2>q$. Also, suppose that either $\Omega=\R^d$ or $A,B$ are real.

Choose
$0<\e<1-(p-1)^{-1}$.
Then $q+\e<2<p-\e$ and $(p-\e)^{-1}+(q+\e)^{-1}<1$,
so that there exists a unique $r=r(\e)>1$ determined by
\begin{equation}
\label{eq: Dallas vs. Clippers Game 6}
\frac{1}{p-\e}+\frac{1}{q+\e}+\frac1{r(\e)}=1.
\end{equation}
By interpolation of $L^p$ spaces we also have $f\in (L^{p-\e}\cap L^2)(\Omega)$ and $g\in (L^{q+\e}\cap L^2)(\Omega)$.

Choose also $\delta>0$ and let $C=\delta I_{\C^d}$. By \cite[Lemma 5.20]{CD-DivForm} we have that $C$ is $r$-elliptic.
Furthermore, take $R>0$ such that the ball $B(0,R)$ intersects the open set $\Omega$ and define
$h=\chi_{\Omega\cap B(0,R)}$, with $\chi$ denoting the characteristic function. Clearly, $h\in (L^{r}\cap L^2)(\Omega)$. Finally, let $\oW = H_0^1(\Omega)$.

We would like to apply Theorem \ref{t: trilinemb} for the triples of functions $f,g,h$, matrices $(A,B,\delta I)$ and indices $(p-\e,q+\e,r(\e))$ as chosen above and with arbitrarily small $\e>0$. To this end we need to make sure that the corresponding $*$-ellipticity conditions are satisfied.
\begin{itemize}
\item
If $\Omega$ is arbitrary $A,B$ are real, then for every $s>0$ they are automatically $s$-elliptic \cite{CD-DivForm}. In particular, they are $\max\{p,q,r\}$-elliptic regardless of the choice of $p,q,r>1$. We also know from \cite{CD-DivForm} that {\it real} elliptic matrices are the only ones with this property.
\item
If $\Omega=\R^d$ and $A,B$ are arbitrary (complex elliptic), then we have to verify that {\rm({\Large $\star$})} holds for indices
$(p-\e,q+\e,r(\e))$.

Recall that $p,q$ were conjugate expontents. Since $A,B\in\cA_p(\Omega)=\cA_q(\Omega)$, we infer from \cite[Corollary 5.16]{CD-DivForm} and the choice of $\e$ that $A\in\cA_{p-\e}(\Omega)$ and $B\in\cA_{q+\e}(\Omega)$.
Similarly, for
$$
s(\e):=1+\frac{p-\e}{q+\e}
$$
we have
$$
\aligned
2<
s(\e)
=\frac{q+p}{q+\e}
<\frac{q+p}{q}
=p,
\endaligned
$$
which means that $A,B$ are also $s(\e)$-elliptic.
\end{itemize}

We may now apply \eqref{eq: trilinemb} as follows:
\begin{equation}
\label{eq: Smarjeske toplice}
\int_0^\infty\int_{\Omega}
\left| \nabla T_t^{A,\oU} f \right|
\left| \nabla T_t^{B,\oV} g \right|
\left| T_t^{\delta I,\oW} h \right|
\wrt x\wrt t
\ \leqsim\
\nor{f}_{p-\e}\nor{g}_{q+\e}\nor{h}_{r(\e)}.
\end{equation}

We first want to send $\e\rightarrow0$.

By the continuity of $L^s$ norms at $s=p,q,\infty$ we have
$$
\lim_{\e\rightarrow0}\nor{f}_{p-\e}\nor{g}_{q+\e}\nor{h}_{r(\e)}
=\nor{f}_{p}\nor{g}_{q}\nor{h}_{\infty}
=\nor{f}_{p}\nor{g}_{q}.
$$
Since $\e$ does not appear on the left-hand side in \eqref{eq: Smarjeske toplice}, we proved
\begin{equation}
\label{eq: Sarah Vaughan - Misty}
\int_0^\infty\int_{\Omega}
\left| \nabla T_t^{A,\oU} f \right|
\left| \nabla T_t^{B,\oV} g \right|
\left| T_t^{\delta I,\oW} h \right|
\wrt x\wrt t
\ \leqsim\
\nor{f}_{p}\nor{g}_{q}.
\end{equation}
A comment is in place at this point. As stated in Theorem \ref{t: trilinemb}, the embedding constants implied in \eqref{eq: trilinemb} also depend on $r$ and $\Delta_r(C)$. So when we move
$\e\rightarrow0$,
one has to make sure that the embedding constants stay finite. In fact this is indeed the case; see Corollary \ref{c: cc}.
Moreover, the same Corollary \ref{c: cc} tells us that the embedding constants from
\eqref{eq: Sarah Vaughan - Misty} do {\it not} depend on $\delta$.

Next we want to send $\delta\rightarrow0$. By the strong continuity of $T_s^{I,\oW}$ in $L^2(\Omega)$ we have
$$
h=\lim_{s\searrow0}T_s^{I,\oW} h
$$
in the $L^2(\Omega)$ sense.
Choose $k,n\in\N$. From \eqref{eq: Sarah Vaughan - Misty} we trivially get
\begin{equation}
\label{Rahmaninov - Vsenoschnoe bdenie}
\int_0^\infty\int_{\Omega}
\min\left\{\left| \nabla T_t^{A,\oU} f \right|,k\right\}
\left| \nabla T_t^{B,\oV} g \right|
\left| T_t^{(1/n) I,\oW} h \right|
\wrt x\wrt t
\ \leqsim\
\nor{f}_{p}\nor{g}_{q}.
\end{equation}
Therefore the integrand on the left-hand side \eqref{Rahmaninov - Vsenoschnoe bdenie} can be decomposed as $X_k Y Z_n$,
where for every fixed $t>0$ we have $X_k(t) \in L^\infty(\Omega)$, $Y(t) \in L^2(\Omega)$, while $Z_n(t) \in L^2(\Omega)$ and converges in $L^2(\Omega)$ to $h\in L^2(\Omega)$ as $n\rightarrow\infty$.
Consequently, for any fixed $t>0$ one obtains $\int_\Omega (X_k Y Z_n)(t) \rightarrow \int_\Omega (X_k Y)(t) h$ as $n\rightarrow\infty$, which through
Fatou's lemma
transforms \eqref{Rahmaninov - Vsenoschnoe bdenie} into
\begin{equation*}
\int_0^\infty\int_{\Omega}
\min\left\{\left| \nabla T_t^{A,\oU} f \right|,k\right\}
\left| \nabla T_t^{B,\oV} g \right|
|h|\wrt x\wrt t
\ \leqsim\
\nor{f}_{p}\nor{g}_{q}.
\end{equation*}
The embedding constants are the same as in \eqref{eq: Sarah Vaughan - Misty}, since those were independent of $\delta$.

Recalling that we had $h=\chi_{\Omega\cap B(0,R)}$, we finally pass $R\rightarrow\infty$ and $k\rightarrow\infty$ which gives \eqref{eq: N bil}.

\begin{remark}
The reason for restricting ourselves in this section to the cases of either $\Omega=\R^d$ or $A,B$ real, was to
apply Theorem \ref{t: trilinemb} for triples $(A,B,\delta I)$ and $(p,q,r)$ with {\it arbitrarily large} $r$.
The underlying observation is that 
$A,B,C\in \cA_{\max\{p,q,r\}}(\Omega)$ holds for all $r>1$ only when $A,B,C$ are real, see \cite[p. 3179]{CD-DivForm} or \eqref{eq: real elliptic}.
We believe that Theorem \ref{t: trilinemb} might hold under the more convenient conditions ({\Large $\star$}) for any open $\Omega$, not only $\Omega=\R^d$,  
yet at this moment we
cannot prove this. If we could,
then the reasoning from this section would automatically extend to the case of arbitrary open $\Omega$ and complex elliptic $A,B$.

Note also that if $r\rightarrow\infty$ (as in this section), then $p,q$ become conjugate exponents, therefore the condition $\Delta_{1+q/p}(A)>0$ from Theorem \ref{t: trilinemb} becomes just $\Delta_p(A)>0$, which is familiar to us from Theorem \ref{t: N bil}.
This makes the connection between the two results and the limiting procedure from this section look more natural. Also, it supports our impression that the set of conditions ({\Large $\star$}) could indeed be optimal for trilinear embeddings on any open sets.
\end{remark}

\subsection{Applications}
\label{s: Applications}

\subsubsection{Paraproducts associated with semigroups}
Let $p,q,r,\Omega,A,B,C,\oU,\oV,\oW,f,g,h$ be as in Theorem~\ref{t: trilinemb}.
Define the trilinear form 
$\Theta$ 
by
$$
\Theta(f,g,h)
:= - \int_{0}^{\infty} \int_{\mathbb{R}^d} \Big(\frac{\textup{d}}{\textup{d}t} T^A_t f\Big) (T^B_t g) (T^C_t h) \wrt x \wrt t
= \int_{0}^{\infty} \int_{\mathbb{R}^d} (L_A T^A_t f) (T^B_t g) (T^C_t h) \wrt x \wrt t.
$$
Using \eqref{eq: int by parts} and the product rule it can be rewritten as
$$
\Theta(f,g,h) = \int_{0}^{\infty} \int_{\mathbb{R}^d} (T^C_t h) \big\langle A \nabla T^A_t f, \overline{\nabla T^B_t g} \big\rangle_{\mathbb{C}^d} \wrt x \wrt t
+ \int_{0}^{\infty} \int_{\mathbb{R}^d} (T^B_t g) \big\langle A \nabla T^A_t f, \overline{\nabla T^C_t h} \big\rangle_{\mathbb{C}^d} \wrt x \wrt t.
$$
Theorem~\ref{t: trilinemb} gives
\begin{equation}\label{eq:pprodest}
|\Theta(f,g,h)| \leqsim \nor{f}_{p}\nor{g}_{q}\nor{h}_{r}.
\end{equation}
In particular, $\Theta$ can be uniquely extended to a bounded trilinear form on 
$L^p(\Omega)\times L^q(\Omega)\times L^r(\Omega)$. 
It is natural to call it the {\it paraproduct} associated with semigroups $(T^{A}_{t})_{t\geq 0}$, $(T^{B}_{t})_{t\geq 0}$ and $(T^{C}_{t})_{t\geq 0}$.
In order to arrive at a more familiar paraproduct-type expression, we define
\begin{equation}\label{eq:choiceofphipsi}
\varphi(z) := e^{-z}, \quad \psi(z) := z e^{-z},
\end{equation}
and then rewrite $\Theta$ using the functional calculus notation, as
$$
\Theta(f,g,h) = \int_{0}^{\infty} \int_{\mathbb{R}^d} \big(\psi(tL_A)f\big) \big(\varphi(tL_B)g\big) \big(\varphi(tL_C)h\big) \wrt x \,\frac{\textup{d}t}{t}.
$$
It is more common to reserve the word ``paraproduct'' for certain bilinear operators $(f,g)\mapsto\Pi(f,g)$, but one quickly arrives at a trilinear form by dualizing them with a third function $h$.

Paraproducts associated with semigroups were previously studied by several authors in various settings, often more general than ours; see the papers by  Bernicot \cite{B12}, Bernicot and Sire \cite{BS13}, Frey \cite{F13}, Frey and Kunstmann \cite{FK13}, Bernicot and Frey \cite{BF14}, and Wr\'{o}bel \cite{W18}.
However, in the context of divergence form operators, Theorem~\ref{t: trilinemb} allows the study of paraproducts associated with three different semigroups, while the aforementioned literature was confined to taking $A=B=C$, $A^\ast=B=C$ or $A=B^\ast=C$. Moreover, the implicit constant in estimate \eqref{eq:pprodest} only depends on the exponents $p,q,r$ and the stated $\ast$-ellipticity constants of $A,B,C$, so the result has a ``dimensionless'' flavor.
We regard \eqref{eq:pprodest} rather as the first hint of a possible general dimension-free theory of paradifferential calculus for complex elliptic operators.

\subsubsection{Square functions}

In \cite{CD-DivForm} a question was raised about a connection between $p$-ellipticity and {\it square function} estimates on $L^p$. Here we present a few observations in this direction.

Fix $\Omega\subseteq\R^{d}$ and $\oU$ as in Section~\ref{s: Heian}. Set $T^{A}_{t}=T^{A,\oU}_{t}$. Consider the {\it vertical Littlewood--Paley--Stein square function} defined for $f\in L^{2}(\Omega)$ by the rule
$$
\cG^{A}_{\oU}(f)(x):=\left(\int^{\infty}_{0}\mod{(\nabla T^{A}_{t}f)(x)}^{2}\wrt t\right)^{1/2}.
$$

When 
$\Omega=\R^d$, 
it was proved by Auscher \cite[Chapter 6]{Auscher1} that $\cG^{A}_{\oU}$ is bounded on $L^q(\R^d)$ for $q_-(A)<q<q_+(A)$ and unbounded for $q<q_-(A)$ or $q>q_+(A)$, where 
$(q_{-}(L_{A}),q_{+}(L_{A}))$ is the maximal open interval of exponents $p\in[1,\infty]$ for
which $(\sqrt{t}\nabla T_t^A)_{t>0}$ is uniformly bounded on $L^p$.
We have $q_-(A)=1$ for all real $A$ \cite[Corollary 6.6]{Auscher1}. While $q_+(I)=\infty$, in principle $q_+(A)$ can be arbitrarily close to $2$ even for real elliptic matrices $A$.
In order to have a more convenient result for all $q\in(2,\infty)$, Auscher, Hofmann and Martell \cite{AHM12} considered the {\it conical square function}, defined as
$$
\cC^{A}(f)(x):=
{\displaystyle
\left(
\iint_{V_x}\left|\nabla(T^{A}_{t}f)(y)\right|^2\,\frac{\textup{d}y\wrt t}{t^{d/2}}
\right)^{1/2}},
$$
where $V_x=\mn{(y,t)\in\R^{d}\times(0,\infty)}{|x-y|<\sqrt t}$ is a cone with respect to the {\it parabolic} metric on $\R^d\times (0,\infty)$, given by $d\big((x,t),(y,s)\big):=\sqrt{|x-y|^2+|t-s|}$.

Again following Auscher \cite{Auscher1}, denote by $(p_{-}(L_{A}),p_{+}(L_{A}))$ the maximal open interval of exponents $p$ for
which $(T_t^A)_{t>0}$ is uniformly bounded on $L^p$.
Auscher, Hofmann and Martell \cite[Theorem 3.1.(2)]{AHM12} showed that $\cC^{A}$ is bounded on $L^{p}(\R^{d})$ whenever $p\in (p_{-}(L_{A}),\infty)$.
Moreover, the lower bound is optimal and equals 1 when $A$ is real \cite[p.19]{Auscher1}.

\medskip
The situation of 
$\Omega\subsetneq\R^d$
is notably different, as
it can be proved that if $d\geq 3$, then for every $p>3$ there exists a bounded, connected open set $\Omega\subseteq\R^{d}$ with strongly Lipschitz boundary such that, if either $\oU=H^{1}(\Omega)$ (pure Neumann boundary conditions) or $\oU=H^{1}_{0}(\Omega)$ (pure Dirichlet conditions), then $\cG^{A}_{\oU}$ is {\it not} bounded on $L^{p}(\Omega)$ even for $A=I$ (see \cite{CD20-SqF}).

\medskip
In the generality we consider in this paper, we cannot prove the trilinear inequality \eqref{eq: trilinemb} by means of the semigroup maximal operator $T^{A}_{*}:f\mapsto \sup_{t>0}\mod{T^{A}_{t}f}$ and the vertical square function $\cG^{A}_{\oU}$ (compare with \cite[p. 460]{KS18}). Namely, we do not know whether $T^{A}_{*}$ is bounded on $L^p$ \cite{CD20-SqF}. Moreover, when $A=B=C$ are real matrices, Theorem~\ref{t: trilinemb} holds true for any $p,q,r\in (1,\infty)$ such that $1/p+1/q+1/r=1$, while, as discussed above, the square function $\cG^{A}_{\oU}$ could be unbounded on $L^p$ for $p>3$, even for $A=I$.

In light of the discussion above, it is natural to consider yet another square function,
in the spirit of the P.-A. Meyer's modified square function originally associated with a Poisson diffusion semigroup \cite{Meyer1,Meyer2} (see also \cite[Section~3]{Coulhon-Duong}).
Denote by $(T_{t})_{t>0}$ the Neumann heat semigroup on $\Omega$, i.e., $T_{t}=T^{I,H^{1}(\Omega)}_{t}$. The {\it modified square function} is defined by the rule
$$
\widetilde{\cG}^{A}_{\oU}(f)(x):=
\left(\int^{\infty}_{0}T_{t}\left(\mod{\nabla T^{A}_{t}f}^{2}\right)(x)\wrt t\right)^{1/2},\quad f\in L^{2}(\Omega).
$$
We emphasize that the definition of $\widetilde{\cG}^{A}_{\oU}(f)$ features {\it two} different semigroups. Let us also remark that in principle we could replace $T_t$ by a more general sub-Markovian semigroup.

From the classical formula for the heat kernel on $\R^d$ we notice that when $\Omega=\R^{d}$, we have
\begin{equation}
\label{eq: Eminem Forgot About Dre}
\cC^{A}f
\leq e^{1/8}(4\pi)^{d/4}\,
\widetilde{\cG}^{A}f
\end{equation}
pointwise on $\R^d$.
Conversely, a bit more work shows that the boundedness of $\cC^{A}$ on $L^p(\R^d)$ implies the boundedness of $\widetilde{\cG}^{A}$ on the same space.

The following result of ours deals with the functional $\widetilde{\cG}_\oU^{A}$, which thus by \eqref{eq: Eminem Forgot About Dre} majorizes $\cC^{A}$ when $\Omega=\R^d$. Our theorem applies to arbitrary domains $\Omega$, and, unlike most of the results cited above, features dimension-free estimates.

\begin{theorem}
\label{t: LPS 2}
Let $\oU$ be as in Section~\ref{s: Heian}.
If $A\in\cA(\Omega)$ and $p\geq2$ are such that $A$ is $p$-elliptic, then the modified square function $\widetilde{\cG}^{A}_{\oU}$ is bounded on $L^{p}(\Omega)$.
The norm estimates do not depend on the underlying dimension $d$.
\end{theorem}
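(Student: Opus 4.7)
My plan is to reduce Theorem~\ref{t: LPS 2} to Theorem~\ref{t: trilinemb} by duality. Assume first that $p>2$ and set $r:=(p/2)'=p/(p-2)$, so that $\tfrac1p+\tfrac1p+\tfrac1r=1$. Since $\widetilde{\cG}_\oU^A f\geq 0$, $L^{p/2}$-duality gives
\begin{equation*}
\big\|\widetilde{\cG}_\oU^A f\big\|_p^2
=\big\|(\widetilde{\cG}_\oU^A f)^2\big\|_{p/2}
=\sup_h\int_\Omega\int_0^\infty T_t\big(|\nabla T_t^A f|^2\big)(x)\,h(x)\wrt t\wrt x,
\end{equation*}
where the supremum runs over nonnegative $h\in (L^r\cap L^2)(\Omega)$ of unit $L^r$-norm. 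Because $T_t=T_t^{I,H^1(\Omega)}$ is generated by the real symmetric matrix $I$, it is both self-adjoint and positivity-preserving on $L^2(\Omega)$; Tonelli together with self-adjointness transforms the last display into
\begin{equation*}
\int_0^\infty\!\!\int_\Omega |\nabla T_t^A f|^2\,(T_t h)\wrt x\wrt t,
\end{equation*}
with the nonnegative factor $T_t h$ playing the role of the third function in a trilinear expression.

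Next I would apply Theorem~\ref{t: trilinemb} with $g=f$, $(A,B,C)=(A,A,I)$, $(\oU,\oV,\oW)=(\oU,\oU,H^1(\Omega))$, and exponents $(p,p,r)$. By \cite[Lemma 5.20]{CD-DivForm} the matrix $I$ is $s$-elliptic for every $s>1$, so the hypothesis of Theorem~\ref{t: trilinemb} on $C$ is automatic. The conclusion of that theorem yields
\begin{equation*}
\int_0^\infty\!\!\int_\Omega |\nabla T_t^A f|^2\,|T_t h|\wrt x\wrt t\,\leqsim\,\|f\|_p^2\,\|h\|_r,
\end{equation*}
with an implied constant independent of $d$; taking the supremum in $h$ gives the desired dimension-free estimate $\|\widetilde{\cG}_\oU^A f\|_p\leqsim\|f\|_p$. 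The endpoint $p=2$ is handled directly: conservativity of the Neumann heat semigroup collapses $\|\widetilde{\cG}_\oU^A f\|_2^2$ to $\int_0^\infty\|\nabla T_t^A f\|_2^2\wrt t$, which is $\leqsim\|f\|_2^2$ by analyticity of $(T_t^A)_{t>0}$ on $L^2(\Omega)$ via the standard identity $\int_0^\infty \Re\sk{L_A T_t^A f}{T_t^A f}_{\cH}\wrt t=\tfrac12\|f\|_2^2$.

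The main obstacle will be matching the ellipticity hypothesis required by Theorem~\ref{t: trilinemb} with the sole assumption of $p$-ellipticity of $A$ available in Theorem~\ref{t: LPS 2}. On general open $\Omega$, Theorem~\ref{t: trilinemb} demands that $A$ be $\max\{p,p,r\}$-elliptic, which coincides with $p$-ellipticity precisely when $p\geq 3$ (so that $r=p/(p-2)\leq p$). For $p\in(2,3)$ one has $r>p$; on $\Omega=\R^d$ the milder condition ({\Large $\star$}), specialised to $q=p$ and the real matrix $C=I$, reduces to plain $p$-ellipticity of $A$ and the argument goes through unchanged, but for arbitrary open sets one would presumably have to refine the Bellman--heat proof of Theorem~\ref{t: trilinemb} so that the real symmetry of the third semigroup relaxes the generalised-convexity demand on $A$ to just $p$-ellipticity. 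I expect this to be the sole delicate point of the proof, the rest being a routine application of Theorem~\ref{t: trilinemb}.
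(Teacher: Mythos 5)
Your proof is essentially identical to the paper's: the paper also writes $\|\widetilde{\cG}^A_\oU f\|_p^2=\|(\widetilde{\cG}^A_\oU f)^2\|_{p/2}$, dualizes against $\phi\in(L^s\cap L^2)(\Omega)$ with $s=(p/2)'$, uses self-adjointness of the Neumann heat semigroup to move $T_t$ onto the dual function, and applies Theorem~\ref{t: trilinemb} with indices $(p,p,s)$, matrices $(A,A,I)$ and functions $(f,f,\bar\phi)$. The delicate point you flag for $2<p<3$ (where $s>p$, so the stated hypothesis of Theorem~\ref{t: trilinemb} on a general open set asks for $s$-ellipticity rather than $p$-ellipticity of $A$) is a genuine subtlety, but the paper's own one-line proof does not address it either, so your write-up matches --- and in its candour slightly exceeds --- the paper's argument.
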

\begin{proof}
Take $f\in (L^p\cap L^2)(\Omega)$ and write $G=\widetilde{\cG}^{A}_{\oU}f$.
By duality, for $s=(p/2)'$ we have
$$
\nor{G}_p^2=\nor{G^2}_{p/2}=\sup\mn{\Big|\int_\Omega G^2\cdot\bar\phi\Big|}{\phi\in (L^s\cap L^2)(\Omega),\ \nor{\phi}_s=1}.
$$
Since the identity matrix is $r$-elliptic for any $r\geq1$, see e.g. \cite[Lemma 5.20]{CD-DivForm}, we may apply Theorem~\ref{t: trilinemb} with the triples of indices $(p,p,s)$, matrices $(A,A,I)$ and functions $(f,f,\bar\phi)$.
\end{proof}

\begin{remark}
After the proofs of Theorems~\ref{t: trilinemb} and \ref{t: LPS 2} were completed, the first two authors of the present paper realized that Theorem~\ref{t: LPS 2} could also be proven by a variant of a {\it bilinear} embedding based on a
modification of the technique from \cite{CD-Mixed}.
This will be explained in their paper in preparation \cite{CD20-SqF}.
\end{remark}

\subsubsection{Kato--Ponce-type inequalities and Bessel--Sobolev algebras}
\label{s: KP}

It was proven in \cite[Theorem~1.3]{CD-DivForm}, \cite[Theorem~2]{Egert2018} and \cite[Lemma~17]{CD-Mixed} that $(T^{A}_{t})_{t>0}$ extends to a contractive analytic semigroup on $L^{\wp}(\Omega)$ whenever $A$ is $\wp$-elliptic;
see Proposition \ref{p: Zbunjen i osamucen} for a more explicit statement.
In this section we slightly abuse the notation and maintain the symbol $L_{A}$ for the negative generator of $(T^{A}_{t})_{t>0}$ in $L^{\wp}(\Omega)$. 
Let $\cD_\wp(L_A)$ denote its domain. 
The generator $L_{A}$ is a {\it sectorial} operator (see \cite{CDMY}, \cite[Theorem II.4.6]{EN} or \cite{Haase} for references) on $L^{\wp}(\Omega)$ of angle $<\pi/2$, so 
$$
L^{\wp}(\Omega)={\rm N}_{\wp}(L_{A})\oplus \overline{\Ran_{\wp}(L_{A})},
$$
where ${\rm N}_{\wp}(L_{A})$ and $\Ran_{\wp}(L_{A})$ denote the kernel (null space) and the range of the operator $L_A$. 
Moreover,  the projection $Q_{\wp}^{A}$ onto ${\rm N}_{\wp}(L_{A})$ is given by
$Q_{\wp}^{A}f=\lim_{t\rightarrow+\infty}T^{A}_{t}f$ in $L^\wp$; see \cite{CDMY}. In particular, 
$\mn{Q_{\wp}^{A}}{A\text{ is }\wp\text{-elliptic}}
$ 
is a consistent family of contractive projections. See \cite[Theorem 3.8 and p. 63]{CDMY}.

From this,
\eqref{eq: int by parts} and \eqref{eq: ellipticity} 
it is not hard to see that, if $A$ is $\wp$-elliptic, then the kernel ${\rm N}_\wp(L_A)$ consists of
all $L^p$ functions which are constant on each connected component of $\Omega$ of finite measure whose boundary does not intersect $\Gamma$, and are $0$ on other connected components of $\Omega$.
Here $\Gamma$ is as in Section \ref{s: Heian} (c).

In order to simplify our proofs, we shall always assume that ${\rm N}_{2}(L_{A})=\{0\}$. 
Under this assumption, $L_{A}$ is thus an injective sectorial operator of angle $\vartheta\in(0,\pi/2)$ on $L^{\wp}(\Omega)$, provided that $A$ is $\wp$-elliptic. Then the complex powers $L^{z}_{A}$ are well defined on $L^{\wp}(\Omega)$ for every 
$z\in\C$; see \cite[Section 3.2]{Haase} or \cite[Chapter 2, Section 7]{Yagi2010} for the construction.

Let $A$ be $\wp$-elliptic. By \cite[Theorem 3]{CD-Mixed}, $L_{A}$ admits a bounded $H^{\infty}$-calculus of angle $<\pi/2$ on $L^{\wp}(\Omega)$ in the sense of \cite{CDMY}. It follows that $L_{A}$ has bounded {\it imaginary powers} (see \cite{CDMY} and \cite[Proposition 3.5.5]{Haase} for the main properties) on $L^{\wp}(\Omega)$ and there exist $\theta_{\wp}\in (0,\pi/2)$ and $C>0$ such that
\begin{equation}
\label{eq: impowers}
\norm{L^{iu}_{A}}{\wp}\leq Ce^{\theta_{\wp}|u|},\quad \forall u\in\R.
\end{equation}
See \cite[Theorems 4.2 and 5.1]{CDMY}.

\begin{theorem}[Kato--Ponce-type inequality]
\label{t: princ}
Choose numbers $p_{1},q_{1},p_2,q_2,r
\in (1,\infty)$ and denote $\varrho=r'=r/(r-1)$. 
Assume that
 $1/{p_1}+1/{q_1}=1/{p_2}+1/{q_2}=1/\varrho$.
Suppose that $A\in\cA(\Omega)$ is $\max\{p_{1},p_{2},q_{1},q_{2},r\}$-elliptic. Let $\beta\in (0,1/\varrho)$. Then 
for all $f\in \Dom_{p_{1}}(L^{\beta}_{A})\cap L^{p_{2}}(\Omega)$ and $g\in\Dom_{q_{2}}(L^{\beta}_{A})\cap L^{q_{1}}(\Omega)$ we have $fg\in\Dom_{\varrho}(L^{\beta}_{A})$ and 
\begin{equation*}
\label{eq: KatoPonce}
\|L^{\beta}_{A}(fg)\|_{\varrho}
\leqsim
\|L^{\beta}_{A}f\|_{p_{1}}\norm{g}{q_{1}}+\norm{f}{p_{2}}\|L^{\beta}_{A}g\|_{q_{2}}.
\end{equation*} 
When $\Omega=\R^d$,
the same conclusion holds
under milder assumptions, namely, when the triplets $(p_1,q_1,r)$ and $(p_2,q_2,r)$ satisfy 
{\rm({\Large $\star$})} from page \pageref{star}.

The implied constants depend on $\beta,p_1,q_1,p_2,q_2,r$ and $*$-ellipticity constants of $A,B,C$ alluded to in the theorem's assumptions.
\end{theorem}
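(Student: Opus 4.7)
\emph{Proof plan.} By duality and density, it suffices to bound $|\langle L_A^\beta(fg),h\rangle|$ for $h$ in a dense subspace of $L^r(\Omega)$. The hypothesis that $A$ satisfies the required $*$-ellipticity means, via \cite{CD-Mixed} (see Section~\ref{s: KP}), that $L_A$ admits a bounded $H^\infty$-calculus of angle $<\pi/2$ on each relevant $L^\wp(\Omega)$. This justifies the Calder\'on-type subordination
\[
L_A^\beta u = \frac{1}{\Gamma(1-\beta)}\int_0^\infty \psi(tL_A) u \cdot t^{-\beta}\,\frac{\wrt t}{t},\qquad \psi(z):=ze^{-z},
\]
which I apply with $u=fg$, together with the operator identity $t^{-\beta}\psi(tL_A)=\tilde\psi_\beta(tL_A)L_A^\beta$ where $\tilde\psi_\beta(z):=z^{1-\beta}e^{-z}$.

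I would then use the semigroup Leibniz identity
\[
\psi(tL_A)(fg)=\psi(tL_A)f\cdot\varphi(tL_A)g+\varphi(tL_A)f\cdot\psi(tL_A)g-t\,\partial_t\cE(t),
\]
with $\varphi(z):=e^{-z}$ and $\cE(t):=T_t^A(fg)-(T_t^Af)(T_t^Ag)$, to split $\langle L_A^\beta(fg),h\rangle$ into three pieces $I_1+I_2+I_3$. The paraproduct pieces become
\[
I_1=\int_0^\infty\!\!\int_\Omega\tilde\psi_\beta(tL_A)\bigl(L_A^\beta f\bigr)\cdot\varphi(tL_A)g\cdot h\,\frac{\wrt x\,\wrt t}{t}
\]
(and $I_2$ symmetrically with $f,g$ swapped). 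I would bound $|I_1|\leqsim\|L_A^\beta f\|_{p_1}\|g\|_{q_1}\|h\|_r$ by a mild variant of the paraproduct estimate \eqref{eq:pprodest}: either by re-running the integration-by-parts-in-$t$ argument that derives \eqref{eq:pprodest} from Theorem~\ref{t: trilinemb} (noting that $\tilde\psi_\beta$ has precisely the $O(z^{1-\beta})$ vanishing at $z=0$ and exponential decay at $z=\infty$ that this argument requires), or by the Laplace representation $\tilde\psi_\beta(tL_A)=\Gamma(\beta)^{-1}\int_1^\infty(v-1)^{\beta-1}v^{-1}\psi(vtL_A)\,\wrt v$ combined with Fubini and the scale change $s=vt$, which reduces $I_1$ to an average over $v\geq 1$ of time-dilated paraproducts directly covered by Theorem~\ref{t: trilinemb}. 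The integrability of the resulting weights uses $\beta<1/\varrho$. The symmetric bound on $I_2$ yields $\|f\|_{p_2}\|L_A^\beta g\|_{q_2}\|h\|_r$.

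For the commutator $I_3$ I would integrate by parts in $t$; the boundary terms vanish because $\cE(0)=0$ (with $\beta<1$) and because $\Null_2(L_A)=\{0\}$, producing $I_3=-\beta\int_0^\infty t^{-\beta-1}\langle\cE(t),h\rangle\,\wrt t$. The divergence-form product identity $L_A(uv)=uL_Av+vL_Au-\langle A\nabla u,\overline{\nabla v}\rangle_{\C^d}-\langle A\nabla v,\overline{\nabla u}\rangle_{\C^d}$ applied with $u=T_s^Af$, $v=T_s^Ag$ yields
\[
\cE(t)=\int_0^t T_{t-s}^A\bigl[\langle A\nabla T_s^Af,\overline{\nabla T_s^Ag}\rangle_{\C^d}+\langle A\nabla T_s^Ag,\overline{\nabla T_s^Af}\rangle_{\C^d}\bigr]\wrt s.
\]
Transposing the outer $T_{t-s}^A$ onto $h$ and performing the $t$-integral via Fubini gives
\[
|I_3|\leqsim\int_0^\infty\!\!\int_\Omega|\nabla T_s^Af|\,|\nabla T_s^Ag|\cdot s^{-\beta}|\Phi(sL_{A^*})h|\,\wrt x\,\wrt s,
\]
with $\Phi$ a bounded multiplier of $L_{A^*}$ (explicitly, $\Phi(z)=\int_0^\infty(1+v)^{-\beta-1}e^{-vz}\,\wrt v$). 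To finish, I would transfer the weight $s^{-\beta}$ off $h$ and onto a gradient via the subordination $L_A^{-\beta}=\Gamma(\beta)^{-1}\int_0^\infty T_u^A u^{\beta-1}\,\wrt u$, writing $\nabla T_s^Af=\nabla T_s^AL_A^{-\beta}(L_A^\beta f)$, and then apply Theorem~\ref{t: trilinemb} with $L_A^\beta f$ in the first slot (or symmetrically with $L_A^\beta g$). This final transfer, which exploits $\beta<1/\varrho$ to guarantee the integrability of the Laplace weights and requires careful bookkeeping of the resulting $(s,u)$-double integral and of the non-symmetric complex divergence-form structure, is the main technical obstacle; the passage between the Hilbert-space and $L^\wp$ constructions of $L_A^\beta$ is then standard given \cite{CDMY, Haase}.
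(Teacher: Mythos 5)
Your overall strategy --- dualize, subordinate $L_A^\beta$ to the semigroup, and split via a semigroup Leibniz rule into two paraproduct pieces plus a Duhamel-type commutator --- is a genuinely different decomposition from the paper's. The paper instead inserts the threefold Calder\'on reproducing formula
$\int_\Omega fg\,\overline{L_{A^*}^\beta h}=-\int_0^\infty\int_\Omega\tfrac{\textup{d}}{\textup{d}t}\big(\phi_\alpha(tL_A)f\cdot\phi_\alpha(tL_A)g\cdot\overline{\phi_\alpha(tL_{A^*})L_{A^*}^\beta h}\big)$,
so that \emph{all three} factors are mollified at the common scale $t$ (Lemma~\ref{l: L-P}), moves the fractional power between factors within a fixed scale via $\psi_a(tL)L^\beta=t^{-\beta}\psi_{a+\beta}(tL)$, and absorbs the resulting fractional indices by subordination to imaginary powers (Proposition~\ref{p: newtrilinear}). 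This structural difference is exactly where your argument develops gaps.

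First, in your $I_1$ and $I_2$ the third factor is the \emph{raw} function $h$, not a time-$t$ mollification of it. Neither \eqref{eq:pprodest} nor Theorem~\ref{t: trilinemb} then applies: the trilinear embedding needs $T_t^{C}h$ (or $\eta(tL_C)h$) in the third slot, and the only device in the paper for replacing $T_t^{\delta I}h$ by $|h|$ (Section~\ref{s: One Penny}, sending $\delta\to0$) costs $\delta^{-1/r}$ by Proposition~\ref{p: Pat Riley} and Corollary~\ref{c: cc}, which diverges for finite $r$; your Laplace representation of $\tilde\psi_\beta$ only redistributes time between the first two slots and never mollifies $h$, so ``directly covered by Theorem~\ref{t: trilinemb}'' is unjustified. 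Second, and more seriously (as you yourself flag), in $I_3$ the transfer of $s^{-\beta}$ onto $f$ via $L_A^{-\beta}$ produces, after $u=sw$, a superposition $\int_0^\infty w^{\beta-1}(\cdots)\,\textup{d}w$ of trilinear integrals whose three factors live at the three distinct scales $s(1+w)$, $s$ and $sv$. Since $w^{\beta-1}$ is not integrable at infinity, you need the embedding constant to decay in $w$; but the only quantitative dilation estimate available (Lemma~\ref{l: final}, resting on Proposition~\ref{p: Pat Riley}) covers simultaneous dilation of the \emph{two gradient slots} relative to the scalar slot, giving the decay $s^{-1/r'}$ that makes $\beta<1/\varrho$ usable. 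No analogue is proved for dilating a single gradient slot, and the Bellman-function constants do not obviously supply one. The paper avoids this by arranging (in its resonant terms $J_3'$, $J_3''$) that both gradients sit at the common larger time and only the scalar factor at the smaller one --- precisely the configuration Lemma~\ref{l: final} handles. Until you either mollify $h$ at scale $t$ from the outset and reorganize the commutator so the two gradient factors share a scale, or prove the missing single-slot dilation estimate, the proof is incomplete.
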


The inequalities of the above kind, known also as {\it fractional Leibniz rule}, 
bear name after the classical 1988 paper by Kato and Ponce \cite{KP88}
and have since then been studied profusely. 
See e.g. Bernicot--Coulhon--Frey \cite{BCF16}, 
Grafakos--Oh \cite{GO14} and Li \cite{Li19} for just a few of the very 
many recent works on the subject.  

To the best of our knowledge, Theorem \ref{t: princ} is the first instance of a Kato--Ponce-type inequality for general divergence-form operators with complex coefficients and on arbitrary open sets $\Omega$, and thus the first one that relates $p$-ellipticity to this type of estimates. 
We emphasize that the condition it imposes ($p$-ellipticity) is algebraic and thus easy to verify. 
Together with other examples presented in this paper it suggests that 
the trilinear embedding and the route to its proof have 
a significant
potential that might be explored in the future.

\begin{remark} 
In Theorem~\ref{t: princ} the upper bound $\beta<1/r^{\prime}$  is consistent with the upper bound $\alpha<p_{0}/p$, $p_{0}=2$, in Bernicot and Frey
\cite[Theorem~1.3]{BF2018}. Note that in \cite[Theorem~1.3]{BF2018} (restricted to the special case of divergence form operators on open sets satisfying the doubling condition) 
$p_{0}=2$ corresponds to \cite[$({\rm G}_{2})$]{BF2018}: the 
estimate $\sup_{t>0}\norm{\sqrt{t}\nabla T^{A}_{t} }{2}<+\infty$, which comes for free from the definition of $L_{A}$ by means of the sesquilinear form $\gota$ and the analyticity of $(T^{A}_{t})_{t>0}$ in $L^{2}$.  
\end{remark}

\subsection{Organization of the paper}
Here is the summary of each section.
\begin{itemize}
\item
In Section \ref{s: prelims} we summarize some of the main notions needed in the paper.
\item
In Section \ref{s: KS} we state the main result regarding the Bellman function $\gX$.
\item
In Section \ref{s: mocne funkcije} we discuss the building blocks of $\gX$, namely, the power functions.
\item
In Section \ref{s: RICE} we define $\gX$ and prove the theorem announced in Section \ref{s: KS}.
\item
In Section \ref{s: Extracorporeal Shock Wave Therapy} we prove the trilinear embedding for $\R^d$.
\item
In Section \ref{s: general case} we prove the trilinear embedding for general $\Omega$.
\item
In Section \ref{s: KPproof} we prove the Kato--Ponce inequality announced in Section \ref{s: KP}.
\end{itemize}

\section{More notation and preliminaries}
\label{s: prelims}

For $a_1,a_2>0$ we write $a_1\geqsim a_2$ if there is a constant $c>0$ such that $a_1\geqslant c a_2$. Similarly we define $a_1\leqsim a_2$. If both $a_1\geqsim a_2$ and $a_1\leqsim a_2$, then we write $a_1\sim a_2$.

If $z=(z_1,\hdots,z_d)\in\C^d$ and $w$ is likewise, we write
$$
\sk{z}{w}_{\C^d}=\sum_{j=1}^dz_j\overline w_j\,
$$
and $|z|^2=\sk{z}{z}_{\C^d}$.

Let $\N$ denote the set of all positive integers.

\subsection{Real form of complex operators}
We explicitly identify $\C^{d}$ with $\R^{2d}$ as follows.
For each $d\in\N$ consider the operator
$\cV_{d}:\C^{d}\rightarrow\R^{d}\times\R^{d}$, defined by
$$
\cV_{d}(\xi_{1}+i\xi_{2})=
(\xi_{1},\xi_{2}).
$$
Let $N,d\in\N$. We define another identification operator,
$$
\cW_{N,d}:\underbrace{\C^{d}\times\hdots\times\C^{d}}_{N\text{ times}}\longrightarrow \underbrace{\R^{2d}\times\hdots\times\R^{2d}}_{N\text{ times}},
$$
 by the rule
$$
\cW_{N,d}(\xi^{1},\dots,\xi^{N})
=\left(\cV_{d}(\xi^{1}),\dots,\cV_{d}(\xi^{N})\right)
$$
with $\xi^{j}\in\C^{d}$ for $j=1,\dots,N$.

\subsection{Gradient and Hessian forms}
\label{s: Ella Fitzgerald - Caravan}

Let $\Omega\subseteq \R^d$ be an open set and $u:\Omega\rightarrow\C$.
Following \cite[Appendix A.3]{Evans}, we will denote by $Du$ the {\it gradient}
$(\partial_{x_1}u,\hdots,\partial_{x_d}u)$ of $u$,
while $D^2u$ will denote the {\it Hessian matrix} of $u$, that is, the matrix of all second-order derivatives of $u$.
We will also regard $Du$ and $D^2u$ as
{\it sets}
of all first- and, respectively, second-order derivatives of $u$. In accordance with this notation we also have
\begin{equation*}
|Du|=\left(\sum_{j=1}^{d}|\partial_{x_j}u|^2\right)^{1/2}
\end{equation*}
and likewise
\begin{equation*}
\left|D^2u\right|=\left(\sum_{j,k=1}^{d}|\partial^2_{x_jx_k}u|^2\right)^{1/2}
.
\end{equation*}
Sometimes we may also write $D_ju$ for $\partial_{x_j}u$ and similarly $D^2_{jk}u$ for $\partial^2_{x_jx_k}u$.


When the entries of $u$ are complex, that is, if function $u$ is defined on a subset of $\C^N$, then by $Du$ we mean $
\left[D\big(u\circ \cW_{N,1}^{-1}\big)\right]\circ \cW_{N,1}$ and the same for $D^2u$.

\subsection{Generalized Hessians and generalized convexity}
\label{s: cajkovskij grand sonata}

Given a matrix $A\in\C^{d\times d}$, we introduce, as in \cite{CD-DivForm, CD-Mixed}, its derived real $(2d)\times(2d)$ matrix-form as follows:
$$
\cM(A)=\left[
\begin{array}{rr}
\Re A  & -\Im A\\
\Im A  & \Re A
\end{array}
\right].
$$
Let $N,d\in\N$ and $\Phi\colon\mathbb{C}^N\to\mathbb{R}$ of class $C^2$. Choose and, respectively, denote
\begin{equation}
\label{eq: Bare Garaza 2014}
\aligned
\omega_1,\hdots,\omega_N&\in \C &\hskip 30pt &\hskip 3pt \omega:=(\omega_1,\hdots,\omega_N)\\
X_1,\hdots,X_N&\in \C^d &\hskip 30pt &X:=(X_1,\hdots,X_N)\\
A_1,\hdots,A_N&\in \C^{d\times d} &\hskip 30pt &\bA:=(A_1,\hdots,A_N).
\endaligned
\end{equation}
Following \cite{CD-DivForm, CD-Mixed}, we define the {\it generalized Hessian form of $\Phi$ with respect to $\bA$} as
\begin{equation*}
H_{\Phi}^{\bA}[\omega;X]=
 \sk{
 \left[
D^2\Phi(\omega)
    \otimes I_{\R^d}
 \right]
 \cW_{N,d}(X)}
 {\left[\cM(A_1)\oplus\hdots\oplus\cM(A_N)\right]\cW_{N,d}(X)}_{\R^{2Nd}}.
\end{equation*}
We recall that
$D^2\Phi(\omega)$
stands for the Hessian matrix of the function $\Phi\circ\cW_{N,1}^{-1}:(\R^{2})^N\rightarrow\R$, calculated at the point $\cW_{N,1}(\omega)\in(\R^{2})^N$.

Observe that
$$
\left[\cM(A_1)\oplus\hdots\oplus\cM(A_N)\right]\cW_{N,d}(X)
=\cW_{N,d}\left(A_1X_1,\hdots,A_NX_N\right).
$$
In particular, when $N=1$ we have the formula already stated in \cite[(2.4)]{CD-DivForm}:
$$
\cV_d(A\xi)=\cM(A)\cV_{d}(\xi)
$$
for $A\in\C^{d\times d}$ and $\xi\in\C^d$.

To shed more light onto the notion of $H_\Phi^\bA$, let us illustrate it in the special case $N=2$. There, applying the block notation, we have
$$
H^{(A_1,A_2)}_{\Phi}[\omega;X]=
\sk{
D^2\Phi(\omega)
\left[\begin{array}{c}\Re X_1 \\ \Im X_1 \\ \Re X_2 \\ \Im X_2\end{array}\right]}
{\left[\begin{array}{crcr}\Re A_1 & -\Im A_1 &  &  \\ \Im A_1 & \Re A_1&  &  \\ &  & \Re A_2 &-\Im A_2  \\ &  & \Im A_2 & \Re A_2 \end{array}\right]
\left[\begin{array}{c}\Re X_1 \\ \Im X_1 \\ \Re X_2 \\ \Im X_2\end{array}\right]
}_{(\R^d)^4}.
$$

We say that $\Phi$ is {\it convex with respect to $\bA$} if
$H_{\Phi}^{\bA}[\omega;X]\geq 0$ for all $\omega,X$.

\medskip
The sharp condition \eqref{eq: Sparky 21} is equivalent to the strict generalized convexity of power functions $F_p$ (see \eqref{eq: Oblivion} for the definition), that is, to the (uniform) positivity 
$H_{F_p}^{A}[\omega;X]$.
Prior to \cite{CD-DivForm}, the importance of this positivity 
was recognized and studied in a few special cases: when $A$ is either the identity \cite{NT, DV-Sch}, real 
accretive 
\cite{DV-Kato}, of the form $e^{i\phi}I$ \cite{CD-mult}, or of the form $e^{i\phi}B$ with $B$ real, constant and with a symmetric part which is positive definite \cite{CD-OU}. 
Such problems are also related to similar questions considered earlier by Bakry; see \cite[Th\'eor\`eme 6]{Bakry3}. The paper \cite{CD-DivForm} brought a systematic approach to convexity of power functions in presence of arbitrary uniformly strictly accretive complex matrix functions $A$. See also Section \ref{s: mocne funkcije}.

\section{The Bellman function}
\label{s: KS}

{\it Bellman functions} have been a powerful tool in harmonic analysis since the 1980's papers by Burkholder, e.g. \cite{Bu1,Bu2}.
The method was given a huge boost by the pioneering works by Nazarov, Treil and Volberg \cite{NTV}, who also gave it its name \cite{NTV1}. The exploration of heat flows associated with Bellman functions started in the works by Petermichl and Volberg \cite{PV} and Nazarov and Volberg \cite{NV}. See e.g. \cite{CD-DivForm} for more recent references on the method and its applications.

We use a three-variable {\it Bellman function} found by the latter two authors of the present paper \cite{KS18}. It is modelled over a two-variable Bellman function due to Nazarov and Treil \cite{NT}. Compared to \cite[Theorem 1.1]{KS18}, the main novelty in the convexity estimate formulated here is the presence of complex elliptic matrices, as specified below.

\medskip
Denote
$$
\Upsilon=\mn{(u,v,w)\in\C^3}{
(uvw=0) \vee (|u|^{p}=|v|^{q})
\vee (|u|^{p}=|w|^{r})
\vee (|v|^{q}=|w|^{r})}.
$$

\begin{theorem}
\label{t: Voltaren}
Suppose that $p,q,r\in(1,\infty)$ satisfy $1/p+1/q+1/r=1$ and $\Omega\subseteq \R^d$ is open.
Let the accretive matrices $A,B,C:\Omega\rightarrow\C$
satisfy {\rm({\Large $\star$})} from page \pageref{star}, that is, assume that:
\begin{itemize}
\item
$A$ is $p$-elliptic and $(1+p/q)$-elliptic
\item
$B$ is $q$-elliptic and $(1+q/p)$-elliptic
\item
$C$ is $r$-elliptic.
\end{itemize}

There exists a function $\gX:\C^3\rightarrow\R_+$ of class $C^1$ on $\C^3$ and of class $C^2$ on $\C^3\backslash\Upsilon$, such that:
\begin{enumerate}[a)]
\item
\label{eq: Burger}
for all $u,v,w\in\C$,
$$
\gX(u,v,w)\,\leqsim\,|u|^{p} + |v|^{q} + |w|^{r};
$$
\item
\label{eq: Fest}
for all $u,v,w\in\C$,
$$
\aligned
|\partial_{\bar u}\gX|&\leqsim\left(\max\left\{|u|^{p},|v|^{q},|w|^{r}\right\}\right)^{1-1/p},\\
|\partial_{\bar v}\gX|&\leqsim\left(\max\left\{|u|^{p},|v|^{q},|w|^{r}\right\}\right)^{1-1/q},\\
|\partial_{\bar w}\gX|&\leqsim\,|w|^{r-1}
\endaligned
$$
\item
\label{eq: sova}
for almost every $x\in\Omega$ we have,
\begin{equation}
\label{eq: Trifonov Kinderszenen}
H^{(A,B,C)(x)}_{\gmX}[(u,v,w);(\zeta,\eta,\xi)]
\,\geqsim\,|w||\zeta||\eta|
\end{equation}
for all $(u,v,w)\in\C^3\backslash\Upsilon$ and $(\zeta,\eta,\xi)\in\left(\C^{d}\right)^3$.
\end{enumerate}
The implied constants depend on $p,q,r$ and $*$-ellipticity constants of $A,B,C$ alluded to in the theorem's assumptions.
\end{theorem}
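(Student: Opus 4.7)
The plan is to construct $\gX$ explicitly by building on the three-variable Bellman function of Kovač and Škreb from \cite[Section~3.3]{KS18} -- which handles the scalar (identity-matrix) case of the trilinear embedding on $\R^d$ -- and then verifying that its generalized Hessian with respect to the matrices $(A,B,C)$ behaves well under the assumptions {\rm({\Large $\star$})}. The function will be defined piecewise on the three open regions
\[
\Omega_1 = \{|u|^p > \max(|v|^q,|w|^r)\},\quad \Omega_2 = \{|v|^q > \max(|u|^p,|w|^r)\},\quad \Omega_3 = \{|w|^r > \max(|u|^p,|v|^q)\},
\]
whose common boundary makes up part of $\Upsilon$. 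In each region $\gX$ is a specific algebraic combination of the power functions $|u|^p$, $|v|^q$, $|w|^r$ with coefficients tuned so that $\gX$ matches $C^1$-smoothly across $\Upsilon$ and is $C^2$ in the interior.

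Once this function is in hand, the size estimate (a) and the first-order estimates (b) follow regionwise by direct monomial inspection: in $\Omega_i$ the maximum of $|u|^p$, $|v|^q$, $|w|^r$ equals the dominating term, which bounds all other contributions. The real work is the generalized Hessian bound (c). I would compute $H^{(A,B,C)}_\gmX[(u,v,w);(\zeta,\eta,\xi)]$ in each region and separate it into: \textbf{(i)} diagonal contributions of the form $\Re\langle A\zeta,\zeta + c\bar\zeta\rangle_{\C^d}$ and analogues for $B, C$, coming from second derivatives of the single-variable power functions $F_p(u)$, $F_q(v)$, $F_r(w)$; by the treatment in Section~\ref{s: mocne funkcije} and in \cite[Section~5]{CD-DivForm}, these are precisely the quadratic forms governed by the $p$-, $q$-, $r$-ellipticity of $A$, $B$, $C$, and are hence nonnegative; \textbf{(ii)} cross-terms such as $\Re\langle A\zeta,\eta\rangle_{\C^d}$, $\Re\langle B\eta,\zeta\rangle_{\C^d}$, and similar terms pairing $\xi$ with $\zeta$ or $\eta$, which arise from mixed second derivatives of $\gX$ and must be absorbed into the diagonal part.

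The main obstacle is the balancing of step \textbf{(ii)}. Here the stronger $(1+p/q)$-ellipticity of $A$ and $(1+q/p)$-ellipticity of $B$ come into play: they provide extra generalized convexity beyond what $p$- and $q$-ellipticity alone yield, supplying the reserve needed to dominate the $\zeta\eta$-type cross-terms via Cauchy--Schwarz and Young's inequality. After all these absorptions are carried out, the residual quadratic form is a positive multiple of $|w||\zeta||\eta|$; the surviving $|w|$-factor reflects the special structure of the problem, namely that the $h$-variable (associated with $w$) enters the trilinear embedding without a gradient. Checking that the $C^1$-matching constraints across $\Upsilon$ leave enough room to close this absorption step, uniformly on $\C^3\setminus\Upsilon$ and with bounds depending only on the exponents and the specified ellipticity constants of $A, B, C$, is the technical heart of the theorem.
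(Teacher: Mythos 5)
Your overall strategy is the paper's: adapt the three-variable Bellman function of \cite{KS18}, estimate its generalized Hessian with respect to $(A,B,C)$ region by region via the lower bounds for generalized Hessians of power functions, and absorb the cross terms by Cauchy--Schwarz with parameters chosen large enough. But two structural points in your outline are off in a way that matters. First, the partition into three regions according to which of $|u|^p,|v|^q,|w|^r$ is largest is not the right combinatorics: the function is defined piecewise on the \emph{six} regions determined by the full ordering of these three quantities (four regions when $p=q$), which is why $\Upsilon$ contains all three pairwise-equality surfaces --- the formula changes across each of them, e.g.\ between $|v|^q\leq|u|^p\leq|w|^r$ and $|u|^p\leq|v|^q\leq|w|^r$.

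Second, and more seriously, describing $\gX$ on each piece as ``an algebraic combination of the power functions $|u|^p,|v|^q,|w|^r$'' misses the essential building blocks: the actual function is a linear combination of \emph{tensor products} such as $[v]^{1+q/p}[w]$, $[u]^{1+p/q}[w]$, $[u]^2[v]^{1-q/p}[w]$ and $[u]^2[v]^{q-2q/p}$. Without mixed terms of this kind there are no off-diagonal second derivatives, and the only route to the lower bound $|w||\zeta||\eta|$ is AM--GM on the diagonal contributions, which yields roughly $|u|^{(p-2)/2}|v|^{(q-2)/2}|\zeta||\eta|$; in the regions where $|w|^r$ dominates this is \emph{not} $\geqsim |w||\zeta||\eta|$, since it tends to $0$ as $u\to 0$ with $v,w$ fixed (recall $p>2$ once one normalizes $p\geq q$). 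It is precisely these tensor-product blocks that force the hypotheses beyond $p$-, $q$-, $r$-ellipticity: the $\eta$-diagonal of the Hessian of $[v]^{1+q/p}[w]$ is governed by $\Delta_{1+q/p}(B)$ and the $\zeta$-diagonal of $[u]^{1+p/q}[w]$ by $\Delta_{1+p/q}(A)$ (Propositions \ref{p: Vinjerac}--\ref{p: Zhenodraga}), rather than these conditions serving as a generic ``reserve'' for absorbing cross terms. Finally, the quantitative heart --- exhibiting the explicit formula and choosing the parameters $D,E$ so that all six (resp.\ four) domain-by-domain estimates close simultaneously --- is acknowledged but not attempted, so as it stands the proposal is an outline with a wrong ansatz for the function rather than a proof.
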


The theorem will be proven in Section \ref{s: RICE}.
The crucial and the most difficult to verify is property {\it \ref{eq: sova})}. As indicated above, we will prove that the function from \cite{KS18} satisfies the above requirements, possibly after refining its parameters.

\section{Power functions}
\label{s: mocne funkcije}

For $p>0$ and $N\in\N$ define the {\it power function} (by which we actually mean powers of the {\it modulus})
$F_{p}\colon \C^{N} \rightarrow [0,\infty)$ by
\begin{equation}
\label{eq: Oblivion}
F_{p}(\zeta)=|\zeta|^{p}.
\end{equation}
While these functions are, technically speaking, different for different values of the dimension $N$, we will use the same symbol $F_p$ to denote all of them.
We also set $F_0=\bena$, where $\bena$ denotes the constant function of value $1$ on $\C^{N}$.
These functions played a fundamental r\^ole in \cite{CD-Potentials, CD-mult, CD-Mixed, CD-OU, CD-DivForm}. In fact, $p$-ellipticity was in \cite{CD-DivForm} initially introduced as the (uniform strict) positivity of generalized Hessians of power functions on $\C$; see \cite[Remark 5.9]{CD-DivForm}.
Namely,
the operator $\cI_{p}\colon\C^d\rightarrow\C^d$, defined by
\begin{equation}
\label{eq: I_p}
\cI_{p}\xi=\xi+(1-2/p)\overline\xi,
\end{equation}
which clearly features in \eqref{eq: Sparky 21} and \eqref{eq: kabuto}, appeared as a result of our  expressing the Hessian of $F_{p}$ (as a function on $\C$, that is, with $N=1$) in an {\it appropriate} manner. See Lemma \ref{l: Batman} for a more general statement.
Here is the formula, first stated in \cite[(5.5)]{CD-DivForm}, that for $N=1$ directly relates $D^2F_{p}$ to $\cI_{p}$:
$$
D^2F_{p}(\zeta)\xi=\frac{p^2}{2}|\zeta|^{p-2}\sign{\zeta}\cdot\cI_{p}(\sign\bar{\zeta}\cdot\xi)
$$
for $\zeta\in\C\backslash\{0\}$ and $\xi\in\C^d$. 
(Here $\sign z=z/|z|$; furthermore, we implicitly identified $\C^d$ with $\R^d\times\R^d$.) See
also
\cite[Remark 2.4]{CD-Potentials}.

Recall that if $f,g$ are complex functions on some sets $X,Y$ respectively, then their {\it tensor product} $f\otimes g$ is the function on $X\times Y$ mapping $(x,y)\mapsto f(x)g(y)$.
In \cite{CD-mult, CD-OU, CD-DivForm} the first two authors of the present paper analyzed the heat flow associated with the Nazarov--Treil function $Q$ introduced in \cite{NT}.
Given that $Q$ was a combination of tensor products $F_{p}\otimes\bena$, $\bena\otimes F_{q}$ and $F_2\otimes F_{2-q}$, where $p,q$ are conjugate exponents, this eventually drew our attention to the power functions $F_p$.
However, tensor products of power functions also represent the core of another Bellman function, namely, function $\gX$ which was constructed by the latter two authors of the present paper in \cite{KS18} and whose generalized convexity we study in Theorem \ref{t: Voltaren}. The main difference is that function $\gX$ is a function of {\it three} complex variables and is made of tensor products of up to {\it three} power functions, while $Q$ was a function of two variables made of tensor products of two power functions.
This amounts to a significantly higher degree of complexity of $\gX$ compared to $Q$.

\medskip
We continue with lower estimates of generalized Hessians of (tensor products of) power functions.
We single out the cases of 1, 2 or 3 factors, for they constitute the function $\gX$.
In fact, it turns out that it is slightly more convenient to express $\gX$ by means of the following multiples of power functions:
$$
G_{p}(\zeta):=\frac{|\zeta|^{p}}{p}.
$$
Likewise, in the context of power functions we find it fitting to renormalize the entries of generalized Hessians and
work with a modified quantity defined as
\begin{equation}
\label{eq: Htilde}
\widetilde{H}^{{\mathbf A}}_{\Phi}[\omega;X] := H^{{\mathbf A}}_{\Phi}[\omega;(\omega_1X_1,\hdots,\omega_NX_N)]
\end{equation}
for
$\omega,X,{\mathbf A}$ as in \eqref{eq: Bare Garaza 2014}.
With this notation introduced, we are in a position to formulate the estimates that we will need in the proof of Theorem \ref{t: Voltaren}.

\begin{proposition}{\cite[Corollary 5.10]{CD-DivForm}}
\label{p: Vinjerac}
For $p>0$, $u\in\C\backslash\{0\}$, $\alpha \in\C^d$ and $A\in\C^{d\times d}$
we have
$$
{\widetilde H}_{G_{p}}^{A}\left[u;\alpha \right]
\geq
\frac{p}2\Delta_{p}(A)|u|^{p}|\alpha |^2.
$$
\end{proposition}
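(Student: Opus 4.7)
The plan is to evaluate $\widetilde{H}^A_{G_p}[u;\alpha]$ explicitly and then reduce the resulting lower bound to the definition \eqref{eq: Sparky 21} of $p$-ellipticity.

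First, I would unfold the definition \eqref{eq: Htilde} with $N=1$, $\omega_1=u$, $X_1=\alpha$, obtaining
$$
\widetilde{H}^A_{G_p}[u;\alpha] \,=\, H^A_{G_p}[u;u\alpha],
$$
so the task reduces to computing the standard generalized Hessian form of $G_p$ at $u\in\C\setminus\{0\}$ with direction $u\alpha\in\C^d$.

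Next, I would invoke the explicit Hessian identity for power functions from \cite[(5.5)]{CD-DivForm} recalled immediately after \eqref{eq: I_p}, rescaled by $1/p$ to pass from $F_p$ to $G_p$. Combining it with the intertwining $\cV_d(A\xi)=\cM(A)\cV_d(\xi)$, a short Wirtinger-calculus bookkeeping would translate the real $\R^{2d}$-inner product in the definition of $H^A_{G_p}$ into the complex identity
$$
H^A_{G_p}[u;X] \,=\, \tfrac{p}{2}|u|^{p-2}\,\Re\sk{AX}{X}_{\C^d} + \tfrac{p-2}{2}|u|^{p-2}\,\Re\!\bigl[(\sign\bar u)^2\sk{AX}{\bar X}_{\C^d}\bigr],
$$
valid for every $X\in\C^d$, where $\bar X$ denotes coordinatewise conjugation. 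Specialising to $X=u\alpha$ causes the phase of $u$ to collapse, since $\sk{A(u\alpha)}{u\alpha}=|u|^2\sk{A\alpha}{\alpha}$, $\sk{A(u\alpha)}{\overline{u\alpha}}=u^2\sk{A\alpha}{\bar\alpha}$, and $(\sign\bar u)^2 u^2=|u|^2$. Rearranging yields the clean identity
$$
\widetilde{H}^A_{G_p}[u;\alpha] \,=\, \tfrac{p}{2}|u|^p\,\Re\sk{A\alpha}{\alpha+(1-2/p)\bar\alpha}_{\C^d}.
$$

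The final step is to compare this with \eqref{eq: kabuto}, which features $|1-2/p|$ instead of $(1-2/p)$. When $p\geq 2$ the two quantities coincide, so \eqref{eq: Sparky 21} applied with $\xi=\alpha/|\alpha|$ finishes the argument. When $p<2$ one has $(1-2/p)=-|1-2/p|$, and the phase substitution $\beta=i\alpha$ does the job: it preserves $|\beta|=|\alpha|$, and the elementary identities $\sk{iA\alpha}{i\alpha}=\sk{A\alpha}{\alpha}$ and $\sk{iA\alpha}{-i\bar\alpha}=-\sk{A\alpha}{\bar\alpha}$ convert $\Re\sk{A\beta}{\beta+|1-2/p|\bar\beta}$ into $\Re\sk{A\alpha}{\alpha+(1-2/p)\bar\alpha}$. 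An application of \eqref{eq: Sparky 21} to $\beta$ then produces the desired lower bound $\tfrac{p}{2}\Delta_p(A)|u|^p|\alpha|^2$. The only mildly delicate ingredient in this plan is the Wirtinger bookkeeping that yields the intermediate identity for $H^A_{G_p}[u;X]$; the remainder is a routine algebraic manipulation and a single phase rotation.
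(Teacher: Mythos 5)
Your argument is correct: the identity $\widetilde H^A_{G_p}[u;\alpha]=\tfrac{p}{2}|u|^p\,\Re\sk{A\alpha}{\alpha+(1-2/p)\bar\alpha}_{\C^d}$ checks out, and the rotation $\beta=i\alpha$ correctly reconciles the sign of $1-2/p$ with the modulus $|1-2/p|$ appearing in \eqref{eq: kabuto} when $p<2$. The paper does not reprove this statement but simply cites \cite[Corollary 5.10]{CD-DivForm}, whose proof proceeds by exactly this computation (the Hessian identity for power functions and the operator $\cI_p$), so your derivation is essentially the intended one.
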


\begin{proposition}
\label{p: Metajna}
For $r,s>0$, $u_1,u_2\in\C\backslash\{0\}$, $\alpha_1,\alpha_2\in\C^d$ and $A,B\in\C^{d\times d}$
we have
$$
\widetilde{H}_{G_{r}\otimes G_s}^{(A,B)}\left[(u_1,u_2);(\alpha_1,\alpha_2)\right]
\geq
\frac{|u_1|^{r}|u_2|^{s}}{rs}
\left(
\frac{r^2}{2}\Delta_{r}(A)|\alpha_1|^2+\frac{s^2}{2}\Delta_s(B)|\alpha_2|^2-2rs\Lambda(A,B)|\alpha_1||\alpha_2|\right).
$$
\end{proposition}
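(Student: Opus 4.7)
The plan is to exploit the product structure of $\Phi := G_r \otimes G_s$ to split the generalized Hessian form into ``diagonal'' and ``off-diagonal'' contributions, handle the diagonal parts by appealing to the one-variable estimate of Proposition~\ref{p: Vinjerac}, and control the off-diagonal (cross) parts using only the $L^\infty$-bound $\Lambda$.

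First, I would write the $4\times 4$ real Hessian of $\Phi$ (viewed via $\cW_{2,1}$ as a function on $\mathbb{R}^4$) in block form:
\[
D^2 \Phi(u_1,u_2) = \begin{pmatrix} G_s(u_2)\, D^2 G_r(u_1) & DG_r(u_1)\, DG_s(u_2)^T \\[2pt] DG_s(u_2)\, DG_r(u_1)^T & G_r(u_1)\, D^2 G_s(u_2) \end{pmatrix},
\]
where $DG_p(u)=|u|^{p-2}\cV_1(u)\in\mathbb{R}^2$. Since the block-diagonal of $\cM(A)\oplus\cM(B)$ matches that of $D^2\Phi\otimes I_d$, the form $H^{(A,B)}_\Phi[(u_1,u_2);(X_1,X_2)]$ decomposes as
\[
G_s(u_2)\,H^A_{G_r}[u_1;X_1]\,+\,G_r(u_1)\,H^B_{G_s}[u_2;X_2]\,+\,\mathrm{Cross}(X_1,X_2),
\]
where $\mathrm{Cross}$ collects the two symmetric off-diagonal block contributions.

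Next I would substitute $X_j = u_j\alpha_j$ to pass to $\widetilde{H}$. The two diagonal terms become
\[
G_s(u_2)\,\widetilde{H}^{A}_{G_r}[u_1;\alpha_1] + G_r(u_1)\,\widetilde{H}^{B}_{G_s}[u_2;\alpha_2],
\]
which, by Proposition~\ref{p: Vinjerac} applied separately to each factor, is bounded below by
\[
\frac{|u_2|^s}{s}\cdot\frac{r}{2}\Delta_r(A)\,|u_1|^r|\alpha_1|^2 + \frac{|u_1|^r}{r}\cdot\frac{s}{2}\Delta_s(B)\,|u_2|^s|\alpha_2|^2
= \frac{|u_1|^r|u_2|^s}{rs}\!\left(\tfrac{r^2}{2}\Delta_r(A)|\alpha_1|^2+\tfrac{s^2}{2}\Delta_s(B)|\alpha_2|^2\right)\!,
\]
which matches the first two terms of the target inequality exactly.

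It remains to show that $|\mathrm{Cross}(u_1\alpha_1,u_2\alpha_2)|\leq 2\Lambda(A,B)\,|u_1|^r|u_2|^s|\alpha_1||\alpha_2|$; this is the only slightly delicate step. The key observation is that the off-diagonal $2\times 2$ block of $D^2\Phi$ is the rank-one matrix
\[
M \,=\, |u_1|^{r-2}|u_2|^{s-2}\,\cV_1(u_1)\,\cV_1(u_2)^T,
\qquad \|M\|_{\mathrm{op}} \,=\, |u_1|^{r-1}|u_2|^{s-1}.
\]
Using that $\cV_d$ is an isometry between $(\mathbb{C}^d,|\cdot|)$ and $\mathbb{R}^{2d}$, and that $\|\cM(A)\|_{\mathrm{op}}\leq\Lambda(A)$ (which follows from \eqref{eq: boundedness} by duality), one of the two cross contributions satisfies
\[
\bigl|\langle (M\otimes I_d)\,\cV_d(X_2),\,\cM(A)\,\cV_d(X_1)\rangle\bigr|
\leq \|M\|_{\mathrm{op}}\,\Lambda(A)\,|X_1|\,|X_2|,
\]
and plugging $X_j=u_j\alpha_j$ yields an upper bound of $\Lambda(A)|u_1|^r|u_2|^s|\alpha_1||\alpha_2|$. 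The mirror off-diagonal contribution is handled identically with $B$ in place of $A$, and summing gives the required $2\Lambda(A,B)$. Combining this with the diagonal lower bound completes the proof; the main obstacle is the bookkeeping of the cross term, which is made manageable by its rank-one structure.
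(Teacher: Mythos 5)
Your proof is correct. The block decomposition of $D^2(G_r\otimes G_s)$, the identification of the diagonal contributions with $G_s(u_2)\widetilde{H}^A_{G_r}[u_1;\alpha_1]+G_r(u_1)\widetilde{H}^B_{G_s}[u_2;\alpha_2]$, the application of Proposition~\ref{p: Vinjerac}, and the algebra matching the first two target terms are all sound. The rank-one bound $\|M\|_{\mathrm{op}}=|u_1|^{r-1}|u_2|^{s-1}$, together with $\|M\otimes I_{\R^d}\|_{\mathrm{op}}=\|M\|_{\mathrm{op}}$, $|\cV_d(X_j)|=|X_j|$ and $\|\cM(A)\|_{\mathrm{op}}=\|A\|_{\mathrm{op}}\leq\Lambda(A)$, correctly yields $|\mathrm{Cross}(u_1\alpha_1,u_2\alpha_2)|\leq(\Lambda(A)+\Lambda(B))|u_1|^r|u_2|^s|\alpha_1||\alpha_2|\leq 2\Lambda(A,B)|u_1|^r|u_2|^s|\alpha_1||\alpha_2|$, which matches the target after dividing and multiplying by $rs$.

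The route is somewhat different from what the paper indicates. The paper defers to \cite[Corollary 5.12]{CD-DivForm}, and, judging from the alternative identity it spells out for the three-factor analogue (Proposition~\ref{p: Zhenodraga}), its preferred method is to first rewrite $\widetilde{H}^{(A,B)}_{G_r\otimes G_s}$ as an explicit sum of terms $\mathop{\textup{Re}}\langle A\alpha_1,\cI_r\alpha_1\rangle$, $\mathop{\textup{Re}}\langle B\alpha_2,\cI_s\alpha_2\rangle$ and cross terms $\mathop{\textup{Re}}\langle A\alpha_1,\cI_\infty\alpha_2\rangle$, $\mathop{\textup{Re}}\langle B\alpha_2,\cI_\infty\alpha_1\rangle$, and then read off the constants from the definitions of $\Delta_p$ and $\Lambda$. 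Morally both arguments perform the same diagonal/cross split, but you avoid deriving the $\cI_p$-identity by working directly with the $2\times 2$ block structure of the real Hessian and observing the cross block is rank one; you also treat Proposition~\ref{p: Vinjerac} as a black box rather than re-expanding the diagonal pieces. The upshot is a slightly more self-contained and elementary argument for this particular $N=2$ case, whereas the paper's $\cI_p$-identity formulation scales more transparently to the $N=3$ case where the cross-term bookkeeping is heavier.

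One minor phrasing nit: $\|\cM(A)\|_{\mathrm{op}}\leq\Lambda(A)$ follows from \eqref{eq: boundedness} simply by taking $\eta=A\xi$ (Cauchy--Schwarz), which is what you presumably meant by ``duality''; stating it that way would be cleaner.
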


\begin{proof}
Follow the proof of \cite[Corollary 5.12]{CD-DivForm}.
\end{proof}

By now we see how to generalize this to arbitrary $N$-tuples of Lebesgue exponents and matrix functions.
Let us only explicate the case $N=3$ which we will use in our proofs.

\begin{proposition}
\label{p: Zhenodraga}
For $r,s,t>0$, $u_1,u_2,u_3\in\C\backslash\{0\}$, $\alpha_1,\alpha_2,\alpha_3\in\C^d$ and $A,B,C\in\C^{d\times d}$ we have
$$
\aligned
\widetilde{H}_{G_{r}\otimes G_s\otimes G_t}^{(A,B,C)}&\left[(u_1,u_2,u_3);(\alpha_1,\alpha_2,\alpha_3)\right]\\
& \geq
\frac{|u_1|^{r}|u_2|^{s}|u_3|^{t}}{rst}
\bigg(
\frac{r^2}2\Delta_{r}(A)|\alpha_1|^2
+\frac{s^2}2\Delta_{s}(B)|\alpha_2|^2
+\frac{t^2}2\Delta_{t}(C)|\alpha_3|^2\\
&\hskip 20pt
-2\Lambda(A,B,C)
\left(
rs|\alpha_1||\alpha_2|
+rt|\alpha_1||\alpha_3|
+st|\alpha_2||\alpha_3|
\right)\bigg).
\endaligned
$$
\end{proposition}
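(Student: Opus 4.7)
The plan is to mimic the proof of Proposition~\ref{p: Metajna} (which in turn follows \cite[Corollary 5.12]{CD-DivForm}), upgrading the bookkeeping to three factors. First I would expand the generalized Hessian of $\Phi=G_r\otimes G_s\otimes G_t$ according to the $3\times 3$ block structure of $D^2(\Phi\circ\cW_{3,1}^{-1})$: the $(1,1)$-block equals $G_s(u_2)G_t(u_3)\,D^2 G_r(u_1)$, and cyclically for $(2,2)$ and $(3,3)$; the $(1,2)$-block equals $G_t(u_3)\,DG_r(u_1)\,DG_s(u_2)^T$, and analogously for the two remaining symmetric pairs of off-diagonal blocks. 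Substituting the renormalization $X_i=u_i\alpha_i$ from \eqref{eq: Htilde} yields the decomposition
$$
\widetilde H^{(A,B,C)}_{\Phi}[(u_1,u_2,u_3);(\alpha_1,\alpha_2,\alpha_3)] = D + O,
$$
where $D$ collects the three weighted one-variable generalized Hessians, e.g.\ $G_s(u_2)G_t(u_3)\,\widetilde H^{A}_{G_r}[u_1;\alpha_1]$ (and its two cyclic counterparts), while $O$ is the sum over the three unordered pairs $\{i,j\}$ of cross-terms coming from the symmetric off-diagonal blocks $(i,j)$ and $(j,i)$.

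For the diagonal part $D$ I would apply Proposition~\ref{p: Vinjerac} to each of the three summands. Pulling out the weights $G_p(u)=|u|^p/p$ produces the common prefactor $|u_1|^r|u_2|^s|u_3|^t/(rst)$, leaving precisely the three nonnegative terms $\tfrac{r^2}{2}\Delta_r(A)|\alpha_1|^2$, $\tfrac{s^2}{2}\Delta_s(B)|\alpha_2|^2$ and $\tfrac{t^2}{2}\Delta_t(C)|\alpha_3|^2$ appearing in the claim. For the off-diagonal part $O$, the $\{1,2\}$-pair gives a contribution whose absolute value is at most
$$
2\,G_t(u_3)\,|DG_r(u_1)|\,|DG_s(u_2)|\,\Lambda(A,B,C)\,|X_1|\,|X_2|,
$$
via the pointwise estimate $|\sk{A\xi}{\eta}|\leq\Lambda(A)|\xi||\eta|$ applied separately to each of the two symmetric blocks, together with $\Lambda(A),\Lambda(B),\Lambda(C)\leq\Lambda(A,B,C)$; the pairs $\{1,3\}$ and $\{2,3\}$ are analogous. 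Using $|DG_p(u)|=|u|^{p-1}$ and $|X_i|=|u_i||\alpha_i|$, the homogeneities collapse: $G_t(u_3)\,|DG_r(u_1)|\,|DG_s(u_2)|\,|X_1|\,|X_2|=\frac{|u_1|^r|u_2|^s|u_3|^t}{t}|\alpha_1||\alpha_2|$, so that factoring out the common prefactor $|u_1|^r|u_2|^s|u_3|^t/(rst)$ once more produces exactly the subtracted expression $2\Lambda(A,B,C)(rs|\alpha_1||\alpha_2|+rt|\alpha_1||\alpha_3|+st|\alpha_2||\alpha_3|)$. Combining the two estimates via $\widetilde H^{(A,B,C)}_\Phi\geq D-|O|$ finishes the argument.

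The hard part will be the purely algebraic bookkeeping: one must translate carefully between the $\C^d$-setting of the matrices $A,B,C$ and the real $(2d)\times(2d)$ block-matrix formalism behind $H^{\bA}_\Phi$, verify that the two symmetric off-diagonal blocks $(i,j)$ and $(j,i)$ combine without producing spurious signs, and check that all numerical constants collapse to yield exactly the stated lower bound. No new conceptual ingredients beyond those already used for $N=2$ in Proposition~\ref{p: Metajna} are needed.
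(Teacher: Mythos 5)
Your proof is correct and takes essentially the same route as the paper. The paper's own proof consists of the observation that one can follow \cite[Corollary 5.12]{CD-DivForm} (the $N=2$ argument), or equivalently first write out the exact algebraic identity for $\widetilde{H}_{G_r\otimes G_s\otimes G_t}^{(A,B,C)}$ in terms of $\cI_r,\cI_s,\cI_t,\cI_\infty$ and then apply the definitions of $\Delta_\cdot$ and $\Lambda$; your block decomposition into the three one-variable diagonal pieces (bounded below by Proposition~\ref{p: Vinjerac}) plus the three pairs of symmetric off-diagonal rank-one blocks (bounded above by operator-norm estimates with $\Lambda(A),\Lambda(B),\Lambda(C)\le\Lambda(A,B,C)$) is the same computation, carried out termwise rather than by exhibiting the identity, and your bookkeeping of the homogeneities and the factor of $2$ from the pair of transposed blocks matches the claimed constants exactly.
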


\begin{proof}
The proof can be carried out very similarly to the proofs of \cite[Corollary 5.10]{CD-DivForm} and \cite[Corollary 5.12]{CD-DivForm}. Alternatively, one can first establish the identity
\begin{align*}
& \widetilde{H}_{G_{r}\otimes G_s\otimes G_t}^{(A,B,C)}  \left[(u_1,u_2,u_3);(\alpha_1,\alpha_2,\alpha_3)\right] \\
& \hskip 30pt
= \frac{|u_1|^r|u_2|^s|u_3|^t}{2} \mathop{\textup{Re}} \bigg(
r^2\langle A\alpha_1,\cI_{r}\alpha_1\rangle_{\C^d}
+ rs\langle A\alpha_1,\cI_{\infty}\alpha_2\rangle_{\C^d}
+ rt\langle A\alpha_1,\cI_{\infty}\alpha_3\rangle_{\C^d} \\
& \hskip 122pt
+ rs\langle B\alpha_2,\cI_{\infty}\alpha_1\rangle_{\C^d}
+ s^2\langle B\alpha_2,\cI_{s}\alpha_2\rangle_{\C^d}
+ st\langle B\alpha_2,\cI_{\infty}\alpha_3\rangle_{\C^d} \\
& \hskip 123pt
+ rt\langle C\alpha_3,\cI_{\infty}\alpha_1\rangle_{\C^d}
+ st\langle C\alpha_3,\cI_{\infty}\alpha_2\rangle_{\C^d}
+ t^2\langle C\alpha_3,\cI_{t}\alpha_3\rangle_{\C^d}
\bigg)
\end{align*}
and then simply use the definitions of the appropriate ellipticity constants.
\end{proof}

\subsection{Generalized convexity of power functions in higher dimensions}

Recall that the power functions $F_p$ and the operators $\cI_p$ were introduced in \eqref{eq: Oblivion} and \eqref{eq: I_p}, respectively.

\begin{lemma}\cite[Lemma 8]{CD-Mixed}
\label{l: Batman}
Suppose that $p>1$ and $N,d\in\N$.
Let $\omega, X,\bA$ be as in \eqref{eq: Bare Garaza 2014}.
Then, for $\omega\ne0$,
\begin{equation}
\label{eq: smoked bacon burger}
H_{F_p}^{\bA}[\omega;X]=|\omega|^{p-2}H_{F_p}^{\bA}[\omega/|\omega|;X].
\end{equation}
In case when $|\omega|=1$ we have, for
$Y_j:=\overline{\omega_j}\,X_j$,
the following formul\ae:
\begin{enumerate}[(I)]
\item
\ \vskip -20pt
$$
\hskip -15pt
p^{-1}H_{F_p}^{\bA}[\omega;X]
 =\sum_{j=1}^{N}\Re\sk{A_{j}Y_{j}}{Y_{j}}
+(p-2)\sum_{j,k=1}^{N}
\Re\sk{A_{j}Y_{j}}{\Re Y_{k}}
$$
\item
\ \vskip -20pt
$$
\aligned
p^{-1}H_{F_p}^{\bA}[\omega;X]  &=\sum_{j=1}^{N}\big(1-|\omega_j|^2\big)\Re\sk{A_{j}Y_{j}}{Y_{j}}
+\frac{p}{2}\sum_{j=1}^{N}
\Re\sk{A_{j}Y_{j}}{\cI_pY_{j}}
\\
&\hskip 107pt
+(p-2)\sum_{j\not =k}
\Re\sk{A_{j}Y_{j}}{\Re Y_{k}}.
\endaligned
$$
\end{enumerate}

\end{lemma}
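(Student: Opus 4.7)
\emph{Strategy.} Reduce everything to an explicit computation of $D^2F_p$ in real coordinates and translate the answer back into the complex formalism via the identifications $\cV_d$, $\cW_{N,d}$ and $\cM$.

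\emph{Step 1 (Homogeneity).} The function $F_p(\zeta)=|\zeta|^p$ is positively homogeneous of degree $p$ on $\C^N$, so its real Hessian is positively homogeneous of degree $p-2$: $D^2F_p(t\zeta)=t^{p-2}D^2F_p(\zeta)$ for all $t>0$. Choosing $t=|\omega|$ and $\zeta=\omega/|\omega|$ gives $D^2F_p(\omega)=|\omega|^{p-2}D^2F_p(\omega/|\omega|)$, and since $H_{F_p}^{\bA}[\omega;X]$ depends linearly on $D^2F_p(\omega)$, the scaling identity \eqref{eq: smoked bacon burger} is immediate.

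\emph{Step 2 (The Hessian at $|\omega|=1$).} Viewing $F_p\circ\cW_{N,1}^{-1}$ as the map $v\mapsto|v|^p$ on $\R^{2N}$, a direct differentiation yields $D^2F_p(v)=p|v|^{p-2}I_{\R^{2N}}+p(p-2)|v|^{p-4}vv^T$. At $|v|=|\omega|=1$ this reads
\[
D^2F_p(\omega)=pI_{\R^{2N}}+p(p-2)\,vv^T,\qquad v=\cW_{N,1}(\omega),
\]
exhibiting the Hessian as a scalar multiple of the identity plus a rank-one correction.

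\emph{Step 3 (Contraction).} Substitute this splitting into the definition of $H_{F_p}^{\bA}$. The contribution of $pI_{\R^{2N}}\otimes I_{\R^d}$ equals $p\sk{\cW_{N,d}(X)}{[\cM(A_1)\oplus\cdots\oplus\cM(A_N)]\cW_{N,d}(X)}_{\R^{2Nd}}$, which through the standard isometry $\sk{\cV_d(\xi)}{\cV_d(\eta)}_{\R^{2d}}=\Re\sk{\xi}{\eta}_{\C^d}$ together with $\cM(A_j)\cV_d(X_j)=\cV_d(A_jX_j)$ becomes a sum of diagonal terms $\Re\sk{A_jY_j}{Y_j}$ once one passes to $Y_j=\overline{\omega_j}X_j$. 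For the rank-one part, decompose $\R^{2Nd}$ into $N$ complex $d$-blocks via $\cV_d^{-1}$ and verify that the $j$-th block of $(vv^T\otimes I_{\R^d})\cW_{N,d}(X)$ equals $\omega_j\sum_k\Re Y_k$. Pairing against $\cV_d(A_jX_j)$ and absorbing the scalar through $\omega_j\,\overline{A_jX_j}=\overline{A_jY_j}$ produces the cross-sum $(p-2)\sum_{j,k}\Re\sk{A_jY_j}{\Re Y_k}$. Assembling both contributions gives formula (I).

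\emph{Step 4 (From (I) to (II)).} The passage between the two forms rests on the elementary identity
\[
\tfrac{p}{2}\Re\sk{AY}{\cI_pY}-(p-2)\Re\sk{AY}{\Re Y}=\Re\sk{AY}{Y},
\]
immediate from $\cI_pY=Y+(1-2/p)\overline Y$ and $\Re Y=(Y+\overline Y)/2$. Using it to trade each diagonal $(j=k)$ cross term in (I) for $\tfrac{p}{2}\Re\sk{A_jY_j}{\cI_pY_j}$, and regrouping the resulting coefficients of $\Re\sk{A_jY_j}{Y_j}$ with the aid of the normalization $|\omega|=1$, produces formula (II).

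\emph{Main obstacle.} The principal technicality is in Step 3: the tensor structure $vv^T\otimes I_{\R^d}$ must be matched against the block-diagonal $\cM(A_1)\oplus\cdots\oplus\cM(A_N)$, and the complex scalar $\omega_j$ emerging from the rank-one correction has to be absorbed cleanly under the substitution $X_j\leftrightarrow Y_j$. The argument is mechanical but demands care with the various real-complex identifications.
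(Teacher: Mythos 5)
The paper gives no proof of this lemma --- it is imported as is from \cite[Lemma 8]{CD-Mixed} --- so your argument stands alone. The strategy is the natural one: compute the Hessian of $|v|^p$ on $\R^{2N}$, split it at $|v|=1$ as $pI_{\R^{2N}}+p(p-2)vv^T$, and push both pieces through the real--complex dictionary. Steps 1 and 2 are correct, and so is the rank-one analysis in Step 3: the $j$-th $\R^{2d}$-block of $(vv^T\otimes I_{\R^d})\cW_{N,d}(X)$ is indeed $\cV_d\big(\omega_j\sum_k\Re Y_k\big)$, and pairing it against $\cV_d(A_jX_j)$ yields the cross-sum $(p-2)\sum_{j,k}\Re\sk{A_jY_j}{\Re Y_k}$.

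The diagonal contribution, however, is mishandled, and this is a genuine gap. The contribution of $pI_{\R^{2N}}\otimes I_{\R^d}$ is $p\sum_j\Re\sk{A_jX_j}{X_j}_{\C^d}$ --- exactly what your isometry computation produces --- and it does \emph{not} become $p\sum_j\Re\sk{A_jY_j}{Y_j}$ upon ``passing to $Y_j$'': one has $\sk{A_jY_j}{Y_j}=|\omega_j|^2\sk{A_jX_j}{X_j}$, and $|\omega|=1$ only fixes $\sum_j|\omega_j|^2$, not each $|\omega_j|$ (the substitution is harmless only for $N=1$). The same defect sinks Step 4: trading the $j=k$ cross terms of the printed (I) via your identity $\frac{p}{2}\Re\sk{AY}{\cI_pY}-(p-2)\Re\sk{AY}{\Re Y}=\Re\sk{AY}{Y}$ leaves \emph{no} residual $(1-|\omega_j|^2)$-terms at all, so the printed (I) and (II) actually differ by $\sum_j(1-|\omega_j|^2)\Re\sk{A_jY_j}{Y_j}$ and cannot be reconciled by any regrouping. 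What your calculation has in fact uncovered is a misprint in the statement as reproduced in this paper: the first sums of both (I) and (II) should read $\Re\sk{A_jX_j}{X_j}$ rather than $\Re\sk{A_jY_j}{Y_j}$. With that correction your Step 3 proves (I) on the nose, the trade identity together with $\Re\sk{A_jY_j}{Y_j}=|\omega_j|^2\Re\sk{A_jX_j}{X_j}$ converts (I) into (II), and the corrected (II) is also what actually yields the lower bound of Corollary~\ref{c: Debeluh} (the printed version, estimated term by term, produces a prefactor $(2-|\omega_j|^2)|\omega_j|^2\leq 1$ where $1$ is needed). You should have noticed that your own calculation was handing you $X_j$-inner products in the diagonal block, and not forced them into $Y_j$-form just to match the print.
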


We note that in the special case $N=1$, Lemma \ref{l: Batman} was proven earlier in \cite[Lemma 5.6]{CD-DivForm}. The case of foremost interest for us in this paper will be $N=3$.

\begin{corollary}\cite[Corollary 9]{CD-Mixed}
\label{c: Debeluh}
Repeating the assumptions of Lemma \ref{l: Batman}
and introducing the notation
$$
\Delta_p(\bA):=\min_{j=1,\hdots,N}\Delta_p(A_j)
\hskip30pt
\text{and}
\hskip 30pt
\Lambda(\bA):=\max_{j=1,\hdots,N}\Lambda(A_j),
$$
we have, for $|\omega|=1$ and $p\geq2$,
$$
p^{-1}H_{F_p}^{\bA}[\omega;X]
\geq\Delta_p(\bA)|X|^2
-(p-2)\Lambda(\bA)\sum_{j\not =k}|\omega_{j}||\omega_k|
|X_{j}||X_{k}|.
$$
\end{corollary}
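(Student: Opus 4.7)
The plan is to invoke formula~(II) of Lemma~\ref{l: Batman} at the unit vector $\omega$, set $Y_j=\bar\omega_j X_j$, and then estimate each of its three constituent sums separately. These are the ``weight'' sum $\sum_j(1-|\omega_j|^2)\Re\sk{A_jY_j}{Y_j}$, the ``principal diagonal'' sum $\tfrac p2\sum_j\Re\sk{A_jY_j}{\cI_pY_j}$, and the off-diagonal ``cross'' sum $(p-2)\sum_{j\neq k}\Re\sk{A_jY_j}{\Re Y_k}$. The assumption $p\geq 2$ will be used twice: it makes $|1-2/p|=1-2/p$, so that the operator $\cI_p$ defined in \eqref{eq: I_p} coincides with the one appearing in the $p$-ellipticity inequality \eqref{eq: Sparky 21}, and it produces the comparison factor $p/2\geq 1$ in the principal sum.

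The principal sum I would handle by applying \eqref{eq: Sparky 21} pointwise with $\xi=Y_j$, obtaining $\Re\sk{A_jY_j}{\cI_pY_j}\geq\Delta_p(A_j)|Y_j|^2\geq\Delta_p(\bA)|Y_j|^2$. The weight sum will be nonnegative, since $|\omega|=1$ forces each $|\omega_j|\leq 1$ and ordinary ellipticity \eqref{eq: ellipticity} gives $\Re\sk{A_jY_j}{Y_j}\geq\lambda(A_j)|Y_j|^2\geq 0$. For the cross sum, the boundedness \eqref{eq: boundedness} together with $|\Re Y_k|\leq|Y_k|$ yields, for $j\neq k$,
$$
|\Re\sk{A_jY_j}{\Re Y_k}_{\C^d}|\leq\Lambda(A_j)|Y_j||Y_k|=\Lambda(A_j)|\omega_j||\omega_k||X_j||X_k|,
$$
so that, after passing to $\Lambda(\bA)=\max_j\Lambda(A_j)$, this sum contributes at most $(p-2)\Lambda(\bA)\sum_{j\neq k}|\omega_j||\omega_k||X_j||X_k|$ in absolute value, which produces the subtracted term in the statement.

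Assembling the three bounds and using $|Y_j|=|\omega_j||X_j|$ then yields the claimed inequality. The main bookkeeping task, and the place where the argument is most delicate, will be combining the weight and principal sums so as to extract the coefficient $\Delta_p(\bA)$ in front of the \emph{full} $|X|^2=\sum_j|X_j|^2$ rather than merely in front of the weighted $\sum_j|\omega_j|^2|X_j|^2$ furnished by the principal sum alone. This is precisely the reason for preferring formula~(II) over the homogeneous formula~(I): the weight sum, which vanishes exactly when all $|\omega_j|=1$, is what recombines via the identity $(1-|\omega_j|^2)+|\omega_j|^2=1$ with the principal sum to reconstitute $\Delta_p(\bA)|X|^2$. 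This step is also the point where the hypothesis $p\geq 2$ (as opposed to mere ellipticity) enters in an essential way.
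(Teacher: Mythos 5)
Your route---formula (II) of Lemma \ref{l: Batman}, a three-way split into weight, principal and cross sums, followed by a recombination of the weights---is the intended derivation (the paper cites the corollary from \cite{CD-Mixed} without proof, and states (II) precisely so that $\cI_p$, hence $\Delta_p$, appears). But the step you yourself single out as ``the most delicate'' is never executed, and as described it cannot be: this is a genuine gap. The principal sum yields at best $\tfrac{p}{2}\Delta_p(\bA)\sum_j|Y_j|^2=\tfrac{p}{2}\Delta_p(\bA)\sum_j|\omega_j|^2|X_j|^2$, and the weight sum, taken with $Y_j=\bar\omega_jX_j$ as in the displayed formula, yields at best $\sum_j(1-|\omega_j|^2)\lambda(A_j)|\omega_j|^2|X_j|^2$; \emph{both} carry the factor $|\omega_j|^2$. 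Their sum is therefore bounded below only by quantities of the form $|\omega_j|^2(2-|\omega_j|^2)c_j|X_j|^2$, and $t(2-t)$ equals $1$ only at $t=1$ and vanishes at $t=0$. In particular, if some $\omega_j=0$ then $Y_j=0$ and the $j$-th coordinate contributes nothing to your lower bound, whereas the target still contains $\Delta_p(\bA)|X_j|^2$. This also makes your second paragraph, which discards the weight sum as merely nonnegative, inconsistent with your closing paragraph, which declares it indispensable.

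The repair is to read the first sum of (II) with $X_j$ in place of $Y_j$, i.e.\ as $\sum_j(1-|\omega_j|^2)\Re\sk{A_jX_j}{X_j}$: computing $D^2F_p(\omega)=p\bigl(I+(p-2)xx^T\bigr)$ at $|x|=1$ shows that the identity part of $p^{-1}H^{\bA}_{F_p}[\omega;X]$ is the \emph{unweighted} sum $\sum_j\Re\sk{A_jX_j}{X_j}$, and this is the form established in \cite{CD-Mixed} (testing $N=2$, $A_1=A_2=1$, $\omega=(1,0)$, $X=(0,1)$ shows the version with $Y_j$ cannot be right). With that reading, bound the weight sum below by $\sum_j(1-|\omega_j|^2)\lambda(A_j)|X_j|^2$, the principal sum by $\sum_j|\omega_j|^2\Delta_p(A_j)|X_j|^2$ (using $p/2\geq1$), and invoke $\lambda(A_j)=\Delta_2(A_j)\geq\Delta_p(A_j)$ for $p\geq2$ --- a monotonicity you need but do not mention --- to obtain $(1-|\omega_j|^2)\lambda(A_j)+|\omega_j|^2\Delta_p(A_j)\geq\Delta_p(\bA)$ coordinatewise, which reconstitutes $\Delta_p(\bA)|X|^2$. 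Your treatment of the cross sum is correct as it stands.
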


We remarked at the beginning of Section \ref{s: mocne funkcije} that, when $N=1$ and $A\in\C^{d\times d}$, the $A$-convexity of $F_p$ is closely related to the condition $\Delta_p(A)>0$; in fact, it is equivalent to $\Delta_p(A)\geq0$ \cite[Proposition 5.8]{CD-DivForm}. The case of $N>1$ is in striking contrast with this:  when $N>1$, the power function $F_p:\C^N\rightarrow[0,\infty)$ may fail to be $\bA$-convex even if $\Delta_p(\bA)>0$; see \cite[Example 7]{CD-Mixed}.

We may however fix this by perturbing the power function of $\omega\in\C^N$ by a sufficiently small linear combination of power functions of individual components of $\omega$. This is the content of the following lemma. It extends \cite[Section 4.3]{CD-Mixed}, where the case $N=2$ was treated and proven in a different, more geometrically flavoured manner.
The lemma will be used in Section \ref{s: general case}.

\begin{lemma}
\label{l: GCDFEsDCD}
If ${s}>2$ and $\bA:=(A_1,\hdots,A_N)\in\big(\C^{d\times d}\big)^N$ are such that $\Delta_{s}(\bA)>0$, then there exists $c=c({s},\bA,N)>0$ such that the function $P_{s}:\C^N\rightarrow[0,\infty)$, defined as
\begin{equation}
\label{eq: GFEsDCEs}
P_{s}(u_1,\hdots,u_N):=F_{s}(u_1,\hdots,u_N)+c\sum_{j=1}^NF_{s}(u_j),
\end{equation}
satisfies
$$
H^\bA_{P_{s}}[u;X]\ \geqsim\ |u|^{{s}-2}|X|^2
\hskip 30pt
\forall u\in\C^N, X\in\big(\C^d\big)^N.
$$
The implied constants depend on ${s}$, $\bA$ and $N$.
In particular, $P_{s}$ is $\bA$-convex.
\end{lemma}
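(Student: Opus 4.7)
My plan is to reduce to the unit sphere $|u|=1$ by $s$-homogeneity, combine the lower bound on $H^\bA_{F_s}$ from Corollary~\ref{c: Debeluh} with the single-variable estimate from Proposition~\ref{p: Vinjerac} applied to each perturbation summand, and absorb the bad cross terms via an AM-GM inequality tailored to the perturbation structure. Since both $F_s$ and each $u\mapsto F_s(u_j)=|u_j|^s$ are $s$-homogeneous, so is $P_s$, and the argument behind \eqref{eq: smoked bacon burger} gives
\[
H^\bA_{P_s}[u;X]=|u|^{s-2}H^\bA_{P_s}[u/|u|;X],\qquad u\neq 0.
\]
The case $u=0$ is trivial since $s>2$ forces $D^2 P_s(0)=0$. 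Because $F_s(u_j)$ depends only on the $j$th complex coordinate, the Hessian of the perturbation decouples, giving
\[
H^\bA_{P_s}[u;X] \;=\; H^\bA_{F_s}[u;X] \;+\; c\sum_{j=1}^N H^{A_j}_{F_s}[u_j;X_j].
\]

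Write $\delta:=\Delta_s(\bA)$ and $M:=\Lambda(\bA)$. For $|u|=1$, Corollary~\ref{c: Debeluh} yields
\[
s^{-1}H^\bA_{F_s}[u;X] \;\geq\; \delta\,|X|^2-(s-2)M\sum_{j\neq k}|u_j||u_k||X_j||X_k|,
\]
while Proposition~\ref{p: Vinjerac}, translated via $F_s=sG_s$ and the $N=1$ identity $\widetilde{H}^{A}_{G_s}[u;\alpha]=H^{A}_{G_s}[u;u\alpha]$, yields $H^{A_j}_{F_s}[u_j;X_j]\geq \tfrac{s^2}{2}\delta\,|u_j|^{s-2}|X_j|^2$. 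The decisive step is the AM-GM estimate
\[
|u_j||u_k||X_j||X_k|\;\leq\;\tfrac{1}{2}\bigl(|u_j|^2|X_j|^2+|u_k|^2|X_k|^2\bigr),
\]
applied to the pair $(|u_j||X_j|,|u_k||X_k|)$. Summing over $j\neq k$ and combining the three estimates gives
\[
s^{-1}H^\bA_{P_s}[u;X] \;\geq\; \sum_{j=1}^N g(|u_j|)\,|X_j|^2,\qquad g(t):=\delta-(s-2)(N-1)M\,t^2+\tfrac{cs\delta}{2}\,t^{s-2}.
\]
The crucial point is that this choice of AM-GM makes the cross term's $|u_j|^2$ pair with the perturbation's $|u_j|^{s-2}$ on a per-index basis, both vanishing together as $|u_j|\to 0$; had one used instead the symmetric bound $|u_j||u_k||X_j||X_k|\leq\tfrac{1}{2}(|u_j|^2|X_k|^2+|u_k|^2|X_j|^2)$, the weight $(1-|u_j|^2)|X_j|^2$ would obstruct the argument when $|u_j|$ is small.

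The main (though elementary) obstacle is to choose $c=c(s,\bA,N)$ large enough that $g\geq c'>0$ on $[0,1]$. We have $g(0)=\delta>0$ and $g(1)>0$ once $c$ is large; a brief calculus argument splitting into the regimes $2<s<4$, $s=4$, $s>4$ shows that in the first two cases $g'\geq 0$ throughout, while in the third the unique interior critical point $t^{\ast}$ satisfies $t^{\ast}\to 0$ as $c\to\infty$ and hence $g(t^{\ast})\to\delta$. Consequently $g\geq\delta/2$ for $c$ sufficiently large depending only on $s$, $N$, $M$, $\delta$, yielding $H^\bA_{P_s}[u;X]\geqsim |X|^2$ on $|u|=1$. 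Homogeneity upgrades this to the claimed $H^\bA_{P_s}[u;X]\geqsim |u|^{s-2}|X|^2$ on all of $\C^N\times(\C^d)^N$, from which $\bA$-convexity of $P_s$ follows immediately.
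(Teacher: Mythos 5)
Your proposal is correct and follows essentially the same route as the paper: reduction to $|u|=1$ by $s$-homogeneity, Corollary~\ref{c: Debeluh} for the main term, the decoupled single-variable lower bound for the perturbation summands, the same AM--GM pairing of $|u_j||X_j|$ with $|u_k||X_k|$, and the same case split at $s=4$. The only divergence is the closing elementary step: you distribute the good term per index and minimize the one-variable function $g$ on $[0,1]$, whereas the paper keeps a global reserve $\tfrac12|X|^2$ and, for $s>4$, applies H\"older's inequality with weights $y_j^2$ to reduce to a one-variable inequality in $T=\sum_j(v_jy_j)^2$ --- both endgames work equally well.
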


\begin{proof}
For the moment we will assume just that $c>0$; the range of admissible $c$'s will be
getting restricted as the proof will progress.

We have, by Corollary \ref{c: Debeluh} and \eqref{eq: smoked bacon burger},
$$
H^\bA_{P_{s}}[u;X]
\ \geqsim\ |u|^{{s}-2}\left(|X|^2-\sigma\sum_{j<k}\frac{|u_j|}{|u|}\cdot\frac{|u_k|}{|u|}\cdot|X_j|\cdot|X_k|\right)+c\sum_{j=1}^N|u_j|^{{s}-2}|X_j|^2.
$$
Here $\sigma=\sigma({s},\bA)>0$.

By writing
$$
v_j:=\frac{|u_j|}{|u|}
\hskip 40pt
\text{and}
\hskip 40pt
y_j:=\frac{|X_j|}{|X|},
$$
and using the trivial inequality
$$
\sum_{j<k}(v_jy_j)(v_ky_k)
\leq
\frac{N-1}{2}\sum_{j=1}^N(v_jy_j)^2,
$$
we get
$$
H^\bA_{P_{s}}[u;X]
\ \geqsim\
|u|^{{s}-2}|X|^2
\left(1-\frac{(N-1)\sigma}{2}\sum_{j=1}^N(v_jy_j)^2+c\sum_{j=1}^Nv_j^{{s}-2}y_j^2\right).
$$
Denote $\tilde\sigma:=(N-1)\sigma/2$.

We see that it suffices to show the following: there exists $c=c({s},\bA,N)>0$ such that
\begin{equation*}
c\sum_{j=1}^Nv_j^{{s}-2}y_j^2+\frac12\geq{\tilde\sigma}\sum_{j=1}^N(v_jy_j)^2
\end{equation*}
for all $(v_1,\hdots,v_N),(y_1,\hdots,y_N)\in S^{N-1}\cap[0,\infty)^N$.

\medskip
If \underline{${s}\leq4$} then, since $v_j\leq1$, we have $v_j^{{s}-2}\geq v_j^2$, while for
$$
T:=\sum_{j=1}^N(v_jy_j)^2
$$
we clearly obtain $cT+1/2\geq{\tilde\sigma} T$ for $c\geq{\tilde\sigma}$.

\medskip
If \underline{${s}>4$}, write ${s}-2=2+\e$, where $\e>0$. 
Since $y_1^2+\hdots+y_N^2=1$,  the elements $\big\{y_1^2,\hdots,y_N^2\big\}$ represent a weighted counting measure on $\{1,\hdots,N\}$ of mass $1$. Therefore, recalling that $\e>0$, H\"older's inequality gives
$$
\sum_{j=1}^Nv_j^{2+\e}y_j^2\geq\Bigg(\sum_{j=1}^Nv_j^2y_j^2\Bigg)^{1+\e/2}.
$$
So it is enough to prove that there exists $c>0$ such that
$
cT^{1+\e/2}+1/2\geq{\tilde\sigma} T
$
for all $T\in[0,1]$.
An elementary analysis of the function $f:[0,\infty)\rightarrow\R$, defined by $f(T):=cT^{1+
\e/2}+1/2-{\tilde\sigma} T$, confirms the existence of such $c$.
\end{proof}

\subsection{Regularization}
\label{s: molto cantabile}
We would like to replace $\gX$ by a function which satisfies the inequality {\it \ref{eq: sova})} of Theorem \ref{t: Voltaren} but is, in addition, also of class $C^2$ everywhere on $\C^3$ (not only on $\C^3\backslash\Upsilon$). A standard way of achieving this involves {\it mollifiers}.

Fix $N\in\N$ and a radial function $\f\in C_c^\infty(\C^N)$ such that $0\leq\f\leq1$, ${\rm supp}\, \f\subseteq  B_{\C^{N}}(0,1)$ and $\int\f=1$.
For $\nu\in(0,1]$ and $\omega\in\C^{N}$ set $\varphi_\nu(\omega)=\nu^{-2N}\varphi(\omega/\nu)$.
If $\gZ:\C^N\rightarrow\R$ is locally integrable, let the {\it convolution}
$
\gZ*\f_\nu:\C^N\rightarrow\R
$
be defined for $\omega_0\in\C^N$ as
$$
(\gZ*\f_\nu)(\omega_0)
=\int_{\C^N}
\gZ(\omega_0-\omega)\f_\nu(\omega)
\wrt \omega.
$$
Explicit connection between this definition and the (perhaps slightly more standard) notion of convolution on real euclidean spaces goes, as expected, through the identificaton operator $\cW_{N,1}$:
if $\Psi:\C^N\rightarrow\R$, we define
$$
\int_{\C^N}\Psi(\omega)\wrt \omega:=\int_{\R^{2N}}\left(\Psi\circ\cW_{N,1}^{-1}\right)(x)\wrt x.
$$
We will frequently use the abbreviation $\gX_\nu=\gX*\f_\nu$.

Taking Theorem \ref{t: Voltaren} as a starting point and arguing as in \cite[Section 5.1]{CD-DivForm} we obtain the following estimates.
\begin{corollary}
\label{c: Naklofen}
Let $\gX$ be as in Theorem \ref{t: Voltaren}. Then for any $u,v,w\in\C$ and $\nu\in(0,1]$ we have:
\begin{enumerate}[a')]
\item
\label{eq: X1}
$$
\gX_\nu(u,v,w)\,\leqsim\,(|u|+\nu)^{p} + (|v|+\nu)^{q} + (|w|+\nu)^{r};
$$
\item
\label{eq: X2}
$$
\aligned
  |\partial_{\bar u}\gX_\nu|&\leqsim\left(\max\left\{(|u|+\nu)^{p},(|v|+\nu)^{q},(|w|+\nu)^{r}\right\}\right)^{1-1/p},\\
  |\partial_{\bar v}\gX_\nu|&\leqsim\left(\max\left\{(|u|+\nu)^{p},(|v|+\nu)^{q},(|w|+\nu)^{r}\right\}\right)^{1-1/q},\\
  |\partial_{\bar w}\gX_\nu|&\leqsim\,(|w|+\nu)^{r-1};
\endaligned
$$
\item
\label{eq: X3}
for $A,B,C$ satisfying the 
$*$-ellipticity conditions as in Theorem \ref{t: Voltaren} and almost every $x\in\Omega$,
\begin{equation*}
H^{(A,B,C)(x)}_{\gmX_\nu}[(u,v,w);(\zeta,\eta,\xi)]
\,\geqsim\,(|w|-\nu)|\zeta||\eta|
\hskip 30pt
\forall u,v,w\in\C,\ \zeta,\eta,\xi\in\C^d.
\end{equation*}
\end{enumerate}
The implied constants are the same as in the corresponding estimates of Theorem \ref{t: Voltaren}.
\end{corollary}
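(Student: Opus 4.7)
The plan is to derive each of a'), b'), c') from the corresponding estimate in Theorem \ref{t: Voltaren} by applying it pointwise under the convolution integral defining $\gX_\nu$. Recall that $\f_\nu$ is supported in $B_{\C^3}(0,\nu)$, is non-negative, and has integral $1$.

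For a'), write
\[
\gX_\nu(u_0,v_0,w_0)=\int_{\C^3}\gX(u_0-u,v_0-v,w_0-w)\,\f_\nu(u,v,w)\wrt u\wrt v\wrt w;
\]
for $(u,v,w)$ in the support of $\f_\nu$ we have $|u_0-u|\leq|u_0|+\nu$ and analogously for the other two variables, so applying part a) of Theorem \ref{t: Voltaren} to the integrand and using $\int\f_\nu=1$ gives the claim. For b'), since $\gX\in C^1(\C^3)$ the Wirtinger derivative commutes with the convolution, so $\partial_{\bar u}\gX_\nu=(\partial_{\bar u}\gX)*\f_\nu$, and an analogous pointwise estimate via part b) of Theorem \ref{t: Voltaren} produces the three required bounds.

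The decisive point is c'). Since the generalized Hessian form $H^{\bA}_\Phi[\omega;X]$ depends linearly on the Hessian matrix $D^2\Phi(\omega)$, the identity
\[
D^2\gX_\nu(\omega_0)=\int_{\C^3}D^2\gX(\omega_0-\omega)\,\f_\nu(\omega)\wrt\omega
\]
translates into
\[
H^{(A,B,C)(x)}_{\gmX_\nu}\bigl[(u_0,v_0,w_0);(\zeta,\eta,\xi)\bigr]=\int H^{(A,B,C)(x)}_{\gmX}\bigl[(u_0-u,v_0-v,w_0-w);(\zeta,\eta,\xi)\bigr]\,\f_\nu(u,v,w)\wrt u\wrt v\wrt w.
\]
Because $\Upsilon$ has Lebesgue measure zero, part c) of Theorem \ref{t: Voltaren} applies to the integrand for almost every point of the support of $\f_\nu$ and yields the pointwise lower bound $\geqsim|w_0-w|\,|\zeta||\eta|$. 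On that support we have $|w|\leq\nu$, whence $|w_0-w|\geq|w_0|-\nu$ by the triangle inequality; combined with $\int\f_\nu=1$, this gives the desired estimate when $|w_0|\geq\nu$. When $|w_0|<\nu$ the claimed right-hand side is non-positive, while the left-hand side is non-negative as an integral of non-negative quantities, so the estimate is automatic.

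The only genuine technical obstacle is justifying the commutation identity $D^2\gX_\nu=(D^2\gX)*\f_\nu$ despite $\gX$ being only $C^1$ (and not $C^2$) on the exceptional set $\Upsilon$. This is handled as in \cite[Section 5.1]{CD-DivForm}: from the explicit construction of $\gX$ given in Section \ref{s: RICE}, one checks that $\nabla\gX$ is locally Lipschitz on $\C^3$, hence $\gX\in W^{2,\infty}_{\mathrm{loc}}(\C^3)$, and its classical Hessian off $\Upsilon$ agrees almost everywhere with its distributional Hessian; the identity above then follows from the standard interaction between weak differentiation and mollification.
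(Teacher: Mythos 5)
Your overall route is the same as the paper's (which simply invokes the mollification argument of \cite[Section 5.1]{CD-DivForm}): parts a') and b') by pushing the pointwise bounds of Theorem \ref{t: Voltaren} under the convolution with the $\nu$-shift, and part c') by commuting the Hessian with the mollifier and integrating the a.e.\ lower bound. Those parts are fine, including the observation that the case $|w_0|<\nu$ is trivial by nonnegativity.

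However, your justification of the one step you yourself single out as the genuine obstacle is wrong. You claim that $\nabla\gX$ is locally Lipschitz, i.e.\ $\gX\in W^{2,\infty}_{\mathrm{loc}}(\C^3)$. This is false when $p>q$: by the paper's own Proposition \ref{p: Drska:CCA 92:46}, the upper bound \eqref{eq: 20022020} for $|D^2\gX|$ contains the term $|v|^{-(1-q/p)}|w|$, and this singularity is genuinely attained --- e.g.\ in the region $|u|^p\le|v|^q\le|w|^r$ the summand $(1-q/p)[u]^2[v]^{1-q/p}[w]$ has $v$-Hessian of size $|u|^2|v|^{-1-q/p}|w|$, which along $|u|^p=|v|^q$ equals $|v|^{-(1-q/p)}|w|\to\infty$ as $v\to0$ with $w\ne0$. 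So $D^2\gX$ is unbounded near $\Upsilon$ and the $W^{2,\infty}_{\mathrm{loc}}$ argument collapses. The correct (and intended) repair is weaker but sufficient: the exponent $1-q/p$ lies in $(0,2)$, so $|D^2\gX|\in L^1_{\mathrm{loc}}(\C^3)$; since $\gX\in C^1$ and $\Upsilon$ is a finite union of real-analytic hypersurfaces and coordinate hyperplanes, the ACL characterization of Sobolev functions (exactly as the paper uses in \eqref{eq: Oudinot} for $\cP_{s,n,\e}$, citing Schwartz/Leoni) shows that each first-order partial of $\gX$ lies in $W^{1,1}_{\mathrm{loc}}$ with distributional derivatives equal a.e.\ to the classical ones off $\Upsilon$; this yields $D^2\gX_\nu=(D^2\gX)*\f_\nu$ and the rest of your argument for c') then goes through unchanged.
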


\section{Proof of Theorem \ref{t: Voltaren}}
\label{s: RICE}

We can assume that $p\geq q$. Indeed, suppose that in such a case the function $\gX$ from Theorem \ref{t: Voltaren} exists. Now take a triplet of exponents $(p,q,r)$ with $1/p+1/q+1/r=1$ and $1<p<q$ and a triplet of matrices $(A,B,C)$ which together with $(p,q,r)$ satisfy the assumptions of Theorem \ref{t: Voltaren}. It is easy to verify that in this case the desired function can be taken to be $\gX(u,v,w)=\widetilde\gX(v,u,w)$, where $\widetilde\gX$ is a function which Theorem \ref{t: Voltaren} gives in the case of the triplets $(q,p,r)$ and $(B,A,C)$.
In particular,
$$
H^{(A,B,C)}_{\gmX}[(u,v,w);(\zeta,\eta,\xi)]=H^{(B,A,C)}_{\widetilde\gmX}[(v,u,w);(\eta,\zeta,\xi)].
$$

So we assume that $p\geq q$. This implies \label{Transformers} $p>2$, for $1-2/p\geq1-1/p-1/q=1/r>0$.
As announced earlier, we will prove that the function from \cite{KS18}, constructed by the latter two authors of the present paper, can be marginally adapted so as to fit the requirements of Theorem \ref{t: Voltaren}. Before recalling the definition of the function and embarking on the proofs, let us introduce notation that we deem handy, since it simplifies the function's coefficients.

For $u\in\C$ and $p>0$ introduce the {\it ad hoc} notation
$$
[u]^{p}:=\frac{|u|^{p}}p,
$$
with $[u]:=[u]^{1}=|u|$.
Note that $p$ has to be interpreted simply as the {\it upper index} in $[u]^p$ and not as an exponent of a power. In other words, $[u]^p$ is not equal to the ``pure power'' $|u|^p$, but rather to a renormalized version of it. This will not cause confusions and, after all, the distinction will only affect the coefficients of the function $\gX$ below, making them appear more elegant. Also observe that, in fact, $[u]^p=G_p(u)$, with $G_p$ defined as in the previous section.

Next, we present the function from \cite{KS18}.

\subsection{\framebox{Case $p>q$}}
\label{subsec:case1}
Define
\renewcommand{\arraystretch}{2.4}
\begin{align*}
& \gX(u,v,w): = \\
& \left\{
\begin{array}{ll}
[u]^{p}+ D [v]^{q} + E [w]^{r};
& |w|^r\leqslant |v|^q \leqslant |u|^{p}, \\
{\displaystyle
[u]^{p}
+ D [v]^{1+q/p}[w]
+ \left(E -\frac{D }{1+q/p}\right)[w]^{r}};
& |v|^q\leqslant |w|^r \leqslant |u|^{p}, \\
{\displaystyle
 [u]^{1+p/q}[w]
+ D [v]^{1+q/p} [w]
+ \left(E  -\frac{D +q/p}{1+q/p} \right)[w]^{r}
};
& |v|^q\leqslant |u|^{p}\leqslant |w|^r, \\
{\displaystyle
(1-q/p)[u]^2[v]^{1-q/p}[w]
+\left(D -\frac{1-q/p}{2}\right)[v]^{1+q/p}[w]
+\left(E  -\frac{D  + q/p}{1+q/p} \right)[w]^{r}
};
& |u|^{p}\leqslant |v|^q\leqslant |w|^r, \\
{\displaystyle
(1-q/p)   [u]^2[v]^{q-2q/p}
+   [u]^2[w]^{r-2r/p}
+ \left(D -\frac{1-q/p}{2}\right) [v]^{q}
+ (E-1/2)[w]^{r}
};
 & |u|^{p}\leqslant |w|^r\leqslant |v|^q, \\
{\displaystyle
 \frac{1/r}{1-2/p}[u]^{p}
+ (1-q/p)[u]^2[v]^{q-2q/p}
+\left(D  - \frac{1-q/p}{2}\right)[v]^{q}
+E [w]^{r}
};
& |w|^r\leqslant |u|^{p} \leqslant |v|^q.
\end{array}
\right.
\end{align*}
\renewcommand{\arraystretch}{1}

\begin{remark}
The function $\gX$ admits a certain type of homogeneity, as explained below.

Write
$$
\aligned
 U&:=[u]^p, &  \hskip 20pt {a}&:=1/p,\\
 V&:=[v]^q, &  \hskip 20pt {b}&:=1/q,\\
 W&:=[w]^r, &  \hskip 20pt {c}&:=1/r
\endaligned
$$
and
$$
\aligned
\Omega_1&:=\left\{(u,v,w)\in\C^3\right.\!\!\! &;\ & \left. |w|^r\leq|v|^q\leq|u|^p\right\},\\
\Omega_2&:=\left\{\right. &;\ & \left. |v|^q\leq|w|^r\leq|u|^p\right\},\\
\Omega_3&:=\left\{\right. &;\ & \left. |v|^q\leq|u|^p\leq|w|^r\right\},\\
\Omega_4&:=\left\{\right. &;\ & \left. |u|^p\leq|v|^q\leq|w|^r\right\},\\
\Omega_5&:=\left\{\right. &;\ & \left. |u|^p\leq|w|^r\leq|v|^q\right\},\\
\Omega_6&:=\left\{\right. &;\ & \left. |w|^r\leq|u|^p\leq|v|^q\right\}.
\endaligned
$$

Then we can observe that $\gX(u,v,w)$ is a linear combination of
\begin{align*}
& \left\{
\begin{array}{ll}
U,\, V,\, W
& \text{ in }\Omega_1, \\
U,\, V^{1-c} W^{c} ,\, W
& \text{ in }\Omega_2, \\
U^{1-c}W^{c} ,\, V^{1-c}W^{c},\,  W
& \text{ in }\Omega_3, \\
U^{2a}V^{b-a}W^{c} ,\, V^{1-c}W^{c},\, W
& \text{ in }\Omega_4, \\
U^{2a}V^{1-2a},\, U^{2a}W^{1-2a},\, V,\, W
& \text{ in }\Omega_5, \\
U,\, U^{2a}W^{1-2a},\, V,\, W
& \text{ in }\Omega_6.
\end{array}
\right.
\end{align*}
\renewcommand{\arraystretch}{1}
\noindent
The above-said homogeneity is intended in the sense that the sum of exponents of any of the monomials above is always one.
Consequently, we have
$$
\gX\left(t^{1/p}u,t^{1/q}v,t^{1/r}w\right)=t\gX(u,v,w)
$$
valid for any $u,v,w\in\C$ and $t\geq0$.
\end{remark}

One has
$$
\gX(u,v,w)=\frac{1}{p}\cA_{r,p,q}(|w|,|u|,|v|),
$$
where $\cA=\cA_{p,q,r}$ (note the permutation of variables and indices) is the function from \cite[p.464]{KS18}, with
an adequate choice of its parameters $A,B,C$.

This function is of class $C^1$ on $\C^3$ and of class $C^2$ on the complement of $\Upsilon$; see \cite{KS18}. The properties
{\it \ref{eq: Burger})} and {\it \ref{eq: Fest})} are rather straightforward to verify, therefore we focus on proving
{\it \ref{eq: sova})}.
First we notice that 
the inequality \eqref{eq: Trifonov Kinderszenen}
is equivalent to
\begin{equation}
\label{eq:lowerest}
\widetilde{H}^{(A,B,C)}_{\gmX}[(u,v,w);(\alpha,\beta,\gamma)]
\,\geqsim\,|u||v||w||\alpha||\beta|.
\end{equation}
This is the inequality that we will actually be proving.

Using the estimates from Section \ref{s: mocne funkcije} we want to choose coefficients $D,E>0$, which may depend on $p,q,r$ and all the $*$-ellipticity constants of $A,B,C$ that were assumed to be positive in the formulation of Theorem \ref{t: Voltaren}, such that all of the coefficients above are positive and the estimate \eqref{eq:lowerest} holds in the interior of each of the six domains $\Omega_1,\hdots,\Omega_6$. Note that we will not have positivity of all ellipticity constants of $A,B,C$ appearing in the computation below. For instance, $\Delta_1(C)$ is never positive \cite{CD-DivForm}, but the only control of this quantity we will need is that $|\Delta_1(C)|$ is at most a constant times $\Lambda(C)$.

Write
$$
\aligned
(\sA,\sB,\sC)&:=(|\alpha|,|\beta|,|\gamma|)\\
(\su,\sv,\sw)&:=(|u|,|v|,|w|).
\endaligned
$$
Denote also $\Lambda:=\Lambda(A,B,C)$.

\subsubsection*{\underline{Domain \#1: $\sw^{r}<\sv^{q}<\su^{p}$}}

By combining \eqref{eq: Htilde} with Proposition \ref{p: Vinjerac} we have
$$
\aligned
\widetilde{H}^{(A,B,C)}_{\gmX}[(u,v,w);(\alpha,\beta,\gamma)]
 &\geq
  \frac{p \Delta_{p}(A)}{2}  \su^{p} \sA^2
 + \frac{D q \Delta_{q}(B)}{2} \sv^{q} \sB ^2
 + \frac{E r \Delta_{r}(C)}{2}  \sw^{r} \sC^2
 \\
 &\geq \sqrt{D pq\Delta_{p}(A) \Delta_{q}(B)\su^p\sv^q}\, \sA   \sB + 0.
\endaligned
$$
Taking into account the characteristic inequalities of the current subdomain, and recalling that $p\geq2$, gives
$$
\sqrt{\su^p\sv^q}
=\su\cdot\su^{p(1/2-1/p)}\sv^{q/2}
\geq\su\cdot\sv^{q(1/2-1/p)}\sv^{q/2}
=\su\cdot\sv\cdot\sv^{q/r}
\geq\su\sv\sw.
$$
Hence we proved
$$
\widetilde{H}^{(A,B,C)}_{\gmX}[(u,v,w);(\alpha,\beta,\gamma)]
\geq
\sqrt{D pq\Delta_{p}(A) \Delta_{q}(B)}\,  \su \sv \sw  \sA   \sB .
$$

\subsubsection*{\underline{Domain \#2: $\sv^{q}<\sw^{r}<\su^{p}$}}

By combining \eqref{eq: Htilde} and Propositions \ref{p: Vinjerac}, \ref{p: Metajna}, we obtain
\begin{align*}
\widetilde{H}^{(A,B,C)}_{\gmX}&[(u,v,w);(\alpha,\beta,\gamma)]\\
&\geq
\frac{p}2\Delta_{p}(A)\su^{p}\sA ^2
\\
&
\hskip 15pt
+D\sv^{1+q/p}\sw
\left(
  \frac{ (1+q/p)\Delta_{1+q/p}(B)}{2}\sB^2
   -2\Lambda\sB \sC + \frac{\Delta_1(C)}{2(1+q/p)}\sC^2\right)
   \\
& \hskip 15pt
+\left(E -\frac{D}{1+q/p}\right)
\frac{r}2\Delta_{r}(C)\sw^{r}\sC^2.
\end{align*}

Recall that $\Delta_1(C)\leq0$ always and that $\Delta_{1+q/p}(B)>0$ by our assumption.
Moreover, in the current domain we have
$\sv^{1+q/p}\sw\leq\sw^{r}$.
Hence we may continue as
\begin{align}
\widetilde{H}^{(A,B,C)}_{\gmX}&[(u,v,w);(\alpha,\beta,\gamma)]\nonumber \\
&\geq
\label{eq: Mtkvari}
\frac{p\Delta_{p}(A)}2\su^{p}\sA^2
+\frac{D (1+q/p)\Delta_{1+q/p}(B)}{2}\sv^{1+q/p}\sw\sB^2
\\
&
\hskip 15pt
\label{eq: Narikala}
-(2D \Lambda)\sv^{1+q/p}\sw\sB\sC
+\left(\frac{E r\Delta_{r}(C)}2
 -\frac{D(r\Delta_{r}(C)+|\Delta_1(C)|)}{2(1+q/p)}
\right)
\sv^{1+q/p}\sw\sC^2.
\end{align}

The key term is the one containing $\sB^2$.
We will split it into a sum of two parts, by splitting the factor $1+q/p$ in that term: one summand (for example, the one containing 1) will be added to the bottom row \eqref{eq: Narikala} in order to make it nonnegative, while the other summand (containing $q/p$) will be left in the middle row \eqref{eq: Mtkvari} for the purpose of obtaining \eqref{eq:lowerest}.

In the current domain we have
$$
\su^{p-2}\geq\sw^{r(p-2)/p}=\sw^{r(1/q-1/p)}\cdot\sw\geq\sv^{1-q/p}\sw,
$$
therefore
$$
\su^{p}\sv^{1+q/p}\sw=\su^{p-2}\cdot\left(\su^{2}\sv^{1+q/p}\sw\right)\geq(\su\sv\sw)^2.
$$
Hence \eqref{eq: Mtkvari} can be estimated as
$$
\aligned
&\geq
\sqrt{Dq\Delta_{p}(A)\Delta_{1+q/p}(B)}\,
\su\sv\sw\sA\sB.
\endaligned
$$

Regarding \eqref{eq: Narikala}, when augmented by the other term with $\sB^2$ and divided by $\sv^{1+q/p}\sw$, it becomes
\begin{equation}
\label{eq: The Great British Bake Off}
\geq
\frac{D \Delta_{1+q/p}(B)}{2}\sB^2
-(2D \Lambda)\sB\sC
+\left(
  \frac{Er\Delta_{r}(C)}2
 -\frac{D(r\Delta_{r}(C)+|\Delta_1(C)|)}{2(1+q/p)}
\right)
\sC^2.
\end{equation}
Clearly, if $E$ is large enough, the above expression becomes nonnegative uniformly in $\sB,\sC$.

\subsubsection*{\underline{Domain \#3: $\sv^{q}<\su^{p}<\sw^{r}$}}
By combining \eqref{eq: Htilde} and Propositions \ref{p: Vinjerac}, \ref{p: Metajna} we get
\begin{align*}
\widetilde{H}^{(A,B,C)}_{\gmX}&[(u,v,w);(\alpha,\beta,\gamma)]\\
&\geq
\hskip 22.7pt
\su^{1+p/q}\sw
\left(
  \frac{ (1+p/q)\Delta_{1+p/q}(A)}{2}\sA^2
   -2\Lambda\sA\sC
   +  \frac{\Delta_1(C)}{2(1+p/q)}\sC^2
\right)\\
&
\hskip 15pt
+D\sv^{1+q/p}\sw
\left(
  \frac{ (1+q/p)\Delta_{1+q/p}(B)}{2}\sB^2
   -2\Lambda\sB\sC
   +  \frac{\Delta_1(C)}{2(1+q/p)}\sC^2
\right) \\
&
\hskip 15pt
+   \left(E -\frac{D +q/p}{1+q/p}\right)
\frac{r}2\Delta_{r}(C)\sw^{r}\sC^2.
\end{align*}

Similarly as before, we actually estimate explicitly the terms with $\sA^2$ and $\sB^2$, while the terms containing $\sC$ will just be dismissed as positive, which will be achieved by the splitting trick, used in domain \#2, and the parameter $E$ being sufficiently large. This means that we break up each of the terms containing $\sA^2$ or $\sB^2$ into a sum of two parts, by splitting the factors $1+p/q$ and $1+q/p$ in those terms, in the sense of treating each of the summands separately. 

Parts of the terms with $\sA^2$, $\sB^2$ containing the summand 1 from $1+p/q$ or $1+q/p$ will serve for dismissing the terms with $\sC$, for which we apply the estimates $\sv ^{1+q/p}\sw\leq\su^{1+p/q}\sw\leq\sw^{r}$.

On the other hand, parts of the terms with $\sA^2$, $\sB^2$ containing $p/q$ resp. $q/p$ will get estimated through the inequality $\su^{1+p/q}\sv^{1+q/p}\geq(\su \sv )^2$ as follows:
$$
\frac{\sw}{2}
\left(
\frac{p}{q}\Delta_{1+p/q}(A)\su^{1+p/q}\sA^2
+
\frac{Dq}{p} \Delta_{1+q/p}(B)\sv^{1+q/p}\sB^2
\right)
\geq\sqrt{ D  \Delta_{1+p/q}(A) \Delta_{1+q/p}(B) }\
\su \sv \sw \sA \sB .
$$

\subsubsection*{\underline{Domain \#4: $\su^{p}<\sv^{q}<\sw^{r}$}}

By combining \eqref{eq: Htilde} and Propositions \ref{p: Vinjerac}, \ref{p: Metajna}, \ref{p: Zhenodraga}, we have
\begin{align*}
\widetilde{H}^{(A,B,C)}_{\gmX}&[(u,v,w);(\alpha,\beta,\gamma)]\\
&\geq
\frac{\su^2\sv^{1-q/p}\sw}{2}
\bigg[
2\Delta_{2}(A)\sA^2+\frac{(1-q/p)^2}2\Delta_{1-q/p}(B)\sB^2+\frac{1}2\Delta_{1}(C)\sC^2
\\
&\hskip 100pt
-2\Lambda
\left(
2(1-q/p)\sA\sB
+2\sA\sC
+(1-q/p)\sB\sC
\right)\bigg]
\\
&\hskip 15pt
+\left(D  -\frac{1-q/p}{2}\right)\sv^{1+q/p}\sw
\left(
 \frac{ (1+q/p)\Delta_{1+q/p}(B)}{2}\sB^2
   -2\Lambda\sB\sC
   + \frac{\Delta_1(C)}{2(1+q/p)}\sC^2
\right)
\\
&\hskip 15pt
 +  \left(E -\frac{D + q/p}{1+q/p}\right)
\frac{r}2\Delta_{r}(C)\sw^{r}\sC^2.
\end{align*}

By using that in the current domain we have
$$
\su^{2}\sv^{1-q/p}\sw
\leq\su\sv\sw
\leq\sv^{1+q/p}\sw
\leq\sw^{r},
$$
let us rewrite the above estimate of $\widetilde{H}^{(A,B,C)}_{\gmX}$
in a manner that focuses on the essential:
for certain $\gamma_1,\dots,\gamma_{10}>0$, depending on $p,q,r$ and the $*$-ellipticity constants of $A,B,C$, we have
\begin{align*}
\ \hskip -300pt &&
\widetilde{H}^{(A,B,C)}_{\gmX}[(u,v,w);(\alpha,\beta,\gamma)]\hskip 150pt \\
&&\hskip -50pt \geq\hskip 50pt
 \gamma_1\ \su^{2}\sv^{1-q/p}\sw\ \sA^2\hskip 100pt\\
&&\hskip 10pt +(\gamma_2D-\gamma_3)\ \sv^{1+q/p}\sw\ \sB^2\hskip 100pt\\
&&\hskip 10pt +(\gamma_4E-\gamma_5D-\gamma_6)\ \sw^{r}\ \sC^2\hskip 100pt\\
&&\hskip 10pt -\gamma_7\ \su\sv\sw\ \sA\sB\hskip 100pt\\
&&\hskip 10pt - \gamma_8\ \su^{2}\sv^{1-q/p}\sw\ \sA\sC\hskip 100pt\\
&&\hskip 10pt -(\gamma_9D+\gamma_{10})\ \sv^{1+q/p}\sw\ \sB\sC.\hskip 100pt\!\!
\end{align*}
In order to obtain \eqref{eq:lowerest} we first estimate
$$
\aligned
\frac12
\left(
   \gamma_1\ \su^{2}\sv^{1-q/p}\sw\ \sA^2
   + (\gamma_2D-\gamma_3)\ \sv^{1+q/p}\sw\ \sB^2
\right)
&\geq \sqrt{\gamma_1(\gamma_2D-\gamma_3)}\su\sv\sw\sA\sB.
\endaligned
$$
Choose $D$ large enough so that $\sqrt{\gamma_1(\gamma_2D-\gamma_3)}>\gamma_7$.
Since in the current domain we also have
$$
\aligned
\sqrt{\su^{2}\sv^{1-q/p}\sw^{r+1}} & \geq \su^{2}\sv^{1-q/p}\sw\\
\sqrt{\sv^{1+q/p}\sw^{r+1}} & \geq \sv^{1+q/p}\sw,
\endaligned
$$
we may next choose $E$ large enough so that, simultaneously,
\begin{align*}
\frac12\left(
\gamma_1\ \su^{2}\sv^{1-q/p}\sw\ \sA^2
+ (\gamma_4E-\gamma_5D-\gamma_6)\ \sw^{r}\ \sC^2
\right)
&\geq\gamma_8\ \su^{2}\sv^{1-q/p}\sw\ \sA\sC\\
\frac12
\left(
 (\gamma_2D-\gamma_3)\ \sv^{1+q/p}\sw\ \sB^2
 + (\gamma_4E-\gamma_5D-\gamma_6)\ \sw^{r}\ \sC^2
\right)
& \geq (\gamma_9D+\gamma_{10})\ \sv^{1+q/p}\sw\ \sB\sC.
\end{align*}
By combining all the above inequalities, we prove, for such choices of $D$ and $E$, that
\begin{equation}
\label{eq: minuta}
\widetilde{H}^{(A,B,C)}_{\gmX}[(u,v,w);(\alpha,\beta,\gamma)]
\geq\left(
\sqrt{\gamma_1(\gamma_2D-\gamma_3)}-\gamma_7\right)\su\sv\sw\sA\sB.
\end{equation}
Recall that, by our choice of $D$, the constant in parentheses is strictly positive.

\subsubsection*{\underline{Domain \#5: $\su^{p}<\sw^{r}<\sv^{q}$}}
By combining \eqref{eq: Htilde} and Propositions \ref{p: Vinjerac} and \ref{p: Metajna}, we obtain
\begin{align*}
\widetilde{H}^{(A,B,C)}_{\gmX}&[(u,v,w);(\alpha,\beta,\gamma)] \\
& \geq
(1-q/p)
\su^{2}\sv^{q-2q/p}
\left(
\frac{\Delta_2(A)}{q-2q/p}\sA^2
-2\Lambda\sA\sB
+\frac{(q-2q/p)\Delta_{q-2q/p}(B)}{4}\sB^2
\right)\\
&\hskip 45pt
+ \su^{2} \sw^{r-2r/p}
\left(
\frac{\Delta_2(A)}{r-2r/p}\sA^2
-2\Lambda\sA\sC
+\frac{(r-2r/p)\Delta_{r-2r/p}(C)}{4}\sC^2
\right)\\
&\hskip 45pt
+  \left(D -\frac{1-q/p}{2}\right)
\frac{q}{2}\Delta_{q}(B)\sv^{q}\sB^2\\
&\hskip 45pt
+\left(E -1/2\right)
\frac{r}{2}\Delta_{r}(C)\sw^{r}\sC^2.
\end{align*}

In the current domain we have $\sw^{r}\geq \su^{2}\sw^{r-2r/p}$.
Furthermore, $p>q$ implies that $p>2$, therefore $r-2r/p>0$.
Putting this together, we see that the
sum of the third and the fifth line above
is nonnegative for $E$ sufficiently large.

Regarding the sum of the second and the fourth line,
use that $\su^{2}\sv^{q-2q/p}\leq\su\sv^{1+q/r}\leq\sv^{q}$, which returns (for $E$ large as explained above),
\begin{align*}
\widetilde{H}^{(A,B,C)}_{\gmX}&[(u,v,w);(\alpha,\beta,\gamma)] \\
& \geq
\frac{1/q-1/p}{1-2/p}\Delta_2(A)
\su^{2}\sv^{q-2q/p}\sA^2
-2\Lambda(1-q/p)\su\sv^{1+q/r}\sA\sB
\\
&\hskip 15pt
+ \frac{q}{2} \bigg[
\left(D -\frac{1-q/p}{2}\right)
\Delta_{q}(B)
-\frac{(1-q/p)(1-2/p)|\Delta_{q-2q/p}(B)|}{2}
\bigg]
\sv^{q}\sB^2.
\end{align*}
Since $\sqrt{\su^{2}\sv^{q-2q/p}\cdot\sv^{q}}=\su\sv^{1+q/r}\geq\su\sv\sw$,
the last two lines can be estimated as $\geqsim\su\sv\sw\sA\sB$ if $D$ is sufficiently large.

\subsubsection*{\underline{Domain \#6: $\sw^{r}<\su^{p}<\sv^{q}$}}
We have

\begin{align*}
\widetilde{H}^{(A,B,C)}_{\gmX}&[(u,v,w);(\alpha,\beta,\gamma)] \\
& \geq
\frac{1/r}{1-2/p}\cdot\frac{p}{2}\Delta_{p}(A)\su^{p}\sA^2
\\
&\hskip 15pt
+ (1-q/p)
\su^{2}\sv^{q-2q/p}
\left(
\frac{\Delta_2(A)}{q-2q/p}\sA^2
-2\Lambda\sA\sB+\frac{(q-2q/p)\Delta_{q-2q/p}(B)}{4}\sB^2
\right)\\
&\hskip 15pt
+  \left(D -\frac{1-q/p}{2}\right)
\frac{q}{2}\Delta_{q}(B)\sv^{q}\sB^2\\
&\hskip 15pt
+\frac{Er}{2}\Delta_{r}(C)\sw^{r}\sC^2.
\end{align*}
The first and the last term on the right-hand side
are positive by our assumptions, while the remaining two can be estimated exactly as in Domain \#5, since we have the same expression and the relations between $\su,\sv$ are also the same.

\subsection{\framebox{Case $p=q$}}
\label{subsec:case2}
Define
\renewcommand{\arraystretch}{1.2}
\begin{align*}
 \gX(u,v,w) :=
 \left\{
\begin{array}{ll}
[u]^p + [v]^p + E[w]^r;
& |w|^r \leqslant \min\{|u|^p,|v|^p\}, \\[1mm]
{\displaystyle
[u]^p + [v]^{2}[w] + (E-1/2)[w]^r};
& |v|^p \leqslant |w|^r \leqslant |u|^p, \\[1mm]
{\displaystyle
[u]^{2}[w] + [v]^p + (E-1/2)[w]^r};
& |u|^p \leqslant |w|^r\leqslant |v|^p, \\[1mm]
{\displaystyle
[u]^2[w] + [v]^{2}[w] +  (E-1)[w]^r};
& \max\{|u|^p,|v|^p\}\leqslant |w|^r.
\end{array}
\right.
\end{align*}
\renewcommand{\arraystretch}{1}
This function is of class $C^1$ on $\C^3$ and of class $C^2$ outside of $\Upsilon$. It satisfies estimates {\it \ref{eq: Burger})} and {\it \ref{eq: Fest})} of Theorem \ref{t: Voltaren}. Now let us prove {\it \ref{eq: sova})}.
This time we only have available the positivity of the following $*$-ellipticity constants: $\Delta_{p}(A)$, $\Delta_{p}(B)$, $\Delta_{r}(C)$ and, consequently, also that of $\Delta_{2}(A)$, $\Delta_{2}(B)$.

\subsubsection*{\underline{Domain \#1: $\sw^{r}<\min\{\su^{p},\sv^{p}\}$}}
We have
$$
\aligned
\widetilde{H}^{(A,B,C)}_{\gmX}[(u,v,w);(\alpha,\beta,\gamma)]
&\geq
   \frac{p}{2}  \Delta_{p}(A) \su^{p} \sA^2
+ \frac{p}{2}  \Delta_{p}(B) \sv^{p} \sB^2
+ \frac{Er}{2}  \Delta_{r}(C) \sw^{r} \sC^2
\\
&\geq
p\sqrt{\Delta_{p}(A)\Delta_{p}(B)\su^{p}\sv^{p}}\sA\sB+0.
\endaligned
$$
We conclude noting that $\su^{p}\geq\su^2\sw$ and $\sv^{p}\geq\sv^2\sw$.

\subsubsection*{\underline{Domain \#2: $\sv^{p}<\sw^{r}<\su^{p}$}}
We have
$$
\aligned
\widetilde{H}^{(A,B,C)}_{\gmX}[(u,v,w);(\alpha,\beta,\gamma)]
&\geq
 \frac{p}{2}  \Delta_{p}(A) \su^{p} \sA^2
  \\
&\hskip 15pt
+\frac{1}{2}\,\sv^{2}\sw
\left(2\Delta_2(B)\sB^2-4\Lambda(B,C)\sB\sC+\frac{\Delta_1(C)}{2}\sC^2\right)\\
&\hskip15pt
+\left(
       E-\frac{1}{2}
  \right)
\frac{r}{2}  \Delta_{r}(C) \sw^{r} \sC^2.
\endaligned
$$
By the splitting trick and using that $\sw^{r}\geq\sv^2\sw$ and $\su^{p}\sv^2\sw\geq(\su\sv\sw)^2$, we get the desired estimate for adequately large $E$.

\subsubsection*{\underline{Domain \#3: $\su^{p}<\sw^{r}<\sv^{p}$}}
This case is completely symmetric to the previous one (with respect to switching $\su\leftrightarrow\sv$) and so is the desired conclusion, so there is nothing left to prove.

\subsubsection*{\underline{Domain \#4: $\max\{\su^{p},\sv^{p}\}<\sw^{r}$}}
We have
$$
\aligned
\widetilde{H}^{(A,B,C)}_{\gmX}[(u,v,w);(\alpha,\beta,\gamma)]
&\geq
\frac{(E-1)r}{2}  \Delta_{r}(C) \sw^{r} \sC^2 \\
&\hskip 15pt
+\frac{1}{2}\,\su^{2}\sw
\left(
2\Delta_2(A)\sA^2-4\Lambda(A,C)\sA\sC+\frac{\Delta_1(C)}{2}\sC^2
\right)\\
&\hskip 15pt
+\frac{1}{2}\,\sv^{2}\sw
\left(
2\Delta_2(B)\sB^2-4\Lambda(B,C)\sB\sC+\frac{\Delta_1(C)}{2}\sC^2
\right).
\endaligned
$$
Again we split the two terms involving $\sA^2$ and $\sB^2$ into two halves. One part we estimate as
$$
\left(\Delta_2(A)\su^{2}\sA^2+\Delta_2(B)\sv^{2}\sB^2\right)\sw\geq2\sqrt{\Delta_2(A)\Delta_2(B)}\ \su\sv\sw\sA\sB.
$$
The remaining parts of the terms involving $\sA^2$ and $\sB^2$ we add to the other terms so as to make them
 nonnegative for large $E$, just as we have been doing in the case $p>q$ (this time we use that $\sw^{r}\geq\max\{\su^2\sw,\sv^2\sw\})$.

The proof of Theorem \ref{t: Voltaren} is now complete. \qed

\begin{remark}
Verification of convexity properties of the function $\gX$ performed in Sections~\ref{subsec:case1} and \ref{subsec:case2} can be automatized to a large extent using a \emph{computer algebra system}. In fact, this has already been done in \cite{KS18}, but for a weaker (i.e., purely scalar) type of convexity (case $A=B=C=I$). In \cite{KS18} a computer-based algebraic manipulation also helped in finding the exact formula for $\gX(u,v,w)$. In the present paper we included the ``manual'' verification of the aforementioned properties in order to show that their checking is actually manageable without any aid of computer. This even resulted in our simplifying a few lengthy expressions from \cite{KS18}. Still, we redid computer-based verification for our own peace of mind.
\end{remark}

\subsection{Supplementary estimate}
The next estimate reflects (i.e., is in the spirit of) \cite[(30)]{CD-Mixed}.
It may be viewed as a supplement to Theorem \ref{t: Voltaren}.
It will be used in Section \ref{s: general case}.

\begin{proposition}
\label{p: Drska:CCA 92:46}
Function $\gX$ admits the estimates
\begin{equation}
\label{eq: 20022020}
\aligned
\left|D^2\gX(u,v,w)\right|\ \leqsim
& \ \sv^{-(1-q/p)}\sw\\
& + \su ^{p/q}  + \su^{p-2} \\
& + \sv ^{q/p} + \sv^{q-2} \hskip3pt +\sv^{q(1-2/p)} \hskip2pt  + \sv\\
& \hskip 32pt + \sw^{r-2}  + \sw^{r(1-2/p)} + \sw.
\endaligned
\end{equation}
Here, as before, $(\su,\sv,\sw):=(|u|,|v|,|w|)$.

The implied constants depend on $p,q,r,A,B,C$ and their $*$-ellipticity constants.
\end{proposition}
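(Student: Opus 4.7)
The plan is to differentiate the explicit piecewise formula for $\gX$ directly. By the symmetry trick at the start of Section~\ref{s: RICE} we may assume $p\geq q$, and hence work with the six-piece formula of Subsection~\ref{subsec:case1}; the case $p=q$, using the four-piece formula of Subsection~\ref{subsec:case2}, proceeds analogously and is slightly simpler. Since $\gX$ is only of class $C^1$ on the null set $\Upsilon$, the estimate \eqref{eq: 20022020} is to be read in the almost-everywhere sense on $\C^3$.

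In the interior of each domain $\Omega_1,\dots,\Omega_6$ the function $\gX$ is a finite linear combination of monomials of the form $[u]^\alpha[v]^\beta[w]^\gamma$, so each entry of $D^2\gX$ is a corresponding linear combination of quantities of type $\su^{\alpha-2}\sv^\beta\sw^\gamma$, $\su^{\alpha-1}\sv^{\beta-1}\sw^\gamma$, and the other four analogues. The task then reduces to verifying, monomial by monomial and region by region, that every such expression is dominated by the sum on the right-hand side of \eqref{eq: 20022020}. For this I would repeatedly invoke the characterizing inequalities of each region together with the identity $1/p+1/q+1/r=1$, trading powers of one variable for powers of another.

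As a sample, take the monomial $(1-q/p)[u]^2[v]^{1-q/p}[w]$ appearing in $\Omega_4$. Its $\partial_v^2$-entry is of order $\su^2\sv^{-(1+q/p)}\sw$; invoking $\su^p\leq\sv^q$ (hence $\su^2\leq\sv^{2q/p}$) yields the bound $\sv^{-(1-q/p)}\sw$, the leading term of \eqref{eq: 20022020}. Its $\partial_u^2$-entry $\sv^{1-q/p}\sw$ is bounded, via $\sv^q\leq\sw^r$ and the relation $1/p+1/q+1/r=1$, by $\sw^{r(1-2/p)}$; its mixed entries reduce to $\sw$, $\sv$, and $\sv^{q/p}$ after applying $\su\leq\sv^{q/p}$. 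All remaining monomials in the six regions are treated in the same spirit. An occasional subtlety arises when the exponent $\sigma$ of some resulting $\sv^\sigma$ or $\sw^\sigma$ matches no term of the right-hand side directly: one then verifies that $\sigma$ lies between two admissible exponents, so that the expression is dominated by the sum of the two corresponding terms for every positive value of the variable. This is precisely why \eqref{eq: 20022020} pairs $\sv^{q-2}$ with $\sv^{q(1-2/p)}$ and $\sw^{r-2}$ with $\sw^{r(1-2/p)}$: the two members of each pair together control any intermediate power in both the small- and large-variable regimes.

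The main obstacle is the sheer volume of bookkeeping rather than any conceptual difficulty: six regions, up to four monomials per region, and up to six independent second-order entries per monomial, each of which must be matched against one of the ten summands on the right-hand side. In practice the workload is cut down substantially by the weighted homogeneity of $\gX$ noted in the Remark following its definition, together with the fact that many entries coincide across different monomials, so that only a handful of genuinely distinct inequalities needs to be established by hand.
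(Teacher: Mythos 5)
Your proposal is correct and follows essentially the same route as the paper's proof: differentiate the explicit piecewise formula for $\gX$ domain by domain, expand each entry of $D^2\gX$ into monomials $\su^{a}\sv^{b}\sw^{c}$, and use the domain's ordering inequalities together with $1/p+1/q+1/r=1$ to absorb each monomial into one of the summands of \eqref{eq: 20022020}. The only difference is cosmetic: the paper works out domain $\Omega_2$ as its sample while you work out $\Omega_4$ (your sample computation is correct), and both leave the remaining regions to the reader.
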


\begin{proof}
As usual when dealing with $\gX$, one considers each of the six defining domains separately. We will present the complete proof in one of them only (case $p>q$, domain \#2), as an exemplary sample; the others will be left out.

Recall that $\gX$ is constituted of tensor products of power functions $F_s(\zeta)=|\zeta|^s$ with 1, 2 or 3 nontrivial factors. Let us single out their estimates:
$$
\aligned
|D^2F_{p}(u)|\ & \leqsim\,\su^{p-2}\\
|D^2(F_{p}\otimes F_{t})(u,v)|\ &\leqsim\,\su^{p-2}\sv^{t}+\su^{p-1}\sv^{{t}-1}+\su^{p}\sv^{t-2}\\
|D^2(F_{p}\otimes F_{t}\otimes F_{s})(u,v,w)|
\ &\leqsim\hskip 4pt
\su^{p-2}\sv^{t}\sw^{s}+
\su^{p-1}\sv^{t-1}\sw^{s}+
\su^{p-1}\sv^{t}\sw^{s-1}\\
&\hskip 3pt+\su^{p}\sv^{t-1}\sw^{s-1}+
\su^{p}\sv^{t-2}\sw^{s}+
\su^{p}\sv^{t}\sw^{s-2}.
\endaligned
$$

\medskip
\underline{Upper estimate of $|D^2\gX|$ in case $p>q$, domain \#2.}

Here we have $\sv^q<\sw^r<\su^p$ and $\gX(u,v,w)$
is a linear combination of $\su^p$, $\sv^{1+q/p}\sw$ and  $\sw^r$.
We obtain
$$
|D^2\gX| \ \leqsim \ \su^{p-2}  + (\sv^{q/p-1}\sw + \sv^{q/p} + \sv^{1+q/p}\sw^{-1}) + \sw^{r-2}.
$$
Use that in the given domain we have
$
\sv^{1+q/p} \sw^{-1} \leq \sw^{r-2}.
$
Consequently,
$$
|D^2\gX| \leqsim \ \su^{p-2} + \sv^{q/p}  + \sw^{r-2} + \sv^{q/p-1}\sw.
$$

Of course, other terms in \eqref{eq: 20022020} are contributions of analogous estimates in other regions.
\end{proof}

\subsection{Auxiliary results on the H\"{o}lder triples of exponents}

Here we gather a few simple results regarding the types of triples $(p,q,r)$ that we are considering in this paper --- that is, triples of numbers $p,q,r\in(1,\infty)$ satisfying \eqref{eq:Hoeldexppqr}.
Such $(p,q,r)$ are said to be the {\it H\"{o}lder triple of exponents} or to fulfill the {\it H\"{o}lder scaling condition}.

The first two results below are motivated by the estimates of $D\gX$ and $D^2\gX$; see
Theorem \ref{t: Voltaren} {\it (\ref{eq: Fest})} and Proposition \ref{p: Drska:CCA 92:46}, respectively.
They
have very similar proofs, so we will only prove the second one of these two.

\begin{lemma}
\label{l: T34}
For a H\"{o}lder triple of exponents $p,q,r\in(1,\infty)$ satisfying $p\geq q$ denote
$$
\cP:=\left\{p-1,\,q-1,\,r-1,\,q(1-1/p),\,r(1-1/p),\,p(1-1/q),\,r(1-1/q)\right\}.
$$
Then
$$
\aligned
\max\cP&=\max\{p,r\}-1\\
\min\cP&=\min\{q,r\}-1.
\endaligned
$$
\end{lemma}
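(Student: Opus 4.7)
The plan is to reduce everything to a single elementary two-variable inequality, and then check it case by case against the seven elements of $\cP$. The trivial halves $\max\cP\geq\max\{p,r\}-1$ and $\min\cP\leq\min\{q,r\}-1$ are immediate from the presence of $p-1, q-1, r-1$ in $\cP$ (together with the hypothesis $p\geq q$, which forces $\max\{p-1,q-1,r-1\}=\max\{p,r\}-1$ and $\min\{p-1,q-1,r-1\}=\min\{q,r\}-1$). So the work lies in the other two halves.

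First I would record the key auxiliary inequality: for arbitrary $\alpha,\beta>1$,
\[
\min\{\alpha,\beta\}-1\;\leq\;\alpha\bigl(1-1/\beta\bigr)\;\leq\;\max\{\alpha,\beta\}-1.
\]
This follows from the two identities
\[
\alpha\bigl(1-1/\beta\bigr)-(\alpha-1)\;=\;1-\alpha/\beta,
\qquad
\alpha\bigl(1-1/\beta\bigr)-(\beta-1)\;=\;(\alpha-\beta)\bigl(1-1/\beta\bigr),
\]
each of which has a sign determined by how $\alpha$ compares with $\beta$. Note that the H\"older scaling is not used in this auxiliary estimate; only $\alpha,\beta>1$ is needed.

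Second, I would apply this estimate to the four ``mixed'' elements of $\cP$. The pairs $q(1-1/p)$ and $p(1-1/q)$ both sit in $[\min\{p,q\}-1,\max\{p,q\}-1]=[q-1,p-1]$, so they are sandwiched between two of the ``straight'' elements and cause no trouble. The element $r(1-1/p)$ lies in $[\min\{p,r\}-1,\max\{p,r\}-1]$, and since $p\geq q$ we have $\min\{p,r\}\geq\min\{q,r\}$, giving the lower bound $\min\{q,r\}-1$ on the one hand and the desired $\max\{p,r\}-1$ on the other. The element $r(1-1/q)$ lies in $[\min\{q,r\}-1,\max\{q,r\}-1]$, and since $q\leq p$ we have $\max\{q,r\}\leq\max\{p,r\}$, giving the desired bounds. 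Combining these observations with the trivial bounds on $p-1,q-1,r-1$ yields the two nontrivial halves simultaneously.

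I do not anticipate any genuine obstacle: once the auxiliary inequality is in place the argument is a short checklist of seven items. The only point requiring a moment's care is remembering to exploit the hypothesis $p\geq q$ in the two places where the ``mixed'' elements involve $r$, so that the $\min$/$\max$ from the auxiliary inequality can be enlarged or shrunk to match the claim.
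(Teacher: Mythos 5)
Your proof is correct and takes essentially the same elementary, case-by-case route that the paper uses for the companion Lemma~\ref{l: fade away} (the paper omits the proof of Lemma~\ref{l: T34} itself, stating only that the two proofs are very similar). Packaging the seven checks into the single sharp two-variable bound $\min\{\alpha,\beta\}-1\leq\alpha\bigl(1-1/\beta\bigr)\leq\max\{\alpha,\beta\}-1$, valid for all $\alpha,\beta>1$, is a clean and slightly more systematic way to run the checklist, and it makes transparent that only $p,q,r>1$ and $p\geq q$ are actually used here.
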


\begin{lemma}
\label{l: fade away}
For a H\"{o}lder triple of exponents $p,q,r\in(1,\infty)$ satisfying $p\geq q$ denote
$$
\cS:=
\left\{
p-2,\,q-2,\,r-2,\,1,\,q(1-2/p),\,r(1-2/p),p/q,q/p
\right\}.
$$
Then
$$
\aligned
\max\cS&=\max\{p,r\}-2\\
\min\cS&=\min\{q,r\}-2.
\endaligned
$$
\end{lemma}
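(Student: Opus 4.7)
The plan is to verify, element by element, that each of the eight members of $\cS$ lies in the interval $[\min\{q,r\}-2,\,\max\{p,r\}-2]$. The extremes are attained by $q-2,r-2$ and $p-2,r-2$ respectively, so only six comparisons remain. Before the case analysis I would record consequences of the H\"{o}lder relation $1/p+1/q+1/r=1$ combined with $p \geq q$: from $2/p \leq 1/p+1/q<1$ one gets $p>2$; comparing $1/p+1/q+1/r$ with $1$ gives $\max\{p,r\}\geq 3$ (if $p<3$ then $q\leq p<3$ forces $1/r<1/3$) and symmetrically $\min\{q,r\} \leq 3$; finally, substituting $1/p = 1-1/q-1/r$ and multiplying by $p$ or $q$ yields the H\"{o}lder identities
\[
p/q = p-1-p/r, \qquad q/p = q-1-q/r,
\]
which will drive every comparison involving ratios.

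For the upper bound, the easy cases are $q-2 \leq p-2$, $1 \leq \max\{p,r\}-2$, $q/p \leq 1 \leq \max\{p,r\}-2$, and $q(1-2/p) = q(p-2)/p \leq p-2$ (using $q \leq p$ and $p>2$); for $r(1-2/p) = r(p-2)/p$ I would split on $r \leq p$ (giving $\leq p-2$) versus $r > p$ (giving $\leq r-2$, since then $2r/p \geq 2$). For $p/q$, the H\"{o}lder identity produces the decompositions
\[
(p-2)-p/q = 1-p/r, \qquad (r-2)-p/q = \frac{(r-p)(r-1)}{r},
\]
whose nonnegativity toggles on $r \leq p$ and $r \geq p$ respectively, so one or the other always yields $p/q \leq \max\{p,r\}-2$.

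The lower bound is handled by the same bookkeeping. The trivial cases are $p-2 \geq q-2$, $1 \geq \min\{q,r\}-2$, and $p/q \geq 1 \geq \min\{q,r\}-2$. For $r(1-2/p)$ and $q(1-2/p)$ I would directly compute
\[
r(1-2/p)-(q-2) = \frac{p(r-q)+2(p-r)}{p}, \qquad q(1-2/p)-(r-2) = \frac{p(q-r)+2(p-q)}{p},
\]
and observe that in the subcases $q \leq r$ with $r \leq p$ and $r < q$ respectively, both summands in the numerator are nonnegative by the orderings in force. The residual subcase for the first identity, where $r > p \geq q$, follows from the chain $p(r-q) \geq p(r-p) \geq 2(r-p)$ using $p \geq q$ and $p > 2$. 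Finally, for $q/p$, the identity $q/p = q-1-q/r$ gives $q/p-(r-2) = (q-r)(r-1)/r$, nonnegative exactly when $q \geq r$, which settles the subcase $\min = r$; the subcase $\min = q \leq r$ reduces to $q \leq 2p/(p-1)$, an equivalent form of the H\"{o}lder constraint $1/p+2/q \geq 1/p+1/q+1/r = 1$.

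I do not anticipate any serious obstacle: the proof is bookkeeping once the H\"{o}lder identities are in place, and the main conceptual step is recognizing that every nontrivial comparison collapses either to a sign analysis of a product of the form $(x-y)(z-1)/z$ or to a sum of two signed products whose signs are dictated by the orderings among $p,q,r$ together with the hypothesis $p \geq q$. The structure is essentially parallel to that of Lemma \ref{l: T34}, so the two lemmas can be proven by the same template.
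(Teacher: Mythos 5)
Your argument is correct and takes essentially the same route as the paper's: both are element-by-element verifications of membership in $[\min\{q,r\}-2,\max\{p,r\}-2]$ driven by the H\"older relation (the paper organizes the upper bound around the single substitution $p/q=p(1-1/p-1/r)\leq M(1-2/M)=M-2$ with $M=\max\{p,r\}$ and gets the minimum by ``mirroring'', whereas you compute explicit differences and split on the ordering of $p,q,r$, but the content is the same). Two small slips to repair: the displayed identity $(p-2)-p/q=1-p/r$ should read $p/r-1$ (your subsequent sign analysis is consistent with the corrected version, which is nonnegative precisely when $r\leq p$); and in the lower bound you compare $r(1-2/p)$ only against $q-2$ and $q(1-2/p)$ only against $r-2$, omitting the cases $r(1-2/p)\geq r-2$ when $\min\{q,r\}=r$ and $q(1-2/p)\geq q-2$ when $\min\{q,r\}=q$ --- both are immediate, since $x(1-2/p)\geq x-2$ is equivalent to $x\leq p$, which holds for $x=q$ always and for $x=r$ in the subcase $r<q\leq p$.
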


\begin{proof}
Write $M:=\max\{p,r\}$ and $\gm:=\min\{q,r\}$.
Recall that $p>2$, as we saw on page \pageref{Transformers}.
Similarly, $\gm\leq3\leq M$.

Let us start with the first identity. We have:
\begin{itemize}
\item
$q-2\leq p-2\leq \max\{p-2,r-2\}= M-2$;
\item
$q(1-2/p)\leq p(1-2/p)=p-2\leq M-2;$
\item
$r(1-2/p)\leq M(1-2/M)=M-2$;
\item
$q/p\leq1\leq
p/q
=p(1-1/p-1/r)
\leq
M(1-1/M-1/M)=M-2$.
\hfill{\rm ($\spadesuit$)}
\end{itemize}

By mirroring the above inequalities we prove the second identity.
%
\end{proof}

The last auxiliary result of this section will be used for the proof of Corollary \ref{c: 5AM} below.

\begin{lemma}
\label{l: Ce rau am facut la lume}
Take a H\"{o}lder triple of exponents $p,q,r\in(1,\infty)$. Then
$$
p,q,r,1+p/q,1+q/p\in[M',M],
$$
where $M:=\max\{p,q,r\}$ and $M'$ is the conjugate exponent of $M$.
\end{lemma}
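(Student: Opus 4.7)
The plan is to separate the five bounds into two families and handle them by very different but equally short arguments. First, the bounds $p,q,r \leq M$ are immediate from the definition of $M$. For the lower bounds $p,q,r \geq M'$, I would use the Hölder scaling condition directly: from $1/p + 1/q + 1/r = 1$ and $q,r \leq M$ I get
$$
\frac{1}{p} = 1 - \frac{1}{q} - \frac{1}{r} \leq 1 - \frac{1}{M} = \frac{1}{M'},
$$
and the same reasoning applies to $q$ and $r$ by symmetry of the hypothesis.

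Next I would handle the mixed expressions $1+p/q$ and $1+q/p$ by exploiting the identity already noted in the remark following Theorem~\ref{t: trilinemb}, namely
$$
1 + \frac{p}{q} = \frac{p}{r'}, \qquad 1 + \frac{q}{p} = \frac{q}{r'},
$$
which is an easy algebraic consequence of $1/p+1/q+1/r=1$. Since $r>1$ implies $r' \geq 1$, we obtain
$$
1 + \frac{p}{q} = \frac{p}{r'} \leq p \leq M, \qquad 1 + \frac{q}{p} = \frac{q}{r'} \leq q \leq M,
$$
which gives the required upper bounds. For the lower bounds I would invoke that $1+p/q$ and $1+q/p$ are mutually conjugate exponents (also pointed out in the same remark); hence the upper bound $1+q/p \leq M$ is equivalent, by passing to conjugates, to the lower bound $1+p/q \geq M'$, and symmetrically for $1+q/p \geq M'$.

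There is essentially no obstacle here: the whole statement reduces to the identity $1+p/q = p/r'$ combined with the trivial inequalities $r' \geq 1$ and $p,q \leq M$. The only mildly non-obvious observation is the lower bound $p \geq M'$, and this is dispatched by retaining only one of the two reciprocals on the right-hand side of $1/p = 1 - 1/q - 1/r$ when estimating against $1/M$. No case analysis and no assumption like $p \geq q$ are needed, which is why this lemma slots in cleanly as a preparatory tool for the size estimates on $D\gX$ and $D^2\gX$.
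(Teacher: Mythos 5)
Your proof is correct; every step checks out. The upper bounds $p,q,r\leq M$ are definitional, the lower bound $1/p=1-1/q-1/r\leq 1-1/M=1/M'$ is valid because $1/q\geq 1/M$ (and you may even drop $1/r>0$ to get strict inequality), the identity $1+p/q=p/r'$ follows from $1/p+1/q=1/r'$, and the conjugacy of $1+p/q$ and $1+q/p$ correctly converts the upper bounds on one into the lower bounds on the other.

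Your route is genuinely different from the paper's. The paper first reduces to $p\geq q$ by symmetry, imports the chain of inequalities {\rm($\spadesuit$)} from the proof of Lemma~\ref{l: fade away} to get $2\leq 1+p/q\leq M-1$, and then handles $p,q,r\geq M'$ by a case analysis on whether $M=p$ or $M=r$, using $1/M'=1/q+1/r>\max\{1/q,1/r\}$. You avoid the normalization, the case split, and the dependence on the earlier lemma entirely: the estimate $1/p\leq 1-1/M$ treats $p,q,r$ symmetrically in one line, and the identity $1+p/q=p/r'$ (already recorded in the remark after Theorem~\ref{t: trilinemb}) gives the upper bound on the mixed exponents directly. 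What the paper's approach buys is that {\rm($\spadesuit$)} also yields the sharper bound $1+p/q\leq M-1$, which is what the neighbouring Lemmas~\ref{l: T34} and \ref{l: fade away} actually need; your argument only gives $1+p/q\leq M$, which is all that Lemma~\ref{l: Ce rau am facut la lume} asserts. For the statement as written, your proof is the cleaner and more self-contained of the two.
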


\begin{proof}
Since the assumptions and the desired conclusions of the lemma are invariant under the reversal of $p$ and $q$, we may assume that $p\geq q$.

From {\rm ($\spadesuit$)} above we see that $2\leq 1+p/q\leq M-1<M$. Since $1+q/p$ is the conjugate exponent to $1+p/q$, it follows that $M'\leq 1+q/p\leq2$.

It remains to show that $p,q,r\in[M',M]$. We prove this when $M=p$; the proof in the case $M=r$ is identical.
Since $1/p'=1/q+1/r$, one gets $1/p'>\max\{1/q,1/r\}$, meaning that $M'=p'<\min\{q,r\}$.
\end{proof}

\subsection{More about the constants in Theorem \ref{t: Voltaren}}
A scrutinous analysis of the proof of Theorem \ref{t: Voltaren} reveals information on the growth of constants there. We record some of these pieces of information here, as we may need them later.

\begin{proposition}
\label{p: Pat Riley}
Let $\gX$ be the Bellman function from Section \ref{subsec:case1} adapted to the case $p>q$ and $D,E$ the parameters that appear in its definition.
The estimates of $\gX$, as specified 
in Theorem \ref{t: Voltaren}, involve constants that can be bounded as follows:
\begin{itemize}
\item
in {\it \ref{eq: Burger})},
\begin{equation}
\gX \leqslant \frac{1}{p}|u|^p + \frac{D}{q}|v|^q + \frac{E}{r}|w|^r;
\label{eq:est_gX}
\end{equation}
\item
in {\it \ref{eq: sova})},
$$
H_\gmX^{(A,B,C)}
\geq
\left(\sqrt{\alpha_1D-\alpha_2}-\alpha_3\right)|w||\zeta||\eta|,
$$
where the constants $\alpha_{1,2,3}$ are all positive and depend on $p,q$ and the $*$-ellipticity and $L^\infty$ constants of $A,B$, while $D$ is chosen in a manner such that  $\sqrt{\alpha_1D-\alpha_2}-\alpha_3>0$.
For example,
\begin{equation}
\label{2544}
D=D_{p,q,A,B}=\frac{\alpha_2+(\alpha_3+1)^2}{\alpha_1}\,,
\end{equation}
making $\sqrt{\alpha_1D-\alpha_2}-\alpha_3=1$.
Such a choice is possible if $E$ is large enough.
\end{itemize}

Assume that $E$ is admissible in the sense of $\gX$ satisfying the conclusions of Theorem \ref{t: Voltaren}.
Then for any $0<\delta\leq1$, the constant $\delta^{-1}E$ is admissible if we replace the triple $(A,B,C)$ by ($A,B,\delta C)$.

\smallskip
In the special case $C=I$, we may arrange for $E$ to stay bounded as we move $r\rightarrow\infty$ in the sense of sending $\e\rightarrow0$ in \eqref{eq: Dallas vs. Clippers Game 6}.
\end{proposition}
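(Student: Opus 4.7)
All four assertions will follow from a closer reading of the computations already performed in Section~\ref{subsec:case1}. The estimate \eqref{eq:est_gX} I would verify region by region. On region~\#1, $\gX=[u]^p+D[v]^q+E[w]^r$ by definition, so equality holds. On each of the remaining five regions, $\gX$ is a linear combination of pure powers $[u]^p,\,[v]^q,\,[w]^r$ and of mixed monomials $[u]^a[v]^b[w]^c$ with $a/p+b/q+c/r=1$. The H\"older scaling is exactly the condition that permits Young's inequality to dominate each such mixed monomial by a convex combination of $[u]^p,\,[v]^q,\,[w]^r$ with the appropriate weights; direct inspection shows that the specific coefficients of $\gX$ are matched so that the resulting inequalities sum up to exactly \eqref{eq:est_gX}, with no surplus constants.

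For the lower bound on $H^{(A,B,C)}_\gmX$ I would revisit the six regions of Section~\ref{subsec:case1} and observe that the binding estimate is the one produced on region~\#4, namely \eqref{eq: minuta}, which already has the announced form once one sets $\alpha_1=\gamma_1\gamma_2$, $\alpha_2=\gamma_1\gamma_3$, $\alpha_3=\gamma_7$. In each of the other five regions, the lower bound is of the form $c(p,q,A,B)\sqrt{D}\,\su\sv\sw\sA\sB$ for some $D$-independent $c>0$, so it dominates the region~\#4 estimate once $D$ is large. The constants $\gamma_1,\gamma_2,\gamma_3,\gamma_7$ are built out of $p,q$ and $\Delta_p(A),\Delta_2(A),\Delta_{1+q/p}(B),\Delta_{1-q/p}(B),\Lambda(A),\Lambda(B)$, using a slightly refined version of Proposition~\ref{p: Zhenodraga} in which the cross term $|\alpha_i||\alpha_j|$ is bounded by $\sqrt{\Lambda(A_i)\Lambda(A_j)}$ rather than the uniform $\Lambda(A,B,C)$; the $C$-dependence has been absorbed into the coefficient of $\sC^2$ through the admissibility of $E$. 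The explicit expression \eqref{2544} is then nothing but the positive solution of the algebraic equation $\sqrt{\alpha_1 D-\alpha_2}-\alpha_3=1$.

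The scaling claim rests on the fact that admissibility of $E$ amounts to the nonnegativity of certain quadratic forms in $(\sA,\sB,\sC)$ extracted from the six regions. In each of them, the coefficient of $\sC^2$ has the shape $\frac{Er}{2}\Delta_r(C)-D\,\Psi\bigl(r\Delta_r(C),|\Delta_1(C)|\bigr)$ with $\Psi$ increasing in both arguments, while the cross terms involving $\sC$ have coefficients bounded in modulus by a multiple of $\Lambda(A,B,C)$. Upon replacing $C$ by $\delta C$ and simultaneously $E$ by $\delta^{-1}E$, the leading product $E\Delta_r(\delta C)=\delta^{-1}E\cdot\delta\Delta_r(C)$ is preserved by the degree-one homogeneity of $\Delta_r$, the subtracted $D$-term is shrunk by $\delta\leq 1$, and $\Lambda(A,B,\delta C)=\max\{\Lambda(A),\Lambda(B),\delta\Lambda(C)\}$ can only decrease. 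Consequently every discriminant inequality that made $E$ admissible for $(A,B,C)$ remains valid for $(A,B,\delta C)$ with constant $\delta^{-1}E$.

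For the final assertion, the decisive input is that with $C=I$ one has the explicit identities $r\Delta_r(I)=2$ for all $r\geq 2$, $|\Delta_1(I)|=0$, and $\Lambda(I)=1$. Substituting these into the admissibility inequalities obtained in the various regions makes every occurrence of $r$ disappear (or be replaced by the constant $2$), leaving finitely many inequalities on $E$ whose coefficients depend continuously on $p,q$ and on the $*$-ellipticity and $L^\infty$ data of $A,B$. Since along the limiting procedure of \eqref{eq: Dallas vs. Clippers Game 6} one has $p\to p_0$, $q\to q_0$ with $p_0,q_0$ conjugate, and the relevant ellipticity quantities $\Delta_p(A),\Delta_{1+q/p}(B),\ldots$ converge to positive limits, an $E$ satisfying all of them uniformly in $\e$ can be chosen. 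The main obstacle throughout the proof is the painstaking six-region case analysis, but the bookkeeping is routine once one has the explicit formulas of Section~\ref{subsec:case1} and the Hessian estimates of Propositions~\ref{p: Vinjerac}--\ref{p: Zhenodraga} at hand.
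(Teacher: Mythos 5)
Your proposal is correct and follows essentially the same route as the paper's own proof: region-by-region Young's inequality for \eqref{eq:est_gX} (the paper writes out only $\Omega_4$ in full), identification of \eqref{eq: minuta} on region~\#4 as the source of \eqref{2544}, degree-one homogeneity of $\Delta_r$ together with monotonicity of $\Lambda(A,B,\delta C)$ for the $\delta$-scaling claim (the paper spells out subdomain $\Omega_2$), and the identity $r\Delta_r(I)=2$ plus continuity of $\Delta_s(A)$ in $s$ for the $C=I$ limit. Your added remark, that the cross-term $|\alpha_1||\alpha_2|$ in Proposition~\ref{p: Zhenodraga} is actually governed by $\Lambda(A),\Lambda(B)$ alone (since $C$ never enters those off-diagonal entries), is a useful and correct observation — it is precisely what makes $\alpha_3$ genuinely independent of $C$ as claimed, a point the paper's written $\gamma$-constants on region~\#4 leave slightly implicit.
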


\begin{proof}
Consider each of the six domains separately and put together the calculations performed there. For parts \eqref{eq: Burger} and \eqref{eq: sova} they can be done rather easily. The size of $E$ is mostly determined by the requirement that the ``discarded'' terms in the calculation of $D^2\gX$ in each of the six subdomains -- indeed be positive. For example, in the subdomain $\Omega_4$, the identity \eqref{2544} reflects \eqref{eq: minuta} from page \pageref{eq: minuta}.

For the sake of completeness, we will present the proof of \eqref{eq:est_gX} in subdomain $\Omega_4$ where the function $\gX$ is defined as
\[\gX(u,v,w)=\frac{1}{2}|u|^2|v|^{1-q/p}|w|+\frac{1}{1+q/p}\left(D-\frac{1-q/p}{2}\right)|v|^{1+q/p}|w|+\left(\frac{E}{r}-\frac{D+q/p}{r(1+q/p)}\right)|w|^r.\]
We can apply Young's inequality to get
\begin{align*}
|u|^2|v|^{1-q/p}|w| &\leqslant  \frac{2}{p}|u|^p+\left(\frac{1}{q}-\frac{1}{p}\right)|v|^q+\frac{1}{r}|w|^r\\
|v|^{1+q/p}|w| &\leqslant \left(1-\frac{1}{r}\right)|v|^q+\frac{1}{r}|w|^r
\end{align*}
and hence
\begin{align*}
\gX(u,v,w) &\leqslant \frac{1}{2}\cdot \frac{2}{p}|u|^p+\left[\frac{1}{2}\left(\frac{1}{q}-\frac{1}{p}\right)+\frac{1}{1+q/p}\left(D-\frac{1-q/p}{2}\right)\cdot\left(1-\frac{1}{r}\right)\right]|v|^q \\
&\qquad + \left[\frac{1}{2r}+\frac{1}{r(1+q/p)}\left(D-\frac{1-q/p}{2}\right)+\frac{E}{r}-\frac{D+q/p}{r(1+q/p)}\right]|w|^r\\
&=\frac{1}{p}|u|^p + \frac{D}{q}|v|^q + \frac{E}{r}|w|^r,
\end{align*}
which gives us exactly \eqref{eq:est_gX}. All the other subdomains are treated in a similar way.

\smallskip
Let us now address the second statement. Again we present the proof in the case of only one subdomain, namely, $\Omega_2$. The principle that works there also applies to other subdomains.

Starting from \eqref{eq: The Great British Bake Off}, in order for it to become nonnegative, we see that it suffices for $E$ to satisfy the inequality
\begin{equation}
\label{eq: ramen}
\left(
  Er\Delta_{r}(C)
 -\frac{D(r\Delta_{r}(C)+|\Delta_1(C)|)}{1+q/p}
\right)
\Delta_{1+q/p}(B)
\geq
4D \Lambda^2.
\end{equation}
Replace $C$ by $\delta C$, write $E_\delta$ for the corresponding constant (that we are looking for),
and notice that $\Delta_s(\delta C)=\delta\Delta_s(C)$ for $\delta>0$. Eventually we get
$$
\delta\left(
E_\delta r\Delta_{r}( C)
 -\frac{D(r\Delta_{r}( C)+|\Delta_1( C)|)}{1+q/p}
\right)
\Delta_{1+q/p}(B)
\geq 
4D \Lambda^2.
$$
At the same time we have $\delta<1$, therefore it is sufficient to have the following inequality:
$$
\left(
\delta E_\delta r\Delta_{r}( C)
 -\frac{D(r\Delta_{r}( C)+|\Delta_1( C)|)}{1+q/p}
\right)
\Delta_{1+q/p}(B)
\geq
4D \Lambda^2.
$$
But this is exactly \eqref{eq: ramen} for $E_\delta=\delta^{-1}E$.

In fact we also used that for $\delta\in(0,1]$ we have $\Lambda(A,B,\delta C)\leq\Lambda(A,B,C)$.

\smallskip
Regarding the case $C=I$, the crucial observation is that in the inequalities determining $E$, we have $r,\Delta_r(C)$ appearing only as the product $r\Delta_r(C)$; see \eqref{eq: ramen} for the situation in the subdomain $\Omega_2$, for example. But for $r\geq2$ we have $r\Delta_r(I)=2$, \cite[(5.18)]{CD-DivForm},
which of course stays bounded as $r\rightarrow\infty$.
Finally use that $\Delta_s(A)$ depends on $s$ in a continuous manner.
\end{proof}

\section{Proof of Theorem \ref{t: trilinemb} for $\Omega=\R^d$}
\label{s: Extracorporeal Shock Wave Therapy}

As noted on page \pageref{koronavirus - skoro na rivu}, in case of $\Omega=\R^d$ we have $\oU=\oV=\oW=H_0^1(\R^{d})=H^1(\R^{d})$,  so that there we may drop the symbol for space from the notation for operators and semigroups. That is, we will write $L_A$ instead of $L_{A,\oU}$ and $T_t^A$ instead of $T_t^{A,\oU}$, and the same for other operators and semigroups.

We closely follow \cite[Section 6]{CD-DivForm}. We will for the moment {\it assume that the entries of $A,B,C$ are bounded $C^1$ functions with bounded derivatives}.
Once the proof for smooth $A,B,C$ is over, we will apply the regularization argument from \cite[Appendix]{CD-DivForm} to pass to the case of arbitrary (nonsmooth) $A,B,C$.
Also, we will initially work with $f,g,h\in C_c^\infty(\R^d)$ and then pass to $f,g,h$ as in the formulation of Theorem \ref{t: trilinemb}.

\subsection*{The heat flow}

Let $\gX$ be as in Theorem~\ref{t: Voltaren}. Here we use the $*$-ellipticity assumptions on $A,B,C$. Take $f,g,h\in C_c^\infty(\R^d)$. Suppose that $\psi\in C^\infty_c(\R^d)$ is radial, $\psi\equiv 1$ in the unit ball, $\psi\equiv 0$ outside the ball of radius $2$, and $0< \psi< 1$ elsewhere. For $R>0$ define the dilates $\psi_R(x) := \psi(x/R)$.

Let $\f_\nu$ be as in Section \ref{s: molto cantabile}.
Abbreviate $ \gX*\f_\nu= \gX_\nu$ and $\gamma_t=(T_{t}^{A}f,T_{t}^{B}g,T_{t}^{C}h)$.
With these choices made and fixed, define for $t\geq0$ the quantity
$\cE_{R,\nu}$ by
\begin{equation*}
\cE_{R,\nu}(t)=\int_{\R^{d}}\psi_R\cdot \gX_\nu(\gamma_t)\,.
\end{equation*}
This flow is regular \cite[Section 4.1]{CD-DivForm}. Fix $T>0$. We want to estimate the integral
\begin{equation}
\label{eq: Messiah}
-\int_0^T\cE_{R,\nu}'(t)\wrt t
\end{equation}
from above and below.

\subsection*{Upper estimate of the integral (\ref{eq: Messiah})}

Corollary \ref{c: Naklofen} {\it (\ref{eq: X1})} leads to
$$
-\int_0^T\cE_{R,\nu}'(t)\wrt t
\leqslant \cE_{R,\nu}(0)
=\int_{\R^d}\psi_R\cdot \gX_\nu(f,g,h)
\leqsim\int_{\R^d}\psi_R\left[(|f|+\nu)^{p}+(|g|+\nu)^{q}+ (|h|+\nu)^{r}\right].
$$
By the Lebesgue dominated convergence theorem we may send first $\nu\rightarrow0$ and then $R\rightarrow\infty$ and obtain
\begin{equation}
\label{eq: Velbon}
\limsup_{R\rightarrow\infty}\limsup_{\nu\rightarrow0}\left(-\int_{0}^T\cE_{R,\nu}'(t)\wrt t\right)
\leqsim
\nor{f}_{p}^{p}+\nor{g}_{q}^{q}+\nor{h}_{r}^{r}\,.
\end{equation}

\subsection*{Lower estimate of the integral (\ref{eq: Messiah})}
Recall the differential operators
$$
\partial_z=\frac{\partial_x-i\partial_y}2
\hskip30pt
\text{ and }
\hskip30pt
\partial_{\bar z}=\frac{\partial_x+i\partial_y}2.
$$

For $R>0$ define $\omega_R=\mn{x\in\R^{d}}{R\leqslant|x|\leq 2R}$,
so that ${\rm supp}\ \nabla\psi_R\subseteq \omega_R$.
By applying  \cite[Proposition 4.3]{CD-DivForm} and Corollary \ref{c: Naklofen} {\it (\ref{eq: X3})} we obtain
$$
\aligned
-&\int_0^T\cE_{R,\nu}'(t)\wrt t\ \geqsim
\int_0^T\int_{\R^d}\psi_R(|T_{t}^{C}h|-\nu)|\nabla T_{t}^{A}f||\nabla T_{t}^{B}g|\\
&+2\Re
\int_0^T\int_{\omega_R}
\Big( \left[(\partial_{\bar u} \gX_\nu)\circ\gamma\right]\cdot\sk{\nabla\psi_R}{A\nabla T_{t}^{A} f}_{\C^{d}}
\\
&\hskip 70pt
+\left[(\partial_{\bar v} \gX_\nu)\circ\gamma\right]\cdot\sk{\nabla\psi_R}{B\nabla T_{t}^{B} g}_{\C^{d}}
+\left[(\partial_{\bar w} \gX_\nu)\circ\gamma\right]\cdot\sk{\nabla\psi_R}{C\nabla T_{t}^{C} h}_{\C^{d}}
\Big).
\endaligned
$$
(Note that the assumption on the entries of $A,B,C$ being $C^1$ with bounded derivatives was used in order to justify applying \cite[Proposition 4.3]{CD-DivForm}.)
We would like to study this inequality as $\nu\rightarrow0$ and $R\rightarrow\infty$.
Since $ \gX$ is of class $C^1$, we have
$\partial_{\bar u} \gX_\nu\rightarrow \partial_{\bar u} \gX$,
$\partial_{\bar v} \gX_\nu\rightarrow \partial_{\bar v} \gX$ and
$\partial_{\bar w} \gX_\nu\rightarrow \partial_{\bar w} \gX$
pointwise on $\C^3$, as $\nu\rightarrow 0$.

Recall the estimates of Corollary \ref{c: Naklofen} {\it (\ref{eq: X2})}.
Thus, by using \cite[Lemma 6.1]{CD-DivForm} and the dominated convergence theorem,
\begin{eqnarray*}
&\hskip -120pt
{\displaystyle
  \liminf_{\nu\rightarrow 0}\left(-\int_0^T\cE_{R,\nu}'(t)\wrt t\right)
     \geqsim
  \int_0^T\int_{\R^d}\psi_R|\nabla T_{t}^{A}f||\nabla T_{t}^{B}g||T_{t}^{C}h|}\\
&\hskip -80pt{\displaystyle
+2\,\Re
\int_0^T\int_{\omega_R}
\Big( \left[(\partial_{\bar u} \gX)\circ\gamma\right]\cdot\sk{\nabla\psi_R}{A\nabla T_{t}^{A} f}_{\C^{d}}
}\\
&\hskip 60pt
{\displaystyle
+\left[(\partial_{\bar v} \gX)\circ\gamma\right]\cdot\sk{\nabla\psi_R}{B\nabla T_{t}^{B} g}_{\C^{d}}
+\left[(\partial_{\bar w} \gX)\circ\gamma\right]\cdot\sk{\nabla\psi_R}{C\nabla T_{t}^{C} h}_{\C^{d}}
\Big).}
\nonumber
\end{eqnarray*}
Hence, by Theorem \ref{t: Voltaren} {\it(\ref{eq: Fest})}, the second part of \cite[Lemma 6.1]{CD-DivForm} and Fatou's lemma,
\begin{equation}
\label{eq: Sherpa 4370D}
\aligned
\liminf_{R\rightarrow \infty}\liminf_{\nu\rightarrow 0}
\left(-\int_0^T\cE_{R,\nu}'(t)\wrt t\right)
& \geqsim
\int_0^T\int_{\R^{d}}|\nabla T_{t}^{A}f||\nabla T_{t}^{B}g||T_{t}^{C}h|.
\endaligned
\end{equation}

\subsection*{Summary (for smooth $A,B,C$)}
The combination of \eqref{eq: Velbon} and \eqref{eq: Sherpa 4370D} immediately gives
$$
\int_0^T\int_{\R^{d}}|\nabla T_{t}^{A}f||\nabla T_{t}^{B}g||T_{t}^{C}h|\ \leqsim\
\nor{f}_{p}^{p}+\nor{g}_{q}^{q}+\nor{h}_{r}^{r}\,.
$$
Now replace $f,g,h$ by $a_1 f,a_2 g, a_3 h$ where $a_j$ are positive numbers such that $a_1a_2a_3=1$.
\label{polar1}
After minimizing over all triples $(a_1,a_2, a_3)$ as above and applying Lemma \ref{l: Mos Def} below, we
get
$$
\int_0^T\int_{\R^{d}}|\nabla T_{t}^{A}f||\nabla T_{t}^{B}g||T_{t}^{C}h|\ \leqsim\
\nor{f}_{p}\nor{g}_{q}\nor{h}_{r}\,.
$$
At this point send $T\rightarrow\infty$ and use the monotone convergence theorem. This gives the trilinear embedding for {\it smooth} $A,B,C$.

\subsection*{Proof for nonsmooth $A,B,C$}
In order to extend the trilinear embedding to arbitrary (nonsmooth) $A,B$ and $C$, we adapt the argument from \cite[Section 6]{CD-DivForm} as follows. We shall assume the notation from \cite[Appendix]{CD-DivForm}.

Suppose that $A,B,C$ satisfy conditions ({\Large $\star$}) of Theorem \ref{t: trilinemb} and let $A_\e,B_\e,C_\e$ be their smooth approximations as in \cite[Section A.1]{CD-DivForm}. It follows from \cite[Lemma A.5 {\it (iv)}]{CD-DivForm} that, for sufficiently small $\e>0$, matrices $A_\e,B_\e,C_\e$ also satisfy  ({\Large $\star$}). We will work with such $\e$.

Denote
$$
\wt{f}^{A}(x,t):=\big(P_t^Af\big)(x).
$$

Choose $0<a<b<\infty$.
Applying the Minkowski's integral inequality to the second identity from the proof of \cite[Lemma A.4]{CD-DivForm} gives
$$
\aligned
\nor{\nabla_x \wt{f}^{A}-\nabla_x \wt{f}^{A_\e}}_{L^2(\R^d\times(a,b))}
&=\frac1{2\pi}\left(
\int_{\R^d\times(a,b)}\left|\int_\gamma e^{-t\zeta}\nabla U(\e,\zeta)f(x)\wrt \zeta\right|^2\wrt x\wrt t\right)^{1/2}\\
&\leq\frac1{2\pi}
\int_\gamma
\left(
\int_{\R^d\times(a,b)}
\left|e^{-t\zeta}\nabla U(\e,\zeta)f(x)\right|^2
\wrt x\wrt t
\right)^{1/2}
\wrt |\zeta|\\
&=\frac1{2\pi}
\int_\gamma
\left(
\int_a^b
e^{-2t\,\Re\zeta}\wrt t
\right)^{1/2}
\left(
\int_{\R^d}
\left|\nabla U(\e,\zeta)f(x)\right|^2
\wrt x
\right)^{1/2}
\wrt |\zeta|.
\endaligned
$$
Write
$$
F(\delta)=F_{a,b}(\delta):=\int_a^b
e^{-2t\delta}\wrt t.
$$
Of course, $F$ is a continuous function on $\R$ that can be expressed explicitly.
 On the other hand, by \cite[Lemma A.1]{CD-DivForm} we have
$$
\left(
\int_{\R^d}
\left|\nabla U(\e,\zeta)f(x)\right|^2
\wrt x
\right)^{1/2}
\leqsim
\nor{M_{A-A_\e}T_0(\zeta)f}_{L^2(\R^d)},
$$
with implicit constants depending on $A$.
Therefore
\begin{equation}
\label{eq: karantena dan 1}
\nor{\nabla_x \wt{f}^{A}-\nabla_x \wt{f}^{A_\e}}_{L^2(\R^d\times(a,b))}
\leqsim
\int_\gamma
\sqrt{F(\Re\zeta)}
\nor{M_{A-A_\e}T_0(\zeta)f}_{L^2(\R^d)}
\wrt |\zeta|.
\end{equation}
Since $\Lambda(A),\Lambda(A_\e)\leq\Lambda$, we have
$
\nor{M_{A-A_\e}T_0(\zeta)f}_{L^2(\R^d)}
\leqsim
\nor{T_0(\zeta)f}_{L^2(\R^d)}
\leqsim
|\zeta|^{-1/2}\nor{f}_{L^2(\R^d)}
$
as in \cite[proof of Lemma A.4]{CD-DivForm}.
Hence the integrands in \eqref{eq: karantena dan 1} admit, uniformly in $\e>0$, the majorant
$\sqrt{F(\Re\zeta)/|\zeta|}$,
which belongs to $L^1(\gamma,\textup{d}|\zeta|)$. Thus we may use Lebesgue's dominated convergence when passing $\e\rightarrow0$. Since for $\zeta\in\gamma$ we have $\nor{M_{A-A_\e}T_0(\zeta)f}_{L^2(\R^d)}\rightarrow0$ as $\e\rightarrow0$ \cite[proof of Lemma A.4]{CD-DivForm}, we conclude that
$$
\lim_{\e\rightarrow0}
\nor{\nabla_x \wt{f}^{A}-\nabla_x \wt{f}^{A_\e}}_{L^2(\R^d\times(a,b))}
=0.
$$
In  a similar fashion we show that
$$
\lim_{\e\rightarrow0}
\nor{\wt{f}^{A}-\wt{f}^{A_\e}}_{L^2(\R^d\times(a,b))}
=0.
$$
That is,
$$
\wt{f}^{A}=\lim_{\e\rightarrow0}
\wt{f}^{A_\e}
\hskip 30pt
\text{in }H^1\big(\R^d\times(a,b)\big).
$$
Of course, the same is valid for $B,B_\e$ and $C,C_\e$.

Now a standard theorem in measure theory implies that there exists a sequence $(\e_l)_{l\in\N}$ such that
$\wt{f}^{A_{\e_l}}\rightarrow \wt{f}^{A}$ and $\nabla\wt{f}^{A_{\e_l}}\rightarrow \nabla\wt{f}^{A}$ as $l\rightarrow\infty$ almost everywhere on $\R^d\times(a,b)$, and the same for $(B,g)$ and $(C,h)$. Consequently, Fatou's lemma gives
$$
\aligned
\int_a^b\int_{\R^{d}}|\nabla T_{t}^{A}f||\nabla T_{t}^{B}g||T_{t}^{C}h|\wrt x\wrt t
&\leq\liminf_{l\rightarrow\infty}
\int_a^b\int_{\R^{d}}|\nabla T_{t}^{A_{\e_l}}f||\nabla T_{t}^{B_{\e_l}}g||T_{t}^{C_{\e_l}}h|\wrt x\wrt t\\
&\leq\liminf_{l\rightarrow\infty}
\int_0^\infty\int_{\R^{d}}|\nabla T_{t}^{A_{\e_l}}f||\nabla T_{t}^{B_{\e_l}}g||T_{t}^{C_{\e_l}}h|\wrt x\wrt t.
\endaligned
$$
Recall that we have already established the trilinear embedding for triples of smooth matrices and that $\Delta_s(A_\e)\rightarrow\Delta_s(A)$ as $\e\rightarrow0$ \cite[Lemma A.5 {\it (iv)}]{CD-DivForm}.
Passing $a\rightarrow0$ and $b\rightarrow\infty$ finishes the proof of Theorem \ref{t: trilinemb} for functions in $f,g,h\in C_c^\infty$.
It remains to prove it for a larger class of functions, as specified in the formulation of the theorem.

\subsection{Proof for arbitrary $f,g,h$}
\label{s: Jugoplastika : Denver 1989}

We assume that the trilinear embedding \eqref{eq: trilinemb} holds for triples of functions from $C_c^\infty(\R^d)$.
Take $f\in\left(L^p\cap L^2\right)(\R^d)$.
Let $\big(f_n\big)_{n\in\N}$ be such a sequence in $C_c^\infty(\R^d)$ that $f_n\rightarrow f$ in both $L^p$ and $L^2$. (Scrutinous reading of the proof of convergence in a single $L^r$ reveals that simultaneous convergence in a pair of Lebesgue spaces may be achieved.) By the {\it bilinear embedding theorem} \cite[Theorem 1.1]{CD-DivForm} on $L^2\times L^2$ we conclude that $\big(\nabla\wt{f}_n^A\big)_{n\in\N}$ is a Cauchy sequence in $L^2\big(\R^d\times(0,\infty)\big)$.
Therefore it is Cauchy in $\gH=\gH_M:=L^2\big(\R^d\times(0,M)\big)$ for every fixed $M>0$. Consequently, there exists $\Psi\in\gH$ such that $\nabla\wt{f}_n^A\rightarrow\Psi$ in $\gH$ as $n\rightarrow\infty$.
At the same time we know that the semigroup $(T_t^Af)_{t>0}$ is contractive on $L^2(\R^d)$, which implies that $\wt{f}_n^A\rightarrow\wt{f}^A$ in $\gH$ as $n\rightarrow\infty$. Here we use that we work on $\R^d\times(0,M)$ instead of working on $\R^d\times(0,\infty)$ straight away. Hence
$\big(\wt{f}_n^A\big)_{n\in\N}$ is a Cauchy sequence in $H^1\big(\R^d\times(0,M)\big)$. By using completeness of $H^1$, the fact that convergence in $H^1$ implies convergence in $\gH$ and, once again, that $\wt{f}_n^A\rightarrow\wt{f}^A$ in $\gH$, we conclude that $\wt{f}_n^A\rightarrow\wt{f}^A$ in $H^1\big(\R^d\times(0,M)\big)$. In particular, $\nabla\wt{f}_n^A\rightarrow\nabla\wt{f}^A$ in $\gH$. Now a standard theorem provides the existence of an increasing sequence of positive integers $(n_k)_{k\in\N}$ so that
\renewcommand{\arraystretch}{1.5}
\begin{equation}
\label{eq: SR71}
\text{a.e. }(x,t)\in\R^d\times(0,M):
\hskip 20pt
\left\{
\begin{array}{rcl}
\wt{f}_{n_k}^A&\rightarrow&\wt{f}^A\\
\nabla\wt{f}_{n_k}^A&\rightarrow&\nabla\wt{f}^A
\end{array}
\right.
\hskip 20pt
\text{as }k\rightarrow\infty.
\end{equation}
\renewcommand{\arraystretch}{1.0}
Of course, we can analogously find $g_n,h_n\in C_c^\infty(\R^d)$ and arrange that \eqref{eq: SR71} also holds (for the same subsequence) for $(g,B)$ and $(h,C)$. Now use that
$$
\int_0^M\int_{\Omega}
\left| \nabla T_t^{A} f _n\right|
\left| \nabla T_t^{B} g_n \right|
\left| T_t^{C} h_n \right|
\wrt x\wrt t
\ \leqsim\
\nor{f_n}_{p}\nor{g_n}_{q}\nor{h_n}_{r},
$$
apply \eqref{eq: SR71}, Fatou's lemma and the fact that $f_n\rightarrow f$ in $L^2(\R^d)$ and the same for $g,h$. Since the implied constants do not depend on $M$, we may finally send $M\rightarrow\infty$ and complete the proof of Theorem \ref{t: trilinemb} for
$f\in L^p\cap L^2 $,
$g\in L^q\cap L^2 $
and
$h\in L^r\cap L^2 $.
\hfill
\qed


\begin{lemma}
\label{l: Mos Def}
Suppose that
the numbers $p_1,\hdots,p_n>1$ are related by
$$
\frac{1}{p_1}+\hdots+\frac{1}{p_n}=1.
$$
Then for every sequence ${\mathsf f}_1,\hdots,{\mathsf f}_n>0$ we have
$$
\min\mn{\sum_{j=1}^n(a_j{\mathsf f}_j)^{p_j}}{a_1,\ldots,a_n>0, \prod_{j=1}^na_j=1}=\prod_{j=1}^np_j^{1/p_j}{\mathsf f}_j.
$$
\end{lemma}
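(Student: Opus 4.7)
The plan is to obtain the lower bound via weighted AM--GM and then verify that equality can actually be attained at some admissible point $(a_1,\dots,a_n)$, which identifies the infimum with the claimed value.

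First I would apply the weighted arithmetic--geometric mean inequality with weights $w_j = 1/p_j$ (which sum to $1$ by hypothesis) and quantities $y_j = p_j (a_j \mathsf{f}_j)^{p_j}$. This yields
\begin{equation*}
\sum_{j=1}^n (a_j \mathsf{f}_j)^{p_j}
= \sum_{j=1}^n w_j y_j
\geq \prod_{j=1}^n y_j^{w_j}
= \prod_{j=1}^n p_j^{1/p_j} (a_j \mathsf{f}_j)
= \Big(\prod_{j=1}^n a_j\Big) \prod_{j=1}^n p_j^{1/p_j} \mathsf{f}_j
= \prod_{j=1}^n p_j^{1/p_j} \mathsf{f}_j,
\end{equation*}
where the last equality uses the constraint $\prod_j a_j = 1$. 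This already gives the ``$\geq$'' half of the claim.

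Next I would show the bound is sharp. Equality in the weighted AM--GM step occurs precisely when all $y_j$ coincide, i.e.\ when $p_j (a_j \mathsf{f}_j)^{p_j}$ is independent of $j$. Forcing the common value to equal the right-hand side already computed, one obtains the explicit candidate
\begin{equation*}
a_j = \frac{1}{\mathsf{f}_j}\, p_j^{-1/p_j} \prod_{k=1}^n \big(p_k^{1/p_k}\mathsf{f}_k\big)^{1/p_j}, \qquad j=1,\dots,n.
\end{equation*}
A direct calculation, again using $\sum_j 1/p_j = 1$, shows that $\prod_j a_j = 1$, so this candidate is admissible; by construction, plugging it into the sum recovers $\prod_{j} p_j^{1/p_j} \mathsf{f}_j$.

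There is no real obstacle here; the only point worth care is bookkeeping the exponents in the equality case so as to confirm that the constraint $\prod a_j = 1$ is met by the extremal configuration, rather than contradicted by it. An alternative route via Lagrange multipliers (differentiating $\sum_j (a_j \mathsf{f}_j)^{p_j} - \mu \log \prod_j a_j$) gives the same critical point and then a convexity argument would identify it as the minimum, but the weighted AM--GM derivation is cleaner and avoids any compactness/coercivity discussion.
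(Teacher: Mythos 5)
Your proof is correct and follows essentially the same route as the paper: the weighted AM--GM step is exactly the $n$-factor Young's inequality the paper invokes, and your explicit minimizer $a_j = p_j^{-1/p_j}\mathsf{f}_j^{-1}\prod_k (p_k^{1/p_k}\mathsf{f}_k)^{1/p_j}$ coincides with the one given in the paper's proof.
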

\begin{proof}
The lemma follows from Young's inequality and, on the other hand, the choice of
\begin{equation*}
a_j=\frac{\left(\prod_{k=1}^{n}p_k^{1/p_k}{\mathsf f}_k\right)^{1/p_j}}{p_j^{1/p_j}{\mathsf f}_j}
\end{equation*}
which confirms that the minimum is actually attained.
\end{proof}

Let us finally prove that the assumptions ({\Large $\star$}) of Theorem \ref{t: trilinemb}, pertaining to the case $\Omega=\R^d$, are indeed milder than $A,B,C\in \cA_{\max\{p,q,r\}}(\R^d)$, as indicated in the formulation of Theorem
\ref{t: trilinemb}.
As a matter of fact, this is a quick consequence of Lemma \ref{l: Ce rau am facut la lume}.

\begin{corollary}
\label{c: 5AM}
Let $p,q,r\in(1,\infty)$ be such that $1/p+1/q+1/r=1$. If $A$ is $\max\{p,q,r\}$-elliptic, then $A$ is $t$-elliptic for any $t\in\{p,q,r,1+p/q,1+q/p\}$.
\end{corollary}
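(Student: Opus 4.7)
The plan is to combine Lemma \ref{l: Ce rau am facut la lume} with the two structural properties of the classes $\cA_{p}(\Omega)$ recalled in Section \ref{s: Tendinitis Achilaris dex}: conjugation invariance $\Delta_{p}(A)=\Delta_{p'}(A)$, and the fact that $\{\cA_{p}(\Omega)\}_{p\in[2,\infty)}$ is a \emph{decreasing} chain.

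First, I would set $M:=\max\{p,q,r\}$ and let $M'$ denote its conjugate exponent. By Lemma \ref{l: Ce rau am facut la lume}, each $t\in\{p,q,r,1+p/q,1+q/p\}$ lies in the interval $[M',M]$. Since $M\geq 3>2$ (because $1/p+1/q+1/r=1$ forces $M\geq 3$), we have $M'\leq 3/2<2<M$, so the interval $[M',M]$ contains $2$.

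Next, introduce $t^{*}:=\max\{t,t'\}$. Because $t\in[M',M]$, the number $t^{*}$ lies in $[2,M]$: indeed, either $t\geq 2$, in which case $t^{*}=t\leq M$, or $t<2$, in which case $t^{*}=t'\leq(M')'=M$. By the conjugation invariance of $\Delta_{\cdot}(A)$ noted in Section \ref{s: Tendinitis Achilaris dex}, the matrix $A$ is $t$-elliptic if and only if it is $t^{*}$-elliptic. Finally, since $t^{*},M\in[2,\infty)$ with $t^{*}\leq M$, the monotonicity of the chain $\{\cA_{p}(\Omega)\}_{p\in[2,\infty)}$ yields $\cA_{M}(\Omega)\subseteq\cA_{t^{*}}(\Omega)$. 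Thus the hypothesis $A\in\cA_{M}(\Omega)$ gives $A\in\cA_{t^{*}}(\Omega)=\cA_{t}(\Omega)$, as desired.

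There is essentially no obstacle here beyond bookkeeping: the only nontrivial input is Lemma \ref{l: Ce rau am facut la lume}, which bounds all five exponents between $M'$ and $M$, and the rest is a symmetric-reduction to the ``upper'' range $[2,M]$ on which $p$-ellipticity is a monotone condition.
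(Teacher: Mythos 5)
Your proof is correct and takes exactly the same route as the paper's: invoke Lemma \ref{l: Ce rau am facut la lume} to place each exponent in $[M',M]$, then use conjugation invariance together with the monotonicity of $\Delta_t(A)$ on $[2,\infty)$. The paper merely states this in one sentence; your version just makes the bookkeeping explicit.
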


\begin{proof}
This follows from Lemma \ref{l: Ce rau am facut la lume} and the fact that $\Delta_t(A)$ is invariant under conjugation of $t$ and, as a function of $t$, decreases on $[2,\infty)$.
\end{proof}

The proof of Theorem \ref{t: trilinemb} for $\Omega=\R^d$ is now complete.

\section{Proof of Theorem \ref{t: trilinemb} for general $\Omega$}
\label{s: general case}

For the sake of simplifying the notation, we will only prove the theorem in the case when $\oU=\oV=\oW$. The proof of the general case is exactly the same.
Consequently, we will again omit writing $\oU,\oV,\oW$ in the notation, just like we did in Section \ref{s: Extracorporeal Shock Wave Therapy}.
We will occasionally abbreviate $\bA=(A,B,C)$.

We first remark that in order to pass from $\R^d$ to arbitrary open sets there, we cannot merely imitate Section \ref{s: Extracorporeal Shock Wave Therapy}. The reason is explained in \cite[Section 1.4]{CD-Mixed}.
Instead, here we closely follow \cite{CD-Mixed}, adapting the argument there to the current trilinear context.
Hence in this section we will always assume that $\Omega\subseteq \R^{d}$ is open and $\oU$ is one of the subspaces of Section~\ref{s: Heian}.

We first recall a basic result on $L^p$ extensions of semigroups $(T^{A}_{t})_{t>0}$.

\begin{proposition}
\cite[Lemma 17]{CD-Mixed}
\label{p: Zbunjen i osamucen}
Let $s\geq 2$ and $A\in\cA_s(\Omega)$. There exists $\vartheta=\vartheta(s)\in (0,\pi/2)$ such that $(T^{A}_{t})_{t>0}$ for any $\wp\in [s',s]$ extends to a semigroup which is analytic and contractive in $L^{\wp}(\Omega)$ in the cone $\mn{z\in\C\setminus\{0\}}{|\arg z|<\vartheta}$.
We denote the negative generator of $(T^{A}_{t})_{t>0}$ on $L^{\wp}$ by $L_A^{(\wp)}$.
\end{proposition}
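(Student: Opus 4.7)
The plan is to leverage the $p$-ellipticity hypothesis to verify the classical form-sectoriality condition on $L^{\wp}$, which will then yield both the contractivity and the analyticity in a sector.

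First, I would observe that the hypothesis $A\in\cA_{s}(\Omega)$ with $s\geq2$ actually gives $A\in\cA_{\wp}(\Omega)$ for every $\wp\in[s',s]$. This is immediate from two properties of $p$-ellipticity recalled in Section~\ref{s: Tendinitis Achilaris dex}: the chain $\{\cA_{p}(\Omega)\}_{p\in[2,\infty)}$ is decreasing, and $\Delta_{p}(A)=\Delta_{p'}(A)$. So for the rest of the argument, I may assume $A\in\cA_{\wp}(\Omega)$ for the $\wp$ of interest.

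Next, for a sufficiently regular $u$ (so that $|u|^{\wp-2}u$ is an admissible test function in $\oU$) I would rewrite
\[
\gota(u,|u|^{\wp-2}u)=\int_{\Omega}|u|^{\wp-2}\bigl\langle A\nabla u,\,\nabla u+(1-2/\wp)\overline{\sign u}\,\nabla u\bigr\rangle_{\C^{d}},
\]
up to a truncation/regularization to handle the zero set of $u$, and identify the integrand as the pointwise pairing featuring the operator $\cI_{\wp}$ from \eqref{eq: I_p}. The $\wp$-ellipticity condition \eqref{eq: Sparky 21} then yields
\[
\Re\,\gota(u,|u|^{\wp-2}u)\,\geq\,\Delta_{\wp}(A)\!\int_{\Omega}|u|^{\wp-2}|\nabla u|^{2},
\]
while the upper bound \eqref{eq: boundedness} gives the companion estimate
\[
\bigl|\gota(u,|u|^{\wp-2}u)\bigr|\,\leq\,C_{\wp}\,\Lambda(A)\!\int_{\Omega}|u|^{\wp-2}|\nabla u|^{2}.
\]
Combining these two bounds produces an $L^{\wp}$-sectoriality inequality
\[
\bigl|\Im\,\gota(u,|u|^{\wp-2}u)\bigr|\,\leq\,(\tan\vartheta)\,\Re\,\gota(u,|u|^{\wp-2}u),
\]
with $\tan\vartheta=\tan\vartheta(s)$ depending only on $\wp,\Delta_{\wp}(A),\Lambda(A)$ (hence, after taking the worst case, only on $s$ and the ellipticity constants).

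Finally, I would feed this sectoriality condition into the classical Langer--Maz'ya/Nittka--Ouhabaz framework for positivity and sector preservation under semigroup action: the estimate above implies that the restriction of $T_{t}^{A}$ to $L^{2}\cap L^{\wp}$ is contractive for $t>0$ on $L^{\wp}$, and in fact remains contractive for complex $t$ in the cone $\{|\arg z|<\vartheta\}$, by repeating the computation with $e^{i\theta}A$ in place of $A$ for $|\theta|<\vartheta$. Density of $L^{2}\cap L^{\wp}$ in $L^{\wp}$ then gives a contractive extension to all of $L^{\wp}$ inside this sector; analyticity on $L^{\wp}$ follows because analyticity on $L^{2}$ (from sectorial form theory, \cite[Ch.~VI]{Kat}) and uniform boundedness on $L^{\wp}$ interpolate to analyticity on $L^{\wp}$ in the same sector.

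The main obstacle I foresee is the rigorous manipulation in the second step: the function $|u|^{\wp-2}u$ need not lie in $\oU$ for arbitrary $u\in\oU$, and the pointwise identification of the integrand using $\cI_{\wp}$ must be justified on the possibly large zero set of $u$. This is handled by a standard approximation $u\mapsto (|u|^{2}+\e)^{(\wp-2)/2}u$, sending $\e\to 0$, together with a truncation in Sobolev norm — routine but technical. Everything else is a bookkeeping of constants to track that the resulting $\vartheta$ depends only on $s$ and on the $\wp$-ellipticity constants of $A$.
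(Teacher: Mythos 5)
This proposition is imported verbatim from \cite[Lemma 17]{CD-Mixed} and the paper offers no proof of its own; your sketch reconstructs exactly the argument underlying that reference (and \cite[Theorem 1.3]{CD-DivForm}, \cite[Theorem 2]{Egert2018}): reduce to $\wp$-ellipticity via monotonicity and conjugation-invariance of $\Delta_\wp$, test the form against a regularized version of $|u|^{\wp-2}u$ to extract the $\cI_\wp$-pairing and hence $L^\wp$-dissipativity, rotate $A$ to $e^{i\theta}A$ to get contractivity on a cone, and upgrade to analyticity by consistency with the $L^2$ theory. This is correct and is essentially the same approach as the source; the one step you defer (that $\oU$, in particular the mixed-boundary-condition space of type (c), is preserved by the truncation/regularization, which is where Nittka's invariance criterion enters in the cited proofs) is indeed the technical heart of that lemma, and you have identified it accurately.
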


Let us start with a reduction in the spirit of \cite[Section 6.1]{CD-Mixed}.

\begin{proposition}
Suppose that $A,B,C,p,q,r$ are as in the formulation of Theorem \ref{t: trilinemb} and $\gX=\gX(u,v,w)$ as in Theorem \ref{t: Voltaren}.
Assume that
\begin{equation}
\label{eq: reduction}
\int_{\Omega}\mod{\nabla\tf}\mod{\nabla\tg}\mod{\th}
\leqsim
\,\Re\int_{\Omega}
\Big(
  \partial_{u}\gX\left({\mathtt f},{\mathtt g},{\mathtt h}\right) L_{A}{\mathtt f}
+\partial_{v}\gX\left({\mathtt f},{\mathtt g},{\mathtt h}\right) L_{B}{\mathtt g}
+\partial_{w}\gX\left({\mathtt f},{\mathtt g},{\mathtt h}\right) L_{C}{\mathtt h}
\Big)
\end{equation}
for 
${\mathtt f}\in\cD(L_A)$, 
${\mathtt g}\in\cD(L_B)$, 
${\mathtt h}\in\cD(L_C)$ 
such that
$\tf,\tg,\th,L_{A}\tf, L_{B}\tg,L_C\th\in\big(L^p\cap L^{p'}\cap L^{r}\big)(\Omega)$.
Then the statement of Theorem \ref{t: trilinemb} holds.
\end{proposition}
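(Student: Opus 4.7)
The plan is to apply the reduction inequality \eqref{eq: reduction} pointwise in $t$ to the triple
\[
\mathtt f(t):=T_t^A f,\quad \mathtt g(t):=T_t^B g,\quad \mathtt h(t):=T_t^C h,
\]
and then integrate in $t$. Fix arbitrary $0<\delta<T<\infty$. For each $t\in(\delta,T)$, analyticity of the three semigroups on $L^2(\Omega)$ places $\mathtt f(t)\in\cD(L_A)$, $\mathtt g(t)\in\cD(L_B)$, $\mathtt h(t)\in\cD(L_C)$. Moreover, Proposition \ref{p: Zbunjen i osamucen} combined with the $\max\{p,q,r\}$-ellipticity of $A,B,C$ and Lemma \ref{l: Ce rau am facut la lume} ensures that each of the semigroups extends analytically and contractively to $L^\wp(\Omega)$ for every $\wp\in\{p,q,r,p',r'\}$; together with the smoothing effect of analyticity in these $L^\wp$, this delivers the integrability $\mathtt f,\mathtt g,\mathtt h,L_A\mathtt f,L_B\mathtt g,L_C\mathtt h\in L^p\cap L^{p'}\cap L^r$ demanded by the hypothesis, so that \eqref{eq: reduction} applies at each fixed $t>0$.

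Because $\gX$ is $C^1$ and real-valued, $\partial_u\gX=\overline{\partial_{\bar u}\gX}$, so the chain rule and the semigroup identity $\partial_t T_t^X=-L_X T_t^X$ give
\[
\frac{d}{dt}\gX\bigl(\mathtt f(t),\mathtt g(t),\mathtt h(t)\bigr)
=-2\,\Re\bigl[(\partial_u\gX)L_A\mathtt f+(\partial_v\gX)L_B\mathtt g+(\partial_w\gX)L_C\mathtt h\bigr].
\]
Integrating \eqref{eq: reduction} over $t\in(\delta,T)$, swapping space and time integrals by Fubini (justified by the integrability just noted), and invoking the fundamental theorem of calculus yields
\[
\int_\delta^T\!\!\int_\Omega\!|\nabla T_t^A f|\,|\nabla T_t^B g|\,|T_t^C h|\wrt x\wrt t
\,\leqsim\,\tfrac12\!\int_\Omega\!\gX\bigl(T_\delta^A f,T_\delta^B g,T_\delta^C h\bigr)\wrt x,
\]
where the nonnegative boundary term at time $T$ is discarded by using $\gX\geq 0$. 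The pointwise estimate of Theorem \ref{t: Voltaren} \textit{\ref{eq: Burger})} together with the $L^\wp$-contractivity of the semigroups then produces the uniform bound $\leqsim\|f\|_p^p+\|g\|_q^q+\|h\|_r^r$.

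To upgrade this sum into a product, I would use the standard homogenization trick: replace $(f,g,h)$ by $(a_1f,a_2g,a_3h)$ with $a_1a_2a_3=1$, which leaves the left-hand side unchanged (by trilinearity) and turns the right-hand side into $a_1^p\|f\|_p^p+a_2^q\|g\|_q^q+a_3^r\|h\|_r^r$. Minimizing over such triples via Lemma \ref{l: Mos Def} yields the desired product $\|f\|_p\|g\|_q\|h\|_r$. Sending $\delta\to 0$ and $T\to\infty$ by monotone convergence then delivers \eqref{eq: trilinemb}. The main delicate point will be verifying the full integrability required by \eqref{eq: reduction} along the semigroup orbit for general data $f\in L^p\cap L^2$, $g\in L^q\cap L^2$, $h\in L^r\cap L^2$; should the analyticity-based smoothing not directly yield simultaneous membership in $L^{p'}\cap L^r$ for $L_A\mathtt f$, etc., a density argument approximating by nicer data in the spirit of Section \ref{s: Jugoplastika : Denver 1989} would bridge the gap.
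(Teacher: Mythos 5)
Your proposal matches the paper's proof essentially step for step: the same heat-flow functional $\cE(t)=\int_\Omega\gX(\gamma_t)$ with $\gamma_t=(T_t^Af,T_t^Bg,T_t^Ch)$, the same application of \eqref{eq: reduction} along the orbit (justified via Proposition \ref{p: Zbunjen i osamucen} and Lemma \ref{l: Ce rau am facut la lume}), the same bound $-\int\cE'\leq\cE(0)\leqsim\|f\|_p^p+\|g\|_q^q+\|h\|_r^r$ from Theorem \ref{t: Voltaren}\,a), the polarization via Lemma \ref{l: Mos Def}, and the closing density argument in the spirit of Section \ref{s: Jugoplastika : Denver 1989}. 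The only cosmetic difference is that the paper sidesteps your ``delicate point'' by first fixing $f,g,h\in\big(L^p\cap L^{p'}\cap L^r\big)(\Omega)$ (so the orbit integrability is automatic from analyticity on each $L^\wp$ with $\wp\in[M',M]$) and only afterwards approximating general data --- exactly the fallback you correctly anticipate at the end.
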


\begin{proof}
By symmetry it is enough to prove Theorem \ref{t: trilinemb} for $p\geq q$.
Let $f,g,h\in \big(L^p\cap L^{p'}\cap L^{r}\big)(\Omega)$.
This intersection is contained in $L^q$ (cf. the end of the proof of Lemma \ref{l: Ce rau am facut la lume}).

Define $\gamma\colon[0,\infty)\rightarrow\C^3$ by
$$
\gamma_t=\gamma(t):=\left(T^{A}_{t}f,T^{B}_{t}g,T_t^Ch\right)
$$
and
$\cE\colon[0,\infty)\rightarrow[0,\infty)$ by
\begin{equation*}
\cE(t)=\int_{\Omega}\gX(\gamma_t).
\end{equation*}
Recall that $\Delta_{p'}(\bA)=\Delta_{p}(\bA)$.
Theorem \ref{t: Voltaren} {\it (\ref{eq: Fest})} and Proposition \ref{p: Zbunjen i osamucen}
imply that $\cE$ is well defined, continuous on $[0,\infty)$, differentiable on $(0,\infty)$ with a continuous derivative and
\begin{equation}
\label{eq: Perasovic}
-\cE'(t)
=2\,\Re
   \int_{\Omega}
     \Big(
          \partial_{u}\gX(\gamma_t) L_{A}T^{A}_{t}f
        +\partial_{v}\gX(\gamma_t) L_{B}T^{B}_{t}g
        +\partial_{w}\gX(\gamma_t) L_{C}T^{C}_{t}h
\Big);
\end{equation}
see \cite[Proposition 4.3]{CD-DivForm} or \cite[Section 6.1]{CD-Mixed}.
Since $\cE$ is nonnegative and $\gamma(0)=(f,g,h)$, we have, by Theorem \ref{t: Voltaren} {\it (\ref{eq: Burger})},
\begin{equation}
\label{eq: Sretenovic}
-\int_0^\infty\cE'(t)\wrt t\leq\cE(0)=\int_{\Omega}\gX(f,g,h)\leqsim\nor{f}_p^p+\nor{g}_q^q+\nor{h}_r^r.
\end{equation}
Analyticity of the semigroups yields $T^{A}_tf\in\cD\big(L_{A}^{(p)}\big)\cap\cD\big(L_{A}^{(p')}\big)\cap\cD\big(L_{A}^{(r)}\big)$, and the same for $(g,B)$ and $(h,C)$.
By consistency of the semigroups and H\"older's inequality, we have
$$
\cD\big(L_{A}^{(p)}\big)\cap\cD\big(L_{A}^{(p')}\big)\cap\cD\big(L_{A}^{(r)}\big)\subseteq  \cD\big(L_{A}^{(2)}\big)\subseteq  \oU
$$
and the same for $B,C$.
Therefore we may apply \eqref{eq: reduction} with $(\tf,\tg,\th)=\left(T^{A}_{t}f,T^{B}_{t}g,T_t^Ch\right)=\gamma_t$.
Together with \eqref{eq: Perasovic} and \eqref{eq: Sretenovic} we then obtain
$$
\int_0^\infty\int_{\Omega}
\left| \nabla T_t^{A} f \right|
\left| \nabla T_t^{B} g \right|
\left| T_t^{C} h \right|
\wrt x\wrt t
\ \leqsim\
\nor{f}_p^p+\nor{g}_q^q+\nor{h}_r^r.
$$
Using again Lemma \ref{l: Mos Def} completes
\label{polar2}
the proof of Theorem \ref{t: trilinemb} for $f,g,h\in \big(L^p\cap L^{p'}\cap L^{r}\big)(\Omega)$.

In order to pass to
$f\in L^p\cap L^2$,
$g\in L^q\cap L^2$
and
$h\in L^r\cap L^2$,
we argue just as in Section \ref{s: Jugoplastika : Denver 1989}, that is, by considering triples from $C_c^\infty(\Omega)$, only replacing \cite[Theorem 1.1]{CD-DivForm} by \cite[Theorem 2]{CD-Mixed}.
\end{proof}

We will thus focus on proving \eqref{eq: reduction}.
Given functions ${\mathtt f},{\mathtt g},{\mathtt h}$, denote
\begin{equation}
\label{eq: nzapazap}
\text{\calligra L }(\gX)({\mathtt f},{\mathtt g},{\mathtt h})
=
\Re\int_{\Omega}
\Big(
  (\partial_{u}\gX)\left({\mathtt f},{\mathtt g},{\mathtt h}\right) L_{A}{\mathtt f}
+(\partial_{v}\gX)\left({\mathtt f},{\mathtt g},{\mathtt h}\right) L_{B}{\mathtt g}
+(\partial_{w}\gX)\left({\mathtt f},{\mathtt g},{\mathtt h}\right) L_{C}{\mathtt h}
\Big).
\end{equation}
Proving \eqref{eq: reduction} thus means proving
\begin{equation}
\label{eq: Cherokee}
\int_{\Omega}\mod{\nabla\tf}\mod{\nabla\tg}\mod{\th}
\leqsim
\,
\text{\calligra L }(\gX)({\mathtt f},{\mathtt g},{\mathtt h})
\end{equation}
for 
${\mathtt f}\in\cD(L_A)$, 
${\mathtt g}\in\cD(L_B)$, 
${\mathtt h}\in\cD(L_C)$ 
such that
$\tf,\tg,\th,L_{A}\tf, L_{B}\tg,L_C\th\in\big(L^p\cap L^{p'}\cap L^{r}\big)(\Omega)$.

\setcounter{subsection}{-1}

\subsection{Scheme of the proof of (\ref{eq: reduction})}

Our plan is to start with the right-hand side of \eqref{eq: reduction}, integrate by parts in the sense of \eqref{eq: int by parts}, arrive at the (generalized) Hessian form of $\gX$, and finally use convexity properties of $\gX$ (Theorem \ref{t: Voltaren}). When doing so we will encounter several technical obstacles which we will tackle one by one, each time by approximating the integrand with an adequately constructed sequence of functions.
As said before, our inspiration for this part of the paper is \cite{CD-Mixed}.

\subsection{First approximation}

In order to apply integration by parts \eqref{eq: int by parts} to the right-hand side of \eqref{eq: reduction}, we must have $(\partial_u\gX)(\tf,\tg,\th)\in\oU$ and the same for $\partial_v\gX,\partial_w\gX$. In particular, these functions should belong to $H^1(\Omega)$. Moreover, we would like to apply the chain rule to $\nabla[(\partial_u\gX)(\tf,\tg,\th)]$ and obtain the Hessian matrix of $\gX$. Based on standard results regarding the chain rule for Sobolev functions, we would for that purpose need $\nabla_{u,v,w}\gX$ to be of class $C^1$. Hence we first replace $\gX$ by its smooth version.

Let $\f_{\nu}$ be as in Section \ref{s: molto cantabile}.
For the reasons just explained, one could try to replace $\text{\calligra L }(\gX)$ by $\text{\calligra L }(\gX*\f_{\nu})$ and then terminate the approximation by sending $\nu\rightarrow0$.

\subsection{Second approximation}

We want to integrate by parts as in \eqref{eq: int by parts}, but we still cannot prove that $\partial_{u,v,w}(\gX*\f_{\nu})(\tf,\tg,\th)\in\oU$. The problem is that $\partial_{u,v,w}(\gX*\f_{\nu})(\tf,\tg,\th)$ is not a Lipschitz function; see \cite[proof of Lemma 19]{CD-Mixed} or \cite[Lemma 4]{Egert2018} cited there.
In order to fix this, we multiply $\gX*\f_{\nu}$ by compactly supported smooth functions which are identically equal to $1$ on gradually larger sets.

Thus, choose a radial function $\psi\in C^{\infty}_{c}(\C^{3})$ such that $\psi\geq 0$, $\psi=1$ on $\{|\omega|\leq 3\}$ and $\psi=0$ on $\{|\omega|>4\}$. For $n\in\N $ define $\psi_{n}(\omega)=\psi(\omega/n)$. We try to consider the flow
$$
\text{\calligra L }\left(\psi_n\cdot(\gX*\f_{\nu})\right)({\mathtt f},{\mathtt g},{\mathtt h})
$$
and then send $n\rightarrow\infty$ and $\nu\rightarrow0$.

\subsection{Third approximation}
\label{s: banana}
As explained earlier and demonstrated in the proof of the case $\Omega=\R^d$ (Section \ref{s: Extracorporeal Shock Wave Therapy}), the gist of our method is the (global) {\it quantitative generalized convexity} -- that is, lower estimates of generalized Hessians -- of $\gX$ and its approximations. For $\gX$, this convexity is made explicit in Theorem \ref{t: Voltaren} {\it(\ref{eq: sova})}. In case of $\gX*\f_\nu$ we have a very similar estimate in Corollary \ref{c: Naklofen} {\it (\ref{eq: X3})}.
However, if we pass to $\psi_n\cdot(\gX*\f_\nu)$, then we quickly see that in the domain where $\psi_n$ is not (locally) constant, which is $\{3n<|\omega|<4n\}$, this estimate might break.
This problem was already encountered in \cite{CD-Mixed} where we considered a two-variable function $Q$ instead of $\gX$. There we solved it by adding to $\psi_n\cdot(Q*\f_\nu)$ a perturbation comprising {\it power function in several variables} and a correction factor ($\nu^{q-2}$). The size of that correction factor was determined on the basis of upper $L^\infty$ estimates of the Hessian of $\psi_n\cdot(Q*\f_\nu)$ for $|\omega|\sim n$, see \cite[(34)]{CD-Mixed}.

However, as said before, in \cite{CD-Mixed} we dealt with a different Bellman function. That is, working with $\gX$ calls for a new set of estimates.
Assessing the magnitude of $\nor{D^2\left[\psi_n\cdot(\gX*\f_\nu)\right]}_\infty$ for $|\omega|\sim n$
requires some effort. We do that next and summarize the outcome in Corollary \ref{c: schlagwerk}.

Let $s_+:=\max\{s,0\}$ denote the positive part of the real number $s$.

\begin{lemma}
\label{l: Drska:Berwolf 44:43}
Let $\nu\in(0,1]$ and $\alpha\in\R$.
Denote $\Phi_\alpha:=F_\alpha\otimes{\mathbf 1}\otimes{\mathbf 1}$, that is, $\Phi_\alpha:\C^3\rightarrow[0,\infty)$ acts as $\Phi_\alpha(u,v,w)=|u|^\alpha$.
If $\alpha>-2$ (so that $\Phi_\alpha\in L_{loc}^1$) then
for $\omega=(u,v,w)\in\C^3$ we have
$$
\left|
\left(\Phi_\alpha*\f_\nu\right)(\omega)
\right|
\,\leqsim\,
\nu^{\alpha}+|u|^{\alpha_+}.
$$
\noindent
If $-2<\beta<0$ and $\Psi_\beta:=F_1\otimes F_\beta\otimes{\mathbf 1}$,
then
$$
\left|
\left(\Psi_\beta
*\f_\nu\right)(\omega)
\right|
\,\leqsim\,
\nu^{\beta}\left(|u|+1\right).
$$
\end{lemma}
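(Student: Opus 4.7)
The plan is to reduce both estimates to a one-variable convolution against a mollifier on $\C\cong\R^2$, since in each case the integrand depends nontrivially on at most two of the three complex variables. Concretely, because $\varphi_\nu$ is smooth and compactly supported on $\C^3$, we can integrate out the $(v',w')$-variables to obtain the marginal
\[
\tilde\varphi_\nu(u'):=\int_{\C^2}\varphi_\nu(u',v',w')\,\textup{d}v'\,\textup{d}w',
\]
which, by the scaling $\varphi_\nu(\cdot)=\nu^{-6}\varphi(\cdot/\nu)$ and a change of variables, is a nonnegative $C_c^\infty$ mollifier on $\C$ supported in $\{|u'|\leq\nu\}$ with $\|\tilde\varphi_\nu\|_\infty\lesssim\nu^{-2}$ and $\int\tilde\varphi_\nu=1$. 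Hence
\[
(\Phi_\alpha*\varphi_\nu)(u,v,w)=\int_{\C}|u-u'|^\alpha\,\tilde\varphi_\nu(u')\,\textup{d}u',
\]
which, remarkably, does not depend on $v,w$ at all. The assumption $\alpha>-2$ is exactly what makes $|z|^\alpha$ locally integrable on $\R^2$, so the convolution is well-defined.

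Next I would split into two regimes. If $|u|\leq 2\nu$, I would estimate crudely by $\|\tilde\varphi_\nu\|_\infty\lesssim\nu^{-2}$ and use the polar computation
\[
\int_{|z|\leq R}|z|^\alpha\,\textup{d}z=\frac{2\pi R^{\alpha+2}}{\alpha+2}
\]
with $R=3\nu$ to obtain a bound of order $\nu^{-2}\cdot\nu^{\alpha+2}=\nu^\alpha$. If instead $|u|>2\nu$, then for $u'$ in the support of $\tilde\varphi_\nu$ we have $|u|/2\leq|u-u'|\leq 2|u|$; when $\alpha\geq 0$ this gives $|u-u'|^\alpha\lesssim|u|^\alpha=|u|^{\alpha_+}$, and when $-2<\alpha<0$ it gives $|u-u'|^\alpha\lesssim|u|^\alpha\leq(2\nu)^\alpha\lesssim\nu^\alpha$. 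Integrating against $\tilde\varphi_\nu$ (whose integral is $1$) yields the claimed bound $\nu^\alpha+|u|^{\alpha_+}$ in both regimes.

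For the second estimate, I would factor $|u-u'|\leq|u|+|u'|\leq|u|+\nu\leq|u|+1$ on the support of $\varphi_\nu$ (using $\nu\leq 1$), pull this out, and reduce to
\[
|(\Psi_\beta*\varphi_\nu)(u,v,w)|\leq(|u|+1)\int_{\C}|v-v'|^\beta\,\tilde\varphi_\nu(v')\,\textup{d}v',
\]
where $\tilde\varphi_\nu$ is now the analogous marginal in the second variable. Since $-2<\beta<0$, the first part applied with $\alpha=\beta$ gives $\nu^\beta+|v|^{\beta_+}=\nu^\beta$ for this integral, which combines with the prefactor to yield $\nu^\beta(|u|+1)$ as claimed. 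The only delicate step worth checking carefully is the case split $|u|\lessgtr 2\nu$ in the first part — all the other manipulations are essentially routine applications of Young's inequality, the triangle inequality, and the explicit scaling of a $2$-dimensional mollifier.
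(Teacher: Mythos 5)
Your proposal is correct and follows essentially the same route as the paper: both reduce the three-variable convolution to a two-dimensional convolution of $|z|^\alpha$ against a mollifier at scale $\nu$ (the paper by bounding $\varphi$ above and rescaling to the integral $I_\alpha(u/\nu)=\int_{|u''-u/\nu|<1}|u''|^\alpha$, you by taking the exact marginal $\tilde\varphi_\nu$), and both then distinguish the regimes $|u|\leqsim\nu$ and $|u|\geqsim\nu$, which is exactly what the paper's dichotomy $-2<\alpha<0$ versus $\alpha\geq0$ for $I_\alpha$ encodes. The second estimate is likewise handled the same way, your pointwise bound $|u-u'|\leq|u|+1$ playing the role of the paper's factor $\nu I_1(u/\nu)\leqsim|u|+\nu$.
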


\begin{proof}
The proof is an adaptation of the proof of \cite[Lemma 14]{CD-Mixed}.
Let $N=3$. Then we have
$$
\aligned
\int_{\C^{N}}
| u^{\prime}|^{\alpha}
&\varphi_{\nu}( u- u^{\prime}, y- y^{\prime})\wrt  u^{\prime}\wrt  y^{\prime}
\\
&=\nu^{-2}\int_{\C}| u^{\prime}|^{\alpha}\left[\nu^{-2(N-1)}\int_{\C^{N-1}}\varphi\left(\frac{ u- u^{\prime}}{\nu},\frac{ y- y^{\prime}}{\nu}\right)\wrt y^{\prime}\right]\wrt u^{\prime}
\\
&=\nu^{-2}\int_{\C}| u^{\prime}|^{\alpha}\int_{\C^{N-1}}\varphi\left(\frac{u-u^{\prime}}{\nu},z'\right)\wrt z^{\prime}\wrt u^{\prime}
\\
&=\nu^{\alpha}\int_{\{|u^{\prime\prime}-u/\nu|<1\}}
\mod{u''}^\alpha
\int_{\{|z^{\prime}|<1\}}\varphi\left(u/\nu-u^{\prime\prime},z'\right)\wrt z^{\prime}\wrt u^{\prime\prime}
\\
&\leq \nu^{\alpha}\nor{\varphi}_{\infty}|B_{\C^{N-1}}(0,1)|
\int_{\{|u''-u/\nu|<1\}}\mod{u''}^\alpha\wrt u''.
\endaligned
$$
Denote the last integral by $I_\alpha(u/\nu)$. We analyze it depending on the sign of $\alpha$.

If \underline{$-2<\alpha<0$}, then we
clearly have
$$
\lim_{|\bar u|\rightarrow\infty}I_\alpha(\bar u)=0.
$$
Therefore in this case $I_\alpha$ is a bounded function on $\C$, that is, $I_\alpha(u/\nu)\leqsim1$.

If \underline{$\alpha\geq0$}, then
$$
I_\alpha(\bar u)
\leqsim_\alpha\,\int_{B_\C(\bar u,1)}\left(|\bar u|^{\alpha}+1\right)\wrt u''
\,\leqsim\, |\bar u|^{\alpha}+1.
$$
Therefore $\nu^\alpha I_\alpha(u/\nu)\,\leqsim\,|u|^\alpha+\nu^\alpha$, as claimed.

\medskip
Regarding the second part of the proof,
$$
\aligned
\int_{\C^{3}}|u^{\prime}||v^{\prime}|^{\beta}
&\varphi_{\nu}( u- u^{\prime}, v- v^{\prime},w-w')\wrt  u^{\prime}\wrt v^{\prime}\wrt w'
\\
&=\nu^{-4}\int_{\C^2}
|u^{\prime}||v^{\prime}|^{\beta}
\left[
\nu^{-2(N-2)}\int_{\C^{N-2}}\varphi\left(\frac{u-u^{\prime}}{\nu},\frac{v-v^{\prime}}{\nu},\frac{w-w^{\prime}}{\nu}\right)\wrt w^{\prime}
\right]
\wrt u^{\prime}\wrt v'
\\
&=\nu^{-4}\int_{\C^2}
|u^{\prime}||v^{\prime}|^{\beta}
\int_{\C^{N-2}}\varphi\left(\frac{u-u^{\prime}}{\nu},\frac{v-v^{\prime}}{\nu},z^{\prime}\right)\wrt z^{\prime}
\wrt u^{\prime}\wrt v'
\\
&\leq
\left(
\nu\int_{\{|u''-u/\nu|<1\}}|u''|\wrt u''
\right)
\left(
\nu^{\beta}\int_{\{|v''-v/\nu|<1\}}|v''|^\beta\wrt v''
\right)
\nor{\f}_\infty|B^{N-2}_\C(0,1)|.
\endaligned
$$
The already completed part of the proof shows that the
we can continue as
\begin{alignat*}{2}
\leqsim\,
\nu I_1(u/\nu)\cdot\nu^{\beta}I_\beta(u/\nu)
\,\leqsim\,
(|u|+\nu)\nu^\beta.
\tag*{\qedhere}
\end{alignat*}
\end{proof}

\begin{remark}
Upon obvious changes of the lemma's modification, the above proof would clearly work for any $N\in\N\backslash\{1\}$, not only $N=3$.
\end{remark}

The next result is modelled after \cite[Lemma 14]{CD-Mixed} and the estimate \cite[(34)]{CD-Mixed} derived from it.
Its gradient estimates complement those of Corollary \ref{c: Naklofen}.

\medskip
\noindent
\framebox{\bf Notation.}
Until the end of this section we will assume that $p,q,r\in(1,\infty)$ are such that $1/p+1/q+1/r=1$ (i.e., that the exponents satisfy the H\"{o}lder scaling) and $p\geq q$. Given a triple of such $(p,q,r)$, we will denote
\begin{equation}
\label{eq: mango}
\aligned
m &:=\min\{q/p+1,r\}\\
M&:=\max\{p,r\}.
\endaligned
\end{equation}

\begin{corollary}
\label{c: izumrud}
Let $\gX$ be as in Theorem \ref{t: Voltaren}.
Take $\nu\in(0,1]$.
Then for every $\omega\in\C^3$ we have
\begin{enumerate}[(i)]
\item
\label{eq: D0}
$\mod{(\gX*\f_\nu)(\omega)}\leqsim\,\su^p+\sv^q+\sw^r+1$,
\item
\label{eq: D1}
$\mod{D(\gX*\f_\nu)(\omega)}\leqsim\,
 |\omega|^{m-1}+|\omega|^{M-1}$,
\item
\label{eq: D2}
$\mod{D^2(\gX*\f_\nu)(\omega)}\leqsim\,
\nu^{m-2}\left(|\omega|^{M-2}
+|\omega|+1\right). $
\end{enumerate}
The implied constants depend on $p,q,r,A,B,C$ and their $*$-ellipticity constants arising from the assumption {\rm({\Large $\star$})}, but not on $\omega,\nu$.
The constants in {\it(\ref{eq: D1})} and {\it(\ref{eq: D2})} may also depend on $d$.
\end{corollary}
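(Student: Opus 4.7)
The plan is to convolve the pointwise bounds of $\gX$, $D\gX$ and $D^2\gX$ from Theorem~\ref{t: Voltaren} and Proposition~\ref{p: Drska:CCA 92:46} with $\f_\nu$ and then control each resulting term via Lemma~\ref{l: Drska:Berwolf 44:43}. For (i), Theorem~\ref{t: Voltaren}~(\ref{eq: Burger}) gives $\gX\leqsim|u|^p+|v|^q+|w|^r$; convolving, using that $\mathrm{supp}\,\f_\nu\subseteq B(0,\nu)\subseteq B(0,1)$, together with the trivial bound $(|z|+\nu)^s\leq 2^{s-1}(|z|^s+1)$ for $s\in\{p,q,r\}$ and $\nu\leq 1$, yields (i).

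For (iii), start from Proposition~\ref{p: Drska:CCA 92:46}, which expresses $|D^2\gX|$ as a sum of monomials. All non-mixed monomials are single-variable powers $|z|^{\alpha}$ with exponents $\alpha$ lying in the set $\cS$ of Lemma~\ref{l: fade away} (the potentially negative exponents $q-2$, $r-2$ still exceed $-2$ since $q,r>1$), and the only mixed monomial is $|v|^{-(1-q/p)}|w|$. Convolving each with $\f_\nu$ via Lemma~\ref{l: Drska:Berwolf 44:43}, using the permutation-invariance of $\f_\nu$ to place any of $u,v,w$ in the role of the first variable (and the second form of the lemma with $\beta=q/p-1\in(-1,0]$ for the mixed term), yields $\nu^{\alpha}+|\omega|^{\alpha_+}$ for each single-variable monomial and $\nu^{q/p-1}(|\omega|+1)$ for the mixed one. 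The crucial exponent identities are that the H\"older scaling $1/p+1/q+1/r=1$ combined with $p\geq q$ forces $q\geq p'$, which in turn gives $m\leq q/p+1\leq 2$, $\min\cS=\min\{q,r\}-2\geq m-2$, and $q/p-1\geq m-2$; hence every $\nu^\alpha$ appearing on the right is dominated by $\nu^{m-2}\geq 1$. Combined with $|\omega|^{\alpha}\leq 1+|\omega|^{M-2}$ for $\alpha\in[0,M-2]$, all contributions fit inside $\nu^{m-2}(|\omega|^{M-2}+|\omega|+1)$.

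For (ii), split according to the size of $|\omega|$. When $|\omega|\geq 2\nu$, every $\omega'\in\mathrm{supp}\,\f_\nu$ satisfies $|\omega-\omega'|\sim|\omega|$, so Theorem~\ref{t: Voltaren}~(\ref{eq: Fest}) combined with Lemma~\ref{l: T34} (together with $m-1\leq\min\cP$, another consequence of $q\geq p'$) gives $|(D\gX*\f_\nu)(\omega)|\leqsim\sum_{\alpha\in\cP}|\omega|^\alpha\leqsim|\omega|^{m-1}+|\omega|^{M-1}$. When $|\omega|\leq 2\nu$, invoke the symmetry: since $\gX$ depends only on $|u|,|v|,|w|$ and $\f_\nu$ is radial on $\C^3$, the map $D\gX$ is odd under $\omega\mapsto-\omega$, which forces $(D\gX*\f_\nu)(0)=0$. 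Taylor's theorem applied to $D\gX*\f_\nu$ along the segment from $0$ to $\omega$, together with (iii) used on the uniformly bounded range $|\eta|\leq 2\nu\leq 2$, produces
\[
|(D\gX*\f_\nu)(\omega)|\leq|\omega|\cdot\sup_{|\eta|\leq 2\nu}|D^2(\gX*\f_\nu)(\eta)|\leqsim|\omega|\cdot\nu^{m-2}.
\]
Since $m\leq 2$ and $|\omega|/\nu\leq 2$, the right-hand side is $\leqsim|\omega|^{m-1}$, completing (ii). The main obstacles are the exponent bookkeeping in (iii) and the symmetry-based refinement at the origin in (ii), which is precisely what prevents a spurious $\nu^{m-1}$ term from contaminating the final bound.
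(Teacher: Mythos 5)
Your proposal is correct and takes essentially the same route as the paper: (i) by convolving the pointwise bound of Theorem~\ref{t: Voltaren}~{\it a)} (which is exactly Corollary~\ref{c: Naklofen}~{\it a')}); (iii) by bounding $|D^2(\gX*\f_\nu)|\leq|D^2\gX|*\f_\nu$, feeding Proposition~\ref{p: Drska:CCA 92:46} into Lemma~\ref{l: Drska:Berwolf 44:43} term by term, and simplifying via Lemma~\ref{l: fade away} together with $\min\{q,r\}\geq m$ and $q/p+1\geq m$; and (ii) by combining evenness of $\gX$ (hence $(D\gX*\f_\nu)(0)=0$) with the mean value theorem and item (iii) near the origin, and with the gradient bounds of Theorem~\ref{t: Voltaren}~{\it b)} plus Lemma~\ref{l: T34} away from it. The only cosmetic differences are the splitting threshold ($2\nu$ instead of $\nu$) and re-deriving rather than citing Corollary~\ref{c: Naklofen}.
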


\begin{proof}
Item {\it(\ref{eq: D0})} follows from Corollary \ref{c: Naklofen} {\it(\ref{eq: X1})}.

Let us prove {\it(\ref{eq: D2})}.
Recalling the convention from Section \ref{s: Ella Fitzgerald - Caravan}, we have
$$
\aligned
\left|D^2(\gX*\f_\nu)(\omega)\right|
& \,\leq\, \sum_{j,k=1}^d\left|D_{jk}^2(\gX*\f_\nu)(\omega)\right|\\
& \,\leq\, \sum_{j,k=1}^d\int_{\C^3}\left|D_{jk}^2\gX\right|(y)\f_\nu(\omega-y)\wrt y\\
& \,\leqsim_{d}\, \int_{\C^3}\left|D^2\gX\right|(y)\f_\nu(\omega-y)\wrt y\\
& = \left(\left|D^2\gX\right|*\f_\nu\right)(\omega).
\endaligned
$$
We justify the second estimate above as in \cite[Section 5.1]{CD-DivForm}, \cite[Section 5.2]{CD-Mixed} or \cite[Section 5]{CD-mult}.
Now use, consecutively, Proposition \ref{p: Drska:CCA 92:46} and
Lemma \ref{l: Drska:Berwolf 44:43} to arrive at
$$
\aligned
\left(\left|D^2\gX\right|*\f_\nu\right)(\omega)
\ \leqsim
& \ \nu^{-(1-q/p)}(\sw+1)\\
&  +\left(\nu^{p/q}+\su^{p/q}\right)+\left(\nu^{p-2}+\su^{p-2}\right)
\\
&
+\left(\nu^{q/p}+\sv^{q/p}\right)
+\left(\nu^{q-2}+\sv^{(q-2)_+}\right)
\hskip2pt
+\left(\nu^{q(1-2/p)}+\sv^{q(1-2/p)}\right)
\hskip2pt
+ (\nu+\sv)\\
&
\hskip 77pt
+ \left(\nu^{r-2}+\sw^{(r-2)_+}\right)
+\left(\nu^{r(1-2/p)}+\sw^{r(1-2/p)}\right)
+ (\nu+\sw).
\endaligned
$$
By taking into account that $0<\nu\leq1$, we simplify this through Lemma \ref{l: fade away} as
$$
\aligned
\left(\left|D^2\gX\right|*\f_\nu\right)(\omega)
\ \leqsim &\ \nu^{\min\{q,r\}-2}+|\omega|^{M-2}
+ \nu^{-(1-q/p)}(|\omega|+1).
\endaligned
$$
Since $q/p<q(1/p+1/r)=q-1$, we have
$\min\{q,r,q/p+1\}=m$, which settles {\it(\ref{eq: D2})}.

\smallskip
We conclude by proving {\it(\ref{eq: D1})}.
Let $j\in \{1,\hdots,2N\}$. Since $\gX\circ\cW_{3,1}^{-1}$ and $\varphi_{\nu}\circ\cW_{3,1}^{-1}$ are even functions in each of the variables in $\R^6$,
the function $(\gX*\varphi_{\nu})\circ\cW_{3,1}^{-1}$ also has this property, so
\begin{equation}
\label{eq: ENG-SWE}
D_{j}(\gX*\varphi_{\nu})(0)=0.
\end{equation}
Hence, by item {\it (\ref{eq: D2})} and the mean value theorem, if $|\omega|<\nu\leq 1$ we get
$$
\mod{D_{j}(\gX*\varphi_{\nu})(\omega)}
\leq\underset{|\eta|\leq 1}{\max}\mod{D^{2}(\gX*\varphi_{\nu})(\eta)}|\omega|
\leqsim\nu^{m-2}
|\omega|
\leqsim|\omega|^{m-1}.
$$
By Corollary \ref{c: Naklofen} {\it(\ref{eq: X2})} and Lemma \ref{l: T34},
if $|\omega|\geq\nu$ we get
\begin{equation*}
\mod{D_{j}(\gX*\varphi_{\nu})(\omega)}\leqsim
|\omega|^{\max\{p,r\}-1}+|\omega|^{\min\{q,r\}-1}.
\end{equation*}
Recalling that $q\geq q/p+1$,
we complete the proof of {\it(\ref{eq: D1})}.
\end{proof}

\begin{remark}
An important property required for $\cR_{n,\nu}$ in \cite{CD-Mixed} was that
\begin{equation}
\label{eq: Problem}
F\in L^p\cap L^q
\hskip 20pt
\Rightarrow
\hskip 20pt
(D\cR_{n,\nu})(F)\in L^{p'} + L^{q'}.
\end{equation}
It was needed in \cite[(37)]{CD-Mixed}, namely, to justify the use of the Lebesgue dominated convergence theorem there, for which one needed the integrand to be in $L^1$ uniformly with respect to $n$, and to this end one applied the H\"older's inequality.
In order to achieve the above property, one may pose the following question: if $F\in L^p$, for which $\gamma$ do we necessarily have $|F|^\gamma\in L^{p'}$? Of course, the (unique) answer is $\gamma=p-1$. Therefore we achieve \eqref{eq: Problem} if we prove
$$
|(D\cR_{n,\nu})(\omega)|\leqsim|\omega|^{p-1}+|\omega|^{q-1}.
$$
This is precisely \cite[Theorem 16 (iv)]{CD-Mixed}. It follows (in part) from
\begin{equation}
\label{eq: DQ}
|D(\cQ*\f_{\nu})(\omega)|\leqsim|\omega|^{p-1}+|\omega|^{q-1},
\end{equation}
which is \cite[Lemma 14 (ii)]{CD-Mixed}.

Trying to repeat the argument from \cite{CD-Mixed} in the current (trilinear) setting, we would thus
probably like to have the estimate
$$
|D(\gX*\f_{\nu})(\omega)|\leqsim|\omega|^{p-1}+|\omega|^{q-1}+|\omega|^{r-1}.
$$
Instead, Corollary \ref{c: izumrud} {\it(\ref{eq: D1})} gives
\begin{equation}
\label{eq: DX}
|D(\gX*\f_{\nu})(\omega)|\leqsim|\omega|^{p-1}+|\omega|^{q/p}
+|\omega|^{r-1}.
\end{equation}
Yet in spite of some discrepancy at the first glance, the estimate \eqref{eq: DX} is consistent with \eqref{eq: DQ}, because when $1/p+1/q=1$ (which was the case in \cite{CD-Mixed}, but not here), we exactly have $q/p=q-1$. So we seem to be on the right path.

An estimate like \eqref{eq: DX}, however, permits the implication
$$
F\in L^p\cap L^q\cap L^r
\hskip 20pt
\Rightarrow
\hskip 20pt
\left[D(\gX*\f_{\nu})\right](F)\in L^{p'} + L^{p} + L^{r'}.
$$
For reasons hinted at here and which will become completely apparent later, see Section \ref{s: Tsukahara Bokuden}, we will initially assume that $F=(f,g,h)\in L^{p}\cap L^{p'}\cap L^{r}$.
Note that, by the end of the proof of Lemma \ref{l: Ce rau am facut la lume} and an interpolation of Lebesgue spaces, this implies $F\in L^q$. To reach the class of $f,g,h$ as stated in the formulation of Theorem \ref{t: trilinemb}, we will then apply approximation arguments.
\end{remark}

The next estimate has its roots in \cite[(34)]{CD-Mixed}.

\begin{corollary}
\label{c: schlagwerk}
If $|\omega|\sim n$ for some $n\in\N$, then for any $\nu\in(0,1]$ we have
$$
\aligned
\left|D\left[\psi_n\cdot(\gX*\f_\nu)\right](\omega)\right|
&\,\leqsim\,
|\omega|^{M-1}\\
\left|D^2\left[\psi_n\cdot(\gX*\f_\nu)\right](\omega)\right|
&\,\leqsim\,
\nu^{m-2}|\omega|^{M-2}.
\endaligned
$$
\end{corollary}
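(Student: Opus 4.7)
The plan is to apply the Leibniz rule to $\psi_n\cdot(\gX*\varphi_\nu)$ and combine it with the estimates already obtained in Corollary \ref{c: izumrud}. Since $\psi_n(\omega)=\psi(\omega/n)$ with $\psi\in C_c^\infty$, the chain rule yields $|D\psi_n(\omega)|\leqsim 1/n$ and $|D^2\psi_n(\omega)|\leqsim 1/n^2$ uniformly in $n$; because $|\omega|\sim n$, these rewrite as $|D\psi_n(\omega)|\leqsim 1/|\omega|$ and $|D^2\psi_n(\omega)|\leqsim 1/|\omega|^2$. I will also use that $n\in\N$ gives $|\omega|\gtrsim 1$, so the additive constants ``$+1$'' in Corollary \ref{c: izumrud} are harmless throughout.

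For the gradient estimate, I would expand
$$D\bigl[\psi_n\cdot(\gX*\varphi_\nu)\bigr]=(D\psi_n)(\gX*\varphi_\nu)+\psi_n\,D(\gX*\varphi_\nu)$$
and bound each piece. Corollary \ref{c: izumrud} {\it(\ref{eq: D0})} gives $|(D\psi_n)(\gX*\varphi_\nu)|\leqsim |\omega|^{-1}(|\omega|^M+1)\leqsim|\omega|^{M-1}$, while Corollary \ref{c: izumrud} {\it(\ref{eq: D1})} combined with $m\leq M$ and $|\omega|\geq 1$ gives $|\psi_n\,D(\gX*\varphi_\nu)|\leqsim|\omega|^{m-1}+|\omega|^{M-1}\leqsim|\omega|^{M-1}$.

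For the Hessian, the Leibniz expansion contributes three summands: $(D^2\psi_n)(\gX*\varphi_\nu)$, a cross term of size $|D\psi_n|\cdot|D(\gX*\varphi_\nu)|$, and $\psi_n\,D^2(\gX*\varphi_\nu)$. The first two are bounded by $|\omega|^{M-2}$ using exactly the same reasoning as above. The crucial reduction is that $m=\min\{q/p+1,r\}\leq q/p+1\leq 2$ (using $p\geq q$), so $\nu^{m-2}\geq 1$ for $\nu\in(0,1]$, and hence the target $\nu^{m-2}|\omega|^{M-2}$ trivially majorizes the bound $|\omega|^{M-2}$ for these two pieces.

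The one nontrivial point I expect to be the main obstacle is the last summand, for which Corollary \ref{c: izumrud} {\it(\ref{eq: D2})} only yields $\nu^{m-2}(|\omega|^{M-2}+|\omega|+1)$. Absorbing $|\omega|+1$ into $|\omega|^{M-2}$ requires $M\geq 3$. This will follow from the H\"older scaling: if all of $p,q,r$ were strictly less than $3$, then $1/p+1/q+1/r>1$ would contradict \eqref{eq:Hoeldexppqr}. Hence $\max\{p,q,r\}\geq 3$, and since we are in the regime $p\geq q$ the maximum is attained by $p$ or $r$, so $M=\max\{p,r\}\geq 3$. With $|\omega|\geq 1$ and $M-2\geq 1$ one obtains $|\omega|+1\leqsim |\omega|^{M-2}$, and the desired estimate follows.
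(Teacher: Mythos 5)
Your proposal is correct and follows the same route as the paper's proof: expand by the Leibniz rule and apply the three estimates of Corollary~\ref{c: izumrud}. The only extra content in your write-up is making explicit the absorption $|\omega|+1\leqsim|\omega|^{M-2}$ via $M=\max\{p,r\}\geq 3$ (a fact the paper records in the proof of Lemma~\ref{l: fade away}) together with $\nu^{m-2}\geq 1$; the paper leaves this step implicit, and you have justified it correctly.
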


\begin{proof}
For both inequalities we use the Leibniz rule. Regarding the first-order derivatives,
$$
\aligned
\left|D\left[\psi_n\cdot(\gX*\f_\nu)\right]\right|
& \,\leqsim\,
|\psi_n\cdot D (\gX*\f_\nu)| +
|D\psi_n\cdot (\gX*\f_\nu)|\\
& \,\leqsim\,
|D (\gX*\f_\nu)| +
\frac1n|\gX*\f_\nu|.
\endaligned
$$
Now apply Corollary \ref{c: izumrud} {\it (\ref{eq: D0})}, {\it (\ref{eq: D1})}.

Regarding the second-order derivatives,
$$
\aligned
\left|D^2\left[\psi_n\cdot(\gX*\f_\nu)\right]\right|
& \,\leqsim\,
|\psi_n\cdot D^2 (\gX*\f_\nu)| +
|D\psi_n\cdot D(\gX*\f_\nu)| +
|D^2\psi_n\cdot(\gX*\f_\nu)|\\
& \,\leqsim\,
|D^2 (\gX*\f_\nu)| +
\frac1n|D(\gX*\f_\nu)| +
\frac1{n^2}|\gX*\f_\nu|.
\endaligned
$$
Now apply Corollary \ref{c: izumrud}.
\end{proof}

Now we seem to be in a position to fix the problem described at the beginning of this section (Section \ref{s: banana}). Following \cite{CD-Mixed}, we try by adding to $\psi_n\cdot(\gX*\f_\nu)$ a function which is:
\begin{itemize}
\item
$\bA$-convex everywhere;
\item
strictly $\bA$-convex for $|\omega|\sim n$,
to the extent sufficient to
compensate for the lack of convexity of $\psi_n\cdot(\gX*\f_\nu)$ there.
\end{itemize}
Based on the analysis leading to Corollary \ref{c: schlagwerk}, and on Lemma \ref{l: GCDFEsDCD}, we think of adding the function
$
\nu^{m-2}P_{M}(u,v,w),
$
where $m$ and $M$ are as in \eqref{eq: mango} and $P_M$ is as in \eqref{eq: GFEsDCEs}.
Therefore at this stage our candidate is
$$
\psi_n\cdot(\gX*\f_\nu)+\nu^{m-2}P_{M}.
$$
By this we mean that we want to consider the flow
$$
\text{\calligra L }\left(\psi_n\cdot(\gX*\f_\nu)+\nu^{m-2}P_{M}
\right)({\mathtt f},{\mathtt g},{\mathtt h})
$$
and then send $n\rightarrow\infty$ and $\nu\rightarrow0$.

\subsection{Fourth (and final) approximation}

\label{s: New Fossils Band}
In order to perform integration by parts, we need properties akin to \cite[Theorem 16]{CD-Mixed}.
We see that the perturbation $\nu^{m-2}P_{M}$ is still not sufficient for that. For example, its second-order derivatives are not in $L^\infty$, its gradient and Hessian do not converge to the gradient and, respectively, Hessian of $\gX*\f_\nu$ as $n\rightarrow\infty$, and so on.

Again we resort to \cite{CD-Mixed} for a hint on how to resolve this. Namely, we try to replace the perturbation $P_{M}$ by its smooth tamed version, as follows. This section will serve as a preparation for that step, while the actual construction will then be made in Section \ref{s: Mravaljamiero}.

\medskip
Given $a>1$ define
$$
\cD_a(t)=
\left\{
\begin{array}{ccr}
{\displaystyle t^a} & ; & 0\leq t\leq1, \\
a t - (a-1) & ; & t\geq1.
\end{array}
\right.
$$
Observe that $\cD_a\circ\cD_b=\cD_{ab}$ and that $\cD_a$ is continuously differentiable on $[0,\infty)$
with
\begin{equation*}
\cD_a'(t)=
a\left\{
\begin{array}{ccr}
{\displaystyle t^{a-1}} & ; & 0\leq t\leq1, \\
1 & ; & t\geq1.
\end{array}
\right.
\end{equation*}
For ${s}>2$, $n\in\N$, $\e>0$ and $N\in\N$ define the function $\cF_{{s},n,\e}$
by
\begin{equation*}
\cF_{{s},n,\e}(\omega)=
n^{s}\,\cD_{\frac{{s}+\e}{2}}\left(\Big|\frac{\omega}{n}\Big|^2\right)
\hskip 30pt\omega\in\C^N.
\end{equation*}
Explicitly,
\renewcommand{\arraystretch}{1.8}
$$
\cF_{{s},n,\e}(\omega)=
\left\{
\begin{array}{ccl}
\ \hskip -48pt
n^{-\e}|\omega|^{{s}+\e}&;& |\omega|\leq n,\\
{\displaystyle
   \frac{{s}+\e}{2}\,n^{{s}-2}|\omega|^2-\left(\frac{{s}+\e}{2}-1\right)n^{{s}}
}&;&|\omega|\geq n.
\end{array}
\right.
$$
\renewcommand{\arraystretch}{1}

\label{Rustavi}
Let $A_1,\hdots,A_N$ be accretive matrices on $\Omega\subseteq \R^d$.
Write $\bA=(A_1,\hdots,A_N)$.
Suppose that $\Delta_{s}(\bA)>0$ and let $c=c({s},\bA,N)>0$ be any constant fitting Lemma \ref{l: GCDFEsDCD}.
Note that the dependence of $c$ on $\bA$ is limited to $\Delta_s(\bA)$ and $\Lambda(\bA)$.

By continuity of $\Delta_t(\bA)$ in $t$ we have $\Delta_{s+\e}(\bA)>0$ for sufficiently small $\e>0$.
Choose and fix such an $\e>0$.
In analogy with \eqref{eq: GFEsDCEs} we then define function $\cP_{{s},n,\e}:\C^N\rightarrow[0,\infty)$ by
\begin{equation*}
\cP_{{s},n,\e}(u_1,\hdots,u_N):=\cF_{{s},n,\e}(u_1,\hdots,u_N)+c\sum_{j=1}^{N}\cF_{{s},n,\e}(u_j).
\end{equation*}

Define the set $\Theta_n\subseteq \C^N$ by
$$
\Theta_n=\{|(u_1,\hdots,u_N)|=n\}\cup\bigcup_{j=1}^N\{|u_j|=n\}.
$$
The next proposition is based on \cite[Proposition 12]{CD-Mixed}.
\begin{proposition}
\label{p: 1949 Buick Roadmaster}
Under the above assumptions on $\bA,s,\e$, we have:
\begin{enumerate}[\rm (i)]
\item
\label{eq: Bugatti}
$\cP_{ s,n,\e}\in C^{1}(\C^{N})\cap C^{2}(\C^{N}\setminus\Theta_{n})$ for all $n\in\N$. Moreover,
$$
\aligned
D\cP_{ s,n,\e}\rightarrow 0
&\quad  \text{pointwise in  } \C^{N},\\
D^{2}\cP_{ s,n,\e}\rightarrow 0 &
\quad\text{pointwise in  }\C^{N}\setminus\bigcup_{k\in\N}\Theta_{k}
\endaligned
$$
as $n\rightarrow\infty$.

\item
\label{eq: Delahaye}
$\cP_{ s,n,\e}$ is $\bA$-convex in $\C^{N}\setminus\Theta_{n}$, for all $n\in\N$.
Moreover, for all $n\in\N$, $X=(X_1,\hdots,X_N)$ with $X_j\in\C^d$, and $u\in \C^{N}\setminus\Theta_{n}$ with $|u|>n$, we have
$$
H^{\bA}_{\cP_{ s,n,\e}}[u;X]\geq ( s+\e)n^{ s-2}\lambda(\bA)|X|^{2}.
$$

\item
\label{eq: Hispano-Suiza}
There exists $C>0$ that does not depend on $n$ such that for all $n\in\N$ we have
$$
\aligned
\mod{(D \cP_{ s,n,\e})(\omega)}&\leq C|\omega|^{ s-1},\quad \forall\omega\in\C^{N},\\
\mod{(D^{2}\cP_{ s,n,\e})(\omega)}&\leq C|\omega|^{ s-2},\quad \forall\omega\in\C^N\setminus\Theta_{n}.
\endaligned
$$
\item
\label{eq: Tucker}
For every $n\in\N$ there exists $C(n)>0$ such that
$$
\mod{(D\cP_{ s,n,\e})(\omega)}\leq C(n)|\omega|,\quad \forall\omega\in\C^{N}.
$$
\item
\label{eq: Packard}
For all $n\in\N $ we have
$\mod{D^{2}\cP_{ s,n,\e}}\in L^{\infty}(\C^{N}\setminus\Theta_{n})$.

\end{enumerate}

\end{proposition}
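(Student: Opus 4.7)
The proof proceeds by analyzing $\cF_{s,n,\e}$ (and hence $\cP_{s,n,\e}$) separately on the two regimes dictated by the definition of $\cD_{(s+\e)/2}$: the ``power regime'' $\{|\omega|\leq n\}$ (respectively $\{|u_j|\leq n\}$ for the summands), where $\cF_{s,n,\e}$ equals $n^{-\e}F_{s+\e}$, and the ``quadratic regime'' $\{|\omega|\geq n\}$, where $\cF_{s,n,\e}$ is a real quadratic polynomial with Hessian (as a real $2N\times 2N$ matrix) equal to $(s+\e)\,n^{s-2}I_{2N}$. The gluing occurs precisely along $\Theta_n$.

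For the regularity in {\rm (\ref{eq: Bugatti})}, I would verify that $\cD_{(s+\e)/2}$ is $C^{1}$ but not $C^{2}$ at $t=1$ (the derivatives from both sides coincide in value $(s+\e)/2$, whereas the second derivatives jump from $\tfrac{s+\e}{2}\cdot\tfrac{s+\e-2}{2}$ to $0$), which by the chain rule yields $\cF_{s,n,\e}\in C^{1}(\C^{N})\cap C^{2}(\C^{N}\setminus\{|\omega|=n\})$, assembling to $\cP_{s,n,\e}\in C^{1}(\C^{N})\cap C^{2}(\C^{N}\setminus\Theta_{n})$. The regularity at $\omega=0$ is automatic since $s+\e>2$. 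For the convergences, I would observe that once $n>\max\{|\omega|,|u_{1}|,\ldots,|u_{N}|\}$, one has $\cP_{s,n,\e}=n^{-\e}P_{s+\e}$ near $\omega$, so $|D\cP_{s,n,\e}(\omega)|\lesssim n^{-\e}|\omega|^{s+\e-1}\to 0$ and $|D^{2}\cP_{s,n,\e}(\omega)|\lesssim n^{-\e}|\omega|^{s+\e-2}\to 0$ for $\omega\notin\bigcup_{k}\Theta_{k}$.

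For {\rm (\ref{eq: Delahaye})}, which is the main point, I distinguish three cases. If $|\omega|\leq n$ (whence $|u_{j}|\leq n$ for every $j$), then $\cP_{s,n,\e}=n^{-\e}P_{s+\e}$, and by our choice of $\e$ ensuring $\Delta_{s+\e}(\bA)>0$, Lemma~\ref{l: GCDFEsDCD} gives $H_{\cP_{s,n,\e}}^{\bA}[u;X]\gtrsim n^{-\e}|u|^{s+\e-2}|X|^{2}\geq 0$. If $|\omega|>n$, the first summand of $\cP_{s,n,\e}$ is quadratic, and since
\[
H^{\bA}_{|\omega|^{2}}[u;X]\;=\;2\sum_{j=1}^{N}\Re\langle A_{j}X_{j},X_{j}\rangle_{\C^{d}}\;\geq\;2\lambda(\bA)|X|^{2},
\]
this summand contributes $(s+\e)n^{s-2}\lambda(\bA)|X|^{2}$. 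The summands $c\,\cF_{s,n,\e}(u_{j})$ only affect the $j$-th diagonal block of $D^{2}$ and, regardless of whether $|u_{j}|$ lies below or above $n$ (applying either Proposition~\ref{p: Vinjerac} in the power regime or the computation above in the quadratic regime), each contributes a non-negative term. This yields both $\bA$-convexity outside $\Theta_n$ and the quantitative lower bound claimed in {\rm (\ref{eq: Delahaye})}.

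The estimates {\rm (\ref{eq: Hispano-Suiza})}, {\rm (\ref{eq: Tucker})}, {\rm (\ref{eq: Packard})} are then direct case checks. For $|\omega|\leq n$: $|D\cF_{s,n,\e}(\omega)|\lesssim n^{-\e}|\omega|^{s+\e-1}\leq|\omega|^{s-1}$ since $|\omega|^{\e}\leq n^{\e}$, and analogously $|D^{2}\cF_{s,n,\e}(\omega)|\lesssim|\omega|^{s-2}$. For $|\omega|\geq n$: $|D\cF_{s,n,\e}(\omega)|\lesssim n^{s-2}|\omega|\leq |\omega|^{s-1}$ (using $s\geq 2$) and $|D^{2}\cF_{s,n,\e}(\omega)|\lesssim n^{s-2}\leq|\omega|^{s-2}$. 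Summing over the $N$ one-variable terms gives {\rm (\ref{eq: Hispano-Suiza})} (observing $|u_j|\leq|\omega|$). For {\rm (\ref{eq: Tucker})}, factor $|D\cF_{s,n,\e}|\lesssim n^{s-2}|\cdot|$ uniformly, since in the power regime $|u_{j}|^{s-2}\leq n^{s-2}$. For {\rm (\ref{eq: Packard})}, combine $|D^{2}\cF_{s,n,\e}|\lesssim n^{-\e}|\omega|^{s+\e-2}\leq n^{s-2}$ for $|\omega|\leq n$ with $|D^{2}\cF_{s,n,\e}|\lesssim n^{s-2}$ for $|\omega|\geq n$, yielding an $L^{\infty}$ bound of order $n^{s-2}$.

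The main technical obstacle is {\rm (\ref{eq: Delahaye})} -- specifically, guaranteeing that $\bA$-convexity survives in the ``mixed'' regime where $|\omega|>n$ but some $|u_{j}|\leq n$, which is overcome by exploiting the decoupling of the Hessians of the one-variable summands $\cF_{s,n,\e}(u_{j})$ from one another (they live on disjoint blocks of $D^{2}$) together with the positivity granted by Proposition~\ref{p: Vinjerac} applied under the $(s+\e)$-ellipticity assumption on each $A_{j}$.
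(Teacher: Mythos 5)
Your proposal is correct and follows essentially the same route as the paper's proof: the same two-regime case analysis on $\cD_{(s+\e)/2}$, Lemma \ref{l: GCDFEsDCD} under the $(s+\e)$-ellipticity hypothesis in the power regime, the block-diagonal decoupling of the one-variable summands plus the quadratic Hessian computation $H^{\bA}_{|\cdot|^2}[u;X]=2\sum_j\Re\langle A_jX_j,X_j\rangle\geq 2\lambda(\bA)|X|^2$ in the regime $|\omega|>n$, and the same elementary derivative bounds for items (iii)--(v). The only cosmetic difference is that you invoke Proposition \ref{p: Vinjerac} where the paper cites \cite[Proposition 5.8 and Corollary 5.16]{CD-DivForm}, which amounts to the same positivity statement.
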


\begin{proof}
Item \eqref{eq: Bugatti} holds since, fixing $u=(u_1,\hdots,u_N)$, for $n$ large enough ($n>|u|$) we have
\begin{equation}
\label{eq: Duesenberg}
\cP_{ s,n,\e}(u)=
n^{-\e}\left(|u|^{ s+\e}+c\sum_{j=1}^N|u_j|^{ s+\e}\right),
\end{equation}
where the term in the parentheses is clearly independent of $n$.

We now prove item (\ref{eq: Delahaye}). Suppose first that $|u|<n$; then $|u_j|<n$ for $j=1,\hdots,N$. In this case \eqref{eq: Duesenberg} holds, therefore $\cP_{ s,n,\e}(u)=n^{-\e}P_{ s+\e}(u)$ for all $u\in \C^{N}\setminus\Theta_{n}$ and the $\bA$-convexity follows from Lemma~\ref{l: GCDFEsDCD}.

Suppose now that
$|u|>n$. Then
$$
\cP_{ s,n,\e}(u)=\frac{ s+\e}{2}\,n^{ s-2}|u|^2+\left(1-\frac{ s+\e}{2}\right)n^{ s}
+c\sum_{j=1}^N\cF_{ s,n,\e}(u_j).
$$

Therefore, by \cite[(5.7)]{CD-DivForm},
$$
\aligned
H^{\bA}_{\cP_{ s,n,\e}}[u;X]&=( s+\e)n^{ s-2}\sum_{j=1}^N\Re\sk{A_jX_{j}}{X_{j}}
+c\sum_{j=1}^N
H^{A_j}_{\cF_{ s,n,\e}}[u_j;X_j]\\
&\geq( s+\e)n^{ s-2}\lambda(\bA)|X|^{2}+c\sum_{j=1}^N
H^{A_j}_{\cF_{ s,n,\e}}[u_j;X_j].
\endaligned
$$
Since
\renewcommand{\arraystretch}{1.8}
$$
H^{A_j}_{\cF_{ s,n,\e}}[u_j;X_j]=
\left\{
\begin{array}{rcl}
{\displaystyle n^{-\e}H^{A_j}_{F_{ s+\e}}[u_j;X_j]} & ; & |u_j|<n;\\
{\displaystyle \frac{ s+\e}{2}n^{ s-2}H^{A_j}_{F_{2}}[u_j;X_j]} & ; &|u_j|>n
\end{array}
\right.
$$
and $\Delta_{ s+\e}(\bA)>0$, we deduce from \cite[Proposition 5.8 and Corollary 5.16]{CD-DivForm}
that $H^{A_j}_{\cF_{ s,n,\e}}[u_j;X_j]\geq0$ for every $j\in\{1,\hdots,N\}$, which finishes the proof of item \eqref{eq: Delahaye}.
\renewcommand{\arraystretch}{1}

Item \eqref{eq: Hispano-Suiza} follows from the estimate
\renewcommand{\arraystretch}{1.8}
\begin{equation}
\label{eq: Horch}
\aligned
\mod{D\cF_{ s,n,\e}(\omega)}
&\leqsim
\left\{
\begin{array}{ccl}
n^{-\e}|\omega|^{ s+\e-1}&;& |\omega|\leq n,\\
n^{ s-2}|\omega|&;&|\omega|\geq n
\end{array}
\right.
\\
&\leqsim\mod{\omega}^{ s-1}
\endaligned
\end{equation}
and similarly for second-order derivatives.
\renewcommand{\arraystretch}{1}

Item \eqref{eq: Tucker} likewise follows from \eqref{eq: Horch}.

Item \eqref{eq: Packard} follows by noting that outside a ball in $\C^N$, the function $\cF_{ s,n,\e}$ is defined as $A|\omega|^2+B$ for some constants $A,B$.
\end{proof}

\medskip
Now we pass to the estimates for $\cP_{s,n,\e}*\f_\nu$.
Since $ \cP_{s,n,\e}\in C^1(\C^{N})$ and its second-order partial derivatives exist on $\C^{N}\setminus \Theta_{n}$ and extend to a locally integrable function on $\C^{N}$, by the ACL characterization of Sobolev spaces (see, for example, \cite[Th\'eor\`eme~V, p. 57]{Schwartz} or \cite[Theorem~11.45]{Leoni}) we have\begin{equation}
\label{eq: Oudinot}
\aligned
D(\cP_{s,n,\e}*\varphi_{\nu})&=(D\cP_{s,n,\e})*\varphi_{\nu},\\
D^{2}(\cP_{s,n,\e}*\varphi_{\nu})&=(D^{2}\cP_{s,n,\e})*\varphi_{\nu}.
\endaligned
\end{equation}

The following statement closely resembles \cite[Proposition 15]{CD-Mixed}.
For the reader's convenience, we give a complete proof (which to a significant extent uses Proposition \ref{p: 1949 Buick Roadmaster}).

\begin{proposition}
\label{p: rikverc}
Assume the conditions on $s,\bA,\e$ as on page \pageref{Rustavi} and let $\nu\in (0,1)$.
\begin{enumerate}[\rm (i)]
\item
\label{eq: alons}
We have
$$
\aligned
D\left(\cP_{s,n,\e}*\f_\nu\right)&\rightarrow 0,\\
D^{2}\left(\cP_{s,n,\e}*\f_\nu\right)&\rightarrow 0
\endaligned
$$
pointwise in $\C^{N}$ as $n\rightarrow\infty$.
\item
\label{eq: enfants}
The function $\cP_{s,n,\e}*\f_\nu$ is $\bA$-convex in $\C^{N}$. Moreover, for all $n\in\N$, $X=(X_1,\hdots,X_N)$ with $X_j\in\C^d$, and all $\omega\in\C^{N}$ with $|\omega|>2n$,
$$
H_{ \cP_{s,n,\e}*\varphi_{\nu}}^{\bA}[\omega;X]
\geq (s+\e)n^{s-2}\lambda(\bA)|X|^{2}.
$$
\item
\label{eq: de}
There exists $C>0$ that does not depend on $n$ and $\nu$ such that
for any $\text{\v{s}}\leq2$ we have
$$
\mod{D(\cP_{s,n,\e}*\f_\nu)(\omega)}\leq C\left(|\omega|^{s-1}+|\omega|^{\text{\v{s}}-1}\right),
\quad
\forall\omega\in\C^{N},\quad\forall n\in\N.
$$

\item
\label{eq: la}
For every $n\in\N $ there exists $C(n)>0$ (that does not depend on $\nu$) such that
$$
\mod{D(\cP_{s,n,\e}*\varphi_{\nu})(\omega)}\leq C(n)|\omega|,\quad \forall\omega\in\C^{N}.
$$

\item
\label{eq: patrie}
We have $\mod{D^{2}(\cP_{s,n,\e}*\varphi_{\nu})}\in L^{\infty}(\C^{N})$
with
$$
\nor{D^{2}(\cP_{s,n,\e}*\varphi_{\nu})}_{\infty}\leq C
$$
for some $C>0$ that does not depend on $\nu$.
\end{enumerate}
\end{proposition}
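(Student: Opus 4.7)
The plan is to derive each of the five items from the corresponding item of Proposition \ref{p: 1949 Buick Roadmaster} by commuting differentiation and convolution through \eqref{eq: Oudinot}. Items \eqref{eq: patrie} and \eqref{eq: enfants} come almost for free. For \eqref{eq: patrie}, since $\Theta_n$ is a Lebesgue null set, $D^2\cP_{s,n,\e}$ is essentially bounded on $\C^N$ by Proposition \ref{p: 1949 Buick Roadmaster}\,\eqref{eq: Packard}; convolving with $\f_\nu$ (of unit $L^1$ norm) preserves this bound with a constant depending on $n$ but not on $\nu$. For \eqref{eq: enfants}, the generalized Hessian form is linear in $D^2\Phi$, hence
\begin{equation*}
H^{\bA}_{\cP_{s,n,\e}*\f_\nu}[\omega;X] = \int_{\C^N} H^{\bA}_{\cP_{s,n,\e}}[\omega-\omega';X]\,\f_\nu(\omega')\wrt\omega',
\end{equation*}
which is nonnegative by Proposition \ref{p: 1949 Buick Roadmaster}\,\eqref{eq: Delahaye}. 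When $|\omega|>2n$, every $\omega-\omega'$ with $|\omega'|<\nu\leq 1 \leq n$ satisfies $|\omega-\omega'|>n$, so the pointwise lower bound from Proposition \ref{p: 1949 Buick Roadmaster}\,\eqref{eq: Delahaye} passes through the averaging.

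Item \eqref{eq: la} then follows from \eqref{eq: patrie} by a parity argument: since $\cP_{s,n,\e}$ and $\f_\nu$ are each even in every complex coordinate, $\cP_{s,n,\e}*\f_\nu$ is also even, so $D(\cP_{s,n,\e}*\f_\nu)(0)=0$; combined with the Hessian bound from \eqref{eq: patrie}, the mean value theorem gives $|D(\cP_{s,n,\e}*\f_\nu)(\omega)|\leq C(n)|\omega|$. For \eqref{eq: alons}, I apply the Lebesgue dominated convergence theorem to the convolutions $(D\cP_{s,n,\e})*\f_\nu$ and $(D^2\cP_{s,n,\e})*\f_\nu$: pointwise convergence to zero as $n\rightarrow\infty$ is Proposition \ref{p: 1949 Buick Roadmaster}\,\eqref{eq: Bugatti} (the Hessian version holding outside the measure-zero set $\bigcup_{k\in\N}\Theta_k$), while the $n$-uniform majorants $C|\omega-\omega'|^{s-1}$ and $C|\omega-\omega'|^{s-2}$ supplied by Proposition \ref{p: 1949 Buick Roadmaster}\,\eqref{eq: Hispano-Suiza} are $\f_\nu$-integrable at each fixed $\omega$.

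The main work lies in \eqref{eq: de}, where the constant must be independent of both $n$ and $\nu$. I split into two regimes. For $|\omega|\geq 1$, the $n$-free bound $|D\cP_{s,n,\e}(\omega')|\leq C|\omega'|^{s-1}$ from Proposition \ref{p: 1949 Buick Roadmaster}\,\eqref{eq: Hispano-Suiza}, convolved with $\f_\nu$, gives $|D(\cP_{s,n,\e}*\f_\nu)(\omega)|\leq C(|\omega|+\nu)^{s-1}\leqsim|\omega|^{s-1}$. For $|\omega|\leq 1$ I use the companion estimate $|D^2\cP_{s,n,\e}(\omega'')|\leq C|\omega''|^{s-2}$ from the same item: since $s>2$ and $|\omega-\omega'|\leq 2$ throughout the relevant range, convolution yields $|D^2(\cP_{s,n,\e}*\f_\nu)|\leq C\cdot 2^{s-2}$ on $B_{\C^N}(0,1)$ uniformly in $n$ and $\nu$. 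Combined with the vanishing of the gradient at the origin, the mean value theorem produces $|D(\cP_{s,n,\e}*\f_\nu)(\omega)|\leqsim|\omega|$ on $B_{\C^N}(0,1)$. Since on this ball $|\omega|\leq|\omega|^{\text{\v{s}}-1}$ for every $\text{\v{s}}\leq 2$, the estimate closes. The main obstacle is precisely keeping the constants $n$-independent in this near-origin analysis, which is handled by using Proposition \ref{p: 1949 Buick Roadmaster}\,\eqref{eq: Hispano-Suiza} rather than the weaker items \eqref{eq: Tucker} and \eqref{eq: Packard}.
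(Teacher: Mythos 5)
Your proof is correct and follows essentially the same route as the paper: commuting $D$ and $D^2$ with the convolution via the ACL identities, transferring items (ii), (v) and the pointwise limits directly from Proposition \ref{p: 1949 Buick Roadmaster} with dominated convergence, and handling (iii) and (iv) by the evenness/vanishing-gradient-at-the-origin plus mean value theorem argument, with the same split at $|\omega|=1$ and the same use of the $n$-independent bounds $|D\cP_{s,n,\e}|\leqsim|\omega|^{s-1}$, $|D^2\cP_{s,n,\e}|\leqsim|\omega|^{s-2}$. No gaps.
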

\begin{proof}
Item~(\ref{eq: alons}) follows by combining \eqref{eq: Oudinot}, Proposition~\ref{p: 1949 Buick Roadmaster} (\ref{eq: Bugatti}) and (\ref{eq: Hispano-Suiza}) with the dominated convergence theorem.

Item~(\ref{eq: patrie}) follows from \eqref{eq: Oudinot} and Proposition~\ref{p: 1949 Buick Roadmaster} (\ref{eq: Packard}).

By \eqref{eq: Oudinot} we have
$$
H_{ \cP_{s,n,\e}*\varphi_{\nu}}^{\bA(x)}[\omega;X]
=\int_{\C^N}H_{ \cP_{s,n,\e}}^{\bA(x)}[\omega-\omega^{\prime};X]\varphi_\nu(\omega^{\prime})\wrt \omega^{\prime},
$$
for all $x\in\Omega$, $\omega\in\C^{N}$ and $X\in(\C^{d})^N$.
If we assume that $|\omega|>2n$, since the support of the integrand is contained in $B_{\C^{N}}(0,\nu)$, we have $|\omega-\omega'|>2n-\nu>n$. Therefore we may estimate the integrand by means of Proposition~\ref{p: 1949 Buick Roadmaster}~(\ref{eq: Delahaye}) almost everywhere on $B_{\C^{N}}(0,\nu)$ and thus prove item~(\ref{eq: enfants}).

Let us address item (\ref{eq: de}). We proceed much as in the proof of Corollary~\ref{c: izumrud}~(\ref{eq: D1}). First consider $|\omega|\leq1$.
By smoothness and evenness properties of $\cP_{s,n,\e}*\varphi_{\nu}$,
\begin{equation}
\label{eq: Massena}
D(\cP_{s,n,\e}*\varphi_{\nu})(0)=0.
\end{equation}
Hence, the second identity in \eqref{eq: Oudinot}, the second estimate of Proposition~\ref{p: 1949 Buick Roadmaster}~(\ref{eq: Hispano-Suiza}) and the mean value theorem imply
$$
\mod{D(\cP_{s,n,\e}*\varphi_{\nu})(\omega)}\leq C|\omega|\leq C|\omega|^{\text{\it\v{s}}-1},\quad \forall |\omega|\leq 1,\quad \forall n\in\N .
$$
Now take $|\omega|>1$.
From the first identity in \eqref{eq: Oudinot}, the first estimate of Proposition~\ref{p: 1949 Buick Roadmaster}~(\ref{eq: Hispano-Suiza}) and Lemma \ref{l: Drska:Berwolf 44:43}
we get
\begin{equation*}
\mod{D (\cP_{s,n,\e}*\varphi_{\nu})(\omega)}\leq C|\omega|^{s-1}.
\end{equation*}
Thus we proved (\ref{eq: de}).

Finally, item (\ref{eq: la}) follows from item (\ref{eq: patrie}), \eqref{eq: Massena} and the  mean value theorem.
\end{proof}

\subsection{The sequence $\gX_{n,\nu}$}
\label{s: Mravaljamiero}
Here we finally present the function that will be the backbone of our heat-flow process. We start by summarizing the assumptions.

Let $p,q,r\in(1,\infty)$ satisfy $1/p+1/q+1/r=1$ and $p\geq q$. Let $M,m$ be as in \eqref{eq: mango}. Furthermore, take an open set $\Omega\subseteq \R^d$ and $A,B,C\in\cA_M(\Omega)$. Write $\bA=(A,B,C)$. Choose $\e>0$ such that we also have  $A,B,C\in\cA_{M+\e}(\Omega)$.

We will apply the results from Section \ref{s: New Fossils Band} with $N=3$ and $s=M$. Since $\e$ will stay fixed throughout the process, for the sake of transparency we will drop it from the indices. Thus we will write just  $\cP_{M,n}$ instead of $\cP_{M,n,\e}$.

The final stage of our construction is the function
\begin{equation*}
\gX_{n,\nu}:=\psi_n\cdot(\gX*\f_\nu)+\siC\nu^{m-2}(\cP_{M,n}*\f_\nu)
\end{equation*}
with $\siC>0$ to be defined in the following theorem.
The latter is a ``three-variable counterpart'' of \cite[Theorem 16]{CD-Mixed} and the function $\gX_{n,\nu}$ is the analogue of $\cR_{n,\nu}$ from \cite{CD-Mixed}.

\begin{theorem}
\label{t: Jugoplastika 1989}
Let $\nu\in (0,1]$. There exists $\siC>0$, depending on $\psi,p,q,r,\bA$ and the $*$-ellipticity constants stipulated in Theorem \ref{t: trilinemb}, but not depending on $\nu$ or $n$, such that $\gX_{n,\nu}$ is $\bA$-convex in $\C^{3}$ for all $n\in\N $. Moreover, the following statements hold.
\begin{enumerate}[{\rm (i)}]
\item
\label{eq: Maljkovic}
We have
$$
\aligned
D\gX_{n,\nu}  &\rightarrow D(\gX*\varphi_{\nu}),\\
D^2\gX_{n,\nu} &\rightarrow D^2(\gX*\varphi_{\nu})
\endaligned
$$
pointwise in $\C^3$ as $n\rightarrow\infty$.
\item
\label{eq: Kukoc}
For any $n\in\N $ there exists $C=C(n,\nu,\siC)>0$ such that
$$
\mod{(D\gX_{n,\nu})(\omega)}\leq C|\omega|,\quad \forall\omega\in\C^{3}.
$$
\item
\label{eq: Radja}
There exists $C>0$ that does not depend on $n$ such that
$$
\mod{(D\gX_{n,\nu})(\omega)}\leq C\nu^{m-2}
\left(
|\omega|^{m-1}
+
|\omega|^{M-1}
\right)
$$
for all $\omega\in\C^{3}$, $n\in\N $ and $\nu\in (0,1]$.
\item
\label{eq: Ivanovic}
For any $n\in\N $ and $\nu>0$ we have
$$
(\partial_{\bar u} \gX_{n,\nu})(0,v,w)=
(\partial_{\bar v} \gX_{n,\nu})(u,0,w)=
(\partial_{\bar w} \gX_{n,\nu})(u,v,0)=0,\quad
$$
for all $u,v,w\in\C$.
\item
\label{eq: Sobin}
$|D^{2}\gX_{n,\nu}|\in L^{\infty}(\C^{3})$.
\end{enumerate}
\end{theorem}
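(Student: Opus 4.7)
The hard part is the $\bA$-convexity of $\gX_{n,\nu}$. The strategy is a three-region split of $\C^3$, designed so that the perturbation $\siC\nu^{m-2}(\cP_{M,n}*\f_\nu)$ provides enough positive curvature to compensate for the non-convexity that $\psi_n\cdot(\gX*\f_\nu)$ acquires in the annulus where $\psi_n$ transitions from $1$ to $0$. On the ball $\{|\omega|\leq 3n\}$ we have $\psi_n\equiv 1$ along with all its derivatives, so $H^\bA_{\psi_n(\gX*\f_\nu)}[\omega;X]=H^\bA_{\gX*\f_\nu}[\omega;X]=(H^\bA_{\gmX}[\cdot;X]*\f_\nu)(\omega)$, which is nonnegative by Theorem~\ref{t: Voltaren}\,{\it (\ref{eq: sova})} (using that $\Upsilon$ has Lebesgue measure zero and $\gX\in C^2(\C^3\setminus\Upsilon)$). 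On $\{|\omega|\geq 4n\}$ the first summand vanishes identically and only the perturbation remains, which is $\bA$-convex by Proposition~\ref{p: rikverc}\,{\it (\ref{eq: enfants})}. In both regions any $\siC>0$ is admissible.

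The delicate case is the transition annulus $3n<|\omega|<4n$, where $|\omega|\sim n$. There Corollary~\ref{c: schlagwerk} gives the upper bound $|D^2[\psi_n(\gX*\f_\nu)](\omega)|\leqsim\nu^{m-2}|\omega|^{M-2}\sim\nu^{m-2}n^{M-2}$, whence $|H^\bA_{\psi_n(\gX*\f_\nu)}[\omega;X]|\leq c\Lambda(\bA)\nu^{m-2}n^{M-2}|X|^2$ for a constant $c$ depending only on $\psi$, $p,q,r$ and the $*$-ellipticity data of $\bA$. Since $|\omega|>2n$ throughout this region, Proposition~\ref{p: rikverc}\,{\it (\ref{eq: enfants})} furnishes the matching lower bound $\siC\nu^{m-2}H^\bA_{\cP_{M,n}*\f_\nu}[\omega;X]\geq \siC(M+\e)\lambda(\bA)\nu^{m-2}n^{M-2}|X|^2$. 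Choosing $\siC$ large enough that $\siC(M+\e)\lambda(\bA)\geq c\Lambda(\bA)$ forces $H^\bA_{\gX_{n,\nu}}[\omega;X]\geq 0$ in the annulus. Crucially, such a $\siC$ depends only on $\psi$, $p,q,r$ and the $*$-ellipticity data of $\bA$, but neither on $n$ nor on $\nu$.

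With $\siC$ fixed, the remaining items are direct consequences of the defining decomposition and the corresponding parts of Proposition~\ref{p: rikverc}. For {\it (\ref{eq: Maljkovic})} one notes that for any given $\omega$ one eventually has $\psi_n(\omega)=1$ and $D^k\psi_n(\omega)=0$ for $k=1,2$ once $n$ is large, and combines this with the Leibniz rule and Proposition~\ref{p: rikverc}\,{\it (\ref{eq: alons})}. Item {\it (\ref{eq: Ivanovic})} exploits the invariance of $\psi$, $\f_\nu$, $\gX$ and $\cP_{M,n,\e}$ under the diagonal circle action $(u,v,w)\mapsto(e^{i\theta_1}u,e^{i\theta_2}v,e^{i\theta_3}w)$, which forces $\gX_{n,\nu}(u,v,w)$ to depend on $u$ only through $|u|$ (and similarly for $v,w$); hence $\partial_{\bar u}\gX_{n,\nu}$ vanishes whenever $u=0$. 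Item {\it (\ref{eq: Sobin})} combines the fact that $\psi_n\cdot(\gX*\f_\nu)$ is $C^2$ with compact support with Proposition~\ref{p: rikverc}\,{\it (\ref{eq: patrie})}; and {\it (\ref{eq: Kukoc})} follows by the mean value theorem applied to $D[\psi_n(\gX*\f_\nu)]$, which is $C^1$, has bounded Jacobian, and vanishes at $\omega=0$ by the diagonal invariance just invoked, combined with Proposition~\ref{p: rikverc}\,{\it (\ref{eq: la})}.

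Finally, {\it (\ref{eq: Radja})} is proved by the same three-region split: for $|\omega|\leq 3n$ the first summand simplifies to $D(\gX*\f_\nu)$ and is bounded by Corollary~\ref{c: izumrud}\,{\it (\ref{eq: D1})} as $\leqsim|\omega|^{m-1}+|\omega|^{M-1}$; for $|\omega|\geq 4n$ it vanishes; and in the annulus $3n<|\omega|<4n$ Corollary~\ref{c: schlagwerk} yields $|D[\psi_n(\gX*\f_\nu)](\omega)|\leqsim|\omega|^{M-1}$. The missing prefactor $\nu^{m-2}$ is absorbed because $\nu\in(0,1]$ and $m\leq 2$ (since $p\geq q$ gives $q/p\leq 1$, hence $m=\min\{q/p+1,r\}\leq 2$). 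The perturbation term is handled by applying Proposition~\ref{p: rikverc}\,{\it (\ref{eq: de})} with $\text{\v{s}}=m$, which yields the matching bound $\siC\nu^{m-2}\bigl(|\omega|^{m-1}+|\omega|^{M-1}\bigr)$.
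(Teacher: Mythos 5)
Your proof is correct and follows essentially the same route as the paper's: the same three-region split of $\C^3$ according to where $\psi_n$ is locally constant, the same pairing of Corollary~\ref{c: schlagwerk} with the second part of Proposition~\ref{p: rikverc}~(ii) on the transition annulus to fix $\siC$ uniformly in $n$ and $\nu$, and the same derivation of items (i)--(v) from Proposition~\ref{p: rikverc} together with the smoothness, compact support and evenness of $\psi_n\cdot(\gX*\f_\nu)$. The only (harmless) deviations are that on $\{|\omega|\leq 3n\}$ you get nonnegativity of $H^{\bA}_{\gX*\f_\nu}$ directly from the convolution identity and Theorem~\ref{t: Voltaren}~c) rather than via the $-\nu|X|^2$ bound of Corollary~\ref{c: Naklofen}, and that you deduce the vanishing of first derivatives at coordinate zeros from torus invariance rather than from evenness in each real variable.
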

\begin{proof}
We address the statements of the theorem one by one.

\smallskip
$\bullet$
Let us first prove the $\bA$-convexity in the region $\{|\omega|< 3n\}\cup\{|\omega|>4n\}$.
It follows from the $\bA$-convexity of $\gX*\varphi_{\nu}$ and $\cP_{M,n}*\varphi_{\nu}$.
Indeed, from Corollary \ref{c: Naklofen} {\it(\ref{eq: X3})} we have
$$
H^{(A,B,C)(x)}_{\gmX_\nu}[\omega;X]
\,\geqsim\,-\nu|X|^2.
$$
By combining this with Proposition~\ref{p: rikverc}~(\ref{eq: enfants}) we get, for sufficiently large $\siC $,
$$
H^{(A,B,C)(x)}_{\gmX_{n,\nu}}[\omega;X]
\,\geqsim\,\left[\siC \nu^{m-2}(M+\e)n^{M-2}\lambda(\bA)-\nu\right]|X|^2.
$$
Since $\nu\in(0,1]$ and $m-2\leq0$ we thus get
$$
H^{(A,B,C)(x)}_{\gmX_{n,\nu}}[\omega;X]
\,\geqsim\,\left[\siC (M+\e)n^{M-2}\lambda(\bA)-1\right]\nu|X|^2.
$$
Thus we see that this quantity is positive if $\siC $ is large enough.

In order to achieve $\bA$-convexity in the region $\{3n\leq|\omega|\leq 4n\}$, we choose $\siC $ large enough and combine Corollary \ref{c: schlagwerk} with the second part of Proposition~\ref{p: rikverc}~(\ref{eq: enfants}).

\smallskip
$\bullet$
Item~(\ref{eq: Maljkovic}) is a trivial consequence of Proposition~\ref{p: rikverc} (\ref{eq: alons}) and the definition of $\psi_n\cdot\left(\gX*\f_\nu\right)$.

\smallskip
$\bullet$
From \eqref{eq: ENG-SWE} and the fact that $\psi_n\equiv1$ in a neighbourhood of $0$, we conclude that
$$
(D\left[\psi_n\cdot\left(\gX*\f_\nu\right)\right])(0)=0.
$$
Hence, by the mean value theorem and the fact that $\psi_n\cdot\left(\gX*\f_\nu\right)\in C_c^\infty(\C^{3})$, we get
$$
\mod{(D\left[\psi_n\cdot\left(\gX*\f_\nu\right)\right])(\omega)}\leq C(\nu,n)|\omega|.
$$
Item~(\ref{eq: Kukoc}) follows from here and Proposition~\ref{p: rikverc}~(\ref{eq: la}).

\smallskip
$\bullet$
In order to prove (\ref{eq: Radja}) we separately estimate $|D\left[\psi_n\cdot(\gX*\varphi_\nu)\right](\omega)|$ and $|D\left(\cP_{M,n}*\f_\nu\right)|$.
The estimate of $|D\left[\psi_n\cdot(\gX*\varphi_\nu)\right](\omega)|$ follows by
Corollary \ref{c: izumrud} {\it(\ref{eq: D1})} (for $|\omega|\not\in[3n,4n]$, since $D\psi_n\equiv0$ there) and Corollary \ref{c: schlagwerk} (for $|\omega|\in[3n,4n]$).

On the other hand, the estimate of $|D\left(\cP_{M,n}*\f_\nu\right)|$ is Proposition~\ref{p: rikverc}~(\ref{eq: de}), used with $s=M$ and $\text{{\it\v{s}}}=m$.

\smallskip
$\bullet$
To prove item~(\ref{eq: Ivanovic}) just observe that $\gX_{n,\nu}$ is smooth and even in each of the variables, because both $\gX*\varphi_{\nu}$ and $\cP_{M,n}*\varphi_{\nu}$ have this property.

\smallskip
$\bullet$
Item~(\ref{eq: Sobin}) follows from Proposition~\ref{p: rikverc} (\ref{eq: patrie}) and the fact that $\psi_n\cdot(\gX*\f_\nu)\in C^{2}_c(\C^{3})$.
\end{proof}

As a consequence of Theorem \ref{t: Jugoplastika 1989}, items \eqref{eq: Ivanovic} and \eqref{eq: Sobin}, we have the following invariance result, which is modelled after (and proven exactly as) \cite[Lemma 19]{CD-Mixed}.

\begin{lemma}
\label{l: Philips}
If ${\mathtt f},{\mathtt g},{\mathtt h}\in\oU$ then
$
\partial_{u}\gX_{n,\nu}\left({\mathtt f},{\mathtt g},{\mathtt h}\right)\in \oU,
$
and the same for $\partial_{v}\gX_{n,\nu}$ and $\partial_{w}\gX_{n,\nu}$.
\end{lemma}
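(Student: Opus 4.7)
The plan is to exploit two consequences of Theorem \ref{t: Jugoplastika 1989}: the boundedness of $D^2\gX_{n,\nu}$ from (\ref{eq: Sobin}), and the coordinate-slice vanishing of the first-order partials from (\ref{eq: Ivanovic}). Since $\gX_{n,\nu}$ is of class $C^\infty$ on $\C^3$ (being a sum of convolutions with the mollifier $\f_\nu$ multiplied by smooth cutoffs), the function $F:=\partial_u\gX_{n,\nu}$ is itself $C^\infty$, and (\ref{eq: Sobin}) supplies a global bound on its gradient, so $F$ is Lipschitz on $\C^3$, viewed as $\R^6$. Real-valuedness of $\gX_{n,\nu}$ gives $\partial_u\gX_{n,\nu}=\overline{\partial_{\bar u}\gX_{n,\nu}}$, whence (\ref{eq: Ivanovic}) furnishes
\[
F(0,v,w)=0 \qquad \forall\, v,w\in\C,
\]
and, in particular, $|F(u,v,w)|\leq\|DF\|_\infty |u|$. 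The corresponding statements hold verbatim for $\partial_v\gX_{n,\nu}$ and $\partial_w\gX_{n,\nu}$.

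With this in hand, the Lipschitz chain rule yields $F(f,g,h)\in H^1(\Omega)$ for any $f,g,h\in H^1(\Omega)$, together with the expected weak-gradient formula in terms of $\nabla f,\nabla g,\nabla h$ and the bounded partials of $F$. This immediately takes care of the Neumann case $\oU=H^1(\Omega)$. For the remaining two cases I will produce an approximating sequence inside the dense subset that defines $\oU$, using that Lipschitz control of $F$ combined with its vanishing on $\{u=0\}$ implies
\[
\bigl\|F(f_k,g_k,h_k)-F(f,g,h)\bigr\|_{H^1(\Omega)}\longrightarrow 0
\]
whenever $f_k,g_k,h_k\to f,g,h$ in $H^1(\Omega)$, after which closedness of $\oU$ in $H^1(\Omega)$ closes the argument.

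For the Dirichlet case $\oU=H_0^1(\Omega)$, I pick $f_k\in C_c^\infty(\Omega)$ with $f_k\to f$. Since $F(0,v,w)=0$, the composition $F(f_k,g,h)$ is supported in $\operatorname{supp} f_k$, so it is an $H^1$-function with compact support in $\Omega$ and therefore belongs to $H_0^1(\Omega)$; the Lipschitz limit then lies in $H_0^1(\Omega)$. For the mixed case, I approximate $f,g,h$ by $\phi_k|_\Omega,\chi_k|_\Omega,\eta_k|_\Omega$ with $\phi_k,\chi_k,\eta_k\in C_c^\infty(\R^d\setminus\Gamma)$; the composition $F(\phi_k,\chi_k,\eta_k)$ is then $C^\infty$ on $\R^d$ with support contained in $\operatorname{supp}\phi_k\subseteq\R^d\setminus\Gamma$ (again thanks to $F(0,\cdot,\cdot)=0$), so its restriction to $\Omega$ belongs to $\oU$ by the very definition of $\oU$. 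Replacing $\partial_u\gX_{n,\nu}$ by $\partial_v\gX_{n,\nu}$ or $\partial_w\gX_{n,\nu}$ yields the analogous conclusions via the other two vanishing identities in (\ref{eq: Ivanovic}). The one genuinely delicate point, and hence the main obstacle, is propagating the ``vanishing near $\Gamma$'' property through a nonlinear composition in the mixed case — an issue neatly dissolved by (\ref{eq: Ivanovic}); all other parts are faithful transcriptions of the corresponding argument in \cite{CD-Mixed}.
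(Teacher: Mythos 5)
Your proof is correct and takes essentially the same route as the paper, which simply delegates to \cite[Lemma 19]{CD-Mixed}: the two ingredients you isolate --- boundedness of $D^2\gX_{n,\nu}$ from item \emph{(v)}, and the coordinate-slice vanishing of the first-order partials from item \emph{(iv)} (combined with real-valuedness of $\gX_{n,\nu}$ to pass from $\partial_{\bar u}$ to $\partial_u$) --- are exactly what that proof uses, and your treatment of the three boundary-condition cases mirrors the argument there. One small correction to the phrasing: the $H^1(\Omega)$ convergence $F(f_k,g_k,h_k)\to F(f,g,h)$ is not a consequence of Lipschitz control and the vanishing on $\{u=0\}$ alone, but also uses the continuity and $L^\infty$ bound on $D^2\gX_{n,\nu}$ together with dominated convergence along an a.e.-convergent subsequence, which suffices since $\oU$ is closed in $H^1(\Omega)$; all of these ingredients are already in your hands, so the argument stands.
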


\subsection{Completion of the proof of Theorem \ref{t: trilinemb}}
\label{s: Tsukahara Bokuden}
Recall that we need to prove \eqref{eq: Cherokee}.
So take $\tf,\tg,\th\in\oU$ such that $\tf,\tg,\th,L_{A}\tf, L_{B}\tg,L_C\th\in\big(L^p\cap L^{p'}\cap L^{r}\big)(\Omega)$. As noted before, this intersection is contained in $L^q$.

Recalling the notation \eqref{eq: nzapazap}, we will first prove that
\begin{eqnarray}
\label{eq: Shoshoni}
\text{\calligra L }(\gX)({\mathtt f},{\mathtt g},{\mathtt h})
=\lim_{\nu\searrow 0}\lim_{n\rightarrow\infty}
\text{\calligra L }(\gX_{n,\nu})({\mathtt f},{\mathtt g},{\mathtt h}).
\end{eqnarray}
Let us justify \eqref{eq: Shoshoni}. First consider the limit as $n\rightarrow\infty$. We want to use the Lebesgue dominated convergence theorem. Thus we must prove that for all $n\in\N$, the integrands in
\begin{equation}
\label{eq: Cheyenne}
\text{\calligra L }(\gX_{n,\nu})({\mathtt f},{\mathtt g},{\mathtt h})
=
\Re\int_{\Omega}
\Big(
  (\partial_{u}\gX_{n,\nu})\left({\mathtt f},{\mathtt g},{\mathtt h}\right) L_{A}{\mathtt f}
+(\partial_{v}\gX_{n,\nu})\left({\mathtt f},{\mathtt g},{\mathtt h}\right) L_{B}{\mathtt g}
+(\partial_{w}\gX_{n,\nu})\left({\mathtt f},{\mathtt g},{\mathtt h}\right) L_{C}{\mathtt h}
\Big)
\end{equation}
admit a majorant which lies in $L^1(\Omega)$ and is independent of $n$. It is enough to treat the first summand, as the other two can be estimated in exactly the same manner.

By Theorem~\ref{t: Jugoplastika 1989}~(\ref{eq: Radja}) we have, with $\omega=\left({\mathtt f},{\mathtt g},{\mathtt h}\right)$,
\begin{equation*}
\mod{  (\partial_{u}\gX_{n,\nu})\left({\mathtt f},{\mathtt g},{\mathtt h}\right) L_{A}{\mathtt f}
}
\,\leqsim\,
|\omega|^{p-1}\left|L_{A}{\mathtt f}\right|+|\omega|^{q/p}\left|L_{A}{\mathtt f}\right|+|\omega|^{r-1}\left|L_{A}{\mathtt f}\right|.
\end{equation*}
(We omitted copying from Theorem~\ref{t: Jugoplastika 1989}~(\ref{eq: Radja}) the power of $\nu$, since at this stage we consider it as constant.)
The above majorant clearly does not depend on $n$. We claim that it belongs to $L^1$. Indeed, use that $L_{A}{\mathtt f}\in L^p\cap L^{p'}\cap L^{r}$ and that $|\omega|\in L^p\cap L^q\cap L^{r}$.
It then follows that
$$
|\omega|^{p-1}\left|L_{A}{\mathtt f}\right|+|\omega|^{q/p}\left|L_{A}{\mathtt f}\right|
+|\omega|^{r-1}\left|L_{A}{\mathtt f}\right|
\in
L^{p'}\cdot L^{p} + L^{p}\cdot L^{p'} + L^{r'}\cdot L^{r},
$$
which by the Hölder's inequality belongs to $L^{1}$. Thus, by Theorem~\ref{t: Jugoplastika 1989}~(\ref{eq: Maljkovic}), we proved
\begin{equation}
\label{eq: Lakota}
\lim_{n\rightarrow\infty}
\text{\calligra L }(\gX_{n,\nu})({\mathtt f},{\mathtt g},{\mathtt h})
=\text{\calligra L }(\gX*\f_\nu)({\mathtt f},{\mathtt g},{\mathtt h}).
\end{equation}
Now let us take the limit as $\nu\searrow0$.
We argue as before, just that now the adequate estimates are provided by Corollary~\ref{c: izumrud}~(\ref{eq: D1}). From the fact that $\gX\in C^{1}(\C^{3})$ and the Lebesgue dominated convergence theorem we thus deduce that
\begin{equation}
\label{eq: Comanche}
\lim_{\nu\searrow0}
\text{\calligra L }(\gX*\f_\nu)({\mathtt f},{\mathtt g},{\mathtt h})
=\text{\calligra L }(\gX)({\mathtt f},{\mathtt g},{\mathtt h}).
\end{equation}
The combination of \eqref{eq: Comanche} and \eqref{eq: Lakota} returns \eqref{eq: Shoshoni}.

\medskip
By Lemma~\ref{l: Philips}, we can integrate by parts the integral in \eqref{eq: Cheyenne}
and, from the chain rule for the composition of smooth functions with vector-valued Sobolev functions, deduce as in \cite[Corollary 4.2]{CD-DivForm} that
\begin{equation*}
\aligned
2\,
\Re\int_\Omega
&\Big(
  \partial_{u}\gX_{n,\nu}\left({\mathtt f},{\mathtt g},{\mathtt h}\right) L_{A}{\mathtt f}
+\partial_{v}\gX_{n,\nu}\left({\mathtt f},{\mathtt g},{\mathtt h}\right) L_{B}{\mathtt g}
+\partial_{w}\gX_{n,\nu}\left({\mathtt f},{\mathtt g},{\mathtt h}\right) L_{C}{\mathtt h}
\Big)\\
&=\int_{\Omega}H^{(A,B,C)}_{\gmX_{n,\nu}}\left[\cW_{3,1}\left({\mathtt f},{\mathtt g},{\mathtt h}\right);\cW_{3,d}\left(\nabla {\mathtt f}, \nabla {\mathtt g}, \nabla {\mathtt h}\right)\right],
\endaligned
\end{equation*}
so we merge this with \eqref{eq: Lakota} into
\begin{equation}
\label{eq: Kiowa}
2\text{\calligra L }(\gX*\f_{\nu})({\mathtt f},{\mathtt g},{\mathtt h})
=\lim_{n\rightarrow\infty}
\int_{\Omega}H^{(A,B,C)}_{\gmX_{n,\nu}}\left[\cW_{3,1}\left({\mathtt f},{\mathtt g},{\mathtt h}\right);\cW_{3,d}\left(\nabla {\mathtt f}, \nabla {\mathtt g}, \nabla {\mathtt h}\right)\right].
\end{equation}
By Theorem~\ref{t: Jugoplastika 1989}, the function $\gX_{n,\nu}$ is $\bA$-convex in $\C^{3}$, so the integrand on the right-hand side of \eqref{eq: Kiowa} is nonnegative for all $n\in\N$. Hence, by \eqref{eq: Kiowa}, Fatou's lemma and Theorem~\ref{t: Jugoplastika 1989} (\ref{eq: Maljkovic}), followed by Corollary~\ref{c: Naklofen}~\eqref{eq: X3},
$$
\aligned
2\text{\calligra L }(\gX*\f_{\nu})({\mathtt f},{\mathtt g},{\mathtt h})
&\geq \int_{\Omega}H^{(A,B,C)}_{\gmX*\f_\nu}\left[\cW_{3,1}\left({\mathtt f},{\mathtt g},{\mathtt h}\right);\cW_{3,d}\left(\nabla {\mathtt f}, \nabla {\mathtt g}, \nabla {\mathtt h}\right)\right]\\
&\geqsim\int_{\Omega}|\nabla {\mathtt f}||\nabla {\mathtt g}|({\mathtt h}-\nu),
\endaligned
$$
for all $\nu\in (0,1)$, where
the implied constant may depend on $p,q,r,A,B,C$ and their $*$-ellipticity constants alluded to in Theorem \ref{t: trilinemb}, but not on $\nu$. The desired inequality \eqref{eq: reduction} now follows from \eqref{eq: Comanche}.

\subsection{Growth of the embedding constants}

Here we make more explicit the behaviour of the embedding constants appearing in Theorem \ref{t: trilinemb}.

\begin{corollary}
\label{c: cc}
Under the assumptions of Theorem \ref{t: trilinemb},
when $p>q$, the embedding constant in \eqref{eq: trilinemb} can be estimated as
$$
\leqsim
\left(\frac{1}{p}\right)^{1/p}\left(\frac{D}{q}\right)^{1/q}\left(\frac{E}{r}\right)^{1/r}.
$$
Here $D,E$ are parameters of the function $\gX$ from Section \ref{subsec:case1}, with $D$ chosen specifically as in \eqref{2544}.
In particular, while the constant $E$ in general depends on the choice of the matrix $C$, the constant $D$ does not.

If $C=I$, the constant stays bounded when considering the triple $(p-\e,q+\e,r(\e))$ with conjugate exponents $p,q$ and sending $\e\rightarrow0$, as described in Section \ref{s: One Penny}. More precisely, in the limit $\e\rightarrow0$ the constants get majorized by
$$
\leqsim \frac{D^{1/q}}{p^{1/p}q^{1/q}}.
$$
\end{corollary}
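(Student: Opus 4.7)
The plan is to track the dependence on $D$ and $E$ explicitly through the heat-flow argument in the proof of Theorem~\ref{t: trilinemb}. The two ingredients that make this possible are the sharper upper bound~\eqref{eq:est_gX} (in place of the merely qualitative item~\emph{\ref{eq: Burger})} of Theorem~\ref{t: Voltaren}) and the observation that once $D$ is chosen according to~\eqref{2544}, the implied constant in the generalized convexity estimate~\eqref{eq: Trifonov Kinderszenen} becomes $1$. With this in hand, the flow argument as carried out in Sections~\ref{s: Extracorporeal Shock Wave Therapy} and~\ref{s: general case} would yield
\[
\int_{0}^{\infty}\!\!\int_{\Omega}|\nabla T_{t}^{A}f||\nabla T_{t}^{B}g||T_{t}^{C}h|\wrt x\wrt t
\,\leqsim\,\int_{\Omega}\gX(f,g,h)\wrt x
\,\leq\,\frac{\|f\|_{p}^{p}}{p}+\frac{D\|g\|_{q}^{q}}{q}+\frac{E\|h\|_{r}^{r}}{r},
\]
where the implicit constant in the first $\leqsim$ depends on $p,q,r$ and the $*$-ellipticity constants of $A,B,C$, but importantly \emph{not} on $D$ or $E$.

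Next I would polarize exactly as on pages~\pageref{polar1} and~\pageref{polar2}: substitute $(a_{1}f,a_{2}g,a_{3}h)$ with $a_{1}a_{2}a_{3}=1$ and minimize via Lemma~\ref{l: Mos Def} applied with $\mathsf{f}_{1}=p^{-1/p}\|f\|_{p}$, $\mathsf{f}_{2}=(D/q)^{1/q}\|g\|_{q}$, $\mathsf{f}_{3}=(E/r)^{1/r}\|h\|_{r}$. The minimum equals $p^{1/p}q^{1/q}r^{1/r}\mathsf{f}_{1}\mathsf{f}_{2}\mathsf{f}_{3}$, and since the map $s\mapsto s^{1/s}$ is bounded on $[1,\infty)$, the prefactor $p^{1/p}q^{1/q}r^{1/r}$ can be absorbed into $\leqsim$, giving the first assertion. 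The $C$-independence of $D$ I would confirm by a pass through Section~\ref{subsec:case1}: every inequality constraining $D$ in Domains~\#1--\#6 involves only $p,q$ and the $*$-ellipticity and $L^{\infty}$ constants of $A$ and $B$, with the tightest constraint arising in Domain~\#4 and encoded by~\eqref{eq: minuta} and hence by~\eqref{2544}.

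For the second assertion I would specialize to $C=I$ with the triplet $(p-\e,q+\e,r(\e))$ from~\eqref{eq: Dallas vs. Clippers Game 6}. Continuous dependence of the constants $\alpha_{1},\alpha_{2},\alpha_{3}$, and thus of $D_{p-\e,q+\e,A,B}$, on the exponents would give $D_{p-\e,q+\e,A,B}\to D_{p,q,A,B}$ together with $(1/(p-\e))^{1/(p-\e)}\to (1/p)^{1/p}$ and $(D/(q+\e))^{1/(q+\e)}\to (D/q)^{1/q}$ as $\e\to 0$. The last assertion of Proposition~\ref{p: Pat Riley} supplies the crucial input: for $C=I$ the admissible $E$ can be kept bounded as $r\to\infty$, so $(E/r(\e))^{1/r(\e)}=\exp\!\big(r(\e)^{-1}\log(E/r(\e))\big)\to 1$. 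Combining these three limits produces the stated majorant $D^{1/q}/(p^{1/p}q^{1/q})$. The only care point throughout is verifying that the factor $p^{1/p}q^{1/q}r^{1/r}$ coming from Lemma~\ref{l: Mos Def} is indeed bounded both for fixed $(p,q,r)$ and along the limit $\e\to 0$ (where $r\to\infty$ and $r^{1/r}\to 1$); no step is delicate given the bookkeeping already performed in Proposition~\ref{p: Pat Riley}.
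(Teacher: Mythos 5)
Your proposal is correct and follows essentially the same route as the paper: the paper's own proof is just a two-sentence reference to Proposition~\ref{p: Pat Riley} (the explicit upper bound \eqref{eq:est_gX}, the normalization $\sqrt{\alpha_1 D-\alpha_2}-\alpha_3=1$ via \eqref{2544}, and the boundedness of $E$ for $C=I$ as $r\to\infty$) combined with the polarization trick of Lemma~\ref{l: Mos Def}, which is exactly the bookkeeping you carry out. Your additional observations — that the factor $p^{1/p}q^{1/q}r^{1/r}$ is harmlessly absorbed and that $(E/r(\e))^{1/r(\e)}\to1$ — are the intended (implicit) details.
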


\begin{proof}
The statement follows from Proposition \ref{p: Pat Riley} and the ``polarization trick'' (see the way Lemma \ref{l: Mos Def} was used on pages \pageref{polar1} and \pageref{polar2}). Choosing $D$ as in \eqref{2544} gives $\sqrt{\alpha_1D-\alpha_2}-\alpha_3=1$.

Note that just the last factor depends on $r$. And since in the case of $C=I$ we saw (Proposition \ref{p: Pat Riley}) that $E$ stays bounded as $r\rightarrow\infty$, we proved
the last part, too, as in this situation the rightmost term disappears with $r\rightarrow\infty$.
\end{proof}

\begin{lemma}
\label{l: final}
Let $\Omega,\oU,\oV,\oW,p,q,r,A,B,C,f,g,h$ be as in the formulation of Theorem \ref{t: trilinemb}.
Then for every $s\geq1$ we have
\begin{equation*}
\label{eq: trilwithpar}
\int^{\infty}_{0}\int_{\Omega}\mod{\nabla T^{A}_{st}f}\mod{\nabla T^{B}_{st}g}\mod{T^{C}_{t}h}\wrt x\wrt t
\,\leqsim\, s^{-1/r^{\prime}}\norm{f}{p}\norm{g}{q}\norm{h}{r}.
\end{equation*}
The implied embedding constants only depend on $p,q,r$ and $*$-ellipticity constants of $A,B,C$ alluded to in the 
assumptions. 
\end{lemma}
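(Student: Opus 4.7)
\bigskip

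The plan is a direct scaling argument that reduces Lemma~\ref{l: final} to Theorem~\ref{t: trilinemb} while carefully tracking how the embedding constant behaves under the rescaling. First I would substitute $\tau = st$ (so $\wrt t = s^{-1}\wrt \tau$) to obtain
\begin{equation*}
\int_0^\infty\!\int_{\Omega}\mod{\nabla T^{A}_{st}f}\mod{\nabla T^{B}_{st}g}\mod{T^{C}_{t}h}\wrt x\wrt t
=\frac{1}{s}\int_0^\infty\!\int_{\Omega}\mod{\nabla T^{A}_{\tau}f}\mod{\nabla T^{B}_{\tau}g}\mod{T^{C}_{\tau/s}h}\wrt x\wrt \tau.
\end{equation*}
From the form-based definition in Section~\ref{s: Heian}, the sesquilinear form $\a_{C/s,\oW}$ equals $s^{-1}\a_{C,\oW}$, so that $L_{C/s,\oW}=s^{-1}L_{C,\oW}$ and consequently $T^{C}_{\tau/s}h=T^{C/s}_{\tau}h$. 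Hence it suffices to apply Theorem~\ref{t: trilinemb} to the triple $(A,B,C/s)$ and to monitor how the embedding constant depends on $s$.

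Since $s\geq 1$, the matrix $C/s$ satisfies all the $\ast$-ellipticity conditions that $C$ does: $\Delta_\bullet(C/s)=s^{-1}\Delta_\bullet(C)>0$ and $\Lambda(A,B,C/s)\leq\Lambda(A,B,C)$, so the ``upper'' ellipticity data do not deteriorate. The hypotheses on $A,B$ are untouched. Theorem~\ref{t: trilinemb} therefore applies to $(A,B,C/s)$ and yields an embedding constant that depends on $s$ solely through the admissible value of the Bellman function parameter $E$ (the companion parameter $D$ is, by Corollary~\ref{c: cc}, independent of $C$).

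To quantify the $s$-dependence I would invoke Proposition~\ref{p: Pat Riley}: if $E$ is admissible for the triple $(A,B,C)$, then $\delta^{-1}E$ is admissible for $(A,B,\delta C)$ whenever $\delta\in(0,1]$. Taking $\delta=1/s$, the admissible constant for $(A,B,C/s)$ is $sE$. Combined with the embedding-constant formula $\leqsim (1/p)^{1/p}(D/q)^{1/q}(E/r)^{1/r}$ from Corollary~\ref{c: cc}, replacing $E$ by $sE$ multiplies the embedding constant by $s^{1/r}$. Inserting this into the rescaled integral and absorbing the prefactor $1/s$ gives the overall factor
\begin{equation*}
\frac{1}{s}\cdot s^{1/r}=s^{1/r-1}=s^{-1/r'},
\end{equation*}
which is exactly the claimed rate.

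The main (minor) obstacle will be to confirm that the scaling step works uniformly in both cases $p>q$ and $p=q$, since Proposition~\ref{p: Pat Riley} and Corollary~\ref{c: cc} are phrased explicitly for $p>q$. I expect this to be routine: the proof of Proposition~\ref{p: Pat Riley} uses only that $E$ appears as a coefficient of $[w]^{r}$ in each defining subdomain of $\gX$, a structural property shared by the Bellman function in Section~\ref{subsec:case2}; and the polarization argument underlying Corollary~\ref{c: cc} (an application of Lemma~\ref{l: Mos Def}) is likewise insensitive to the subcase. Beyond this verification, the argument proceeds by direct substitution and citation of already established results.
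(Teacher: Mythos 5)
Your proof is correct and follows exactly the route the paper takes: change variables $t'=st$, identify $T^{C}_{t'/s}=T^{C/s}_{t'}$ via the scaling of the form, and then track the $s$-dependence through Proposition~\ref{p: Pat Riley} (with $\delta=1/s$) and Corollary~\ref{c: cc} to absorb $s^{-1}\cdot s^{1/r}=s^{-1/r'}$. The paper's one-line proof leaves all of these steps (including the $p=q$ check you flag) implicit, so your fuller write-up is a faithful expansion of the same argument.
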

\begin{proof}
Change the variable by $st=t^{\prime}$ and apply Proposition \ref{p: Pat Riley} and Corollary \ref{c: cc}.
\end{proof}

\section{Proof of Theorem \ref{t: princ}}
\label{s: KPproof}

In this section we prove Theorem \ref{t: princ}. The proof consists of the following principal elements:
\begin{itemize}
\item
dualization (Proposition \ref{p: dual inequality});
\item
Littlewood--Paley decomposition (Section \ref{s: LPdec});
\item
subordination to the imaginary powers of $L_A$ (Section \ref{s: Mellin});
\item
holomorphic functional calculus for $L_A$ \cite[Theorem 3]{CD-Mixed};
\item
trilinear embedding (Theorem \ref{t: trilinemb}) with control of the embedding constants for $(A,B,\delta C)$ in terms of $\delta>0$ (Corollary \ref{c: cc}).
\end{itemize}
We first either review or create anew some necessary tools.

\subsection{Preliminaries}
Fix $A\in\cA(\Omega)$. Recall that for simplicity we assume that $L_{A}$ is injective in $L^{2}(\Omega)$. We will systematically use the holomorphic functional calculus for injective sectorial operators on Banach spaces, see \cite{Mc, CDMY,Haase}.
\subsubsection{Abstract Littlewood-Paley decomposition}
For every $\alpha>0$ define
\[
\psi_{\alpha}(z):=z^{\alpha}e^{-z}, \quad \phi_{\alpha}(z):=\frac{1}{\Gamma(\alpha)}\int^{\infty}_{1}\psi_{\alpha}(sz)\frac{\wrt s}{s},\quad \Re z>0.
\]
A rapid calculation shows that for $\alpha\in\N_{+}$ we have
\begin{equation}
\label{eq: sum}
\phi_{\alpha}(z)
=\sum^{\alpha-1}_{j=0}\frac{\psi_{j}(z)}{j!}
=\left(
\sum^{\alpha-1}_{j=0}\frac{z^{j}}{j!}
\right)
e^{-z}.
\end{equation}
It follows that on $L^{2}(\Omega)$
we have
\begin{equation}
\label{eq: sumbis}
\phi_{\alpha}(tL_{A})
=\sum^{\alpha-1}_{j=0}\frac{1}{j!}\psi_{j}(tL_{A})
=\sum^{\alpha-1}_{j=0}\frac{(tL_{A})^{j}T^{A}_{t}}{j!},
\quad \forall \alpha\in\N_{+}.
\end{equation}
If $A$ is $\chi$-elliptic, then, by \cite{CD-Mixed}, the identity holds true in $L^{\chi}(\Omega)$.

The next result can be referred to as the {\it Calder\'on reproducing formula}, see \cite[Proposition 2.11]{BCF16} for a similar version.

\begin{lemma}
\label{l: L-P}
Let $\chi\in (1,+\infty)$. Suppose that $A$ is $\chi$-elliptic. Then 
we have
\begin{itemize}
\item 
\hskip 4pt $\lim_{t\downarrow 0}\phi_{\alpha}(tL_{A})f=f$ in $L^{\chi}(\Omega)$, 
\item 
$\lim_{t\uparrow \infty}\phi_{\alpha}(tL_{A})f=0$ in $L^{\chi}(\Omega)$, 
\end{itemize}
for every $\alpha>0$ and $f\in L^{\chi}(\Omega)$.
\end{lemma}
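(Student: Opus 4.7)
The plan is to deduce both limits from McIntosh's Convergence Lemma applied to the dilates $g_t(z):=\phi_\alpha(tz)$, $t>0$. The main input is the bounded $H^\infty$-calculus of $L_A$ on $L^\chi(\Omega)$ of angle $<\pi/2$, which was established in \cite[Theorem~3]{CD-Mixed}. Fix any $\theta<\pi/2$ larger than the $H^\infty$-angle of $L_A$, and write $\Sigma_\theta:=\mn{z\in\C\setminus\{0\}}{|\arg z|<\theta}$.

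First I would verify that $\phi_\alpha\in H^\infty(\Sigma_\theta)$ and identify its boundary behaviour. The substitution $w=sz$ in the defining integral for $\phi_\alpha$ yields the equivalent representation
\[
\phi_\alpha(z)=\frac{1}{\Gamma(\alpha)}\int_z^{+\infty}w^{\alpha-1}e^{-w}\,dw,\qquad \Re z>0,
\]
from which $1-\phi_\alpha(z)=\frac{1}{\Gamma(\alpha)}\int_0^z w^{\alpha-1}e^{-w}\,dw=O(|z|^\alpha)$ as $z\to 0$ in $\Sigma_\theta$; in particular $\phi_\alpha$ extends continuously to the closed sector with $\phi_\alpha(0)=1$. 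On the other hand, the original representation combined with the sectorial bound $\Re z\geq|z|\cos\theta$ delivers both $\norm{\phi_\alpha}{H^\infty(\Sigma_\theta)}\leqsim (\cos\theta)^{-\alpha}$ and the exponential decay $|\phi_\alpha(z)|\leqsim e^{-|z|\cos\theta/2}$ as $|z|\to\infty$ in $\Sigma_\theta$.

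Consequently the family $\{g_t\}_{t>0}$ is uniformly bounded in $H^\infty(\Sigma_\theta)$. For any compact $K\subset\Sigma_\theta$ the set $tK$ collapses to the origin as $t\downarrow 0$, so the estimate $|1-\phi_\alpha(w)|\leqsim|w|^\alpha$ gives $g_t\to 1$ uniformly on $K$; similarly, as $t\uparrow\infty$ the set $tK$ is pushed to infinity in $\Sigma_\theta$ where $\phi_\alpha$ decays exponentially, forcing $g_t\to 0$ uniformly on $K$. McIntosh's Convergence Lemma, see \cite[Lemma~2.1]{CDMY} or \cite[Proposition~5.1.4]{Haase}, then yields $g_t(L_A)f\to f$ in $L^\chi(\Omega)$ as $t\downarrow 0$ and $g_t(L_A)f\to 0$ in $L^\chi(\Omega)$ as $t\uparrow\infty$, for every $f\in L^\chi(\Omega)$. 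The only non-automatic step is the verification of the uniform-on-compacts convergence of $g_t$ described above; once that is in place, the Convergence Lemma closes the argument mechanically, with no need to invoke any further information about $L_A$ beyond its bounded $H^\infty$-calculus on $L^\chi(\Omega)$.
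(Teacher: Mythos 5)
Your proposal is correct and follows essentially the same route as the paper: both arguments rest on the bounded $H^{\infty}$-calculus of $L_A$ on $L^{\chi}(\Omega)$ from \cite{CD-Mixed}, the uniform bound $\sup_{t>0}\|\phi_\alpha(tL_A)\|_\chi\leqsim\|\phi_\alpha\|_{H^\infty}$, the identity $1-\phi_\alpha(z)=\frac{1}{\Gamma(\alpha)}\int_0^z w^{\alpha-1}e^{-w}\,\textup{d}w$ (the paper writes it in the unsubstituted form $\phi_\alpha(tz)=1-\frac{1}{\Gamma(\alpha)}\int_0^t(sz)^\alpha e^{-sz}\frac{\textup{d}s}{s}$), and McIntosh's convergence lemma. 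Your verification of uniform convergence on compact subsets of the sector is, if anything, slightly more explicit than the paper's pointwise statement.
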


\begin{proof}
By \cite{CD-Mixed} the operator $L_{A}$ has bounded $H^{\infty}$-calculus of angle $\vartheta_{\chi}<\pi/2$ in $L^{\chi}(\Omega)$. For every $\vartheta\in [0,\pi/2)$ we have $\phi_{\alpha}\in H^{\infty}(\bS_{\vartheta})$. Therefore, for $0<\e<\pi/2-\vartheta_\chi$,
\begin{equation}
\label{eq: u1}
\sup_{t>0}\norm{\phi_{\alpha}(tL_{A})}{\chi}\leqsim \norm{\phi_{\alpha}}{H^{\infty}(\bS_{\vartheta_{\chi}+\varepsilon})}<\infty.
\end{equation}
Also, for all $z\in
\C_+$ we have
\[
\phi_{\alpha}(tz)=1-\frac{1}{\Gamma(\alpha)}\int^{t}_{0}(sz)^{\alpha}e^{-zs}\frac{\wrt s}{s},
\]
yielding
$$
\aligned
\lim_{t\searrow0}\phi_\alpha(tz) & =1\\
\lim_{t\rightarrow\infty}\phi_\alpha(tz) & =0.
\endaligned
$$

Now the lemma follows from a well-known convergence lemma due to A. McIntosh \cite{Mc}; see \cite[Lemma~2.1]{CDMY}, \cite[Theorem~D]{ADM96} and \cite[Proposition~5.4.1]{Haase}.
\end{proof}

\subsubsection{Mellin transform and Cowling's subordination} 
\label{s: Mellin}
Let $m\in L^{1}(\R_{+},
\wrt\lambda/\lambda)$. The {\it Mellin transform} $\cM m $ of $m$ is the  Fourier transform of $m\circ\exp$:
\[
\cM m (u):=\int^{\infty}_{0}m(\lambda)\lambda^{-iu}\frac{\wrt \lambda}{\lambda},\quad u\in\R.
\]
If $\cM m $ belongs to $L^{1}(\R)$, then we have the {\it Mellin inversion formula}
\begin{equation}
\label{eq: M.info}
m(\lambda)=\frac{1}{\pi}\int^{+\infty}_{-\infty}\cM m (u)\lambda^{iu}\wrt u,\quad \lambda>0.
\end{equation}
Let $A\in\cA(\Omega)$ be $\chi$-elliptic. Then, as we remarked in Section \ref{s: KP},  the operator $L_{A}$ has bounded imaginary powers in $L^{\chi}(\Omega)$ \cite{CD-Mixed} and we have the estimates \eqref{eq: impowers}.

Suppose now that $m\in L^{1}(\R_{+},\wrt\lambda/\lambda)$ is such that $\cM m\in L^{1}(\R)$ and that
the estimate
\[
\mod{\cM m (u)}\leq Ce^{-c|u|},\quad \forall u\in\R
\]
holds for some $C>0$ and $c>\theta_{\chi}$. 
Then we can use \eqref{eq: M.info} to extend $m$ holomorphically to $\bS_{\vartheta_{\chi}+\varepsilon}$ for small $\e>0$. 
Assuming the notation from the proof of Lemma \ref{l: L-P}, we have $m\in H^{\infty}(\bS_{\vartheta_{\chi}+\varepsilon})$. The McIntosh convergence lemma \cite{Mc} that we already used in the proof of Lemma~\ref{l: L-P}, together with 
\eqref{eq: M.info}
shows that
\begin{equation}\label{eq: subord}
m(L_{A})=\frac{1}{\pi}\int^{+\infty}_{-\infty}\cM m (u)L^{iu}_{A}\wrt u,
\end{equation}
where the integral converges in the strong operator topology of $\cB(L^{\chi}(\Omega))$. 

For details and proofs (in the self-adjoint case) see \cite{cowling, Meda, CD-mult}.

\medskip
We shall apply Cowling's subordination to the functions $\psi_{\alpha}$. 
Note that
$\cM\psi_{\alpha}(u)=\Gamma(\alpha-iu)$ for $u\in\R$,
hence, by Stirling's formula,
\begin{equation}\label{eq: Stirling}
\mod{
\cM \psi_{\alpha}
(u)}\leq C_{\alpha} (1+|u|)^{\alpha-1/2}e^{-\pi|u|/2},\quad u\in\R.
\end{equation}
Therefore $\cM \psi_{\alpha}\in L^1(\R)$, which shows that the inversion formula \eqref{eq: M.info} applies with $m=\psi_\alpha$.

Fix $A\in\cA(\Omega)$ and $\alpha>0$. By \eqref{eq: subord}, for every $f\in L^{2}(\Omega)$ and $t>0$ we have
\begin{equation}
\label{eq: subord concrete}
\psi_{\alpha}(tL_{A})f=\frac{1}{\pi}\int^{+\infty}_{-\infty}\left[\cM\psi_{\alpha}\right](u)\,t^{iu}L^{iu}_{A}f\wrt u,
\end{equation}
where the right-hand side should be interpreted as a Bochner integral that converges in $L^{2}(\Omega)$. 
Since both $T^{A}_{t}$ and $\nabla T^{A}_{t}$ are $L^{2}$-bounded, 
we may apply these operators to \eqref{eq: subord concrete}. 
Together with $\psi_{\alpha}(2tL_{A})=2^{\alpha}T^{A}_{t}\psi_{\alpha}(tL_{A})$, we then have
subordination estimates
\begin{align}
\mod{\psi_{\alpha}(2tL_{A})f} & \leq \frac{2^{\alpha}}{\pi}\int^{+\infty}_{-\infty}\mod{\cM \psi_{\alpha}(u)}\mod{T^{A}_{t}L^{iu}_{A}f}\wrt u
\label{eq: u1}\\
\mod{\nabla \psi_{\alpha}(2tL_{A})f} & \leq \frac{2^{\alpha}}{\pi}\int^{+\infty}_{-\infty}\mod{\cM \psi_{\alpha}(u)}\mod{\nabla T^{A}_{t}L^{iu}_{A}f}\wrt u\label{eq: u2}
\end{align}
almost everywhere in $\Omega$.

\subsubsection{Modified trilinear embedding}
The next result follows from Theorem \ref{t: trilinemb} and, at the same time, extends it.

\begin{proposition}
\label{p: newtrilinear}
Let $\Omega\subseteq\R^d$ be an open set and let the spaces $\oU,\oV,\oW$ be as in Section~\ref{s: Heian}.
Take $p,q,r\in(1,\infty)$ such that 
$1/p+1/q+1/r=1$. 
Suppose that the accretive matrices
$A,B,C:\Omega\rightarrow\C^{d\times d}$
are $\max\{p,q,r\}$-elliptic. 
Write $L_A=L_{A,\oU}$,  $L_B=L_{B,\oV}$ and $L_C=L_{C,\oW}$.
Let $\eta_{\alpha_{j}}$, $j=1,2,3$, denote either $\psi_{\alpha_{j}}$ with $\alpha_{j}>0$, or $\phi_{\alpha_{j}}$ with $\alpha_{j}\in\N_+$.
Then for every
$f\in\big(L^p\cap L^2\big)(\Omega)$,
$g\in\big(L^q\cap L^2\big)(\Omega)$
and
$h\in\big(L^r\cap L^2\big)(\Omega)$
we have
\begin{equation}
\label{eq: trilinembnew}
\int_0^\infty\int_{\Omega}
\left| \nabla \eta_{\alpha_{1}}(tL_{A}) f \right|
\left| \nabla \eta_{\alpha_{2}}(tL_{B})g \right|
\left| \eta_{\alpha_{3}}(tL_{C}) h \right|
\wrt x\wrt t
\ \leqsim\
\nor{f}_{p}\nor{g}_{q}\nor{h}_{r}.
\end{equation}
When $\Omega=\R^d$, the same conclusion holds
under milder assumptions, namely, 
{\rm({\Large $\star$})} from page \pageref{star}.
The implied embedding constants only depend on 
$\alpha_{1,2,3}$,
$p,q,r$ and $*$-ellipticity constants of $A,B,C$ alluded to in the theorem's assumptions.
\end{proposition}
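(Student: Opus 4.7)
My plan is to reduce the statement to Theorem \ref{t: trilinemb} by combining Cowling's subordination machinery from Section \ref{s: Mellin} with the bounded imaginary powers of $L_A$, $L_B$, $L_C$.

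First, by identity \eqref{eq: sumbis}, every $\phi_\alpha(tL)$ with $\alpha\in\N_+$ is a finite linear combination of $\psi_k(tL)$ for $k=0,1,\dots,\alpha-1$. Since $\psi_0(tL)=T_t$, expanding each $\phi$-type factor and using the triangle inequality reduces \eqref{eq: trilinembnew} to the analogous estimate in which every $\eta_{\alpha_j}$ is $\psi_{\beta_j}$ with $\beta_j\geq 0$. The case $\beta_1=\beta_2=\beta_3=0$ is precisely Theorem \ref{t: trilinemb}; in all other cases at least one $\beta_j$ is strictly positive.

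Next, I would substitute $t=2s$ in the outer integral and apply the pointwise subordination estimates \eqref{eq: u1}--\eqref{eq: u2} to each factor with $\beta_j>0$. By Tonelli's theorem, the integration over $(u_1,u_2,u_3)\in\R^3$ weighted by $\prod_j|\cM\psi_{\beta_j}(u_j)|$ can be pulled outside of the $(x,s)$-integration, which then takes the form
\[
\int_0^\infty\!\!\int_\Omega |\nabla T_s^{A'} F_1|\,|\nabla T_s^{B'} F_2|\,|T_s^{C'} F_3|\,\wrt x\wrt s.
\]
Here, for each slot $j$ with $\beta_j>0$, the matrix is the original ($A$, $B$, or $C$) and $F_j$ is $L_A^{iu_j}f$ (respectively $L_B^{iu_j}g$, $L_C^{iu_j}h$); whereas for each slot with $\beta_j=0$, $F_j$ is the original function and the factor $T_{2s}$ arising there is rewritten via the identity $T_{2s}^M=T_s^{2M}$ (a consequence of $L_{2M}=2L_M$), so the matrix in that slot becomes $2M$. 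Since the rescaling $M\mapsto 2M$ preserves all $\ast$-ellipticity properties of $M$, Theorem \ref{t: trilinemb} applies to this inner integral and bounds it by $\|F_1\|_p\|F_2\|_q\|F_3\|_r$.

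Finally, \eqref{eq: impowers} gives $\|L_A^{iu_1}f\|_p\leq Ce^{\theta_p|u_1|}\|f\|_p$ and analogous estimates for the other two slots, while Stirling's asymptotic \eqref{eq: Stirling} yields $|\cM\psi_{\beta_j}(u_j)|\leqsim(1+|u_j|)^{\beta_j-1/2}e^{-\pi|u_j|/2}$. Since each of $\theta_p,\theta_q,\theta_r$ is strictly less than $\pi/2$, the triple integral over $\R^3$ converges absolutely and produces the desired bound with constants depending only on $\alpha_{1,2,3}$, $p,q,r$, and the stated $\ast$-ellipticity constants. I expect the main technical subtlety to be handling the mixed case where some $\beta_j$ vanish and others are positive; the rescaling identity $T_{2s}^M=T_s^{2M}$ resolves this uniformly and sidesteps the need for a separate modified-time embedding such as Lemma \ref{l: final}.
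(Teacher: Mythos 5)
Your proposal is correct and follows essentially the same route as the paper: reduce via \eqref{eq: sum}--\eqref{eq: sumbis} to $\psi$-type factors, subordinate each such factor to imaginary powers via \eqref{eq: u1}--\eqref{eq: u2}, apply Theorem \ref{t: trilinemb} to the resulting inner integral (the paper absorbs the time shift by writing $T_t^{A/2}$ in the subordinated slot rather than $T_s^{2M}$ in the others, which is the same rescaling-invariance of $\ast$-ellipticity), and close the $u$-integrals with \eqref{eq: Stirling} and \eqref{eq: impowers}. No gaps.
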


\begin{proof}
By using \eqref{eq: sum} we reduce to prove \eqref{eq: trilinembnew} in the case when $\eta_{\alpha_{j}}$ is either $\psi_{\alpha}$ with  $\alpha>0$, or $e^{-\lambda}$. For simplicity, we only prove the estimate
\begin{equation}
\label{eq: trilinembbis}
\int_0^\infty\int_{\Omega}
\mod{\nabla \psi_{\alpha}(tL_{A}) f}
\left| \nabla T^{B}_{t}g \right|
\left| T^{C}_{t} h \right|
\wrt x\wrt t
\ \leqsim\
\nor{f}_{p}\nor{g}_{q}\nor{h}_{r};
\end{equation}
the other cases can be proved similarly.

By using, consecutively, \eqref{eq: u2} and Theorem \ref{t: trilinemb} 
we obtain
\[
\aligned
\int_0^\infty\int_{\Omega}
\mod{\nabla \psi_{\alpha}(tL_{A}) f}
&\left| \nabla T^{B}_{t}g \right|
\left| T^{C}_{t} h \right|
\wrt x\wrt t\\
&\leqsim_{\alpha}\int^{+\infty}_{-\infty}\mod{\cM \psi_{\alpha}(u)}\int_0^\infty\int_{\Omega}
\mod{\nabla T^{A/2}_{t}L^{iu}_{A}f}
\left| \nabla T^{B}_{t}g \right|
\left| T^{C}_{t} h \right|
\wrt x\wrt t\wrt u\\
&\leqsim_{\alpha}\norm{g}{q}\norm{h}{r}\int^{+\infty}_{-\infty}\mod{\cM \psi_{\alpha}(u)}\norm{L^{iu}_{A}f}{p}\wrt u
\endaligned
\]
and \eqref{eq: trilinembbis} follows by combining 
\eqref{eq: Stirling} with \eqref{eq: impowers}.
\end{proof}

Recall that the estimate \eqref{eq: impowers} was a consequence of \cite{CD-Mixed} and that, to the best of our knowledge, in the generality considered here, no analogous results are available.

\subsubsection{Integration by parts}
Let $\Omega\subseteq\R^d$ be an open set and let $\oU\subseteq H^{1}(\Omega)$ be one of the subspaces introduced in Section~\ref{s: Heian}. For every $\varepsilon>0$ define $\Phi_{\varepsilon}:\C\rightarrow \C$ by the rule 
$$
\Phi_{\varepsilon}(z):=\frac{z}{1+\varepsilon\sqrt{|z|^{2}+1}}.
$$
\begin{lemma}
\label{l: approx}
Let $f\in\oU$. For every $\varepsilon>0$ we have $\Phi_{\varepsilon}(f)\in \oU\cap L^{\infty}(\Omega)$, $\mod{\Phi_{\varepsilon}(f)}\leq |f|$ and $\mod{\nabla\Phi_{\varepsilon}(f)}\leq 2\mod{\nabla f}$. Moreover, as $\e\rightarrow0$ we get 
$\Phi_{\varepsilon}(f)\rightarrow f$ and $\nabla\Phi_{\varepsilon}(f)\rightarrow \nabla f$, in both cases almost everywhere on $\Omega$ and in $L^2(\Omega)$.
\end{lemma}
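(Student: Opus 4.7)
The function $\Phi_\e\colon\C\to\C$ is smooth with $\Phi_\e(0)=0$, and the bound $|\Phi_\e(z)|\leq\min\{|z|,1/\e\}$ is immediate from the definition; this yields $\Phi_\e(f)\in L^\infty(\Omega)$ and the pointwise estimate $|\Phi_\e(f)|\leq|f|$. The plan is to identify $\Phi_\e$ with a smooth map $\R^2\to\R^2$, estimate its real Jacobian uniformly by $1$, and then invoke the standard chain rule for compositions of Sobolev functions with globally Lipschitz maps that vanish at the origin.

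First I would compute the Jacobian explicitly. Writing $\Phi_\e(z)=z\,g(|z|)$ with $g(r)=1/(1+\e\sqrt{r^2+1})$, a direct differentiation yields
\[
D\Phi_\e(x+iy) = g(|z|)\,I + \frac{g'(|z|)}{|z|}\begin{pmatrix} x^2 & xy \\ xy & y^2 \end{pmatrix}.
\]
The rank-one matrix above has eigenvalues $0$ and $|z|^2$, so the eigenvalues of $D\Phi_\e(z)$ are $g(|z|)$ and $g(|z|)+|z|g'(|z|)$. Setting $s:=\sqrt{|z|^2+1}\geq 1$, an elementary computation gives
\[
g(|z|)+|z|g'(|z|)=\frac{s+\e}{s(1+\e s)^2},
\]
and, together with $g(|z|)=1/(1+\e s)$, this places both eigenvalues in $(0,1]$ uniformly in $z$ and $\e>0$. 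Hence $\|D\Phi_\e(z)\|\leq 1$ pointwise. Splitting $f=f_1+if_2$ and $\Phi_\e=P+iQ$ into real and imaginary parts and applying the standard Lipschitz chain rule componentwise to $P,Q\colon\R^2\to\R$ (both vanishing at the origin) yields $\Phi_\e(f)\in H^1(\Omega)$ with the pointwise bound
\[
|\nabla\Phi_\e(f)(x)|\leq\|D\Phi_\e(f(x))\|\cdot|\nabla f(x)|\leq|\nabla f(x)|,
\]
which is even stronger than the stated $\leq 2|\nabla f|$.

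Next I would verify the membership $\Phi_\e(f)\in\oU$ in cases \textup{(a)} and \textup{(c)} of Section~\ref{s: Heian}; case \textup{(b)} is already covered. For case \textup{(a)}, approximate $f$ in $H^1(\Omega)$ by $f_n\in C_c^\infty(\Omega)$; each $\Phi_\e(f_n)$ lies in $H^1(\Omega)$ and is compactly supported in $\Omega$ (because $\Phi_\e(0)=0$), so mollification places it in $H_0^1(\Omega)$, after which the uniform Lipschitz bound on $\Phi_\e$ permits passage to the limit. Case \textup{(c)} proceeds analogously by approximating $f$ instead by restrictions of functions $u_n\in C_c^\infty(\R^d\setminus\Gamma)$; since $\Phi_\e(u_n)$ has support inside $\mathrm{supp}\,u_n$, it stays away from $\Gamma$, and a mollification in $\R^d\setminus\Gamma$ followed by restriction to $\Omega$ yields the desired approximation. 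This boundary-preservation step is the most technically delicate point, though it is a routine consequence of the Sobolev chain rule.

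Finally, as $\e\to 0^+$ we have $g(|z|)\to 1$ and $g'(|z|)\to 0$ pointwise, so $\Phi_\e(z)\to z$ and $D\Phi_\e(z)\to I$ pointwise on $\C$. The chain-rule identity $\nabla\Phi_\e(f)=D\Phi_\e(f)\nabla f$ then gives $\Phi_\e(f)\to f$ and $\nabla\Phi_\e(f)\to\nabla f$ almost everywhere on $\Omega$, and $L^2$-convergence follows from the dominated convergence theorem with the integrable majorants $|\Phi_\e(f)-f|\leq 2|f|$ and $|\nabla\Phi_\e(f)-\nabla f|\leq 2|\nabla f|$, both lying in $L^2(\Omega)$ since $f\in\oU\subseteq H^1(\Omega)$.
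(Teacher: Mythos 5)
Your proof is correct and follows essentially the same route as the paper's: the smooth-Lipschitz chain rule for Sobolev compositions, a density approximation by $C_c^\infty(\R^d\setminus\Gamma)$ functions to secure membership in $\oU$, and dominated convergence for the $\varepsilon\to 0$ limits. Your explicit eigenvalue computation of the real Jacobian is a pleasant refinement — it shows $\|D\Phi_\varepsilon\|\leq 1$ and hence yields the sharper pointwise bound $|\nabla\Phi_\varepsilon(f)|\leq|\nabla f|$ in place of the paper's cruder $\leq 2|\nabla f|$ — but the overall structure of the argument is the same.
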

\begin{proof}
The function $\Phi_{\varepsilon}$ is of class $C^{\infty}$ and both $\Phi_\e$ and its gradient are bounded on $\C$.

When $\oU=H^{1}(\Omega)$, the lemma easily follows from the characterization of Sobolev spaces in terms of absolute continuity on lines \cite[Theorem 2.1.4]{Ziemer}, 
the chain rule for 
differentiable
functions and Lebesgue dominated convergence theorem. 
One can also apply directly to $\Phi_\e(f)$ a version of the chain rule for weak derivatives \cite[Theorem 2.1.11]{Ziemer}
adapted to complex functions.

When $\oU$ is of the 
types (a) or (c) from Section \ref{s: Heian}, all there is left to prove is that $\Phi_\e\in\oU$.
Consider a sequence $(\phi_{n})_{n\in\N}\in C^{\infty}_{c}(\R^{d}\backslash \Gamma)$ converging to $f$ in $H^{1}(\Omega)$. Then $\Phi_{\varepsilon}(\phi_{n})\in C^{\infty}_{c}(\R^{d}\backslash \Gamma)$ and the chain rule together with the Lebesgue dominated convergence theorem show that $\Phi_{\varepsilon}(\phi_{n})$ converges to $\Phi_{\varepsilon}(f)$ in $H^{1}(\Omega)$, as $n\rightarrow \infty$.
\end{proof}

\begin{lemma}
\label{l: parts} 
Let $p,q,r\in (1,\infty)$ such that
$1/p+1/q+1/r=1$ 
and let $A\in\cA(\Omega)$ be $p$-elliptic. 
Suppose that $u\in \Dom_{2}(L_{A})\cap\Dom_{p}(L_{A})$, $v\in \oU\cap L^{q}(\Omega)$ and $w\in \oU\cap L^{r}(\Omega)$. Then 
\begin{equation}\label{eq: u6pre}
\mod{\int_{\Omega}L_{A}u\cdot vw}\leq 2\Lambda\left(\int_{\Omega}\mod{\nabla u}\mod{\nabla v}\mod{w}+\int_{\Omega}\mod{\nabla u}\mod{\nabla w}\mod{v}\right).
\end{equation}
Suppose furthermore that 
\begin{equation}\label{eq: u5}
\mod{\nabla u}\left(\mod{w}\mod{\nabla v}+\mod{v}\mod{\nabla w}\right)\in L^{1}(\Omega).
\end{equation}
Then we have 
\begin{equation}
\label{eq: u6}
\int_{\Omega}L_{A}u\cdot vw=\int_{\Omega}\sk{A\nabla u}{\nabla\Bar v}w+\int_{\Omega}\sk{A\nabla u}{\nabla \Bar w}v.
\end{equation}
\end{lemma}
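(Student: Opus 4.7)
The plan starts with an elementary reduction. If $|\nabla u|(|w||\nabla v|+|v||\nabla w|)\notin L^1(\Omega)$, then the right-hand side of \eqref{eq: u6pre} is $+\infty$ and the inequality is trivial; hence it is enough to assume \eqref{eq: u5} and prove the identity \eqref{eq: u6}. The bound \eqref{eq: u6pre} then follows from \eqref{eq: u6} by the triangle inequality combined with the pointwise estimate $|\sk{A\nabla u}{\nabla\bar v}|\leq\Lambda|\nabla u||\nabla v|$ (and the analogue in $\bar w$).

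To prove \eqref{eq: u6} I would mimic the classical integration-by-parts argument, but the natural test function $\bar v\bar w$ is not in $\oU\cap L^2(\Omega)$, so two regularizations are required. First, take the bounded truncations $v_\e:=\Phi_\e(v)$ and $w_\e:=\Phi_\e(w)$ from Lemma~\ref{l: approx}: these lie in $\oU\cap L^\infty(\Omega)$, satisfy $|v_\e|\leq|v|$, $|\nabla v_\e|\leq 2|\nabla v|$ (and analogues for $w$), and converge pointwise a.e.\ to $v,\nabla v,w,\nabla w$ as $\e\to 0$. Second, pick a spatial cutoff $\chi_n\in C_c^\infty(\R^d)$ with $0\leq\chi_n\leq 1$, $\chi_n\equiv 1$ on $B(0,n)$ and $|\nabla\chi_n|\leq C/n$. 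Then $\phi_{\e,n}:=\chi_n\bar v_\e\bar w_\e$ is bounded and compactly supported, and lies in $\oU$ because products of $\oU\cap L^\infty$ functions remain in $\oU$ and multiplication by $C^\infty$-functions preserves $\oU$; in particular $\phi_{\e,n}\in\oU\cap L^2(\Omega)$. Applying \eqref{eq: int by parts} to $u$ and $\phi_{\e,n}$ and expanding $\nabla\phi_{\e,n}$ by the product rule yields
\begin{equation*}
\int_\Omega L_A u\cdot\chi_n v_\e w_\e
=\int_\Omega \chi_n w_\e\sk{A\nabla u}{\nabla\bar v_\e}
+\int_\Omega \chi_n v_\e\sk{A\nabla u}{\nabla\bar w_\e}
+\int_\Omega v_\e w_\e\sk{A\nabla u}{\nabla\chi_n}.
\end{equation*}

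I would then pass to the limit, sending first $\e\to 0$ and then $n\to\infty$ via dominated convergence. The left-hand side is dominated by $|L_A u||v||w|$, which lies in $L^1(\Omega)$ by H\"older's inequality since $L_A u\in L^p$, $v\in L^q$, $w\in L^r$ and $1/p+1/q+1/r=1$. The two main right-hand terms are dominated by $2\Lambda|w||\nabla u||\nabla v|$ and $2\Lambda|v||\nabla u||\nabla w|$, both in $L^1(\Omega)$ by \eqref{eq: u5}. The cutoff term is dominated by $\Lambda|v||w||\nabla u||\nabla\chi_n|$, and crucially it vanishes in the $n$-limit through
\begin{equation*}
\Big|\int_\Omega vw\sk{A\nabla u}{\nabla\chi_n}\Big|
\leq \frac{C\Lambda}{n}\int_{\{n\leq|x|\leq 2n\}}|v||w||\nabla u|\longrightarrow 0,
\end{equation*}
provided $|v||w||\nabla u|\in L^1(\Omega)$. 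Securing this last integrability is the main obstacle.

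The $L^1$-integrability of $|v||w||\nabla u|$ reduces, via H\"older, to the claim $\nabla u\in L^p(\Omega)$, which is not immediate from the membership $u\in\Dom_2(L_A)\cap\Dom_p(L_A)$. It is furnished by a Kato-type square-root estimate of the form $\|\nabla u\|_p\leqsim\|L^{1/2}_A u\|_p$, available under the $p$-ellipticity of $A$ as a consequence of the bounded $H^\infty$-calculus of $L_A$ on $L^p(\Omega)$ established in \cite[Theorem~3]{CD-Mixed}: the inclusion $\Dom_p(L_A)\subseteq\Dom_p(L^{1/2}_A)$ then yields $\nabla u\in L^p(\Omega)$, and H\"older's inequality gives $\|vw\nabla u\|_1\leq\|v\|_q\|w\|_r\|\nabla u\|_p<\infty$. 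Combining this with the limiting procedure above delivers \eqref{eq: u6}, and hence \eqref{eq: u6pre}.
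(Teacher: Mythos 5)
Your overall strategy (truncate $v,w$ via $\Phi_\e$, integrate by parts against the truncated product, pass to the limit by dominated convergence) is the paper's, but the extra spatial cutoff $\chi_n$ you insert creates a genuine gap. The boundary term $\int_\Omega v_\e w_\e\sk{A\nabla u}{\nabla\chi_n}$ only vanishes if $|v||w||\nabla u|\in L^1(\Omega)$, and you secure this by claiming $\nabla u\in L^p(\Omega)$ via a Kato-type estimate $\|\nabla u\|_p\leqsim\|L_A^{1/2}u\|_p$. That estimate is \emph{not} a consequence of the bounded $H^\infty$-calculus of \cite[Theorem~3]{CD-Mixed}: the calculus gives $\Dom_p(L_A)\subseteq\Dom_p(L_A^{1/2})$, i.e.\ $L_A^{1/2}u\in L^p$, but the passage from $L_A^{1/2}u\in L^p$ to $\nabla u\in L^p$ is the $L^p$-boundedness of the Riesz transform $\nabla L_A^{-1/2}$, a much deeper statement that is known to \emph{fail} in the generality of this lemma. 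Indeed, the paper itself recalls that for $d\geq 3$ and $p>3$ there are Lipschitz domains on which $\sup_{t>0}\|\sqrt{t}\,\nabla T_t\|_p=\infty$ and the vertical square function is unbounded even for $A=I$; gradient bounds of the type you invoke are of exactly this nature and are unavailable here. (Falling back on $\nabla u\in L^2$ does not close the cutoff term either, since $1/q+1/r+1/2\neq 1$ and the annulus estimate $n^{-1}\int_{\{n\leq|x|\leq 2n\}}|\nabla u|$ need not tend to $0$ for $d>2$.)

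The cutoff is in fact unnecessary, and removing it repairs the proof and recovers the paper's argument: $v_\e w_\e$ already lies in $\oU\cap L^\infty(\Omega)\subseteq\oU\cap L^2(\Omega)$ (the product of two $H^1\cap L^\infty$ functions is in $H^1\cap L^\infty$ with the Leibniz rule, and membership in $\oU$ for the type (c) spaces follows by approximating with $\Phi_\e(\phi_n)\Phi_\e(\psi_n)$, $\phi_n,\psi_n\in C_c^\infty(\R^d\setminus\Gamma)$ --- a point you assert but should justify). Hence \eqref{eq: int by parts} applies directly to the pair $(u,\bar v_\e\bar w_\e)$ with no boundary term. One further structural remark: the paper derives \eqref{eq: u6pre} \emph{without} assuming \eqref{eq: u5}, by noting that the right-hand side of the truncated identity is bounded uniformly in $\e$ by $2\Lambda\big(\int|\nabla u||\nabla v||w|+\int|\nabla u||\nabla w||v|\big)$ while the left-hand side converges by H\"older and dominated convergence; your reduction ``the inequality is trivial when the right-hand side is infinite, so assume \eqref{eq: u5}'' reaches the same conclusion and is logically sound, so that part is fine.
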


\begin{proof}
For $\varepsilon>0$ set $v_{\varepsilon}=\Phi_{\varepsilon}(v)$ and $w_{\varepsilon}=\Phi_{\varepsilon}(w)$. 
Lemma~\ref{l: approx} gives $v_\e,w_\e\in\oU\cap L^\infty(\Omega)$. 
It is well known that the Leibniz rule holds in $H^1\cap L^\infty$, see \cite[Theorem 4.4]{EG15}. This means that $v_\e w_\e\in H^1(\Omega)\cap L^\infty(\Omega)$ and
\begin{equation}
\label{eq: Sampa}
\nabla(v_\e w_\e)=v_\e\nabla w_\e +w_\e\nabla v_\e\,.
\end{equation}
We want to show a little bit more, namely, that 
\begin{equation}
\label{eq: algebra}
v_{\varepsilon}w_{\varepsilon}\in \oU. 
\end{equation}
If $\oU=H^1(\Omega)$, this has just been proved. Assume now that $\oU$ is of type (c) from Section \ref{s: Heian}. We saw in the proof of Lemma~\ref{l: approx} that, for some sequences $(\phi_{n})_{n\in\N},(\psi_{n})_{n\in\N}\in C^{\infty}_{c}(\R^{d}\backslash \Gamma)$,
$$
\left.
\begin{array}{ccc}
\Phi_\e(\phi_n)\rightarrow\Phi_\e(v)\\
\Phi_\e(\psi_n)\rightarrow\Phi_\e(w)
\end{array}
\right\}
\text{ in }H^1(\Omega).
$$
By Lemma~\ref{l: approx} and the Lebesgue dominated convergence theorem, 
it is from here not difficult to see that $\Phi_\e(\phi_n)\Phi_\e(\psi_n)\rightarrow\Phi_\e(v)\Phi_\e(w)=v_\e w_\e$ in $H^1(\Omega)$, as $n\rightarrow\infty$. This proves \eqref{eq: algebra}.

Since $u\in \Dom_{2}(L_{A})$, it follows from \eqref{eq: int by parts} and \eqref{eq: Sampa} that
\begin{equation}
\label{eq: interim}
\int_{\Omega}L_{A}u\cdot v_{\varepsilon}w_{\varepsilon}=\int_{\Omega}\sk{A\nabla u}{\nabla\Bar v_{\varepsilon}}w_{\varepsilon}+\int_{\Omega}\sk{A\nabla u}{\nabla \Bar w_{\varepsilon}}v_{\varepsilon}.
\end{equation}
By \eqref{eq: boundedness} and Lemma \ref{l: approx}, the right-hand side of \eqref{eq: interim} is bounded by the right-hand side of \eqref{eq: u6pre}, uniformly in $\e>0$.  By Lemma~\ref{l: approx} and the Lebesgue dominated convergence theorem, the left-hand side of \eqref{eq: interim} converges to the left-hand side of \eqref{eq: u6} as $\e\rightarrow0$.

Under the assumption \eqref{eq: u5}, the right-hand side of 
\eqref{eq: interim}
converges to the right-hand side of \eqref{eq: u6}, by Lemma~\ref{l: approx} and the Lebesgue dominated convergence theorem.
\end{proof}

\begin{corollary}\label{c: byparts}
Let $\Omega\subseteq\R^d$ be an open set and  
$\oU\subseteq H^{1}(\Omega)$ 
one of the closed subspaces introduced in Section~\ref{s: Heian}. 
Take $p,q,r\in (1,\infty)$ 
such that
$1/p+1/q+1/r=1$. 
Let $A\in\cA(\Omega)$ be $\max\{p,q,r\}$-elliptic and $\alpha_{1},\alpha_{2},\gamma\geq 0$. 
Furthermore, fix
$f\in (L^{p}\cap L^{2})(\Omega)$, $g\in (L^{q}\cap L^{2})(\Omega)$ and $h\in (L^{r}\cap L^{2})(\Omega)$. Then with 
$A_{\sf s}:=(A+A^T)/2$ and for
almost every $t>0$ we have 
\begin{align}
\int_{\Omega}\psi_{\alpha_{1}}(tL_{A})f\, \cdot &\, \psi_{\alpha_{2}}(tL_{A})g\cdot\overline{L_{A^{*}}\psi_{\gamma}(tL_{A^{*}})h}-\int_{\Omega}L_{A}\psi_{\alpha_{1}}(tL_{A})f\cdot\psi_{\alpha_{2}}(tL_{A})g\cdot\overline{\psi_{\gamma}(tL_{A^{*}})h}\nonumber\\
& -\int_{\Omega}\psi_{\alpha_{1}}(tL_{A})f\cdot L_A\psi_{\alpha_{2}}(tL_{A})g\cdot\overline{\psi_{\gamma}(tL_{A^{*}})h}\nonumber\\
=&-2
\int_{\Omega}
\sk{ A_{\sf s} 
     \nabla \psi_{\alpha_{1}} (tL_{A})f}
     {\nabla \overline{ \psi_{\alpha_{2}}  (t L_{A}) g 
                            }
     }  
\overline{ \psi_{\gamma} (tL_{A^{*}} )h}.
\label{eq: u7}
\end{align}
\end{corollary}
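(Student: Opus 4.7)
The plan is to reduce the claimed identity to three applications of the integration by parts identity \eqref{eq: u6} from Lemma~\ref{l: parts}, one for each of the three integrals on the left-hand side of \eqref{eq: u7}, and then to collect like terms.

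First I would fix $t>0$ and set
\[
u:=\psi_{\alpha_1}(tL_A)f,\qquad v:=\psi_{\alpha_2}(tL_A)g,\qquad w:=\psi_\gamma(tL_{A^*})h.
\]
The $\max\{p,q,r\}$-ellipticity of $A$ (and hence of $A^*$, since $\Delta_s(A)=\Delta_s(A^*)$) together with Proposition~\ref{p: Zbunjen i osamucen}, the boundedness of the $H^\infty$-calculus and the rapid decay of $\psi_\alpha(tz)$ along any sector ensure that $u,v,L_Au,L_Av\in\Dom_2(L_A)\cap L^p(\Omega)\cap L^q(\Omega)\cap L^r(\Omega)$, and analogously $w,L_{A^*}w\in\Dom_2(L_{A^*})\cap L^p(\Omega)\cap L^q(\Omega)\cap L^r(\Omega)$. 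In particular $u,v,w\in\oU$.

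Next I would apply Lemma~\ref{l: parts} in the following three ways:
\begin{itemize}
\item to the triple $(u,v,\bar w)$ with matrix $A$, which gives
\[
\int_\Omega L_Au\cdot v\bar w \;=\; \int_\Omega\sk{A\nabla u}{\nabla \bar v}\bar w \;+\; \int_\Omega\sk{A\nabla u}{\nabla w}v;
\]
\item to the triple $(v,u,\bar w)$ with matrix $A$, which gives the analogous identity with the roles of $u$ and $v$ interchanged;
\item to the triple $(w,\bar u,\bar v)$ with matrix $A^*$, after taking complex conjugates to convert $\overline{L_{A^*}w}$ into $L_{A^*}w$; the relation $\sk{\xi}{A^*\eta}=\sk{A\xi}{\eta}$ then, upon conjugating back, yields
\[
\int_\Omega uv\,\overline{L_{A^*}w} \;=\; \int_\Omega\sk{A\nabla u}{\nabla w}v \;+\; \int_\Omega\sk{A\nabla v}{\nabla w}u.
\]
\end{itemize}
Subtracting the first two identities from the third, the ``cross'' terms $\int v\sk{A\nabla u}{\nabla w}$ and $\int u\sk{A\nabla v}{\nabla w}$ cancel and what remains is
\[
-\int_\Omega \bar w\bigl(\sk{A\nabla u}{\nabla\bar v}+\sk{A\nabla v}{\nabla\bar u}\bigr).
\]
A short index-level manipulation identifies $\sk{A\nabla v}{\nabla\bar u}$ with $\sk{A^T\nabla u}{\nabla\bar v}$, so the bracketed expression equals $\sk{(A+A^T)\nabla u}{\nabla\bar v}=2\sk{A_{\sf s}\nabla u}{\nabla\bar v}$, which is exactly the right-hand side of \eqref{eq: u7}.

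The main obstacle I anticipate is verifying the integrability hypothesis~\eqref{eq: u5} needed in each application of Lemma~\ref{l: parts}. For a generic single $t>0$ this is not automatic; however, Proposition~\ref{p: newtrilinear}, applied to the matrix triples $(A,A,A^*)$, $(A,A^*,A)$, $(A^*,A,A)$ with appropriate permutations of $(p,q,r)$, shows that each of the products $|\nabla u||\nabla v||w|$, $|\nabla u||\nabla w||v|$, $|\nabla v||\nabla w||u|$ belongs to $L^1(\Omega\times(0,\infty))$. By Fubini's theorem, the corresponding spatial integrals are finite for almost every $t>0$, which is precisely why the corollary is asserted only a.e. in $t$.
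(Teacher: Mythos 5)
Your proposal is correct and follows essentially the same route as the paper: the paper likewise invokes Proposition~\ref{p: newtrilinear} together with Tonelli's theorem to secure the integrability hypothesis~\eqref{eq: u5} for almost every $t>0$, and then integrates by parts each of the three terms on the left-hand side of \eqref{eq: u7} via Lemma~\ref{l: parts}. Your explicit cancellation of the cross terms and the identification $\sk{A\nabla u}{\nabla\bar v}+\sk{A\nabla v}{\nabla\bar u}=2\sk{A_{\sf s}\nabla u}{\nabla\bar v}$ is exactly the computation the paper leaves implicit.
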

\begin{proof}
By Proposition~\ref{p: newtrilinear} and Tonelli's theorem, for almost every $t>0$ the function
\[
\aligned
\mod{\nabla \psi_{\alpha_{1}}(tL_{A})f}\mod{\nabla \psi_{\gamma}(tL_{A^{*}})h}\mod{\psi_{\alpha_{2}}(tL_{A})g} &+\mod{\nabla \psi_{\alpha_{2}}(tL_{A})g}\mod{\nabla \psi_{\gamma}(tL_{A^{*}})h}\mod{\psi_{\alpha_{1}}(tL_{A})f}\\
&+\mod{\nabla \psi_{\alpha_{1}}(tL_{A})f}\mod{\nabla \psi_{\alpha_{2}}(tL_{A})g}\mod{\psi_{\gamma}(tL_{A^{*}})h}
\endaligned
\] 
belongs to $L^{1}(\Omega)$. Now \eqref{eq: u7} follows integrating by parts each term in the left-hand side of \eqref{eq: u7} by means of Lemma~\ref{l: parts}.
\end{proof}

\subsection{Proof of Theorem~\ref{t: princ}}

\subsubsection{Step 1: Duality and density} 
As recalled in Section \ref{s: KP},
for every $\chi\in \{p_{1},p_{2},q_{1},q_{2},r\}$ the semigroup $(T^{A}_{t})_{t>0}$ extends to an analytic and contractive semigroup on $L^{\chi}(\Omega)$ in a cone of positive angle in $\C$. It is well-known \cite{Yosida60} and it can be easily proved by using functional calculus \cite[Example~3.4.6]{Haase} that strong continuity (in particular, analyticity) together with contractivity imply that  the fractional power $-L^{\beta}_{A}$ for $\beta\in (0,1)$ generates an analytic and contractive semigroup on $L^{\chi}(\Omega)$ for every $\chi\in \{p_{1},p_{2},q_{1},q_{2},r\}$. These semigroups are consistent, because they are subordinated to the consistent semigroups $(T^{A}_{t})_{t>0}$, see \cite{KatCar91,Yagi2010,EN}.
An approximation argument based on the above facts shows that $\Dom_{2}(L^{\beta}_{A})\cap \Dom_{\chi_{1}}(L^{\beta}_{A})\cap L^{\chi_{2}}(\Omega)$ is dense in $\Dom_{\chi_{1}}(L^{\beta}_{A})\cap L^{\chi_{2}}(\Omega)$
 and the same for $A$ replaced by $A^{*}$.
Here the domain of fractional power is endowed with the graph norm, 
$\nor{\cdot}_{\chi_1}+\| L_A^\beta\cdot \|_{\chi_1}$.
It also follows that the Hermitian dual $\big(L^{\beta}_{A^{*}}\big)^{*}$ of $L^{\beta}_{A^{*}}$ on $L^{\chi^{\prime}}(\Omega)$ coincides with $L^{\beta}_{A}$ on $L^{\chi}(\Omega)$.

In light of the considerations above, Theorem~\ref{t: princ} is equivalent to the following statement. 
\begin{proposition}
\label{p: dual inequality}
Under the assumptions of Theorem~\ref{t: princ},
for $f\in\Dom_{p_{1}}(L^{\beta}_{A})\cap \Dom_{2}(L^{\beta}_{A})\cap L^{p_{2}}(\Omega)$, $g\in\Dom_{q_{2}}(L^{\beta}_{A})\cap \Dom_{2}(L^{\beta}_{A})\cap L^{q_{1}}(\Omega)$ and $h\in \Dom_{r}(L^{\beta}_{A^{*}})\cap \Dom_{2}(L^{\beta}_{A^{*}})$ we have
\begin{equation*}
\label{eq: KatoPoncedual}
\mod{\int_{\Omega}fg\overline{L_{A^{*}}^\beta h}\wrt x}
\leqsim 
\left(\norm{L^{\beta}_{A}f}{p_{1}}\norm{g}{q_{1}}
+\norm{f}{p_{2}}\norm{L^{\beta}_{A}g}{q_{2}}\right)\norm{h}{r}.
\end{equation*}
\end{proposition}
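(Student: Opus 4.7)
The strategy is to reduce the integral $I=\int_\Omega fg\,\overline{L^\beta_{A^*}h}\,dx$ of Proposition~\ref{p: dual inequality} to a trilinear gradient expression controllable by the trilinear embedding (Theorem~\ref{t: trilinemb} and its rescaled variant Lemma~\ref{l: final}). The three essential tools are the Calder\'on reproducing formula (Lemma~\ref{l: L-P}), a semigroup paraproduct decomposition of $fg$, and the integration-by-parts identity of Corollary~\ref{c: byparts}.

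First, since $L^\beta_{A^*}h\in L^r(\Omega)$ (by the holomorphic calculus underlying Proposition~\ref{p: Zbunjen i osamucen}), apply Calder\'on reproducing combined with the commutation $\psi_\gamma(sL_{A^*})L^\beta_{A^*}=s^{-\beta}\psi_{\gamma+\beta}(sL_{A^*})$ to write, for any fixed $\gamma>0$,
$$L^\beta_{A^*}h=\frac{1}{\Gamma(\gamma)}\int_0^\infty s^{-\beta}\psi_{\gamma+\beta}(sL_{A^*})h\,\frac{ds}{s}.$$
Simultaneously, because $fg\in L^\varrho(\Omega)$ and $A$ is $\varrho$-elliptic (Corollary~\ref{c: 5AM}), decompose
$$fg=\frac{1}{\Gamma(\alpha)}\int_0^\infty\!\bigl[\psi_\alpha(tL_A)f\cdot\phi_\alpha(tL_A)g+\phi_\alpha(tL_A)f\cdot\psi_\alpha(tL_A)g\bigr]\frac{dt}{t}\quad(\alpha\in\N_+,\;\alpha>\beta),$$
obtained by integrating $-\frac{d}{dt}[\phi_\alpha(tL_A)f\cdot\phi_\alpha(tL_A)g]$ over $t\in(0,\infty)$ and invoking Lemma~\ref{l: L-P} for the boundary values. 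By symmetry focus on the first summand, and use the identity $\psi_\alpha(tL_A)=t^\beta\psi_{\alpha-\beta}(tL_A)L^\beta_A$ to expose $L^\beta_Af$ together with a balancing $t^\beta$.

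Substituting everything into $I$ and changing variables $s=\sigma t$ yields
$$c\int_0^\infty\!\sigma^{-\beta}\bigg[\int_0^\infty\!\!\int_\Omega\overline{\psi_{\gamma+\beta}(\sigma tL_{A^*})h}\cdot\psi_{\alpha-\beta}(tL_A)L^\beta_Af\cdot\phi_\alpha(tL_A)g\,dx\,\frac{dt}{t}\bigg]\frac{d\sigma}{\sigma}.$$
For each fixed $\sigma>0$, expand $\phi_\alpha=\sum_{j<\alpha}\psi_j/j!$ and apply Corollary~\ref{c: byparts} to each summand (after rewriting $\psi_{\gamma+\beta}=(\sigma t)^{-1}\psi_{\gamma+\beta+1}\cdot(\sigma t)$ to match the identity's $L_{A^*}\psi_{\gamma'}$ form). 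This exchanges the configuration $\psi_{\alpha_1}(tL_A)\cdot\psi_{\alpha_2}(tL_A)\cdot\overline{L_{A^*}\psi_{\gamma'}(\sigma tL_{A^*})}$ for a main gradient term
$$-2\int_0^\infty\!\!\int_\Omega\langle A_{\sf s}\nabla\psi_{\alpha_1}(tL_A)L^\beta_Af,\,\nabla\overline{\psi_{\alpha_2}(tL_A)g}\rangle\overline{\psi_{\gamma''}(\sigma tL_{A^*})h}\,dx\,\frac{dt}{t}$$
plus lower-order remainders in which the relation $L_A\psi_j=t^{-1}\psi_{j+1}$ has been absorbed by raising the $\psi$-order (handled by iteration). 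After matching time-scales, the gradient term is estimated via the rescaled trilinear embedding (Lemma~\ref{l: final}); for $\sigma\leq 1$ this gives a bound $\lesssim\sigma^{1/r'}\|L^\beta_Af\|_{p_1}\|g\|_{q_1}\|h\|_r$.

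The chief obstacle is the convergence of $\int_0^\infty\sigma^{-\beta-1}\cdot(\text{inner bound})\,d\sigma$. For $\sigma\leq1$ the factor $\sigma^{1/r'}$ combined with the weight $\sigma^{-\beta-1}$ produces $\sigma^{1/r'-\beta-1}d\sigma$, which is integrable at $0$ precisely when $\beta<1/r'=1/\varrho$---matching the hypothesis of Theorem~\ref{t: princ} exactly. For $\sigma\geq1$ one must reverse the transfer: instead of moving $L^\beta$ onto $f$ or $g$ via $\psi_\alpha=t^\beta\psi_{\alpha-\beta}L^\beta_A$, one reads the same identity in reverse and moves it onto $\psi_{\gamma+\beta}(\sigma tL_{A^*})h$, effectively switching the trio to which Lemma~\ref{l: final} is applied so that the rescaling parameter is now $1/\sigma\leq1$. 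The precise $\sigma$-dependence of the embedding constants, as tracked through Corollary~\ref{c: cc}, produces integrability at $\infty$ in this regime. Throughout, Corollary~\ref{c: 5AM}, applied to both triples $(p_1,q_1,r)$ and $(p_2,q_2,r)$, supplies all the $\ast$-ellipticity conditions required for the trilinear embedding from the hypothesis that $A$ is $\max\{p_1,p_2,q_1,q_2,r\}$-elliptic (or, for $\Omega=\R^d$, from the milder ({\Large $\star$}) of page~\pageref{star}).
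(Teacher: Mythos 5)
Your overall architecture (dualize, Calder\'on reproducing formula, paraproduct-type decomposition, integration by parts via Corollary~\ref{c: byparts}, rescaled trilinear embedding, with $\beta<1/r'$ emerging from a $\sigma$-integral) is in the right spirit, and your treatment of the regime $\sigma\le 1$ is essentially correct. The gap is in the regime $\sigma\ge 1$, which your double decomposition of $fg$ and of $L^\beta_{A^*}h$ with \emph{independent} time parameters unavoidably creates. There the two gradient factors sit at time $t$ while the non-gradient factor sits at the \emph{later} time $\sigma t$. After rescaling to a common time, this corresponds to the trilinear embedding for the triple $(A,A,\sigma A^*)$ with $\sigma\ge 1$; but Proposition~\ref{p: Pat Riley} and Corollary~\ref{c: cc} only control the constants for $(A,B,\delta C)$ with $\delta\le 1$, and tracking the Bellman parameter $E$ through \eqref{eq: ramen} shows that for $\delta=\sigma\ge1$ one needs $E_\sigma\gtrsim\sigma$, so the embedding constant \emph{grows} like $\sigma^{1/r}$ rather than decays. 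Moreover, extracting $L_{A^*}$ from $\psi_{\gamma+\beta}(\sigma tL_{A^*})$ to enable the integration by parts costs an extra factor $\sigma t$, so the resulting bound is at best $\sigma^{1-\beta}$ times a non-decaying quantity against $\wrt\sigma/\sigma$, and $\int_1^\infty\sigma^{-\beta}\wrt\sigma$ diverges for all $\beta<1$. Your proposed remedies do not repair this: ``reversing the transfer'' puts $L^\beta$ back onto $h$ and produces $\norm{L^\beta_{A^*}h}{r}$ instead of $\norm{h}{r}$ on the right-hand side, while ``switching the trio'' leaves the two gradient factors at mismatched times, where no form of the embedding applies.

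The paper's proof is engineered precisely to avoid this bad regime. It uses a \emph{single} time parameter, differentiating the triple product $\phi_\alpha(tL_A)f\cdot\phi_\alpha(tL_A)g\cdot\overline{\phi_\alpha(tL_{A^*})L^\beta_{A^*}h}$ in $t$; the non-resonant terms $I_1,I_2$ and the pieces $J_1,J_2$ of the resonant term are then single-scale and handled by one integration by parts plus Proposition~\ref{p: newtrilinear}. Two time scales appear only in the remaining piece $J_3$, via the identity $\sk{A_{\sf s}\nabla\phi_\alpha(tL_A)f}{\nabla\overline{\phi_\alpha(tL_A)g}}=-\int_t^\infty\frac{\wrt}{\wrt s}\sk{A_{\sf s}\nabla\phi_\alpha(sL_A)f}{\nabla\overline{\phi_\alpha(sL_A)g}}\wrt s$, and since $s\ge t$ there, the gradient factors always land at the \emph{later} time --- exactly the configuration of Lemma~\ref{l: final}, whose constant decays like $s^{-1/r'}$ and yields convergence iff $\beta<1/r'$. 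To salvage your approach you would need either a version of Lemma~\ref{l: final} with the non-gradient factor at the later time and with decay essentially of order $\sigma^{-1}$ (which the Bellman analysis does not provide), or you would have to convert the $\sigma\ge1$ configuration into the good one by the same fundamental-theorem-of-calculus device the paper uses for $J_3$.
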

So in the continuation we focus on proving the above result.

\subsubsection{Step 2: Littlewood-Paley decomposition} 
\label{s: LPdec}
Fix $\alpha\in\N$, $\alpha\geq 2$. By Lemma~\ref{l: L-P} we have
\[
\int_{\Omega}fg\overline{L^{\beta}_{A^{*}}h}\wrt x=-\int^{\infty}_{0}\int_{\Omega}\frac{\wrt}{\wrt t}
\left(
\phi_{\alpha}(tL_{A})f\cdot\phi_{\alpha}(tL_{A})g\cdot\overline{\phi_{\alpha}(tL_{A^{*}})L^{\beta}_{A^{*}}h}
\right)
\wrt x\wrt t.
\]
Observe that the left-hand side above does {\it not} depend on $\alpha$. Thus in principle we could have just worked with $\alpha=2$. Still, we opted for keeping a generic $\alpha$, in order to underline and understand better its role.

\medskip
It follows from the very definition of $\phi_{\alpha}$ or 
\eqref{eq: sumbis} that for $M\in \cA(\Omega)$ and $w\in L^{2}(\Omega)$ we have 
\[
-\frac{\wrt}{\wrt t}\phi_{\alpha}(tL_{M})w
=\frac{1}{(\alpha-1)!}\,L_{M}\psi_{\alpha-1}(tL_{M})w.
\]
Therefore, for the sake of proving Proposition \ref{p: dual inequality} we need to estimate three terms:
\[
\aligned
&I_{1}:=\int^{\infty}_{0}\int_{\Omega}L_{A}\psi_{\alpha-1}(tL_{A})f\cdot\phi_{\alpha}(tL_{A})g\cdot\overline{\phi_{\alpha}(tL_{A^{*}})L^{\beta}_{A^{*}}h}\wrt x\wrt t,\\
&I_{2}:=\int^{\infty}_{0}\int_{\Omega}L_{A}\psi_{\alpha-1}(tL_{A})g\cdot\phi_{\alpha}(tL_{A})f\cdot\overline{\phi_{\alpha}(tL_{A^{*}})L^{\beta}_{A^{*}}h}\wrt x\wrt t,\\
&I_{3}:=\int^{\infty}_{0}\int_{\Omega}\phi_{\alpha}(tL_{A})f\cdot\phi_{\alpha}(tL_{A})g\cdot\overline{L_{A^{*}}\psi_{\alpha-1}(tL_{A^{*}})L^{\beta}_{A^{*}}h}\wrt x\wrt t.
\endaligned
\]
We shall show that the critical term is $I_3$. We label it the {\it resonant term}, owing to its resemblance to the equally named terms $\Pi(f,g)$ from \cite{BF2018}.

\subsubsection{Estimating the terms $I_{1}$ and $I_{2}$}
Below we provide details for the estimate of $I_{1}$ only, because $I_{2}$ can be treated similarly.  

We expand $\phi_{\alpha}(tL_{A})$ and $\phi_{\alpha}(tL_{A^{*}})$ by means of \eqref{eq: sumbis} and reduce the estimate of $I_{1}$ to bounding from above a finite sum of double integrals of the type 
\begin{equation}
\label{eq: I_1 reduced}
\int^{\infty}_{0}\int_{\Omega}
L_{A}\psi_{\alpha-1}(tL_{A})f\cdot\psi_{k}(tL_{A})g\cdot\overline{\psi_{j}(tL_{A^{*}})L^{\beta}_{A^{*}}h}
\wrt x\wrt t,
\quad 0\leq j,k\leq\alpha-1.
\end{equation}
Now we can either 1) subordinate $\psi_{j}(tL_{A})$ and $\psi_{k}(tL_{A^{*}})$ to the imaginary powers and make a reduction to the case $k=j=0$, or 2) control each term separately, as we show below.

Observe that
\[
\psi_{\alpha-1}(tL_{A})f\cdot\overline{\psi_{j}(tL_{A^{*}})L^{\beta}_{A^{*}}h}=\psi_{\alpha-\beta-1}(tL_{A})L^{\beta}_{A}f\cdot\overline{\psi_{j+\beta}(tL_{A^{*}})h}.
\]
Hence in order to estimate \eqref{eq: I_1 reduced} it suffices to estimate terms of the type
\begin{equation}
\label{eq: intermezzo}
\int^{\infty}_{0}\int_{\Omega}L_{A}\psi_{\alpha-\beta-1}(tL_{A})L^{\beta}_{A}f\cdot\psi_{k}(tL_{A})g\cdot\overline{\psi_{j+\beta}(tL_{A^{*}})h}\wrt x\wrt t.
\end{equation}
We now integrate by parts the inner integral by using the first part of Lemma~\ref{l: parts} with $u=\psi_{\alpha-\beta-1}(tL_{A})L^{\beta}_{A}f$, $v=\psi_{k}(tL_{A})g$ and $w=\overline{\psi_{j+\beta}(tL_{A^{*}})h}$, and we apply the trilinear embedding from Proposition~\ref{p: newtrilinear} which gives the estimate 
\[
\mod{I_{1}}\leqsim  \|L^{\beta}_{A}f\|_{p_{1}}\norm{g}{q_{1}}\norm{h}{r}.
\]
By using the very same arguments we also get  
$\mod{I_{2}}\leqsim  \|L^{\beta}_{A}g\|_{q_{2}}\norm{f}{p_{2}}\norm{h}{r}$.

\begin{remark}
At this level of generality, we cannot estimate $I_{1}$ (or $I_{2}$) starting from \eqref{eq: intermezzo} and using two vertical square functions of the type
$$
\left(\int^{+\infty}_{0}\mod{\psi_{\gamma}(tL_{E})u}^{2}\frac{\wrt t}{t}\right)^{1/2},\quad \gamma>0.
$$
Indeed by \cite{CD-Mixed} the square functions are bounded, but for estimating the factor $\psi_{k}(tL_{A})g$ in \eqref{eq: intermezzo} we either need the boundedness of the maximal heat semigroup on $L^{\chi}(\Omega)$ for $\chi$-elliptic matrices, or the uniform boundedness of the semigroups in $L^{\infty}(\Omega)$ which, in our generality, are respectively unknown and false. 
\end{remark}
\subsubsection{Decomposing the resonant term $I_{3}$}
We expand $\phi_{\alpha}(tL_{A})$ by means of \eqref{eq: sumbis} and integrate by parts by means of Corollary~\ref{c: byparts}. In such a manner the estimate of $I_3$ reduces to estimating a finite number of terms of the type  
\[
\aligned
&J_{1}:=\int^{\infty}_{0}\int_{\Omega}L_{A}\psi_{j}(tL_{A})f\cdot\psi_{k}(tL_{A})g\cdot\overline{\psi_{\alpha-1}(tL_{A^{*}})L^{\beta}_{A^{*}}h}\wrt x\wrt t\\
&J_{2}:=\int^{\infty}_{0}\int_{\Omega}\psi_{j}(tL_{A})f\cdot L_{A}\psi_{k}(tL_{A})g\cdot\overline{\psi_{\alpha-1}(tL_{A^{*}})L^{\beta}_{A^{*}}h}\wrt x\wrt t
\endaligned
\]
and the term
\[
J_{3}:=\int^{\infty}_{0}\int_{\Omega}\sk{A_{\sf s}\nabla \phi_{\alpha}(tL_{A})f}{\nabla\overline{\phi_{\alpha}(tL_{A})g}}\overline{\psi_{\alpha-1}(tL_{A^{*}})L^{\beta}_{A^{*}}h}\wrt x\wrt t.
\]
\subsubsection{Estimating the terms $J_{1}$ and $J_{2}$}
We transfer the fractional power ``from $h$ to $f$'':
\[
\aligned
L_{A}\psi_{j}(tL_{A})f\cdot \psi_{k}(tL_{A}&)g\cdot \overline{\psi_{\alpha-1}(tL_{A^{*}})L^{\beta}_{A^{*}}h}\\
&
=t^{-\beta}L^{1-\beta}_{A}\psi_{j}(tL_{A})L^{\beta}_{A}f\cdot \psi_{k}(tL_{A})g\cdot \overline{\psi_{\alpha+\beta-1}(tL_{A^{*}})h}\\
&=\psi_{j+1-\beta}(tL_{A})L^{\beta}_{A}f\cdot \psi_{k}(tL_{A})g\cdot \overline{L_{A^{*}}\psi_{\alpha+\beta-2}(tL_{A^{*}})h}.
\endaligned
\]
We now proceed much as we did for $I_{1}$: we integrate over $\Omega$, we integrate by parts by means of the first part of Lemma~\ref{l: parts} and we apply Proposition~\ref{p: newtrilinear}. This gives the estimate
\[
\mod{J_{1}}\leqsim  \|L^{\beta}_{A}f\|_{p_{1}}\norm{g}{q_{1}}\norm{h}{r}.
\]
A similar argument gives
$\mod{J_{2}}\leqsim  \|L^{\beta}_{A}g\|_{q_{2}}\norm{f}{p_{2}}\norm{h}{r}$.

\subsubsection{The term $J_{3}$.} Write
$\psi_{\alpha-1}(tL_{A^{*}})L^{\beta}_{A^{*}}h=t^{-\beta}\psi_{\alpha+\beta-1}(tL_{A^{*}})h$.
Lemma \ref{l: L-P} (ii) gives
\[
\aligned
\sk{A_{\sf s}\nabla \phi_{\alpha}(tL_{A})f}{\nabla\overline{\phi_{\alpha}(tL_{A})g}}
&=-\int^{\infty}_{t}\frac{\wrt}{\wrt s}\sk{A_{\sf s}\nabla \phi_{\alpha}(sL_{A})f}{\nabla\overline{\phi_{\alpha}(sL_{A})g}}\wrt s\\
&=\int^{\infty}_{t}\sk{A_{\sf s}\nabla L_{A}\psi_{\alpha-1}(sL_{A})f}{\nabla\overline{\phi_{\alpha}(sL_{A})g}}\wrt s\\
&\hskip 10pt
+\int^{\infty}_{t}\sk{A_{\sf s}\nabla \phi_{\alpha}(sL_{A})f}{\nabla\overline{ L_{A}\psi_{\alpha-1}(sL_{A})g}}\wrt s.
\endaligned
\]
In accordance with this decomposition and \eqref{eq: boundedness} we have $\mod{J_{3}}\leq \Lambda(J^{\prime}_{3}+J^{\prime\prime}_{3})$, where
\[
\aligned
&J^{\prime}_{3}=\int^{\infty}_{0}t^{-\beta}\int^{\infty}_{t}\int_{\Omega}\mod{\nabla L_{A}\psi_{\alpha-1}(sL_{A})f}\mod{\nabla\phi_{\alpha}(sL_{A})g}\mod{\psi_{\alpha+\beta-1}(tL_{A^{*}})h}\wrt x\wrt s\wrt t,\\
&J^{\prime\prime}_{3}=\int^{\infty}_{0}t^{-\beta}\int^{\infty}_{t}\int_{\Omega}\mod{\nabla \phi_{\alpha}(sL_{A})f}\mod{\nabla L_{A}\psi_{\alpha-1}(sL_{A})g}\mod{\psi_{\alpha+\beta-1}(tL_{A^{*}})h}\wrt x\wrt s\wrt t.
\endaligned
\]
\subsubsection{Estimating the term $J^{\prime}_{3}$}
Writing
$
t^{-\beta}L_{A}\psi_{\alpha-1}(sL_{A})f
=(s/t)^{\beta}\psi_{\alpha-\beta}(sL_{A})L^{\beta}_{A}f\,s^{-1}
$
we get
\[
\aligned
J^{\prime}_{3}&=\int^{\infty}_{0}\int^{\infty}_{t}\left(\frac{s}{t}\right)^{\beta}\int_{\Omega}\mod{\nabla\psi_{\alpha-\beta}(sL_{A})L^{\beta}_{A}f}\mod{\nabla\phi_{\alpha}(sL_{A})g}\mod{\psi_{\alpha+\beta-1}(tL_{A^{*}})h}\frac{\wrt x\wrt s\wrt t}{s}\\
&=\int^{\infty}_{1}s^{\beta-1}\left(\int^{\infty}_{0}\int_{\Omega}\mod{\nabla\psi_{\alpha-\beta}(stL_{A})L^{\beta}_{A}f}\mod{\nabla\phi_{\alpha}(stL_{A})g}\mod{\psi_{\alpha+\beta-1}(tL_{A^{*}})h}\wrt x\wrt t\right)\wrt s.
\endaligned
\]
Through the subordination to the imaginary powers \eqref{eq: u1}, \eqref{eq: u2},  
we reduce the estimate of $J_3'$ to proving
\begin{equation}
\label{eq: final}
\aligned
\int^{\infty}_{1}s^{\beta-1}&\left(\int^{\infty}_{0}\int_{\Omega}\mod{\nabla T^{A}_{st}L^{iu_{1}}_{A}L^{\beta}_{A}f}\mod{\nabla T^{A}_{st}L^{iu_{2}}_{A}g}\mod{T^{A^{*}}_{t}L^{iu_{3}}_{A^{*}}h}\wrt x\wrt t\right)\wrt s\\
&\leqsim_{\beta} \norm{L^{iu_{1}}_{A}L^{\beta}_{A}f}{p}\norm{L_A^{iu_{2}}g}{q}\norm{L_{A^*}^{iu_{3}}h}{r}
\endaligned
\end{equation}
for all $u_{1},u_{2},u_{3}\in\R$ and $0<\beta<1/r^{\prime}$.
In order to prove \eqref{eq: final}, just apply Lemma \ref{l: final}.

This finishes the proof of Proposition \ref{p: dual inequality} and thus of Theorem \ref{t: princ}.
\qed

\section*{Acknowledgments}
A. Carbonaro was partially supported by the ``National Group for Mathematical Analysis, Probability and their Applications'' (GNAMPA-INdAM).

O. Dragi\v{c}evi\'c was partially supported by the Slovenian Research Agency (research grant J1-1690) and the Ministry of Higher Education, Science and Technology of Slovenia (research program Analysis and Geometry, contract no. P1-0291).

V. Kova\v{c} and K. A. \v{S}kreb were supported in part by the Croatian Science Foundation under the project UIP-2017-05-4129 (MUNHANAP).

This work was initiated during the second author's stay at the University of Zagreb in the Spring of 2019. He would like to thank the said institution for its kind hospitality.
He also expresses his gratitude to Christoph Thiele and Jonathan Bennett for inquiring (in Helsinki in June of 2015, and Matsumoto in February of 2016, respectively) about trilinear embedding theorems for elliptic operators, which stimulated the investigation undertaken in this paper.

\bibliography{Trilinear_ArXiv3-2}{}
 \bibliographystyle{plain}

\end{document}